\title[Nakayama functors for coalgebras]{Nakayama functors for coalgebras and their applications to Frobenius tensor categories}
\author[T. Shibata]{Taiki Shibata}
\address[T. Shibata]{Department of Applied Mathematics,
  Okayama University of Science \\
  1-1 Ridai-cho, Kita-ku Okayama-shi, Okayama 700-0005, Japan.}
\email{shibata@ous.ac.jp}
\author[K. Shimizu]{Kenichi Shimizu}
\address[K. Shimizu]{Department of Mathematical Sciences,
  Shibaura Institute of Technology \\
  307 Fukasaku, Minuma-ku, Saitama-shi, Saitama 337-8570, Japan.}
\email{kshimizu@shibaura-it.ac.jp}
\keywords{coalgebra, Hopf algebra, Nakayama functor, locally finite abelian category, Frobenius tensor category}
\subjclass[2020]{18M05, 16T05}
\date{}
\numberwithin{equation}{section}
\theoremstyle{plain}
\newtheorem{C}{}[section] 
\newtheorem{lemma}[C]{Lemma}
\newtheorem{theorem}[C]{Theorem}
\newtheorem{corollary}[C]{Corollary}
\theoremstyle{definition}
\newtheorem{definition}[C]{Definition}
\theoremstyle{remark}
\newtheorem{remark}[C]{Remark}
\newtheorem{example}[C]{Example}
\newcommand{\id}{\mathrm{id}}
\newcommand{\op}{\mathrm{op}}
\newcommand{\cop}{\mathrm{cop}}
\newcommand{\radj}{\mathrm{ra}}
\newcommand{\rradj}{\mathrm{rra}}
\newcommand{\ladj}{\mathrm{la}}
\newcommand{\lladj}{\mathrm{lla}}
\newcommand{\bfk}{\Bbbk}
\newcommand{\Obj}{\mathrm{Obj}}
\newcommand{\Hom}{\mathrm{Hom}}
\newcommand{\End}{\mathrm{End}}
\newcommand{\Img}{\mathrm{Im}}
\newcommand{\socle}{\mathop{\mathrm{{soc}}}}
\newcommand{\coHom}{\mathrm{coHom}}
\newcommand{\rat}{\mathrm{rat}}
\newcommand{\Rat}{\mathrm{Rat}}
\newcommand{\Inj}{\mathrm{Inj}}
\newcommand{\Proj}{\mathrm{Proj}}
\newcommand{\unitobj}{\mathbf{1}}
\newcommand{\rev}{\mathrm{rev}}
\newcommand{\eval}{\mathrm{ev}}
\newcommand{\coev}{\mathrm{coev}}
\newcommand{\copow}{\otimes}
\renewcommand{\L}{\mathrm{l}}
\newcommand{\R}{\mathrm{r}}
\newcommand{\Nak}{\mathbb{N}}
\newcommand{\Nakl}{\Nak^{\L}}
\newcommand{\Nakr}{\Nak^{\R}}
\newcommand{\Mod}{\mathfrak{M}}
\newcommand{\qfcomod}{\mathfrak{M}_{\mathtt{qf}}}
\newcommand{\fdmod}{\Mod_{\fd}}
\newcommand{\Vect}{\mathbf{Vec}}
\newcommand{\Sets}{\mathbf{Set}}
\newcommand{\sfInj}{\mathfrak{I}_{\mathtt{sf}}}
\newcommand{\fdInj}{\mathfrak{I}_{\mathtt{fd}}}
\newcommand{\fdPro}{\mathfrak{P}_{\mathtt{fd}}}
\newcommand{\Ind}{\mathrm{Ind}}
\newcommand{\fd}{\mathtt{fd}} 
\newcommand{\fg}{\mathtt{fg}} 
\newcommand{\fp}{\mathtt{fp}} 
\newcommand{\qf}{\mathtt{qf}} 
\newcommand{\src}{\mathfrak{s}}
\newcommand{\tgt}{\mathfrak{t}}
\newcommand{\Span}{\mathrm{span}}
\newcommand{\triv}{\mathrm{triv}}
\newcommand{\modobj}{\mathtt{g}}
\newcommand{\RadIso}{\mathtt{r}}
\newcommand{\DriIso}{\mathtt{u}}
\newcommand{\PivIso}{\mathtt{p}}
\newcommand{\trace}{\mathsf{tr}}
\newcommand{\ptrace}{\mathsf{ptr}}
\newcommand{\YD}{\mathfrak{YD}}
\newcommand{\actl}{\mathbin{\scriptstyle\succ}}
\newcommand{\actr}{\mathbin{\scriptstyle\prec}}
\begin{document}

\maketitle

\begin{abstract}  We introduce Nakayama functors for coalgebras and investigate their basic properties. These functors are expressed by certain (co)ends as in the finite case discussed by Fuchs, Schaumann, and Schweigert.
  This observation allows us to define Nakayama functors for Frobenius tensor categories in an intrinsic way. As applications, we establish the categorical Radford $S^4$-formula for Frobenius tensor categories and obtain some related results. These are generalizations of works of Etingof, Nikshych, and Ostrik on finite tensor categories and some known facts on co-Frobenius Hopf algebras.
\end{abstract}

\tableofcontents

\section{Introduction}

Throughout this paper, $\bfk$ denotes the base field.
Let $A$ be a finite-dimensional algebra, and let ${}_A\fdmod$ be the category of finite-dimensional left modules over $A$.
Given a vector space $X$, we denote its dual space by $X^*$.
In representation theory, the endofunctor on ${}_A\fdmod$ defined by $M \mapsto A^* \otimes_A M$ is called the Nakayama functor. Fuchs, Schaumann and Schweigert \cite{MR4042867} pointed out that this functor is expressed as the coend
\begin{equation}
  \label{eq:intro-rex-Nakayama-algebra}
  A^* \otimes_A M = \int^{X \in {}_A\fdmod} \Hom_A(M, X)^* \otimes_{\bfk} X
\end{equation}
for $M \in {}_A \fdmod$. A finite abelian category \cite{MR3242743} is a linear category that is equivalent to ${}_A \fdmod$ for some finite-dimensional algebra $A$. The coend formula \eqref{eq:intro-rex-Nakayama-algebra} of the Nakayama functor allows us to define the right exact Nakayama functor $\Nakr_{\mathcal{M}}: \mathcal{M} \to \mathcal{M}$ for any finite abelian category $\mathcal{M}$ without referencing an algebra $A$ such that $\mathcal{M} \approx {}_A\fdmod$ so that $\Nakr_{\mathcal{M}} \cong A^* \otimes_A (-)$ if $\mathcal{M} = {}_A \fdmod$. As discussed in \cite{MR4042867}, such an abstract treatment of the Nakayama functor is very useful in the theory of finite tensor categories and their module categories. For example, basic properties of the Nakayama functor imply the categorical Radford $S^4$-formula for finite tensor categories \cite{MR2097289}, a Frobenius property of tensor functors between finite tensor categories \cite{MR3569179}, and a criterion for a finite tensor category to be symmetric Frobenius \cite{MR4042867}. Also, it was recently revealed that the Nakayama functor plays a crucial role in the modified trace theory for finite tensor categories \cite{2021arXiv210313702S,2021arXiv210315772S}.

Motivated by these results, we desire to define the Nakayama functor for some wider classes of linear categories as a universal object like~\eqref{eq:intro-rex-Nakayama-algebra} to extend the above results on finite tensor categories.
A locally finite abelian category \cite{MR3242743} is a linear category that is equivalent to $\fdmod^C$ for some coalgebra $C$, where $\fdmod^C$ denotes the category of finite-dimensional right $C$-comodules.
In this paper, we show that there are endofunctors on a particular class of locally finite abelian categories expressed by (co)ends similar to \eqref{eq:intro-rex-Nakayama-algebra}.
As applications, we extend several known results for finite tensor categories to {\em Frobenius tensor categories}, that is, a tensor category of which every simple object has
an injective hull \cite{MR3410615}. For example, we show that the categorical Radford $S^4$-formula for finite tensor categories and some related results given in \cite{MR2097289} also hold for Frobenius tensor categories. Our results also cover some facts on co-Frobenius Hopf algebras given in \cite{MR2236104,MR2278058,MR2554186}.

The authors' opinion is that, in general, Nakayama functors cannot be defined for locally finite abelian categories by a (co)end formula like \eqref{eq:intro-rex-Nakayama-algebra}. To explain this, let $C$ be a coalgebra.
We recall that $C^*$ has a canonical structure of an algebra and every left (right) $C$-comodule is naturally a right (left) $C^*$-module.
Given a left $C^*$-module $X$, we denote its rational part by $X^{\rat}$ (see Subsection \ref{subsec:notations}).
In this paper, we investigate two endofunctors $\Nakl_C$ and $\Nakr_C$ on the category $\Mod^C$ of all right $C$-comodules defined by
\begin{equation}
  \label{eq:intro-Nakayama-def}
  \Nakl_C(M) = \Hom^C(C, M)^{\rat}
  \quad \text{and} \quad
  \Nakr_C(M) = C \otimes_{C^*} M
\end{equation}
for $M \in \Mod^C$, where $\Hom^C$ is the Hom functor for $\Mod^C$ (see Subsection \ref{subsec:Nakayama-coalg-def} for detail).
The Nakayama functor of Chin and Simson \cite{MR2684139} is actually a restriction of the functor $\Nakl_C$ (Remark \ref{rem:Chin-Simson}).
By a Tannaka theoretic argument, we show that the functors $\Nakl_C$ and $\Nakr_C$ are expressed as (co)ends as
\begin{align}
  \label{eq:intro-Nakayama-coalgebra-1}
  \Nakl_C(M) & = \int_{X \in \fdmod^C} \Hom^C(X, M) \otimes_{\bfk} X, \\
  \label{eq:intro-Nakayama-coalgebra-2}
  \Nakr_C(M) & = \int^{X \in \fdmod^C} \coHom^C(X, M) \otimes_{\bfk} X    
\end{align}
for $M \in \Mod^C$, where $\coHom^C$ is the `coHom' functor introduced by Takeuchi \cite{MR472967} (see Theorem \ref{thm:Nakayama-(co)end} for the precise meaning of these formulas).
The reason why \eqref{eq:intro-Nakayama-coalgebra-1} and \eqref{eq:intro-Nakayama-coalgebra-2} cannot be used to define Nakayama functors for locally finite abelian categories is that, in general, the full subcategory $\fdmod^C$ is closed under neither $\Nakl_C$ nor $\Nakr_C$ (see Subsection~\ref{subsec:counterexamples}).

Based on pioneering works on coalgebras and comodules \cite{MR472967,MR1717358,MR1786197,MR1904645,MR1998048,MR2078404,MR2253657,MR2684139,MR3125851,MR3150709}, we investigate behaviors of the functors \eqref{eq:intro-Nakayama-def} in the case where the coalgebra $C$ is semiperfect, co-Frobenius or quasi-co-Frobenius (QcF) (see Definitions~\ref{def:semiperfect}, \ref{def:co-Frob} or \ref{def:QcF}, respectively).
One of important observations is that $\fdmod^C$ is closed under both $\Nakl_C$ and $\Nakr_C$ if $C$ is semiperfect (Theorem~\ref{thm:Nakayama-semiperfect}). Thus Nakayama functors are defined for a locally finite abelian category that is equivalent to $\fdmod^C$ for some semiperfect coalgebra $C$. We also show that $\Nakl_C$ and $\Nakr_C$ are equivalences if and only if $C$ is QcF, and that these functors are isomorphic to the identity functor if and only if $C$ is a symmetric coalgebra (Theorems~\ref{thm:Nakayama-QcF} and \ref{thm:Nakayama-symmetric}).

Since any Frobenius tensor category is equivalent to $\fdmod^C$ for some QcF coalgebra $C$ (Theorem~\ref{thm:Frob-tensor-cat-def}), one can apply our results to such tensor categories.
As an application, we obtain the categorical Radford $S^4$-formula for Frobenius tensor categories mentioned in the above.
Other applications are introduced during explaining the organization of this paper in the below.

\subsection*{Organization of this paper}

This paper is organized as follows: In Section~\ref{sec:preliminaries}, we fix notations used throughout in this paper. We also recall some basic results on semiperfect coalgebras, the coHom functor, and (co)ends.

In Section~\ref{sec:Naka-for-coalgebras}, for a coalgebra $C$, we introduce two endofunctors $\Nakl_C$ and $\Nakr_C$ on the category $\Mod^C$. We first show that $\Nakr_C$ is left adjoint to $\Nakl_C$ (Lemma~\ref{lem:Nakayama-adj}). Thus, in particular, $\Nakl_C$ and $\Nakr_C$ are left and right exact, respectively. Next, we show that $\Nakl_C$ and $\Nakr_C$ are expressed by (co)ends as in \eqref{eq:intro-Nakayama-coalgebra-1} and \eqref{eq:intro-Nakayama-coalgebra-2} (Theorem~\ref{thm:Nakayama-(co)end}). Their universal properties imply that $\Nakl_C$ and $\Nakr_C$ are Morita-Takeuchi invariant in the following sense: If $\Phi: \Mod^C \to \Mod^D$ is an equivalence of linear categories, where $C$ and $D$ are coalgebras, then there are isomorphisms $\Phi \circ \Nakl_C \cong \Nakl_D \circ \Phi$ and $\Phi \circ \Nakr_C \cong \Nakr_D \circ \Phi$ of functors (Lemma~\ref{lem:Nakayama-Mori-Take-equiv}).

In the most general setting, $\Nakl_C$ and $\Nakr_C$ do not seem to benefit the study of coalgebras. There is, even, an example of a coalgebra $C$ such that both $\Nakl_C$ and $\Nakr_C$ are zero functors (Example~\ref{ex:counter-example-3}). However, if we assume that $C$ is semiperfect, then $\Nakl_C$ and $\Nakr_C$ behave well as in the case of finite-dimensional algebras. For example, for a semiperfect coalgebra $C$, we show that $\Nakl_C$ and $\Nakr_C$ preserve $\fdmod^C$ and they induce equivalences between the full subcategory of projective objects of $\fdmod^C$ and the full subcategory of injective objects of $\fdmod^C$ (Theorem~\ref{thm:Nakayama-semiperfect}).

For a co-Frobenius coalgebra $C$, we show that $\Nakl_C$ and $\Nakr_C$ are isomorphic to the functor twisting the coaction by $\nu^{-1}$ and $\nu$, respectively, where $\nu$ is the Nakayama automorphism of $C$ (Lemma~\ref{lem:Nakayama-co-Fro}). By this result and the Morita-Takeuchi invariance, we see that $\Nakl_C$ and $\Nakr_C$ are equivalences precisely if $C$ is QcF (Theorem~\ref{thm:Nakayama-QcF}), and that $\Nakl_C$ and $\Nakr_C$ are identity functors precisely if $C$ is a symmetric coalgebra (Theorem~\ref{thm:Nakayama-symmetric}).
Now suppose that $C$ is a QcF coalgebra. Given a simple right $C$-comodule $S$, we denote by $E(S)$ and $P(S)$ the injective hull and the projective cover of $S$, respectively. Then we have \begin{equation}
  \label{eq:intro-inj-hull-top}
  \socle P(S) \cong \Nakl_C(S)
  \quad \text{and} \quad
  \mathop{\mathrm{top}} E(S) \cong \Nakr_C(S)
\end{equation}
as in the case of finite-dimensional algebras (Theorem~\ref{thm:Nakayama-permutation}).

In Section~\ref{sec:Naka-for-locally-finite}, we reformulate results of Section~\ref{sec:Naka-for-coalgebras} in the setting of locally finite abelian categories. Given a category $\mathcal{A}$, we denote its ind-completion by $\Ind(\mathcal{A})$.
We note that a locally finite abelian category is equivalent to $\fdmod^C$ for some coalgebra $C$ and $\Ind(\fdmod^C)$ can be identified with $\Mod^C$. Now let $\mathcal{A}$ be a locally finite abelian category. In view of \eqref{eq:intro-Nakayama-coalgebra-1} and \eqref{eq:intro-Nakayama-coalgebra-2}, we can define two endofunctors $\Nakl_{\Ind(\mathcal{A})}$ and $\Nakr_{\Ind(\mathcal{A})}$ on $\Ind(\mathcal{A})$ by
\begin{align*}
  \Nakl_{\Ind(\mathcal{A})}(M)
  & = \int_{X \in \mathcal{A}} \Hom_{\Ind(\mathcal{A})}(X, M) \copow X, \\
  \Nakr_{\Ind(\mathcal{A})}(M)
  & = \int^{X \in \mathcal{A}} \coHom_{\Ind(\mathcal{A})}(X, M) \copow X
\end{align*}
for $M \in \Ind(\mathcal{A})$, where $\copow$ is the copower and $\coHom_{\Ind(\mathcal{A})}$ is the coHom functor for $\Ind(\mathcal{A})$ (see Section~\ref{sec:preliminaries}). By Theorem~\ref{thm:Nakayama-semiperfect}, we see that $\Nakl_{\Ind(\mathcal{A})}$ and $\Nakr_{\Ind(\mathcal{A})}$ restrict to endofunctors on $\mathcal{A}$ if $\mathcal{A}$ has enough injectives and projectives (Theorem~\ref{thm:Nakayama-compact-restriction}).
We also show the following generalization of \cite[Theorem 3.18]{MR4042867}:
Let $\mathcal{A}$ and $\mathcal{B}$ be locally finite abelian categories.
Then, for every linear functor $F: \Ind(\mathcal{A}) \to \Ind(\mathcal{B})$ such that $F$ preserves limits and the double right adjoint $F^{\rradj}$ of $F$ exists, there is an isomorphism $F \circ \Nakl_{\Ind(\mathcal{A})} \cong \Nakl_{\Ind(\mathcal{B})} \circ F^{\rradj}$ (Theorem~\ref{thm:Nakayama-and-adjunctions}). A similar result holds also for $\Nakr$.

In Section~\ref{sec:Frobenius-tensor}, we give applications of our results to Frobenius tensor categories.
If $H$ is a Hopf algebra, then $\fdmod^H$ is a left rigid monoidal category, however, it is not right rigid in general.
This means that $\fdmod^H$ may not be a tensor category in the sense of \cite{MR3242743}.
Mentioning this problem, in this section, we show the following criterion by utilizing Nakayama functors: Let $\mathcal{C}$ be a locally finite abelian category equipped with a structure of a monoidal category such that the tensor product $\otimes$ of $\mathcal{C}$ is bilinear and exact, and the unit object $\unitobj \in \mathcal{C}$ is simple. Suppose that $\mathcal{C}$ has a non-zero object that is either injective or projective. Then $\mathcal{C}$ is left rigid if and only if $\mathcal{C}$ is right rigid (Theorem~\ref{thm:one-sided-rigidity}).
This result is a generalization of the fact that the antipode of a co-Frobenius Hopf algebra is bijective.

The discussion for proving the above result also gives some equivalent conditions for a tensor category to be Frobenius (Theorem~\ref{thm:Frob-tensor-cat-def}). Now let $\mathcal{C}$ be a Frobenius tensor category with left duality $(-)^{\vee}$. In this case, the functor $\Nakr := \Nakr_{\Ind(\mathcal{C})}$ restricts to an endofunctor on $\mathcal{C}$. By the basically same way as \cite{MR4042867}, we obtain natural isomorphisms
\begin{equation}
  \label{eq:intro-Nakayama-iso}
  \Nakr(W \otimes X) \cong \Nakr(W) \otimes X^{\vee\vee}
  \quad \text{and} \quad
  X \otimes \Nakr(Y) \cong \Nakr(X^{\vee\vee} \otimes Y)
\end{equation}
for $W, X, Y \in \mathcal{C}$. Hence, letting $\modobj_{\mathcal{C}} := \Nakr(\unitobj)$, we have
\begin{equation}
  \label{eq:intro-Nakayama-iso-2}
\Nakr(X) \cong \modobj_{\mathcal{C}} \otimes X^{\vee\vee}
\end{equation}
for $X \in \mathcal{C}$.
It turns out that the object $\modobj_{\mathcal{C}}$ is a categorical analogue of the distinguished grouplike element of a co-Frobenius Hopf algebra (see Section~\ref{sec:hopf-algebras}).
By \eqref{eq:intro-inj-hull-top} and \eqref{eq:intro-Nakayama-iso-2}, we obtain a formula of the top of the injective hull of a simple object of $\mathcal{C}$ (Corollary~\ref{cor:ACE15-Rem-2-10}).

By \eqref{eq:intro-Nakayama-iso}, we also obtain a natural isomorphism
\begin{equation}
  \label{eq:intro-Radford-iso}
  \RadIso_X: X \otimes \modobj_{\mathcal{C}} \to\modobj_{\mathcal{C}} \otimes X^{\vee\vee\vee\vee}
\end{equation}
for $X \in \mathcal{C}$, which we call the {\em Radford isomorphism} (Definition~\ref{def:Radford-iso}). We give explicit formulas for this isomorphism in the two cases where $\mathcal{C}$ is braided and $\mathcal{C}$ is semisimple (Theorems~\ref{thm:Radford-iso-braiding} and \ref{thm:Radford-iso-semisimple}).
These results are, in fact, generalizations of the categorical Radford $S^4$-formula and related results given in \cite{MR2097289}.
We also define a spherical tensor category in the spirit of \cite{MR4254952} using the isomorphism~\eqref{eq:intro-Radford-iso} and show that a braided spherical tensor category is a ribbon category (Theorem~\ref{thm:spherical-ribbon}).

In Section~\ref{sec:hopf-algebras}, we explain how our results are applied to Hopf algebras. In particular, we derive several known equivalent conditions for a Hopf algebra to be co-Frobenius as a corollary of our results on tensor categories (Theorem~\ref{thm:Hopf-semiperfect}).
Let $H$ be a co-Frobenius Hopf algebra.
By a generalization of the fundamental theorem for Hopf modules \cite{MR1629389}, we also show that the space of left cointegrals on $H$ becomes an object of the category ${}^{\tilde{H}}_H \YD$ of `twisted' Yetter-Drinfeld modules over $H$.
From this observation, we obtain, for example, an explicit formula of the Nakayama automorphism of $H$ (Lemma~\ref{lem:co-Fb-Hopf-Nakayama-2}).
The `twisted' Drinfeld center $\mathfrak{Z}_{0,4}$ is introduced as a category-theoretical counterpart of ${}^{\tilde{H}}_H \YD$.
We also show that the Radford $S^4$-formula for $H$ given in \cite{MR2278058} and the Radford isomorphism \eqref{eq:intro-Radford-iso} for $\fdmod^H$ are equivalent in the sense that they give rise to isomorphic objects in $\mathfrak{Z}_{0,4}$.

\subsection*{Acknowledgment}

The authors thank the anonymous referee for the careful reading of manuscript.
The first author (T.S.) is supported by JSPS KAKENHI Grant Numbers JP19K14517 and JP22K13905.
The second author (K.S.) is supported by JSPS KAKENHI Grant Number JP20K03520.

\subsection*{Competing interests}

The authors declare no conflict of interest.

\section{Preliminaries}
\label{sec:preliminaries}

\subsection{Notations}
\label{subsec:notations}

Our basic reference on the category theory is Mac Lane \cite{MR1712872}.
Given a category $\mathcal{C}$, we denote by $\mathcal{C}^{\op}$ its opposite category.
The class of objects of $\mathcal{C}$ is written as $\Obj(\mathcal{C})$, however, we also write $X \in \mathcal{C}$ to mean that $X$ is an object of the category $\mathcal{C}$.
An object $X \in \mathcal{C}$ is written as $X^{\op}$ when it is regarded as an object of $\mathcal{C}^{\op}$.
For a functor $F$, we denote by $F^{\ladj}$ and $F^{\radj}$ a left and a right adjoint of $F$, respectively, if they exist.

Throughout this paper, we work over the field $\bfk$.
The category of all vector spaces (over $\bfk$) is denoted by $\Vect$.
By a (co)algebra, we always mean a (co)associative (co)unital (co)algebra over the field $\bfk$.
Given an algebra $A$, we denote by ${}_A\Mod$ and $\Mod_A$ the category of left and right $A$-modules, respectively.
Similarly, given a coalgebra $C$, we denote by ${}^C\Mod$ and $\Mod^C$ the category of left and right $C$-comodules, respectively.
The Hom functors for ${}_A\Mod$, $\Mod_A$, ${}^C\Mod$ and $\Mod^C$ are written as ${}_A \Hom$, $\Hom_A$, ${}^C\Hom$ and $\Hom^C$, respectively. We use the subscript $\fd$ to mean the full subcategory of finite-dimensional objects. For example, $\Mod^C_{\fd}$ is the category of finite-dimensional right $C$-comodules.

The comultiplication and the counit of a coalgebra $C$ is usually denoted by $\Delta : C \to C \otimes_{\bfk} C$ and $\varepsilon : C \to \bfk$, respectively. The Sweedler notation, such as
\begin{equation*}
  \Delta(c) = c_{(1)} \otimes c_{(2)}
  \quad \text{and} \quad
  c_{(1)} \otimes \Delta(c_{(2)}) = c_{(1)} \otimes c_{(2)} \otimes c_{(3)} = \Delta(c_{(1)}) \otimes c_{(2)},
\end{equation*}
will be used to express the comultiplication of $c \in C$. We denote by $C^{\cop}$ the coalgebra whose underlying space is $C$ and the comultiplication is given by $c \mapsto c_{(2)} \otimes c_{(1)}$.

The coaction of $C$ on a right $C$-comodule $M$ is usually denoted by $\delta_M : M \to M \otimes_\bfk C$ and expressed as $m \mapsto m_{(0)} \otimes m_{(1)}$. If $M$ is a bicomodule, the left and the right coactions on $M$ are denoted by $\delta^{\L}_{M}$ and $\delta^{\R}_{M}$, respectively.

Given a vector space $V$, we denote by $V^* = \Hom_{\bfk}(V, \bfk)$ the dual space of $V$.
For $f \in V^*$ and $v \in V$, $f(v)$ is often written as $\langle f, v \rangle$.
The dual space $C^*$ of a coalgebra $C$ is an algebra with respect to the convolution product defined by $\langle f * g, c \rangle = \langle f, c_{(1)} \rangle \langle g, c_{(2)} \rangle$ for $f, g \in C^*$ and $c \in C$.
Every right $C$-comodule $M$ is naturally a left $C^*$-module by the action defined by $c^* \rightharpoonup m = m_{(0)} \langle c^*, m_{(1)} \rangle$ for $c^* \in C^*$ and $m \in M$, and this construction extends to a fully faithful functor
\begin{equation}
  \label{eq:comod-C-to-C*-mod}
  \Mod^{C} \to {}_{C^*}\Mod,
  \quad (M, \delta_M) \mapsto (M, \rightharpoonup).
\end{equation}
A left $C^*$-module is said to be {\em rational} if it belongs to the image of the functor \eqref{eq:comod-C-to-C*-mod}.
Every left $C^*$-module $M$ has the largest rational submodule $M^{\rat}$, called the {\em rational part} of $M$.
By definition, we may and do view $M^{\rat}$ as a right $C$-comodule.
The assignment $M \mapsto M^{\rat}$ extends the functor
\begin{equation}
  \label{eq:rational-functor}
  (-)^{\rat} : {}_{C^*}\Mod \to \Mod^{C},
  \quad M \mapsto M^{\rat},
\end{equation}
which is right adjoint to~\eqref{eq:comod-C-to-C*-mod}.

A left $C$-comodule is regarded as a right $C^*$-module in a similar way.
The rational part of a right $C^*$-module $M$ is also defined in an analogous way and denoted by the same symbol $M^{\rat}$.
This notation is ambiguous when $M$ is a $C^*$-bimodule.
Thus, in that case, we denote by $({}_{C^*}M)^{\rat}$ and $(M_{C^*})^{\rat}$ the rational part of $M$ as a left and a right $C^*$-module, respectively.

\subsection{Basics on semiperfect coalgebras}

For the theory of coalgebras and comodules, we refer the reader to \cite{MR1786197}.

\begin{definition}
\label{def:semiperfect}
A coalgebra $C$ is said to be {\em left} ({\em right}) {\em semiperfect} \cite[Definition 3.2.4]{MR1786197} if every finite-dimensional left (right) $C$-comodule has a projective cover in $\Mod^C$. A coalgebra $C$ is said to be {\em semiperfect} if it is both left and right semiperfect.
\end{definition}

We recall some results on semiperfect coalgebras, which are especially important in this paper.
Let, in general, $A$ be an algebra, and let $M$ be a left $A$-module. Then $M^*$ is a right $A$-module by the action $\langle m^* \cdot a, m \rangle = \langle m^*, a m \rangle$ ($m^* \in M^*$, $m \in M$, $a \in A$). For a while, we let $C$ be an arbitrary coalgebra. First of all, we note:

\begin{lemma}[{\cite[Lemma 2.2.12]{MR1786197}}]
  \label{lem:DNR-Lem-2-2-12}
  If $M$ is a finite-dimensional rational left $C^*$-module, then $M^*$ is a rational right $C^*$-module.
\end{lemma}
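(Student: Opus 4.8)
The plan is to unwind the definitions and reduce everything to the standard fact that a finite-dimensional right $C$-comodule is the same data as a finite-dimensional left $C^{\cop}$-comodule, together with the compatibility between dualization of comodules and dualization of $C^*$-modules. First I would recall that $M$ being a finite-dimensional rational left $C^*$-module means, by the fully faithful functor \eqref{eq:comod-C-to-C*-mod}, that $M$ carries a right $C$-comodule structure $\delta_M(m) = m_{(0)} \otimes m_{(1)}$ inducing the given action $c^* \rightharpoonup m = m_{(0)} \langle c^*, m_{(1)} \rangle$. Since $\dim_{\bfk} M < \infty$, the dual space $M^*$ inherits a left $C$-comodule structure $\delta^{\L}_{M^*} : M^* \to C \otimes_{\bfk} M^*$ characterized by $\langle \delta^{\L}_{M^*}(m^*), c^* \otimes m \rangle = \langle m^*, c^* \rightharpoonup m \rangle$ for $c^* \in C^*$, $m \in M$; concretely, if $\{e_i\}$ is a basis of $M$ with dual basis $\{e^i\}$ and $\delta_M(e_i) = \sum_j e_j \otimes c_{ji}$, then $\delta^{\L}_{M^*}(e^i) = \sum_j c_{ij} \otimes e^j$. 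This is the usual construction of the dual of a finite-dimensional comodule, and one checks coassociativity and counitality of $\delta^{\L}_{M^*}$ directly from those of $\delta_M$.

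Next I would identify the left $C$-comodule structure on $M^*$ with a rational right $C^*$-module structure and verify it agrees with the $A$-module structure on the dual described just before the statement (with $A = C^*$). By the comodule–module dictionary for left $C$-comodules, the coaction $\delta^{\L}_{M^*}$ makes $M^*$ a right $C^*$-module via $m^* \cdot c^* = \langle c^*, (m^*)_{(-1)} \rangle (m^*)_{(0)}$, and this module is rational by construction since it comes from a left $C$-comodule. It then remains to confirm that this right $C^*$-action is exactly the one defined by $\langle m^* \cdot c^*, m \rangle = \langle m^*, c^* \rightharpoonup m \rangle$; but that is precisely the defining property of $\delta^{\L}_{M^*}$ that I set up in the previous step, so the two structures coincide. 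Hence $M^*$, with its canonical right $C^*$-module structure, is rational, which is the claim.

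Alternatively — and this may be the cleanest route — one can simply observe that dualization gives a functor $(\Mod^C_{\fd})^{\op} \to {}^C\Mod_{\fd}$, equivalently $({}_{C^*}\Mod_{\fd}^{\rat})^{\op} \to (\Mod_{C^*}^{\rat})_{\fd}$ where the superscript $\rat$ denotes the rational modules, and the content of the lemma is just that this functor is well-defined, i.e.\ sends finite-dimensional rationals to finite-dimensional rationals. Since the excerpt already grants us this as \cite[Lemma 2.2.12]{MR1786197}, the proof is essentially a matter of spelling out that the finite-dimensionality of $M$ is what allows the comodule structure to pass to the dual at all (an infinite-dimensional comodule has no such dual in general), and that passing through $C^{\cop}$ converts the left/right distinction correctly.

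The main obstacle is not conceptual but bookkeeping: one must be careful with the two conventions for turning comodules into modules (left $C$-comodule $\leftrightarrow$ right $C^*$-module versus right $C$-comodule $\leftrightarrow$ left $C^*$-module) and with the placement of $\langle\,,\rangle$ in the dual pairing, so that the ``canonical'' right $C^*$-module structure on $M^*$ from the preamble really is the one coming from the dual comodule and not its opposite. Concretely, the one identity that carries the whole argument is $\langle m^* \cdot c^*, m\rangle = \langle m^*, c^* \rightharpoonup m\rangle = \langle m^*, m_{(0)}\rangle\langle c^*, m_{(1)}\rangle$, and from this one reads off $\delta^{\L}_{M^*}(m^*) = \sum_i m_i{}_{(1)} \otimes \langle m^*, m_i{}_{(0)}\rangle m^i$ in a basis; checking this is a finite-dimensional coalgebra is then immediate. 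So I would present the proof as: (1) use finite-dimensionality to dualize the coaction, (2) check coassociativity/counit, (3) match the resulting right $C^*$-action with the canonical one, concluding rationality.
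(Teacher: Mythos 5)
Your proof is correct and is the standard argument: the paper does not prove this lemma itself (it cites \cite[Lemma 2.2.12]{MR1786197}) and simply records the resulting dual coaction \eqref{eq:dual-coaction}, which is exactly the left $C$-comodule structure on $M^*$ you construct by dualizing $\delta_M$ and then match with the canonical right $C^*$-action on the dual. Only cosmetic remarks: in your final paragraph you mean ``checking this is a comodule'' (not ``a coalgebra''), and your ``alternative route'' is circular since it presupposes the cited lemma, so the three-step argument of your first two paragraphs is the one to keep.
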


In other words, if $M$ is a finite-dimensional right $C$-comodule, then $M^*$ is a left $C$-comodule.
The left coaction of $C$ on $M^*$, which we denote by $m^* \mapsto m^*_{(-1)} \otimes m^*_{(0)}$, is characterized by the equation
\begin{equation}
  \label{eq:dual-coaction}
  \langle m^*_{(-1)}, m \rangle m^*_{(0)} = \langle m^*, m_{(0)} \rangle m_{(1)}
  \quad (m^* \in M^*, m \in M).
\end{equation}

Given an abelian category $\mathcal{A}$, we denote by $\Proj(\mathcal{A})$ and $\Inj(\mathcal{A})$ the full subcategory of projective and injective objects of $\mathcal{A}$, respectively.
By Lemma~\ref{lem:DNR-Lem-2-2-12}, the assignment $M \mapsto M^*$ gives rise to an anti-equivalence between the categories $\fdmod^C$ and ${}^C\fdmod$.
Thus the assignment $M \mapsto M^*$ also gives equivalences
\begin{equation*}
  \Inj(\fdmod^C) \approx \Proj({}^C\fdmod)^{\op}
  \quad \text{and} \quad
  \Proj(\fdmod^C) \approx \Inj({}^C\fdmod)^{\op}.
\end{equation*}
We warn that, for an algebra $A$, a projective object of ${}_A\fdmod$ may not be a projective object of ${}_A \Mod$. For the case of coalgebras, the situation is easy.
To explain this, we first remark:

\begin{lemma}[{\cite[Corollary 2.4.20]{MR1786197}}]
  \label{lem:DNR-Cor-2-4-20}
  An object $X \in \fdmod^C$ is an injective (projective) object of $\Mod^C$ if and only if $X^*$ is a projective (injective) object of ${}^C\Mod$.
\end{lemma}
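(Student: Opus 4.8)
The plan is to reduce the lemma to a comparison between (projectivity/injectivity in the category of all comodules) and (projectivity/injectivity in the category of finite-dimensional comodules), and then to invoke the anti-equivalence $(-)^{*}\colon \fdmod^{C}\approx {}^{C}\fdmod$ recorded just before the statement, which interchanges injective and projective objects (cf.\ the displayed equivalences $\Inj(\fdmod^{C})\approx \Proj({}^{C}\fdmod)^{\op}$ and $\Proj(\fdmod^{C})\approx \Inj({}^{C}\fdmod)^{\op}$). Concretely, for $X\in\fdmod^{C}$ I would establish the chain
\begin{align*}
  X \text{ is injective in } \Mod^{C}
  &\iff X \text{ is injective in } \fdmod^{C} \\
  &\iff X^{*} \text{ is projective in } {}^{C}\fdmod \\
  &\iff X^{*} \text{ is projective in } {}^{C}\Mod,
\end{align*}
and, symmetrically, the chain with ``injective'' and ``projective'' interchanged. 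The middle equivalences are the cited anti-equivalence; the easy outer implications (global $\Rightarrow$ restricted) hold because $\fdmod^{C}$ and ${}^{C}\fdmod$ are closed under subobjects and quotients, so they inherit monomorphisms and epimorphisms from the ambient comodule categories. Thus everything comes down to the two ``local-to-global'' statements: a finite-dimensional comodule that is injective (resp.\ projective) within the finite-dimensional subcategory is already injective (resp.\ projective) in the whole comodule category.

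For injectivity I would argue via a Baer-type criterion, using the fundamental theorem of comodules: every $C$-comodule is the directed union of its finite-dimensional subcomodules, so $\Mod^{C}$ is locally finite. Let $X\in\fdmod^{C}$ be injective in $\fdmod^{C}$, let $\iota\colon A\hookrightarrow B$ be a monomorphism in $\Mod^{C}$, and let $f\colon A\to X$. Applying Zorn's lemma to the poset of pairs $(B',f')$ with $A\subseteq B'\subseteq B$ and $f'\iota=f$, ordered by extension, produces a maximal element $(B',f')$. If $B'\neq B$, pick $b\in B\setminus B'$; it lies in a finite-dimensional subcomodule $D\subseteq B$ with $D\not\subseteq B'$. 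Since $X$ is injective in $\fdmod^{C}$, the restriction $f'|_{B'\cap D}\colon B'\cap D\to X$ extends along $B'\cap D\hookrightarrow D$ to some $g\colon D\to X$; then $f'$ and $g$ agree on $B'\cap D$, hence glue to a $C$-colinear map $B'+D\to X$ properly extending $f'$, contradicting maximality. So $B'=B$, and $X$ is injective in $\Mod^{C}$. The same argument carried out in ${}^{C}\Mod$ gives the left-comodule version.

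For projectivity I would use finite generation: a comodule is finite-dimensional exactly when it is finitely generated. Let $N\in{}^{C}\fdmod$ be projective in ${}^{C}\fdmod$, let $\pi\colon B\twoheadrightarrow A$ be an epimorphism in ${}^{C}\Mod$, and let $f\colon N\to A$. Then $A':=f(N)$ is finite-dimensional; choosing finitely many generators of $A'$ and lifting them through $\pi$ to finitely many elements of $B$ produces a finite-dimensional subcomodule $B'\subseteq B$ with $A'\subseteq\pi(B')$. Now $\pi|_{B'}\colon B'\twoheadrightarrow\pi(B')$ is an epimorphism in ${}^{C}\fdmod$, so projectivity of $N$ there lifts $f\colon N\to A'\hookrightarrow\pi(B')$ to a map $g\colon N\to B'$; composing with $B'\hookrightarrow B$ yields the required lift of $f$. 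Running the same argument in $\Mod^{C}$ settles the right-comodule case, and the four local-to-global comparisons are in place.

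I expect the injective passage, i.e.\ the Baer-type argument, to be the main obstacle: one must check that the glued map on $B'+D$ is well defined (the two pieces agree on $B'\cap D$ by construction) and $C$-colinear (it factors the colinear map $B'\oplus D\to X$ through the colinear surjection $B'\oplus D\to B'+D$), and one must use the directed-union structure correctly to produce the finite-dimensional $D$. The projective passage and the bookkeeping with the anti-equivalence are then routine, and the two displayed chains of equivalences give exactly the two halves of the lemma.
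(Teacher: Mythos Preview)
Your argument is correct. Note, however, that the paper does not give its own proof of this lemma: it is quoted directly from \cite[Corollary 2.4.20]{MR1786197}, so there is no in-paper argument to compare against. What you have done is, in effect, prove Lemma~\ref{lem:fd-injective} first---both the injective half (via the Baer-type extension) and the projective half (via the finite-generation lift)---and then read off Lemma~\ref{lem:DNR-Cor-2-4-20} using the anti-equivalence $(-)^{*}$ between $\fdmod^{C}$ and ${}^{C}\fdmod$. This reverses the paper's logical order: there, Lemma~\ref{lem:DNR-Cor-2-4-20} is taken from the literature and then \emph{used} to establish the projective half of Lemma~\ref{lem:fd-injective}. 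Your route is more self-contained (it avoids the external citation) at the price of being slightly longer; a direct proof of the duality statement, as in the cited source, does not need the full local-to-global comparison. Your two local-to-global steps are sound as written; in the projective step the finite-dimensional $B'$ exists because each chosen preimage lies in a finite-dimensional subcomodule by the fundamental theorem of comodules, and the sum of these subcomodules does the job.
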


By assembling some results on coalgebras in \cite{MR1786197}, one obtains:

\begin{lemma}
  \label{lem:fd-injective}
  For a coalgebra $C$, we have
  \begin{equation*}
    \Inj(\fdmod^C) = \Inj(\Mod^C) \cap \Obj(\fdmod^C)
    \quad \text{and} \quad
    \Proj(\fdmod^C) = \Proj(\Mod^C) \cap \Obj(\fdmod^C).
  \end{equation*}
\end{lemma}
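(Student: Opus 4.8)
The plan is to prove the two asserted equalities separately, and in fact it suffices to prove one of them and deduce the other by a duality argument. I would start with the injective equality $\Inj(\fdmod^C) = \Inj(\Mod^C) \cap \Obj(\fdmod^C)$. One inclusion is essentially formal: if $X \in \fdmod^C$ is injective as an object of $\Mod^C$, then since $\fdmod^C$ is a full subcategory of $\Mod^C$ closed under subobjects and quotients, the defining lifting property of injectivity restricts, so $X \in \Inj(\fdmod^C)$; hence $\Inj(\Mod^C) \cap \Obj(\fdmod^C) \subseteq \Inj(\fdmod^C)$. (One should be slightly careful here: the lifting property for injectivity in $\fdmod^C$ only quantifies over monomorphisms between finite-dimensional comodules, so this direction does need a small remark, but it is harmless.)

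The substantive inclusion is the reverse one: a finite-dimensional comodule that is injective \emph{within} $\fdmod^C$ must in fact be injective in the whole category $\Mod^C$. Here is where I would use the interplay with $C^*$-modules and with duality. By Lemma~\ref{lem:DNR-Lem-2-2-12}, the functor $M \mapsto M^*$ is an anti-equivalence $\fdmod^C \approx {}^C\fdmod$, and by Lemma~\ref{lem:DNR-Cor-2-4-20} it exchanges injective objects of $\Mod^C$ with projective objects of ${}^C\Mod$, and injective objects of $\fdmod^C$ with projective objects of ${}^C\fdmod$. So the injective statement for right comodules is equivalent to: $\Proj({}^C\fdmod) = \Proj({}^C\Mod) \cap \Obj({}^C\fdmod)$, i.e. the projective statement for \emph{left} comodules. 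Thus proving either of the two displayed equalities (say the projective one) for \emph{every} coalgebra simultaneously yields both for every coalgebra, since the opposite coalgebra $C^{\cop}$ (or equivalently passing to left comodules) lets one swap sides. Concretely: prove $\Proj(\fdmod^C) = \Proj(\Mod^C)\cap\Obj(\fdmod^C)$ for all $C$; applying this to left $C$-comodules (= right $C^{\cop}$-comodules) gives the same for ${}^C\Mod$; then dualize via $M\mapsto M^*$ to get the injective equality for right $C$-comodules.

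For the projective equality itself, the nontrivial content is: if $P \in \fdmod^C$ is projective in $\fdmod^C$, then $P$ is projective in all of $\Mod^C$. The key structural fact to invoke from \cite{MR1786197} is that $\Mod^C$ has enough projectives among its finite-dimensional objects in a strong sense — every finite-dimensional comodule admits a projective cover whose projective part is again finite-dimensional, and finite-dimensional comodules embed in finite direct sums of finite-dimensional subcoalgebras, each of which decomposes suitably. More precisely, I would argue: the indecomposable projective objects of $\Mod^C$ are exactly the projective covers $P(S)$ of simple comodules $S$, and each $P(S)$, being contained in the (possibly infinite-dimensional) injective cogenerator $C$, need not be finite-dimensional in general; but when $P \in \fdmod^C$ is projective in $\fdmod^C$, it is a direct summand of a finite direct sum of modules of the form $e \rightharpoonup C$ for idempotents $e \in C^*$ localizing to finite-dimensional subcoalgebras, and such summands are projective in $\Mod^C$. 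I expect the main obstacle to be precisely this: assembling the right combination of statements from \cite{MR1786197} (the structure of projective covers in $\Mod^C$, the behavior of projectivity under the functor $C^* \leftrightarrow$ comodules, and the finite-dimensionality bookkeeping) so that "projective in $\fdmod^C$" genuinely upgrades to "projective in $\Mod^C$" without a semiperfectness hypothesis — the analogous statement for finite-dimensional algebras is \emph{false}, so the argument must use something special to coalgebras, namely that $\Mod^C$ is locally finite and every object is the filtered colimit of its finite-dimensional subobjects, making Hom out of a finite-dimensional object commute with filtered colimits. Once that commutation is in hand, projectivity of $P$ against all short exact sequences in $\Mod^C$ reduces to projectivity against short exact sequences of finite-dimensional comodules, which is the hypothesis.
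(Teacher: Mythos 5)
Your proposal is correct, but it proves the lemma along a route that mirrors, rather than reproduces, the paper's. The paper takes the \emph{injective} equality as known --- it is a rephrasing of (i) $\Leftrightarrow$ (ii) of \cite[Theorem 2.4.17]{MR1786197} --- and then deduces the \emph{projective} equality by duality: $P \in \Proj(\fdmod^C)$ gives $P^* \in \Inj({}^C\fdmod)$, the first equality applied to $C^{\cop}$ upgrades this to injectivity in ${}^C\Mod$, and Lemma~\ref{lem:DNR-Cor-2-4-20} converts back to projectivity of $P$ in $\Mod^C$. You run the same duality in the opposite direction, so the substantive content of your plan is a direct proof of the projective equality; your closing argument for it is sound and can be completed as sketched: since every comodule is the filtered union of its finite-dimensional subcomodules and $\Hom^C(P,-)$ with $P$ finite-dimensional commutes with such unions, a map $P \to N$ has finite-dimensional image $N_0$, any epimorphism $M \twoheadrightarrow N$ admits a finite-dimensional subcomodule $M_0 \subset M$ with $M_0 \twoheadrightarrow N_0$, and projectivity of $P$ inside $\fdmod^C$ yields the lift $P \to M_0 \subset M$. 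What your route buys is a self-contained argument in place of the citation; what it costs is having to establish the local-finiteness/finite-presentation fact (which the paper only records later, in Lemma~\ref{lem:compact-obj-in-comodules}). Two side remarks in your write-up are wrong, though neither is used by your final argument: a projective object of $\fdmod^C$ is \emph{not} in general a direct summand of a finite direct sum of comodules of the form $Ce$ (or $e \rightharpoonup C$) --- such direct summands of $C$ are \emph{injective} comodules, not projective ones, so that structural claim would prove the wrong statement; and the assertion that the analogue of the lemma fails for finite-dimensional algebras is inaccurate --- for a finite-dimensional algebra $A$ every projective of ${}_A\fdmod$ is a summand of some $A^{\oplus n}$ and hence projective in ${}_A\Mod$; the failure the paper warns about concerns general (infinite-dimensional) algebras, which is indeed why a coalgebra-specific argument such as yours is needed.
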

\begin{proof} 
  The first equation is a rephrasing of (i) $\Leftrightarrow$ (ii) of \cite[Theorem 2.4.17]{MR1786197}.
  We prove the second one.
  Suppose $P \in \Proj(\fdmod^C)$.
  Then $P^*$ is an injective object of ${}^C\fdmod$, and hence it is also an injective object of ${}^C\Mod$ by the first equation applied to $C^{\cop}$.
  Thus, by Lemma~\ref{lem:DNR-Cor-2-4-20}, $P$ is a projective object of $\Mod^C$. The ``$\subset$'' part is proved. The converse inclusion is clear. The proof is done.
\end{proof}

The following criteria will be used throughout this paper:

\begin{lemma}[{\cite[Theorem 3.2.3]{MR1786197}}]
  \label{lem:DNR-thm-323}
  For a coalgebra $C$, the following are equivalent:
  \begin{enumerate}
  \item $C$ is right semiperfect.
  \item $\Mod^C$ has enough projective objects.
  \item Every simple left $C$-comodule has a finite-dimensional injective hull in ${}^C\Mod$.
  \item $({}_{C^*}C^{*})^{\rat}$ is dense in $C^*$ with respect to the finite topology.
  \end{enumerate}
\end{lemma}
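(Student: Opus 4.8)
The plan is to establish the equivalences by relating all four conditions to the finite-dimensional comodule categories. Two preliminary reductions make this possible. First, $\Mod^C$ is a locally finite Grothendieck category, so every comodule is the directed union of its finite-dimensional subcomodules; consequently, if a finite-dimensional right $C$-comodule $M$ admits a projective cover $\pi \colon P \to M$ in $\Mod^C$, then $P$ is automatically finite-dimensional: pick a finite-dimensional subcomodule $P_0 \subseteq P$ with $\pi(P_0) = M$, so that $P_0 + \Ker(\pi) = P$, and conclude $P_0 = P$ since $\Ker(\pi)$ is superfluous. Second, using local finiteness again together with the fact that the top of any comodule is semisimple, one sees that $\Mod^C$ has enough projective objects if and only if every finite-dimensional right $C$-comodule has a projective cover, in which case every indecomposable projective object of $\Mod^C$ is finite-dimensional and equals the projective cover of its simple top. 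I would invoke these two facts as standard structure theory for locally finite categories. I also use freely that $\Mod^C$ has enough injectives, that the indecomposable injective objects of ${}^C\Mod$ are precisely the injective hulls $E_l(T)$ of the simple left $C$-comodules $T$, and that $C$, viewed as a left $C$-comodule, is the direct sum of the $E_l(T)$, each with finite multiplicity.

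With these in hand, I would prove $(1) \Leftrightarrow (3) \Leftrightarrow (2)$ by means of the contravariant equivalence $(-)^* \colon \fdmod^C \to {}^C\fdmod$ from Lemma~\ref{lem:DNR-Lem-2-2-12}. For $(1) \Rightarrow (3)$: given a simple left comodule $T$, the comodule $S := T^*$ is simple in $\fdmod^C$, hence by $(1)$ and the first reduction it has a finite-dimensional projective cover $P \to S$ in $\Mod^C$; dualizing and using Lemma~\ref{lem:DNR-Cor-2-4-20} shows that $T = S^* \hookrightarrow P^*$ is an injective hull of $T$ in ${}^C\Mod$ with $P^*$ finite-dimensional. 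For $(3) \Rightarrow (2)$: if every $E_l(T)$ is finite-dimensional, then, by Lemmas~\ref{lem:DNR-Cor-2-4-20} and~\ref{lem:fd-injective}, the comodule $P(S) := E_l(S^*)^*$ is a finite-dimensional projective object of $\Mod^C$ serving as the projective cover of the simple right comodule $S$; an induction on composition length via the horseshoe lemma then provides a finite-dimensional projective cover for every finite-dimensional right comodule, and taking the direct sum of such covers over a filtration of a general comodule by finite-dimensional subcomodules shows that $\Mod^C$ has enough projectives. For $(2) \Rightarrow (1)$: by the second reduction, enough projectives already furnishes a projective cover (necessarily finite-dimensional) for every finite-dimensional right comodule, which is exactly the defining property $(1)$.

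It remains to tie in condition $(4)$, and this is where I expect the real work to lie. The tool is the standard dictionary between the finite topology on $C^*$ and finite-dimensional subspaces of $C$: a subspace $D \subseteq C^*$ is dense if and only if $D + W^{\perp} = C^*$ for every finite-dimensional subspace $W \subseteq C$, equivalently the restriction $D \to W^*$ is surjective for all such $W$. I would first identify the right $C$-comodule $({}_{C^*}C^*)^{\rat}$ explicitly: realizing $C^*$ as the linear dual of $C$ regarded as a left $C$-comodule, its rational part as a left $C^*$-module is the union of the finite-dimensional submodules $V^*$ with $V$ a finite-dimensional left-comodule quotient of $C$, and, via Lemma~\ref{lem:DNR-Lem-2-2-12} and the socle/injective-hull structure recalled above, this comodule is a direct sum of copies of the linear duals of the injective hulls $E_l(T)$. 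Granting this description, density of $({}_{C^*}C^*)^{\rat}$ says that every functional on every finite-dimensional $W \subseteq C$ extends to a functional lying in some finite-dimensional rational submodule, which unwinds to the assertion that every finite-dimensional left $C$-comodule embeds into a finite-dimensional injective left $C$-comodule, i.e.\ to condition $(3)$. The main obstacle is precisely this last translation: pinning down $({}_{C^*}C^*)^{\rat}$ and verifying that its density is equivalent to the finite-dimensionality of all the $E_l(T)$ demands careful bookkeeping with the finite topology and with the left/right module-versus-comodule conventions, whereas the algebraic equivalences $(1) \Leftrightarrow (2) \Leftrightarrow (3)$ are comparatively routine.
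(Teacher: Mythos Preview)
The paper does not give a proof of this lemma at all; it is stated with the citation to \cite[Theorem~3.2.3]{MR1786197} and invoked as a known result. So there is no ``paper's own proof'' to compare against, and your proposal should be judged on its own merits.

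Your arguments for $(1)\Leftrightarrow(3)$ via the duality $\fdmod^{C}\simeq({}^{C}\fdmod)^{\op}$ and for $(3)\Rightarrow(2)$ by dualising injective hulls to projective covers are correct and standard. The outline for $(3)\Leftrightarrow(4)$ is also on the right track: identifying $({}_{C^{*}}C^{*})^{\rat}$ with the sum of the duals of the $E_{l}(T)$ and reading density as surjectivity onto each $W^{*}$ is exactly how the argument goes, and you are right that this is where the bookkeeping lies.

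The genuine soft spot is your ``second reduction'', specifically the direction $(2)\Rightarrow(1)$. You assert that in a locally finite Grothendieck category, ``enough projectives'' automatically upgrades to ``every finite-length object has a projective cover'', justifying this by the claim that the top of any comodule is semisimple. That claim is true in $\Mod^{C}$, but by itself it does not give what you need: one still has to show that for a projective $P$ the radical $\mathrm{rad}(P)$ is superfluous, or equivalently that $P$ decomposes into indecomposable summands with local endomorphism rings. This is not automatic---it fails for projectives over general rings---and in the comodule setting the usual proofs go through one of the other conditions (typically via~$(4)$ or an explicit description of projective comodules as summands of direct sums of $({}_{C^{*}}C^{*})^{\rat}$) rather than by an abstract locally-finite argument. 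So either supply that missing step, or reorganise the cycle to avoid the direct implication $(2)\Rightarrow(1)$, for instance by proving $(2)\Rightarrow(4)$ and then using your $(4)\Rightarrow(3)\Rightarrow(1)$.
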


Suppose that $C$ is right semiperfect.
Given $V \in {}^C\Mod_{\fd}$, we denote its injective hull in ${}^C\Mod$ by $E(V)$.
Lemma~\ref{lem:DNR-thm-323} implies that $E(V)$ is finite-dimensional.
As noted in the proof of \cite[Theorem 3.2.3]{MR1786197}, a projective cover of a finite-dimensional right $C$-comodule $M$ is given by $E(M^*)^*$. Thus we have:

\begin{lemma}
  \label{lem:DNR-thm-323-plus}
  Let $C$ be a right semiperfect coalgebra, and let $M \in \fdmod^C$.
  Then a projective cover of $M$ exists in $\Mod^C$ and is finite-dimensional.
\end{lemma}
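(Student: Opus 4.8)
The plan is to combine the two facts already assembled in the excerpt: by Lemma~\ref{lem:DNR-thm-323}, right semiperfectness of $C$ is equivalent to ${}^C\Mod$ having finite-dimensional injective hulls for all finite-dimensional objects; and the anti-equivalence $(-)^*\colon \fdmod^C \to {}^C\fdmod$ of Lemma~\ref{lem:DNR-Lem-2-2-12} converts injective hulls into projective covers. So first I would take $M \in \fdmod^C$ and form the dual comodule $M^* \in {}^C\fdmod$. Applying Lemma~\ref{lem:DNR-thm-323} (the implication $(1)\Rightarrow(3)$) to the coalgebra $C$, there is a finite-dimensional injective hull $E(M^*) \in {}^C\Mod$, i.e.\ an essential monomorphism $M^* \hookrightarrow E(M^*)$ with $E(M^*)$ injective and finite-dimensional. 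By Lemma~\ref{lem:DNR-Cor-2-4-20}, $E(M^*)^*$ is then a projective object of $\Mod^C$, and it is finite-dimensional since $E(M^*)$ is.

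Next I would dualize the inclusion $M^* \hookrightarrow E(M^*)$ to obtain a surjection $E(M^*)^* \twoheadrightarrow M^{**} \cong M$ in $\Mod^C$; surjectivity is immediate because $(-)^*$ is exact on finite-dimensional spaces and the double-dual of a finite-dimensional comodule is canonically $M$ itself (this uses Lemma~\ref{lem:DNR-Lem-2-2-12} to know $M^{**}\cong M$ as comodules). The remaining point is to check that this surjection is an essential epimorphism, i.e.\ that its kernel is a superfluous (small) submodule of $E(M^*)^*$, which is exactly the condition making $E(M^*)^* \to M$ a projective cover. This should follow formally: a submodule $N \subseteq E(M^*)^*$ is superfluous iff its annihilator-style dual $N^\perp \subseteq E(M^*)$ is an essential submodule, and under the lattice anti-isomorphism between the submodule lattices of $E(M^*)^*$ and $E(M^*)$ induced by finite-dimensional duality, the kernel of $E(M^*)^*\to M$ corresponds to the essential submodule $M^* \subseteq E(M^*)$. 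Hence the kernel is superfluous and $E(M^*)^* \to M$ is a projective cover of $M$ in $\Mod^C$, which is finite-dimensional.

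I expect the only genuinely delicate step to be the bookkeeping in that last paragraph: matching up "essential monomorphism" on the ${}^C\Mod$ side with "superfluous kernel / essential epimorphism" on the $\Mod^C$ side through finite-dimensional linear duality, and being careful that duality interchanges submodules of a finite-dimensional comodule with quotient comodules of its dual in an inclusion-reversing way. This is routine but needs the exactness and the natural isomorphism $M \cong M^{**}$ in $\fdmod^C$; everything else is a direct citation. In fact, since the excerpt explicitly records (just before the statement) that "a projective cover of a finite-dimensional right $C$-comodule $M$ is given by $E(M^*)^*$" as noted in the proof of \cite[Theorem 3.2.3]{MR1786197}, one may alternatively shortcut the argument by simply invoking that remark together with Lemma~\ref{lem:DNR-thm-323} to conclude both the existence and the finite-dimensionality at once; the structural proof above is the version I would write out if a self-contained argument is wanted.
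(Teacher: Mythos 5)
Your proposal is correct and follows essentially the same route as the paper, which simply records that for a right semiperfect $C$ the injective hull $E(M^*)$ in ${}^C\Mod$ is finite-dimensional (via Lemma~\ref{lem:DNR-thm-323}) and that $E(M^*)^*$ is then a projective cover of $M$, citing the proof of \cite[Theorem 3.2.3]{MR1786197}; your write-up just fills in the duality bookkeeping (Lemma~\ref{lem:DNR-Cor-2-4-20} for projectivity, and essential monomorphism dualizing to superfluous kernel). The only point to phrase a bit more carefully is that condition (3) of Lemma~\ref{lem:DNR-thm-323} speaks of \emph{simple} left comodules, so you should add the routine remark that $E(M^*)$ is a finite direct sum of injective hulls of the simple constituents of the socle of $M^*$, hence finite-dimensional — the same silent step the paper itself takes.
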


If $C$ is semiperfect, then $({}_{C^*}C^*)^{\rat}$ is an idempotent ideal of $C^*$ and is equal to $(C^*_{C^*})^{\rat}$ as a subspace of $C^*$ \cite[Corollary 3.2.16]{MR1786197}.
Thus we may write $C^{*\rat} = ({}_{C^*}C^*)^{\rat} = (C^*_{C^*})^{\rat}$ for a semiperfect coalgebra $C$.

\begin{lemma}[{\cite[Lemma 2.9]{MR1717358}}]
  \label{lem:C-star-rat-M-rat}
  Suppose that $C$ is semiperfect.
  Then, for every left $C^*$-module $M$, we have
  \begin{equation*}
    M^{\rat} = \Span_{\bfk}\{ c^* \rightharpoonup m \mid c^* \in C^{*\rat}, m \in M \}.
  \end{equation*}
\end{lemma}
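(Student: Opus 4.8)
The plan is to prove the two inclusions separately; call the right-hand side $N := \Span_{\bfk}\{ c^* \rightharpoonup m \mid c^* \in C^{*\rat},\ m \in M \}$. The inclusion $N \subseteq M^{\rat}$ should be the easy direction: fix $c^* \in C^{*\rat}$ and $m \in M$. Since $c^*$ lies in the rational part of $C^*$ viewed as a left $C^*$-module, the cyclic submodule $C^* \rightharpoonup c^*$ is finite-dimensional (indeed rational). I would then observe that the map $C^* \to M$, $d^* \mapsto d^* \rightharpoonup m$, is a morphism of left $C^*$-modules that factors through $C^* \rightharpoonup c^* \hookrightarrow C^*$ when restricted appropriately — more precisely, the submodule $C^* \rightharpoonup (c^* \rightharpoonup m) = (C^* * c^*) \rightharpoonup m$ is a quotient of the rational module $C^* * c^* \subseteq C^{*\rat}$, hence is itself rational and finite-dimensional. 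Therefore $c^* \rightharpoonup m$ generates a rational submodule, so $c^* \rightharpoonup m \in M^{\rat}$, and $N \subseteq M^{\rat}$ since $M^{\rat}$ is a subspace.

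For the reverse inclusion $M^{\rat} \subseteq N$, the key point is that $C^{*\rat} = ({}_{C^*}C^*)^{\rat}$ is an idempotent ideal of $C^*$, which holds because $C$ is semiperfect (cited just before the lemma, from \cite[Corollary 3.2.16]{MR1786197}). Idempotence gives $C^{*\rat} = C^{*\rat} * C^{*\rat}$. Now take $m \in M^{\rat}$, so the submodule $C^* \rightharpoonup m$ is rational; I claim $\varepsilon \rightharpoonup m = m$ can be rewritten using elements of $C^{*\rat}$. The mechanism is that on a rational left $C^*$-module the action "locally looks like" the action of $C^{*\rat}$: concretely, for $m \in M^{\rat}$ with coaction $\delta(m) = m_{(0)} \otimes m_{(1)}$, one has $m = m_{(0)} \langle \varepsilon, m_{(1)} \rangle$, and I want to replace $\varepsilon$ by something in $C^{*\rat}$ acting the same way on $m$. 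Using Lemma \ref{lem:DNR-thm-323} (the density of $({}_{C^*}C^*)^{\rat} = C^{*\rat}$ in $C^*$ in the finite topology, condition (4)), there exists $c^* \in C^{*\rat}$ agreeing with $\varepsilon$ on the finite-dimensional subspace $m_{(1)}$ spans inside $C$; then $c^* \rightharpoonup m = m_{(0)} \langle c^*, m_{(1)} \rangle = m_{(0)} \langle \varepsilon, m_{(1)} \rangle = m$, exhibiting $m$ as $c^* \rightharpoonup m \in N$.

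The main obstacle I anticipate is making the density argument precise: one must identify the correct finite-dimensional subspace of $C$ on which $c^*$ needs to agree with $\varepsilon$. The natural choice is the (finite-dimensional) subcoalgebra, or at least the finite-dimensional subspace, generated by the components $m_{(1)}$ appearing in $\delta(m)$ for a fixed $m \in M^{\rat}$; since $M^{\rat}$ is rational, $\delta(m)$ lies in $m_{(0)}$-span $\otimes$ (finite-dim subspace of $C$), so this is legitimate. Once that subspace is fixed, condition (4) of Lemma \ref{lem:DNR-thm-323} — that $C^{*\rat}$ is dense in $C^*$ in the finite topology — supplies the required $c^*$. I would double-check that the notion of "finite topology" here is exactly the topology of pointwise convergence on $C$, so that density means precisely: for any finite-dimensional $V \subseteq C$ and any $f \in C^*$, there is $c^* \in C^{*\rat}$ with $c^*|_V = f|_V$; that is the only nontrivial input, and the rest is Sweedler-notation bookkeeping. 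With both inclusions established, the lemma follows.
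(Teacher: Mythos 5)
Your proposal is correct, and since the paper itself gives no proof of this lemma (it is quoted from \cite[Lemma 2.9]{MR1717358}), the right comparison is with the standard argument behind that citation, which is exactly what you give: the inclusion $\Span_{\bfk}\{c^*\rightharpoonup m\} \subseteq M^{\rat}$ follows because the image of the rational module $C^**c^*$ (or of $C^{*\rat}$) under the $C^*$-linear map $d^*\mapsto d^*\rightharpoonup m$ is rational, and the reverse inclusion is the density argument: for $m\in M^{\rat}$ choose, by condition (4) of Lemma~\ref{lem:DNR-thm-323}, an element $c^*\in C^{*\rat}$ agreeing with $\varepsilon$ on the finite-dimensional span of the $m_{(1)}$'s, so that $c^*\rightharpoonup m=m$. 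Two minor remarks: the idempotence of $C^{*\rat}$ that you announce as the ``key point'' is never actually used (density is the only nontrivial input, and it already holds under right semiperfectness, the two-sided hypothesis serving mainly to make the notation $C^{*\rat}=({}_{C^*}C^*)^{\rat}=(C^*_{C^*})^{\rat}$ unambiguous), and the easy inclusion requires no semiperfectness at all.
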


\subsection{The coHom functor}
\label{subsec:cohom-functor}

Let $\mathcal{A}$ be a $\bfk$-linear category, that is, a category enriched over $\Vect$.
Given objects $W \in \Vect$ and $X \in \mathcal{A}$ such that the functor $\mathcal{A} \to \Vect$ defined by $Y \mapsto \Hom_{\bfk}(W, \Hom_{\mathcal{A}}(X, Y))$ is representable, we call the object representing this functor the {\em copower} of $X$ by $W$ and denote it by $W \copow X \in \mathcal{A}$.
Thus, by definition, there is a canonical isomorphism
\begin{equation}
  \label{eq:def-copower}
  \Hom_{\bfk}(W, \Hom_{\mathcal{A}}(X, Y)) \cong \Hom_{\mathcal{A}}(W \copow X, Y)
\end{equation}
natural in the variable $Y \in \mathcal{A}$.

\begin{example}
  Let $C$ be a coalgebra. Then the copower of $X \in \Mod^C$ by $W \in \Vect$ is the vector space $W \otimes_{\bfk} X$ equipped with the right $C$-comodule structure $\id_{W} \otimes_{\bfk} \delta_X$.
\end{example}

For an object $X \in \mathcal{A}$ and a vector space $W$ with $\alpha = \dim_{\bfk} W$ (which is possibly infinite), the copower $W \copow X$ exists if and only if the direct sum $X^{\oplus \alpha}$ exists. Furthermore, if they exist, there is an isomorphism $W \copow X \cong X^{\oplus \alpha}$ in $\mathcal{A}$. We note that this isomorphism is `not canonical' in the sense that it depends on the choice of a basis of $W$.

If $\mathcal{W} \subset \Vect$ and $\mathcal{X} \subset \mathcal{A}$ are full subcategories such that $W \copow X$ exists for all $W \in \mathcal{W}$ and $X \in \mathcal{X}$, then the assignment $(W, X) \mapsto W \copow X$ extends to a functor from $\mathcal{W} \times \mathcal{X}$ to $\mathcal{A}$ in such a way that the isomorphism \eqref{eq:def-copower} is also natural in the variables $W$ and $X$.

Let $\mathcal{A}$ be a $\bfk$-linear category, and let $X \in \mathcal{A}$ be an object such that the copower $W \copow X$ exists for all $W \in \Vect$. Let $Y$ also be an object of $\mathcal{A}$. If the functor $\Vect \to \Vect$, $W \mapsto \Hom_{\mathcal{A}}(Y, W \copow X)$ is representable, then we call the object representing this functor the {\em coHom} space from $X$ to $Y$ and denote it by $\coHom_{\mathcal{A}}(X, Y)$.
Thus, by definition, there is an isomorphism
\begin{equation}
  \label{eq:def-coHom}
  \Hom_{\bfk}(\coHom_{\mathcal{A}}(X, Y), W) \cong \Hom_{\mathcal{A}}(Y, W \copow X)
\end{equation}
natural in $W \in \Vect$. If $\mathcal{X}$ and $\mathcal{Y}$ are full subcategories of $\mathcal{A}$ such that $\coHom_{\mathcal{A}}(X,Y)$ exists for all $X \in \mathcal{X}$ and $Y \in \mathcal{Y}$, then the assignment $(X, Y) \mapsto \coHom_{\mathcal{A}}(X,Y)$ extends to a functor from $\mathcal{X}^{\op}\times \mathcal{Y}$ to $\Vect$ in such a way that the isomorphism \eqref{eq:def-coHom} is also natural in $X$ and $Y$.

The definition of $\coHom_{\mathcal{A}}$ is based on \cite{MR472967}, where the coHom functor for the category of comodules was introduced and used to establish Morita theory for coalgebras.
Let $C$ be a coalgebra. For simplicity, we write
\begin{equation*}
  \coHom^C(X, Y) := \coHom_{\Mod^C}(X, Y)
\end{equation*}
for $X, Y \in \Mod^C$ (if the right-hand side exists).
A right $C$-comodule $X$ is said to be {\em quasi-finite} \cite{MR472967} if $\dim_{\bfk} \Hom^C(V, X) < \infty$ for all $V \in \fdmod^C$.
We denote by $\qfcomod^C$ the full subcategory of $\Mod^C$ consisting of all quasi-finite comodules.
It is known that $\coHom^C(X, Y)$ exists for all $Y \in \Mod^C$ if and only if $X \in \qfcomod^C$. Moreover, for $X \in \qfcomod^C$ and $Y \in \Mod^C$, we have
\begin{equation}
  \label{eq:coHom-varinjlim}
  \coHom^C(X, Y)
  = \varinjlim_{\lambda \in \Lambda} \Hom^C(Y_{\lambda}, X)^*,
\end{equation}
where $\{ Y_{\lambda} \}_{\lambda \in \Lambda}$ is the directed set of all finite-dimensional subcomodules of $Y$ \cite[Proposition 1.3]{MR472967}.
This expression of $\coHom^C$ may not be convenient for practical use. We note the following useful Lemmas \ref{lem:coHom-finite-1} and \ref{lem:coHom-finite-2}.

\begin{lemma}
  \label{lem:coHom-finite-1}
  For $X \in \qfcomod^C$ and $Y \in \fdmod^C$, we have
  \begin{equation*}
    \coHom^C(X, Y) \cong \Hom^C(Y, X)^*.
  \end{equation*}
\end{lemma}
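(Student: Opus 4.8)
The plan is to derive Lemma~\ref{lem:coHom-finite-1} directly from the defining universal property~\eqref{eq:def-coHom}, bypassing the $\varinjlim$ formula~\eqref{eq:coHom-varinjlim}. Fix $X \in \qfcomod^C$ and $Y \in \fdmod^C$. The quasi-finiteness of $X$ guarantees that $\coHom^C(X,Y)$ exists, so for every vector space $W$ there is an isomorphism $\Hom_{\bfk}(\coHom^C(X,Y), W) \cong \Hom^C(Y, W \copow X)$ natural in $W$. The key observation is that, because $Y$ is finite-dimensional, a $C$-comodule map $Y \to W \copow X = W \otimes_{\bfk} X$ has image landing in $W_0 \otimes_{\bfk} X$ for some finite-dimensional subspace $W_0 \subseteq W$; combined with the fact that $\Hom^C(Y, -)$ commutes with the filtered colimit $W = \varinjlim W_0$ over finite-dimensional subspaces, we get a natural isomorphism $\Hom^C(Y, W \otimes_{\bfk} X) \cong W \otimes_{\bfk} \Hom^C(Y, X)$. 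Here the finite-dimensionality of $\Hom^C(Y,X)$ (which holds since $X$ is quasi-finite and $Y \in \fdmod^C$) is what lets us pull $W$ out of the Hom.

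Putting these together, for all $W \in \Vect$ we obtain
\begin{equation*}
  \Hom_{\bfk}(\coHom^C(X,Y), W)
  \cong \Hom^C(Y, W \otimes_{\bfk} X)
  \cong W \otimes_{\bfk} \Hom^C(Y, X),
\end{equation*}
all isomorphisms natural in $W$. Since $\Hom^C(Y,X)$ is finite-dimensional, $W \otimes_{\bfk} \Hom^C(Y,X) \cong \Hom_{\bfk}(\Hom^C(Y,X)^*, W)$ naturally in $W$. Thus $\Hom_{\bfk}(\coHom^C(X,Y), W) \cong \Hom_{\bfk}(\Hom^C(Y,X)^*, W)$ naturally in $W \in \Vect$, and the (contravariant) Yoneda lemma applied to the functor $\Hom_{\bfk}(-, -) \colon \Vect^{\op} \times \Vect \to \Vect$ — equivalently, plugging in representing objects — yields $\coHom^C(X,Y) \cong \Hom^C(Y,X)^*$.

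I expect the main technical point to be the middle isomorphism $\Hom^C(Y, W \otimes_{\bfk} X) \cong W \otimes_{\bfk} \Hom^C(Y, X)$ for $Y$ finite-dimensional, together with its naturality in $W$. The underlying-vector-space statement $\Hom_{\bfk}(Y, W \otimes_{\bfk} X) \cong W \otimes_{\bfk} \Hom_{\bfk}(Y, X)$ for finite-dimensional $Y$ is standard, and one checks that it restricts to comodule maps on both sides: a linear map $f \colon Y \to W \otimes_{\bfk} X$ is a comodule map iff, writing $f$ via a finite spanning set of $W$, each component $Y \to X$ is a comodule map, using that the coaction on $W \otimes_{\bfk} X$ is $\id_W \otimes \delta_X$. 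Naturality in $W$ is then routine. The only mild subtlety is bookkeeping the finite-dimensional reduction on $W$, which is harmless since everything in sight commutes with filtered colimits in $W$; alternatively, one can cite~\eqref{eq:coHom-varinjlim} directly, noting that when $Y$ is finite-dimensional the directed set of finite-dimensional subcomodules of $Y$ has $Y$ itself as its maximum, so the colimit collapses to the single term $\Hom^C(Y,X)^*$ — this gives an even shorter proof, and I would mention it as the quick route.
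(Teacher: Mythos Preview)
Your proposal is correct. The paper's own proof is precisely the ``quick route'' you mention in your final sentence: it simply says the claim is obvious from~\eqref{eq:coHom-varinjlim}, since when $Y$ is finite-dimensional the directed set of finite-dimensional subcomodules of $Y$ has $Y$ as its maximum and the colimit collapses to $\Hom^C(Y,X)^*$.

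Your main argument, going directly through the universal property~\eqref{eq:def-coHom} and the isomorphism $\Hom^C(Y, W\otimes_{\bfk} X)\cong W\otimes_{\bfk}\Hom^C(Y,X)$ for $Y$ finite-dimensional, is a genuinely different (and self-contained) route that avoids quoting~\eqref{eq:coHom-varinjlim}. It buys you independence from Takeuchi's formula at the cost of some extra bookkeeping; conversely, the paper's one-liner is shorter but presupposes that formula. Since you already identify the short path, I would lead with it and relegate the longer derivation to a remark if you want to keep it.
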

\begin{proof}
  Obvious from \eqref{eq:coHom-varinjlim}.
\end{proof}

\begin{lemma}
  \label{lem:coHom-finite-2}
  For $X \in \fdmod^C$ and $Y \in \Mod^C$, we have
  \begin{equation*}
    \coHom^C(X, Y) \cong X^* \otimes_{C^*} Y.
  \end{equation*}
\end{lemma}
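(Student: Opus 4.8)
The plan is to establish the isomorphism $\coHom^C(X,Y) \cong X^* \otimes_{C^*} Y$ for $X \in \fdmod^C$ and $Y \in \Mod^C$ by exhibiting $X^* \otimes_{C^*} Y$ as the object representing the functor $W \mapsto \Hom^C(Y, W \copow X) = \Hom^C(Y, W \otimes_\bfk X)$ on $\Vect$. First I would recall that since $X$ is finite-dimensional, it is certainly quasi-finite, so $\coHom^C(X,Y)$ exists and is characterized by the natural isomorphism \eqref{eq:def-coHom}. The goal then reduces to producing a natural isomorphism $\Hom_\bfk(X^* \otimes_{C^*} Y, W) \cong \Hom^C(Y, W \otimes_\bfk X)$ for all $W \in \Vect$, and invoking the uniqueness of representing objects.

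The key computation is a chain of natural isomorphisms. By the universal property of the tensor product over $C^*$, $\Hom_\bfk(X^* \otimes_{C^*} Y, W)$ is the space of $\bfk$-bilinear maps $X^* \times Y \to W$ that are $C^*$-balanced, i.e. $\bfk$-linear maps $\phi \colon Y \to \Hom_\bfk(X^*, W)$ satisfying a balancing condition relating the right $C^*$-action on $X^*$ (dual to the left $C^*$-module structure on $X$, which is the comodule structure) and the left $C^*$-action on $Y$. Since $X$ is finite-dimensional, $\Hom_\bfk(X^*, W) \cong W \otimes_\bfk X$ naturally. I would then check that under this identification the $C^*$-balancing condition on $\phi \colon Y \to W \otimes_\bfk X$ is exactly the statement that $\phi$ is a morphism of right $C$-comodules, where $W \otimes_\bfk X$ carries the comodule structure $\id_W \otimes \delta_X$ from the Example. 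Concretely, writing $\phi(y) = y^{[W]} \otimes y^{[X]}$ (suppressing sums), the balancing condition $\phi(c^* \rightharpoonup y) = \phi(y) \cdot c^*$ unwinds, via \eqref{eq:dual-coaction} and the definition $c^* \rightharpoonup m = m_{(0)}\langle c^*, m_{(1)}\rangle$, into the comodule-morphism identity $\phi(y)_{(0)} \otimes \phi(y)_{(1)} = y^{[W]} \otimes (y^{[X]})_{(0)} \otimes (y^{[X]})_{(1)}$ matching $\delta_Y$ composed with $\phi$; I would verify this by pairing against arbitrary elements of $X^*$ and $C^*$ and using that a linear map into $W \otimes_\bfk X$ is determined by such pairings.

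Having matched the two sides, I would observe that both constructions are natural in $W$ (the left side obviously, the right side because $W \copow X = W \otimes_\bfk X$ is functorial in $W$ and $\Hom^C$ is covariant in its second argument), so the composite isomorphism is natural in $W \in \Vect$. By the defining property \eqref{eq:def-coHom} of the coHom space and the essential uniqueness of representing objects of a functor, this forces $\coHom^C(X,Y) \cong X^* \otimes_{C^*} Y$. I would also note, as a sanity check, that the two lemmas are compatible: restricting to $Y \in \fdmod^C$ one should recover Lemma~\ref{lem:coHom-finite-1}, i.e. $X^* \otimes_{C^*} Y \cong \Hom^C(Y,X)^*$, which is a standard finite-dimensional duality identity.

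The main obstacle I expect is bookkeeping with the two mutually dual $C^*$-actions: one must be careful that the right $C^*$-module structure on $X^*$ used in forming $X^* \otimes_{C^*} Y$ is precisely the one dual to the \emph{left} $C^*$-module (equivalently right $C$-comodule) structure on $X$, as in the paragraph preceding Lemma~\ref{lem:DNR-Lem-2-2-12}, and that this is compatible with viewing $X^*$ as a \emph{left} $C$-comodule via \eqref{eq:dual-coaction}. Getting the variances and the Sweedler-index placements consistent so that the balancing condition really does coincide with the comodule-morphism condition — rather than, say, a twisted version involving $C^{\cop}$ — is the delicate point; everything else is a formal manipulation of universal properties.
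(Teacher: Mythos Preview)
Your proposal is correct and follows essentially the same route as the paper: both verify that $X^* \otimes_{C^*} Y$ represents the functor $W \mapsto \Hom^C(Y, W \otimes_{\bfk} X)$ by producing a natural isomorphism $\Hom_{\bfk}(X^* \otimes_{C^*} Y, W) \cong \Hom^C(Y, W \otimes_{\bfk} X)$. The paper's proof is simply a compressed version of yours: it invokes the tensor-Hom adjunction in one step to get ${}_{C^*}\Hom(Y, \Hom_{\bfk}(X^*, W))$, then uses finite-dimensionality of $X$ and the full faithfulness of the embedding $\Mod^C \hookrightarrow {}_{C^*}\Mod$ (equation~\eqref{eq:comod-C-to-C*-mod}) to identify this with $\Hom^C(Y, W \otimes_{\bfk} X)$, whereas you unpack these same facts by hand via the balancing condition.
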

\begin{proof}
  By the tensor-Hom adjunction, we have
  \begin{gather*}
    \Hom_{\bfk}(X^* \otimes_{C^*} Y, W)
    \cong {}_{C^*}\Hom(Y, \Hom_{\bfk}(X^*, W))
    \cong \Hom^C(Y, W \otimes_{\bfk} X)
  \end{gather*}
  for all $W \in \Vect$, where the finite-dimensionality of $X$ is used for the second isomorphism. The proof is done.
\end{proof}

Now we suppose $X \in \qfcomod^C$. Since $\coHom^C(X, -)$ has a right adjoint, it preserves direst sums, and therefore it preserves copowers. Let $D$ be another coalgebra, and let ${}^D\Mod^C$ be the category of $D$-$C$-comodules. If $P \in {}^D\Mod^C$, then $\coHom^C(X, P)$ is a left $D$-comodule by the structure map
\begin{equation*}
  \coHom^C(X, P)
  \xrightarrow{\ \coHom^C(X, \delta^{\L}_P) \ }
  \coHom^C(X, D \otimes_{\bfk} P)
  \xrightarrow{\ \cong \ }
  D \otimes_{\bfk} \coHom^C(X, P),
\end{equation*}
where $\delta^{\L}_P : P \to D \otimes_{\bfk} P$ is the left $D$-comodule structure of $P$ and the second arrow follows from that $\coHom^C(X, -)$ preserves copowers. As noted in  \cite{MR472967}, this construction gives rise to the following functor:
\begin{equation*}
  (\qfcomod^C)^{\op} \times {}^D\Mod^C \to {}^D \Mod,
  \quad (X, P) \mapsto \coHom^C(X, P).
\end{equation*}

\subsection{Relation with coHom and adjoints}

Following the existence result on the coHom space in the category of comodules, given a $\bfk$-linear category $\mathcal{A}$, we denote by $\mathcal{A}_{\qf}$ the full subcategory of $\mathcal{A}$ consisting of all objects $X$ such that $\coHom_{\mathcal{A}}(X, Y)$ exists for all objects $Y \in \mathcal{A}$. Let $\mathcal{A}$ and $\mathcal{B}$ be $\bfk$-linear categories admitting direct sums, and let $F: \mathcal{A} \to \mathcal{B}$ be a $\bfk$-linear functor preserving the direct sums. Then we have a natural isomorphism
\begin{equation}
  \label{eq:preserves-copow}
  F(W \copow X) \cong W \copow F(X)
\end{equation}
for $X \in \mathcal{A}$ and $W \in \Vect$.
If $F(\mathcal{A}_{\qf}) \subset \mathcal{B}_{\qf}$, then there is a linear map
\begin{equation*}
  F^{\sharp}_{X,Y}: \coHom_{\mathcal{B}}(F(X), F(Y)) \to \coHom_{\mathcal{A}}(X, Y)
  \quad (X \in \mathcal{A}_{\qf}, Y \in \mathcal{A})
\end{equation*}
induced by the natural transformations
\begin{equation*}
  \newcommand{\XAR}[1]{\xrightarrow{\makebox[5em]{$\scriptstyle #1$}}}
  \begin{aligned}
    \Hom_{\bfk}(\coHom_{\mathcal{A}}(X, Y), W)
    & \XAR{\eqref{eq:def-coHom}}
    \Hom_{\mathcal{A}}(Y, W \copow X) \\
    & \XAR{f \mapsto F(f)} \Hom_{\mathcal{B}}(F(Y), F(W \copow X)) \\
    & \XAR{\eqref{eq:preserves-copow}} \Hom_{\mathcal{B}}(F(Y), W \copow F(X)) \\
    & \XAR{\eqref{eq:def-coHom}}
    \Hom_{\bfk}(\coHom_{\mathcal{B}}(F(X), F(Y)), W)
\end{aligned}
\end{equation*}
for $W \in \Vect$.

Suppose moreover that the functor $F$ has a left adjoint $F^{\ladj}$. Let $\eta$ and $\varepsilon$ be the unit and the counit of the adjunction $F^{\ladj} \dashv F$, respectively. Then there is a natural transformation
\begin{equation}
  \label{eq:coHom-iso-1}
 F^{\sharp}_{X,F^{\ladj}(Y)} \circ \coHom_{\mathcal{B}}(\id_{F(X)}, \eta_Y):\coHom_{\mathcal{B}}(F(X), Y)
  \to \coHom_{\mathcal{A}}(X, F^{\ladj}(Y))
\end{equation}
for $X \in \mathcal{A}_{\qf}$ and $Y \in \mathcal{B}$.
We note that $F^{\ladj}$ preserves direct sums (in fact any colimits) as it has a right adjoint $F$.
Thus there is also a natural transformation
\begin{equation}
  \label{eq:coHom-iso-2}
  (F^{\ladj})^{\sharp}_{F(X),Y}
  \circ \coHom_{\mathcal{A}}(\varepsilon_X, \id_{F^{\ladj}(Y)}):
  \coHom_{\mathcal{A}}(X, F^{\ladj}(Y))
  \to \coHom_{\mathcal{B}}(F(X), Y) 
\end{equation}
for $X \in \mathcal{A}_{\qf}$ and $Y \in \mathcal{B}$. One can check that \eqref{eq:coHom-iso-1} and \eqref{eq:coHom-iso-2} are mutually inverse to each other. Thus, in summary, we have a natural isomorphism
\begin{equation}
  \label{eq:adjunction-coHom}
  \coHom_{\mathcal{B}}(F(X), Y)
  \cong \coHom_{\mathcal{A}}(X, F^{\ladj}(Y))
  \quad (X \in \mathcal{A}_{\qf}, Y \in \mathcal{B}).
\end{equation}

\subsection{Ends and coends}

We assume that the reader is familiar with dealing with ends and coends \cite[IX]{MR1712872}.
Let $C$ be a coalgebra. The following coend formula for the $C$-bicomodule $C$, which should be well-known, is essential in proving the universal property of the Nakayama functors introduced in the next section.

\begin{lemma}
  The $C$-bicomodule $C$ is a coend of the functor
  \begin{equation*}
    (\fdmod^C)^{\op} \times (\fdmod^C) \to {}^C\Mod^C,
    \quad (X^{\op}, Y) \mapsto X^* \otimes_{\bfk} Y
  \end{equation*}
  with the universal dinatural transformation given by
  \begin{equation}
    \label{eq:Tannaka-dinatural}
    i_X : X^* \otimes_{\bfk} X \to C,
    \quad i_X(x^* \otimes x) = \langle x^*, x_{(0)} \rangle x_{(1)}
    \quad (x^* \in X^*, x \in X)
  \end{equation}
  for $X \in \fdmod^C$. Thus, using the integral notation for coends, we have
  \begin{equation}
    \label{eq:Tannaka}
    C = \int^{X \in \fdmod^C} X^* \otimes_{\bfk} X.
  \end{equation}
\end{lemma}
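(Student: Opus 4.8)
The plan is to verify, directly, the three defining features of a coend in ${}^C\Mod^C$: that each $i_X$ is a morphism of $C$-bicomodules, that the family $(i_X)_{X\in\fdmod^C}$ is dinatural, and that it is universal among dinatural families into a $C$-bicomodule. For dinaturality I would check the identity $i_X\circ(f^*\otimes_\bfk\id_X)=i_Y\circ(\id_{Y^*}\otimes_\bfk f)$ for a morphism $f\colon X\to Y$ of $\fdmod^C$: evaluating both sides on $\eta\otimes x$ gives $\langle\eta,f(x_{(0)})\rangle x_{(1)}$ and $\langle\eta,f(x)_{(0)}\rangle f(x)_{(1)}$, which agree because $f$ is $C$-colinear. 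Right $C$-colinearity of $i_X$ is immediate from coassociativity of $\Delta$, since $\Delta\circ i_X$ and $(i_X\otimes_\bfk\id_C)\circ(\id_{X^*}\otimes_\bfk\delta_X)$ both send $x^*\otimes x$ to $\langle x^*,x_{(0)}\rangle x_{(1)}\otimes x_{(2)}$; left $C$-colinearity follows from the characterization \eqref{eq:dual-coaction} of the coaction on $X^*$ together with coassociativity and counitality. These checks are routine and become transparent after fixing a basis $\{e_i\}$ of $X$ and writing $\delta_X(e_j)=\sum_i e_i\otimes_\bfk c_{ij}$, since then $\Delta(c_{ij})=\sum_k c_{ik}\otimes c_{kj}$, $\varepsilon(c_{ij})=\delta_{ij}$ and $\delta^{\L}_{X^*}(e^i)=\sum_j c_{ij}\otimes e^j$.

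For the universal property, I would fix $M\in{}^C\Mod^C$ and a dinatural family $\phi=(\phi_X\colon X^*\otimes_\bfk X\to M)_{X\in\fdmod^C}$ in ${}^C\Mod^C$, and exploit the key observation that for a finite-dimensional subcomodule $X\subseteq C$ and $c\in X$ one has $i_X(\varepsilon|_X\otimes c)=c$, by counitality. This shows at once that the $i_X$ are jointly surjective onto $C$, so a factorization $g\colon C\to M$ with $g\circ i_X=\phi_X$ is unique and must satisfy $g(c)=\phi_X(\varepsilon|_X\otimes c)$; conversely, I would take this formula as the definition of $g$. Its independence of the chosen finite-dimensional subcomodule $X\ni c$ follows from dinaturality of $\phi$ along the inclusions $X_1\hookrightarrow X_1+X_2\hookleftarrow X_2$ of finite-dimensional subcomodules of $C$ (using that an inclusion $\iota\colon X_i\hookrightarrow X_1+X_2$ satisfies $\iota^*(\varepsilon|_{X_1+X_2})=\varepsilon|_{X_i}$), and then $g$ is manifestly linear.

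It would remain to show that $g\circ i_X=\phi_X$ for an arbitrary $X\in\fdmod^C$ and that $g$ is $C$-bicolinear. For the first, fixing $x^*\in X^*$ and $x\in X$, I would consider the right $C$-colinear map $h:=i_X(x^*\otimes_\bfk-)\colon X\to C$ (colinearity again by coassociativity); its image $Z:=h(X)$ is a finite-dimensional right subcomodule of $C$ containing $c:=i_X(x^*\otimes x)$, and $\varepsilon|_Z\circ h=x^*$ in $X^*$ by counitality. Dinaturality of $\phi$ applied to $h\colon X\to Z$ with $\varepsilon|_Z\in Z^*$ and the element $x$ then gives $\phi_X(x^*\otimes x)=\phi_Z(\varepsilon|_Z\otimes h(x))=g(c)=g\bigl(i_X(x^*\otimes x)\bigr)$, and linearity yields $g\circ i_X=\phi_X$. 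For the second, I would precompose $\delta^{\R}_M\circ g$ and $(g\otimes_\bfk\id_C)\circ\Delta$ with each $i_X$: using the right colinearity of $i_X$ and of $\phi_X$, both composites equal $(\phi_X\otimes_\bfk\id_C)\circ(\id_{X^*}\otimes_\bfk\delta_X)$, so the two maps agree by joint surjectivity; symmetrically for the left coactions. Hence $g$ is a morphism in ${}^C\Mod^C$, and this establishes the universal property and therefore \eqref{eq:Tannaka}.

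I expect no real obstacle beyond keeping the various dinaturality squares and the Sweedler bookkeeping straight in the (co)linearity checks. An alternative, more in the spirit of a ``Tannaka theoretic argument'', would be to first recall the classical presentation $C=\int^{X}X^*\otimes_\bfk X$ already in $\Vect$ --- where joint surjectivity of the $i_X$ and uniqueness of factorizations are automatic from the description of a coend as a quotient of $\bigoplus_{X} X^*\otimes_\bfk X$ --- and then to upgrade it to ${}^C\Mod^C$ via the colinearity verification of the previous paragraph; since the forgetful functor ${}^C\Mod^C\to\Vect$ has a right adjoint and hence preserves colimits, nothing is lost in passing between the two.
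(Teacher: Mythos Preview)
Your proposal is correct. The paper's own proof is a two-sentence sketch: it notes that \eqref{eq:dual-coaction} makes the bicomodule-map check for $i_X$ routine, and then defers the universal property to ``the standard argument in the Tannaka theory'' with a citation to Schauenburg. Your direct verification---joint surjectivity via $i_X(\varepsilon|_X\otimes c)=c$, well-definedness of $g$ through dinaturality along inclusions of subcomodules, and the clever use of the colinear map $h=i_X(x^*\otimes-)\colon X\to Z\subseteq C$ together with $h^*(\varepsilon|_Z)=x^*$ to recover $g\circ i_X=\phi_X$---is a fully self-contained and elementary argument that unpacks exactly what the cited Tannaka result would give. The alternative you sketch at the end (establish the coend in $\Vect$ and upgrade via preservation of colimits by the forgetful functor ${}^C\Mod^C\to\Vect$) is precisely the route the paper gestures at; your direct route buys independence from the external reference at the cost of a little more bookkeeping.
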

\begin{proof}
  By \eqref{eq:dual-coaction}, one can easily check that \eqref{eq:Tannaka-dinatural} is a morphism of $C$-bicomodules.
  The universal property of \eqref{eq:Tannaka-dinatural} is proved by the standard argument in the Tannaka theory; see, {\it e.g.}, \cite{MR1623637}.
\end{proof}

To close this preliminary section, we note some useful formulas for (co)ends.
Let $T: \mathcal{B}^{\op} \times \mathcal{A} \to \mathcal{V}$ and $F: \mathcal{A} \to \mathcal{B}$ be functors, where $\mathcal{A}$, $\mathcal{B}$ and $\mathcal{V}$ are arbitrary categories. Suppose that the functor $F$ admits a right adjoint. Then we have
\begin{equation}
  \label{eq:adjunction-and-coend}
  \int^{X \in \mathcal{A}} T(F(X), X) \cong \int^{Y \in \mathcal{B}}T(Y, F^{\radj}(Y))
\end{equation}
meaning that the coend on the left hand side exists precisely if the coend on the right hand side does and, if they exist, then they are canonically isomorphic \cite[Lemma 3.9]{MR2869176}. We assume that the both sides of \eqref{eq:adjunction-and-coend} exist. Let $C_1$ and $C_2$ be the left and the right hand side of \eqref{eq:adjunction-and-coend}, respectively, and let
\begin{equation*}
  i_X : T(F(X), X) \to C_1 \quad (X \in \mathcal{A})
  \quad \text{and} \quad
  j_Y : T(Y, F^{\radj}(Y)) \to C_2 \quad (Y \in \mathcal{B})
\end{equation*}
be the universal dinatural transformations for $C_1$ and $C_2$, respectively. According to the proof of \cite[Lemma 3.9]{MR2869176}, we find that the canonical isomorphism $\phi: C_1 \to C_2$ and its inverse are characterized by
\begin{equation}
  \label{eq:adjunction-and-coend-construction}
  \phi \circ i_X = j_{F(X)} \circ T(F(X), \eta_{X})
  \quad \text{and} \quad
  \phi^{-1} \circ j_Y = i_{F^{\radj}(Y)} \circ T(\varepsilon_Y, F^{\radj}(Y)),
\end{equation}
respectively, for all $X \in \mathcal{A}$ and $Y \in \mathcal{B}$, where $\eta : \id_{\mathcal{A}} \to F^{\radj} \circ F$ and $\varepsilon : F \circ F^{\radj} \to \id_{\mathcal{B}}$ are the unit and the counit for the adjunction $F \dashv F^{\radj}$, respectively.

By the dual argument, we have an analogous result for ends: If $F: \mathcal{A} \to \mathcal{B}$ is a functor admitting a left adjoint, then we have a canonical isomorphism
\begin{equation}
  \label{eq:adjunction-and-end}
  \int_{X \in \mathcal{A}} T(F(X), X) \cong \int_{Y \in \mathcal{B}}T(Y, F^{\ladj}(Y))
\end{equation}
provided that either the left or the right hand side exists.

\section{Nakayama functor for coalgebras}
\label{sec:Naka-for-coalgebras}

\subsection{Nakayama functor for coalgebras}
\label{subsec:Nakayama-coalg-def}

In this section, we introduce the left exact and the right exact Nakayama functor for coalgebras and investigate their properties.
Let $C$ be a coalgebra.
If $M$ is a right $C$-comodule, then the vector space $\Hom^{C}(C, M)$ is a left $C^*$-module by the action given by
\begin{equation*}
  (c^* \cdot f)(c)
  := f(c \leftharpoonup c^*)
  =\langle c^*, c_{(1)} \rangle f(c_{(2)})
\end{equation*}
for $c^* \in C$, $c \in C$ and $f \in \Hom^C(C, M)$. The functor $\Hom^C(C, -)$ has been studied in relation to integral theory of Hopf algebras; see, {\it e.g.}, \cite{MR1323693,MR1786197,MR3125851}. In general, $\Hom^{C}(C, M)$ is not a rational left $C^*$-module even if $M$ is finite-dimensional (see Example~\ref{ex:counter-example-0}).

\begin{definition}
  The left exact Nakayama functor for $C$ is defined by
  \begin{equation*}
    \Nakl_C : \Mod^C \to \Mod^C,
    \quad M \mapsto \Hom^C(C, M)^{\rat}.
  \end{equation*}
\end{definition}

For $M \in \Mod^C$, we set $\Nakr_C(M) = C \otimes_{C^*} M$, where $C$ and $M$ are regarded as a right $C^*$-module and a left $C^*$-module, respectively. The vector space $\Nakr_C(M)$ is a right $C$-comodule by the well-defined coaction
\begin{equation*}
  C \otimes_{C^*} M \to (C \otimes_{C^*} M) \otimes_{\bfk} C,
  \quad c \otimes_{C^*} m \mapsto (c_{(1)} \otimes_{C^*} m) \otimes c_{(2)}.
\end{equation*}

\begin{definition}
  The right exact Nakayama functor for $C$ is defined by
  \begin{equation*}
    \Nakr_C : \Mod^C \to \Mod^C,
    \quad M \mapsto C \otimes_{C^*} M.
  \end{equation*}  
\end{definition}

As their names suggest, the functors $\Nakl_C$ and $\Nakr_C$ are left exact and right exact, respectively. This follows from the following lemma and the general fact that a functor between abelian categories is left (right) exact if it has a left (right) adjoint.

\begin{lemma}
  \label{lem:Nakayama-adj}
  $\Nakr_C$ is left adjoint to $\Nakl_C$.
\end{lemma}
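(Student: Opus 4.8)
The plan is to exhibit a natural isomorphism
\[
  \Hom^C\bigl(\Nakr_C(M), N\bigr) \cong \Hom^C\bigl(M, \Nakl_C(N)\bigr)
\]
for $M, N \in \Mod^C$ by assembling standard adjunctions one at a time. First I would unwind the left-hand side: since $\Nakr_C(M) = C \otimes_{C^*} M$ with its canonical right $C$-comodule structure, and since the functor \eqref{eq:comod-C-to-C*-mod} is fully faithful, a $C$-comodule map $C \otimes_{C^*} M \to N$ is the same thing as a left $C^*$-linear map $C \otimes_{C^*} M \to N$ whose image lies in (equivalently, which factors through) the rationality of the target — but here the source is already rational, so $\Hom^C(C \otimes_{C^*} M, N)$ is simply ${}_{C^*}\!\Hom(C \otimes_{C^*} M, N)$, the space of $C^*$-linear maps, with $N$ regarded as a left $C^*$-module via \eqref{eq:comod-C-to-C*-mod}. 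Then the tensor-Hom adjunction for the bimodule situation gives
\[
  {}_{C^*}\!\Hom(C \otimes_{C^*} M, N) \cong {}_{C^*}\!\Hom\bigl(M, {}_{C^*}\!\Hom(C, N)\bigr),
\]
where ${}_{C^*}\!\Hom(C, N)$ carries the left $C^*$-module structure coming from the \emph{right} $C^*$-action on $C$ (i.e.\ the left $C$-comodule structure of $C$), which is precisely the $C^*$-action $(c^* \cdot f)(c) = \langle c^*, c_{(1)}\rangle f(c_{(2)})$ appearing in Subsection~\ref{subsec:Nakayama-coalg-def}. Note that ${}_{C^*}\!\Hom(C,N)$ as a set of $C^*$-linear maps coincides with $\Hom^C(C,N)$ since a left $C^*$-linear map out of $C$ is the same as a right $C$-comodule map out of $C$ (again by full faithfulness of \eqref{eq:comod-C-to-C*-mod} applied with $C$ in the first slot, whose right $C$-comodule structure induces that $C^*$-action).

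The remaining point is to replace ${}_{C^*}\!\Hom(M, \Hom^C(C,N))$ by $\Hom^C(M, \Hom^C(C,N)^{\rat}) = \Hom^C(M, \Nakl_C(N))$. This is exactly the adjunction between \eqref{eq:comod-C-to-C*-mod} and the rational-part functor \eqref{eq:rational-functor}: since $M$ is rational, every $C^*$-linear map $M \to \Hom^C(C,N)$ has image inside the largest rational submodule $\Hom^C(C,N)^{\rat}$, so restriction of codomain gives a bijection
\[
  {}_{C^*}\!\Hom\bigl(M, \Hom^C(C,N)\bigr) \;\cong\; \Hom^C\bigl(M, \Hom^C(C,N)^{\rat}\bigr).
\]
Composing the three displayed isomorphisms yields the desired adjunction, and naturality in $M$ and $N$ is clear because each step is a natural isomorphism.

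The one genuine subtlety — and the step I would write out most carefully — is the \emph{bookkeeping of the $C^*$-module structures} through the tensor-Hom adjunction. The algebra $C^* = (C^{\op})^*$ acts on $C$ on both sides (the left action encoding the left $C$-comodule structure, the right action the right one), and one must check that: (i) the $C^*$-action on $C \otimes_{C^*} M$ used to form $\Hom^C(C \otimes_{C^*} M, N)$ is the one descending from the left action on $C$, matching the right $C$-comodule structure defined in the paper via $c \otimes_{C^*} m \mapsto (c_{(1)} \otimes_{C^*} m)\otimes c_{(2)}$; and (ii) the induced $C^*$-action on $\Hom^C(C,N)$ from tensor-Hom is the action displayed before the definition of $\Nakl_C$, and \emph{not} its opposite. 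Both reduce to a direct Sweedler computation using $\langle c^* \ast d^*, c\rangle = \langle c^*, c_{(1)}\rangle\langle d^*, c_{(2)}\rangle$ and the convention $c^* \rightharpoonup m = m_{(0)}\langle c^*, m_{(1)}\rangle$; I expect no surprises, but it is the place where an $\op$ could sneak in, so it deserves explicit verification.
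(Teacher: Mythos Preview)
Your proposal is correct and follows essentially the same approach as the paper's proof: both chain together the identification $\Hom^C = {}_{C^*}\Hom$ (full faithfulness of \eqref{eq:comod-C-to-C*-mod}), the tensor--Hom adjunction, the equality ${}_{C^*}\Hom(C,N) = \Hom^C(C,N)$, and the adjunction between \eqref{eq:comod-C-to-C*-mod} and the rational-part functor \eqref{eq:rational-functor}. Your extra care with the left/right $C^*$-action bookkeeping is a welcome expansion of what the paper leaves implicit.
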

\begin{proof}
  For $M, M' \in \Mod^C$, we have natural isomorphisms
  \begin{align*}
    \Hom^C(\Nakr_C(M), M')
    & = {}_{C^*}\Hom(C \otimes_{C^*} M, M') \\
    & \cong {}_{C^*}\Hom(M, {}_{C^*}\Hom(C, M')) \\
    & = {}_{C^*}\Hom(M, \Hom^C(C, M')) \\
    & \cong \Hom^C(M, \Hom^C(C, M')^{\rat}) \\
    & = \Hom^C(M, \Nakl_C(M')). \qedhere
  \end{align*}
\end{proof}

Although we will not use them in this paper, it would be worth to include a description of the unit and the counit of the adjunction $\Nakr_C \dashv \Nakl_C$. By the proof of Lemma~\ref{lem:Nakayama-adj}, the unit $u$ is given by
\begin{equation*}
  u_M : M \to \Nakl_C \Nakr_C(M) = \Hom^C(C, C \otimes_{C^*} M)^{\rat},
  \quad u_M(m)(c) = c \otimes_{C^*} m
\end{equation*}
for $c \in C$ and $m \in M \in \Mod^C$. The counit $e$ is given by
\begin{equation*}
  e_M : \Nakr_C \Nakl_C(M) = C \otimes_{C^*} \Hom(C, M)^{\rat} \to M,
  \quad e_M(c \otimes_{C^*} \xi) = \xi(c)
\end{equation*}
for $c \in C$, $M \in \Mod^C$ and $\xi \in \Hom^C(C, M)^{\rat}$.

Let $\qfcomod^C$ denote the full subcategory of $\Mod^C$ consisting of all quasi-finite right $C$-comodules (see Subsection~\ref{subsec:cohom-functor}). The restriction of $\Nakl_C$ to $\qfcomod^C$ is expressed by the coHom functor as follows:

\begin{lemma}
  \label{lem:Nakayama-l-coHom}
  For $M \in \qfcomod^C$, we have a natural isomorphism
  \begin{equation*}
    \Nakl_C(M) \cong \coHom^C(M, C)^{* \rat}.
  \end{equation*}
\end{lemma}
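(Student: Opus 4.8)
The plan is to compute $\coHom^C(M,C)$ as a left $C^*$-module and identify its rational part with $\Hom^C(C,M)^{\rat}$. The starting point is the expression~\eqref{eq:coHom-varinjlim}: since $M \in \qfcomod^C$, we have
\begin{equation*}
  \coHom^C(M, C) = \varinjlim_{\lambda \in \Lambda} \Hom^C(C_{\lambda}, M)^*,
\end{equation*}
where $\{C_{\lambda}\}_{\lambda \in \Lambda}$ runs over the finite-dimensional subcomodules of $C$. Because the coHom of a $C$-bicomodule in its right argument carries a left $C$-comodule structure (as recalled at the end of Subsection~\ref{subsec:cohom-functor}, applied to $P = C \in {}^C\Mod^C$), the space $\coHom^C(M,C)$ is a left $C$-comodule; hence $\coHom^C(M,C)^*$ is, by Lemma~\ref{lem:DNR-Lem-2-2-12} applied in the finite case and passing to the colimit, a right $C^*$-module, and its rational part $\coHom^C(M,C)^{*\rat}$ is a right $C$-comodule, i.e.\ an object of $\Mod^C$ as claimed.

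The key step is to produce a $C^*$-linear isomorphism between $\Hom^C(C,M)$ and an appropriate completion/dual of $\coHom^C(M,C)$. First I would use that $C = \varinjlim_{\lambda} C_{\lambda}$ as a right $C$-comodule, so that
\begin{equation*}
  \Hom^C(C, M) = \Hom^C\Bigl(\varinjlim_{\lambda} C_{\lambda}, M\Bigr) \cong \varprojlim_{\lambda} \Hom^C(C_{\lambda}, M),
\end{equation*}
a filtered limit of finite-dimensional spaces (finite-dimensionality coming from quasi-finiteness of $M$). Dualizing the colimit formula for $\coHom$ turns the $\varinjlim$ of duals into the $\varprojlim$ of double-duals, which for finite-dimensional terms is canonically $\varprojlim_{\lambda} \Hom^C(C_{\lambda}, M)$ again. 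Thus $\coHom^C(M,C)^* \cong \Hom^C(C,M)$ as vector spaces, and I would check this identification is compatible with the $C^*$-actions: the left $C^*$-action on $\Hom^C(C,M)$ is $(c^*\cdot f)(c) = \langle c^*, c_{(1)}\rangle f(c_{(2)})$, which arises precisely from the right $C$-bicomodule structure on the source $C$; under the colimit/limit dualities this matches the right $C^*$-action on $\coHom^C(M,C)^*$ coming from the left $C$-comodule structure on $\coHom^C(M,C)$. Taking rational parts on both sides — which is functorial and, being the largest rational submodule, commutes with the identification — yields $\Hom^C(C,M)^{\rat} \cong \coHom^C(M,C)^{*\rat}$, i.e.\ $\Nakl_C(M) \cong \coHom^C(M,C)^{*\rat}$. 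Naturality in $M$ follows since every map in sight (the colimit presentation, the $\Hom/\coHom$ adjunctions, dualization of finite-dimensional spaces, and the rational-part functor) is natural.

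The main obstacle I expect is the careful bookkeeping of the two $C^*$-module structures through the chain of colimit–limit–double-dual identifications: one must verify that the side on which $C^*$ (equivalently $C$) acts is preserved at each stage, since $\coHom^C(M,C)$ inherits a \emph{left} $C$-comodule structure from the \emph{left} $C$-comodule structure of the bicomodule $C$, whereas the $C^*$-module structure on $\Hom^C(C,M)$ comes from the \emph{right} regular comodule structure on $C$ used as the source. Once the structures are matched correctly, the only remaining point is that the rational-part functor, being right adjoint to the inclusion~\eqref{eq:comod-C-to-C*-mod}, transports along any $C^*$-module isomorphism, which is immediate. An alternative, cleaner route that sidesteps the limit manipulations is to invoke the adjunction isomorphism~\eqref{eq:adjunction-coHom} together with Lemma~\ref{lem:coHom-finite-2} directly, but the filtered-limit argument above is the most transparent and I would present that.
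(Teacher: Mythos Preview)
Your approach is workable but considerably more involved than the paper's. The paper observes that the isomorphism $\coHom^C(X,Y)^* \cong \Hom^C(Y,X)$ is immediate from the very definition of the coHom functor: setting $W = \bfk$ in the defining adjunction~\eqref{eq:def-coHom} gives $\Hom_{\bfk}(\coHom^C(X,Y),\bfk) \cong \Hom^C(Y, \bfk \otimes_{\bfk} X) = \Hom^C(Y,X)$. One then checks that this isomorphism is left $C^*$-linear when $Y$ is a $C$-bicomodule, and the result follows by taking $X = M$, $Y = C$ and applying the rational-part functor. Your colimit/limit/double-dual argument is essentially a reconstruction of this single isomorphism from its ingredients via the explicit formula~\eqref{eq:coHom-varinjlim}; it buys nothing extra and forces you into precisely the bookkeeping you flag as the main obstacle.

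There is also a side-confusion in your write-up that you should fix: since $\coHom^C(M,C)$ is a \emph{left} $C$-comodule, it is a \emph{right} $C^*$-module, and therefore its linear dual $\coHom^C(M,C)^*$ is a \emph{left} $C^*$-module (not a right one as you state); its rational part is then a right $C$-comodule, as required to land in $\Mod^C$. The invocation of Lemma~\ref{lem:DNR-Lem-2-2-12} is a red herring: that lemma concerns duals of finite-dimensional comodules, whereas $\coHom^C(M,C)$ is typically infinite-dimensional, and the correct reasoning is simply that the dual of any right $C^*$-module is a left $C^*$-module.
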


The precise meaning of the right hand side is as follows:
Let, in general, $C$ and $D$ be coalgebras, and let $P$ be a $C$-$D$-bicomodule. Then there is a contravariant functor
$\coHom^D(-, P) : \qfcomod^D \to {}^C\Mod$.
If $M$ is a left $C$-comodule, then $M^*$ is a left $C^*$-module as the dual space of the right $C^*$-module $M$. This lemma actually states that the restriction of $\Nakl_C$ to $\qfcomod^C$ is isomorphic to the composition
\begin{equation*}
  \qfcomod^C \xrightarrow{\quad \coHom^C(-, C) \quad}
  {}^C\Mod \xrightarrow{\quad (-)^* \quad}
  {}_{C^*}\Mod \xrightarrow{\quad (-)^{\rat} \quad} \Mod^C.
\end{equation*}

\begin{proof}
  By the definition of the coHom functor, we have a natural isomorphism
  \begin{equation}
    \label{eq:coHom-dual}
    \coHom^C(X, Y)^*
    = \Hom_{\bfk}(\coHom^C(X, Y), \bfk)
    \cong \Hom^C(Y, X)
  \end{equation}
  for $X \in \qfcomod^C$ and $Y \in \Mod^C$. It is easy to check that \eqref{eq:coHom-dual} is left $C^*$-linear when $Y$ is a $C$-bicomodule. Thus, for $M \in \qfcomod^C$, we have an isomorphism
  \begin{equation*}
    \Nakl_C(M) = \Hom^C(C, M)^{\rat} \cong \coHom^C(M, C)^{*\rat}
  \end{equation*}
  of right $C$-comodules. The proof is done.
\end{proof}

\begin{remark}
  \label{rem:Chin-Simson}
  The Nakayama functor of Chin and Simson \cite[Equation 1.12]{MR2684139}, which we denote by $\Nak_{\mathrm{CS}}$ here, is defined on a particular class of quasi-finite comodules. By the definition of the domain of $\Nak_{\mathrm{CS}}$ and \cite[Theorem 1.8 (a)]{MR2684139}, we have
  $\dim_{\bfk} \coHom^C(M, C) < \infty$
  and
  $\Nak_{\mathrm{CS}}(M) = \coHom^C(M, C)^{*}$
whenever $M$ belongs to the domain of $\Nak_{\mathrm{CS}}$. Thus, by Lemma~\ref{lem:Nakayama-l-coHom}, $\Nak_{\mathrm{CS}}$ is the restriction of our Nakayama functor $\Nakl_C$.
\end{remark}

\subsection{Nakayama functor as a (co)end}

Let $C$ be a coalgebra. Here we show that the Nakayama functors have the following universal property:

\begin{theorem}
  \label{thm:Nakayama-(co)end}
  For every $M \in \Mod^C$, $\Nakl_C(M)$ is an end of the functor
  \begin{equation}
    \label{eq:Nakayama-l-end}
    (\fdmod^C)^{\op} \times \fdmod^C \to \Mod^C,
    \quad (X^{\op}, Y) \mapsto \Hom^C(X, M) \otimes_{\bfk} Y
  \end{equation}
  and $\Nakr_C(M)$ is a coend of the functor
  \begin{equation}
    \label{eq:Nakayama-r-coend}
    (\fdmod^C)^{\op} \times \fdmod^C \to \Mod^C,
    \quad (X^{\op}, Y) \mapsto \coHom^C(X, M) \otimes_{\bfk} Y.
  \end{equation}
\end{theorem}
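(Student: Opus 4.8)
The plan is to prove the two statements separately, each by exhibiting the Nakayama functor as the (co)end via an appropriate adjunction formula together with the Tannakian coend presentation of $C$ in \eqref{eq:Tannaka}. The key tools are the adjunction-coend formula \eqref{eq:adjunction-and-coend} and its end-analogue \eqref{eq:adjunction-and-end}, applied to the anti-equivalence $(-)^*:\fdmod^C\to{}^C\fdmod$ and to the representing objects appearing in the definitions of $\coHom^C$ and $\Hom^C$.

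For the right-exact functor $\Nakr_C$, I would start from the definition $\Nakr_C(M)=C\otimes_{C^*}M$ and use the coend presentation \eqref{eq:Tannaka}, $C=\int^{X\in\fdmod^C}X^*\otimes_{\bfk}X$. Since $(-)\otimes_{C^*}M$ is a left adjoint (to ${}_{C^*}\Hom(M,-)$), it preserves colimits, and in particular coends; hence
\begin{equation*}
  \Nakr_C(M)=C\otimes_{C^*}M\cong\int^{X\in\fdmod^C}(X^*\otimes_{\bfk}X)\otimes_{C^*}M\cong\int^{X\in\fdmod^C}(X^*\otimes_{C^*}M)\otimes_{\bfk}X,
\end{equation*}
where in the last step I move the copower $(-)\otimes_{\bfk}X$ past the $\otimes_{C^*}M$ using associativity/commutativity of these operations. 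Then Lemma~\ref{lem:coHom-finite-2} identifies $X^*\otimes_{C^*}M$ with $\coHom^C(X,M)$ for $X\in\fdmod^C$, giving exactly the coend of \eqref{eq:Nakayama-r-coend}. I must be a little careful that the isomorphism is one of right $C$-comodules and that it is natural in $M$: naturality is clear from the construction, and the comodule structure is tracked through each step since all the functors involved ($(-)\otimes_{C^*}M$, the copower) are $\Mod^C$-valued; alternatively one checks directly that the universal dinatural transformation $\coHom^C(X,M)\otimes_{\bfk}X\to C\otimes_{C^*}M$ induced by $i_X$ from \eqref{eq:Tannaka-dinatural} is $C$-colinear.

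For the left-exact functor $\Nakl_C$, the cleanest route is to dualize. Since $\Nakr_C\dashv\Nakl_C$ by Lemma~\ref{lem:Nakayama-adj}, and since ends are limits, one expects $\Nakl_C(M)$ to be computed by an end. Concretely, I would verify the universal property directly: for $N\in\Mod^C$,
\begin{align*}
  \Hom^C(N,\textstyle\int_{X}\Hom^C(X,M)\otimes_{\bfk}X)
  &\cong\textstyle\int_{X}\Hom^C(N,\Hom^C(X,M)\otimes_{\bfk}X)\\
  &\cong\textstyle\int_{X}\Hom_{\bfk}(\Hom^C(X,M),\Hom^C(N,X)),
\end{align*}
using that $\Hom^C(X,M)\otimes_{\bfk}X$ is the copower and \eqref{eq:def-copower}. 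This last end over $X\in\fdmod^C$ is, by the co-Yoneda/end form of naturality, the space of natural transformations $\Hom^C(-,M)\Rightarrow\Hom^C(N,-)$ on $\fdmod^C$; restricting a natural transformation defined on all of $\Mod^C$ and using that every comodule is the filtered colimit of its finite-dimensional subcomodules, this is $\Hom_{C^*}(N,\Hom^C(C,M))\cong\Hom^C(N,\Hom^C(C,M)^{\rat})=\Hom^C(N,\Nakl_C(M))$. Alternatively, one applies \eqref{eq:adjunction-and-end} to the adjunction between the copower functor and the coHom, combined with \eqref{eq:Tannaka}. I would likely present whichever of these is shortest, probably the direct Yoneda computation.

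The main obstacle is the middle step in the $\Nakl_C$ computation: passing from the end over the \emph{finite-dimensional} subcategory $\fdmod^C$ to the $C^*$-module $\Hom^C(C,M)$ and then to its rational part. This requires the observation that $C=\varinjlim Y_\lambda$ over finite-dimensional subcomodules $Y_\lambda$, that $\Hom^C(C,M)=\varprojlim\Hom^C(Y_\lambda,M)$, and that the rational part functor $(-)^{\rat}$ is exactly what makes the end land in $\Mod^C$ rather than merely in $\Vect$ or ${}_{C^*}\Mod$ — this is the coalgebra-theoretic subtlety that has no counterpart in the finite-dimensional algebra case. Everything else (existence of the copowers, $\coHom^C(X,M)$ for $X\in\fdmod^C$ via Lemma~\ref{lem:coHom-finite-2}, naturality, colimit-preservation of $(-)\otimes_{C^*}M$) is routine bookkeeping, though I would state explicitly that the universal dinatural transformations are the evident maps so that the isomorphisms are genuinely canonical.
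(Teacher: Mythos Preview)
Your argument for $\Nakr_C$ is correct and matches the paper's: apply the colimit-preserving functor $(-)\otimes_{C^*}M$ to the Tannakian coend \eqref{eq:Tannaka} and invoke Lemma~\ref{lem:coHom-finite-2}. (The paper is slightly more careful about why $P\mapsto P\otimes_{C^*}M$ preserves colimits as a functor ${}^C\Mod^C\to\Mod^C$, exhibiting an explicit right adjoint via \eqref{eq:Nakayama-(co)end-proof-1}, but your reasoning is sound since colimits in both categories are created in $\Vect$.)

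The $\Nakl_C$ argument, however, has a genuine error. Your second displayed isomorphism
\[
  \Hom^C\bigl(N,\Hom^C(X,M)\otimes_{\bfk}X\bigr)\;\cong\;\Hom_{\bfk}\bigl(\Hom^C(X,M),\Hom^C(N,X)\bigr)
\]
is false: the copower identity \eqref{eq:def-copower} describes $\Hom^C(W\copow X,-)$, not $\Hom^C(-,W\copow X)$. Already for $C=\bfk$ and $W$ finite-dimensional the left side is $W\otimes\Hom(N,X)$ while the right side is $W^*\otimes\Hom(N,X)$. The subsequent interpretation as natural transformations $\Hom^C(-,M)\Rightarrow\Hom^C(N,-)$ is also ill-posed, since these functors have opposite variance. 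What \emph{is} true, using the defining property \eqref{eq:def-coHom} of coHom rather than \eqref{eq:def-copower}, is
\[
  \Hom^C\bigl(N,\Hom^C(X,M)\otimes_{\bfk}X\bigr)\;\cong\;\Hom_{\bfk}\bigl(\coHom^C(X,N),\Hom^C(X,M)\bigr),
\]
and the end of the right-hand side over $X$ is, by the already-established coend formula for $\Nakr_C$, exactly $\Hom^C(\Nakr_C(N),M)\cong\Hom^C(N,\Nakl_C(M))$. So your strategy can be salvaged, but only after proving the $\Nakr_C$ statement first and using the adjunction $\Nakr_C\dashv\Nakl_C$ explicitly.

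The paper takes a different route that avoids this dependence: it identifies the integrand as $\Hom^C(X^*\otimes_{\bfk}X,M)^{\rat}$ (using that this $C^*$-module is already rational, equation \eqref{eq:Nakayama-(co)end-proof-2}) and then shows directly that the contravariant functor $P\mapsto\Hom^C(P,M)^{\rat}$ from ${}^C\Mod^C$ to $\Mod^C$ sends colimits to limits, by exhibiting the natural isomorphism $\Hom^C(N,\Hom^C(P,M)^{\rat})\cong\Hom^{C^e}(P,\Rat_{C^e}(\Hom_{\bfk}(N,M)))$. Applying this to the coend \eqref{eq:Tannaka} yields the end formula in one stroke. This is where the ``rational part'' subtlety you flagged actually enters, and it is the step your Yoneda computation does not reach.
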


The claim about $\Nakl_C(M)$ may be expressed as the equation
\begin{equation*}
  \Nakl_C(M) = \int_{X \in \fdmod^C} \Hom^C(X, M) \otimes_{\bfk} X,
\end{equation*}
however, we shall care about the category in which the integrand lives when we use this sort of notation. Indeed, let $\mathcal{U} : \Mod^C \to \Vect$ be the forgetful functor. Confusingly, the vector space $\mathcal{U}(\Nakl_C(M))$ may not be isomorphic to an end of
\begin{equation}
  \label{eq:Nakayama-l-coend-2}
  (\fdmod^C)^{\op} \times \fdmod^C \to \Vect,
  \quad (X^{\op}, Y) \mapsto \Hom^C(X, M) \otimes_{\bfk} Y,
\end{equation}
since $\mathcal{U}$ does not preserve limits in general \cite[Proposition 31]{MR414567}.
The difference between the end of \eqref{eq:Nakayama-l-end} and that of \eqref{eq:Nakayama-l-coend-2} will be discussed after the proof of this theorem; see Remark~\ref{rem:Nakayama-(co)end}.

The situation is a bit easier for $\Nakr_C(M)$ compared to the case of $\Nakl_C(M)$. 
The claim about $\Nakr_C(M)$ is expressed as the equation
\begin{equation*}
  \Nakr_C(M) = \int^{X \in \fdmod^C} \coHom^C(X, M) \otimes_{\bfk} X,
\end{equation*}
and the integrand may also be viewed as a mere vector space.
Namely, since $\mathcal{U}$ preserves colimits \cite[Corollary 3]{MR414567}, Theorem~\ref{thm:Nakayama-(co)end} implies that the vector space $\mathcal{U}(\Nakr_C(M))$ is a coend of the functor
\begin{equation*}
  (\fdmod^C)^{\op} \times \fdmod^C \to \Vect,
  \quad (X^{\op}, Y) \mapsto \coHom^C(X, M) \otimes_{\bfk} Y.
\end{equation*}

\begin{proof}[Proof of Theorem~\ref{thm:Nakayama-(co)end}]
  For $P \in {}^C\Mod^C$ and $M, N \in \Mod^C$, we have
  \begin{equation}
    \label{eq:Nakayama-(co)end-proof-1}
    \begin{aligned}
      {}_{C^*}\Hom(P \otimes_{C^*} N, M)
      & \cong {}_{C^*}\Hom_{C^*}(P, \Hom_{\bfk}(N, M)) \\
      & \cong \Hom^{C^e} \! (P, \mathrm{Rat}_{C^e}(\Hom_{\bfk}(N, M))),
    \end{aligned}
  \end{equation}
  where the first isomorphism is the tensor-Hom adjunction and $\mathrm{Rat}_{C^e}$ is the rational part functor \eqref{eq:rational-functor} for the enveloping coalgebra $C^e = C \otimes_{\bfk} C^{\cop}$.

  Now we fix a right $C$-comodule $M$ and show that $\Nakl_C(M)$ is an end as stated. For $X, Y \in \fdmod^C$, there is the following natural isomorphism of left $C^*$-modules:
  \begin{equation*}
    \Hom^C(X, M) \otimes_{\bfk} Y
    \xrightarrow{\ \cong \ } \Hom^C(Y^* \otimes_{\bfk} X, M),
    \quad \xi \otimes y
    \mapsto \big[ y^* \otimes x \mapsto \xi(x) \langle y^*, y \rangle \big].
  \end{equation*}
  Since the left $C^*$-module $\Hom^C(X, M) \otimes_{\bfk} Y$ is rational, we have
  \begin{equation}
    \label{eq:Nakayama-(co)end-proof-2}
    \Hom^C(Y^* \otimes_{\bfk} X, M)^{\rat} \cong (\Hom^C(X, M) \otimes_{\bfk} Y)^{\rat} = \Hom^C(X, M) \otimes_{\bfk} Y
  \end{equation}
  as right $C$-comodules.
  For $P \in {}^C\Mod^C$ and $N \in \Mod^C$, we have
  \begin{equation}
    \label{eq:Nakayama-(co)end-proof-3}
    \begin{aligned}
      \Hom^C(N, \Hom^C(P, M)^{\rat})
      & \cong \Hom^C(P \otimes_{C^*} N, M) \\
      & \cong \Hom^{C^e}(P, \mathrm{Rat}_{C^e}(\Hom_{\bfk}(N, M))),
    \end{aligned}
  \end{equation}
  where the first isomorphism is obtained by the same way as the proof of Lemma   \ref{lem:Nakayama-adj} and the second one follows from \eqref{eq:Nakayama-(co)end-proof-1}. Hence the contravariant functor $\Psi_M: {}^C\Mod^C \to \Mod^C$, $P \mapsto \Hom^C(P, M)^{\rat}$ sends a colimit to a limit. Thus,
  \begin{equation*}
    \Nakl_C(M)
    = \Psi_M(C)
    \mathop{\cong}^{\eqref{eq:Tannaka}}
    \int_{X \in \fdmod^C} \Hom^C(X^* \otimes_{\bfk} X, M)^{\rat}
    \mathop{\cong}^{\eqref{eq:Nakayama-(co)end-proof-2}}
    \int_{X \in \fdmod^C} \Hom^C(X, M) \otimes_{\bfk} X.
  \end{equation*}

  The statement for $\Nakr_C(M)$ is proved as follows:
  By \eqref{eq:Nakayama-(co)end-proof-1}, the functor $\Phi_M: {}^C\Mod^C \to \Mod^C$, $P \mapsto P \otimes_{C^*} M$ preserves colimits. Thus,
  \begin{equation*}
    \Nakr_C(M)
    = \Phi_M(C)
    \mathop{\cong}^{\eqref{eq:Tannaka}}
    \int^{X \in \fdmod^C} (X^* \otimes_{\bfk} X) \otimes_{C^*} M
    \cong
    \int^{X \in \fdmod^C} \coHom^C(X, M) \otimes_{\bfk} X,
  \end{equation*}
  where the second isomorphism is given in Lemma~\ref{lem:coHom-finite-2}.
  The proof is done.
\end{proof}

\begin{remark}
  \label{rem:Nakayama-(co)end}
  For $M \in \Mod^C$, we set $\widetilde{\Nak}(M) = \Hom^C(C, M)$.
  By definition, $\widetilde{\Nak}(M)$ is a left $C^*$-module whose rational part is $\Nakl_C(M)$.
  There is a natural isomorphism
  \begin{equation*}
    {}_{C^*}\Hom(N, \Hom^C(P, M))
    \cong \Hom^{C^e}(P, \Rat_{C^e}(\Hom_{\bfk}(N, M)))    
  \end{equation*}
  for $P \in {}^C\Mod^C$ and $N \in {}_{C^*}\Mod$ obtained by the same way as \eqref{eq:Nakayama-(co)end-proof-3}.
  Thus the contravariant functor $\Hom^C(-,M) : {}^C\Mod^C \to {}_{C^*}\Mod$ turns a coend into an end. In particular, the left $C^*$-module $\widetilde{\Nak}(M)$ is an end of the functor
  \begin{equation*}
    (\fdmod^C)^{\op} \times \fdmod^C \to {}_{C^*}\Mod,
    \quad (X^{\op}, Y) \mapsto \Hom^C(X, M) \otimes_{\bfk} Y.
  \end{equation*}
  Since the forgetful functor ${}_{C^*}\Mod \to \Vect$ preserves limits, the vector space $\widetilde{\Nak}(M)$ is in fact an end the functor \eqref{eq:Nakayama-l-end}.

  In summary, the right $C$-comodule $\Nakl_C(M)$ and the vector space $\widetilde{\Nak}(M)$ are an end of \eqref{eq:Nakayama-l-end} and \eqref{eq:Nakayama-l-coend-2}, respectively.
  Now we suppose that $C^{*\rat} = 0$ ({\it e.g.}, take $C$ to be the coalgebra $\bfk[X]$ discussed in Example~\ref{ex:counter-example-3}). Then $\Nakl_C(C) \cong C^{*\rat} = 0$ while $\widetilde{\Nak}(C) \cong C^* \ne 0$. Thus an end of \eqref{eq:Nakayama-l-end} and that of \eqref{eq:Nakayama-l-coend-2} are not isomorphic as vector spaces.
\end{remark}

Two coalgebras $C$ and $D$ are said to be {\em Morita-Takeuchi equivalent} if $\Mod^C$ and $\Mod^D$ are equivalent as $\bfk$-linear categories.
As an application of the universal property of the Nakayama functor, we show that the left and the right exact Nakayama functor are Morita-Takeuchi invariant in the following sense:

\begin{lemma}
  \label{lem:Nakayama-Mori-Take-equiv}
  Let $C$ and $D$ be coalgebras, and let $\Phi : \Mod^C \to \Mod^D$ be an equivalence of $\bfk$-linear categories. Then there are isomorphisms of functors
  \begin{equation*}
    \Phi \circ \Nakl_C \cong \Nakl_D \circ \Phi,
    \quad
    \Phi \circ \Nakr_C \cong \Nakr_D \circ \Phi.
  \end{equation*}
\end{lemma}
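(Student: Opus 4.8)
The plan is to deduce the Morita--Takeuchi invariance purely from the universal property of the Nakayama functors established in Theorem~\ref{thm:Nakayama-(co)end}, rather than from the explicit formulas $\Hom^C(C,-)^{\rat}$ and $C \otimes_{C^*} -$. First I would recall that an equivalence $\Phi : \Mod^C \to \Mod^D$ of $\bfk$-linear categories is automatically exact and preserves all limits and colimits that exist; in particular it preserves copowers, so there is a natural isomorphism $\Phi(W \copow X) \cong W \copow \Phi(X)$ for $W \in \Vect$ and $X \in \Mod^C$, and it restricts to an equivalence $\Phi_{\fd} : \fdmod^C \to \fdmod^D$ between the full subcategories of finite-dimensional objects (a right $C$-comodule is finite-dimensional precisely when it is an object of finite length whose simple subquotients are finite-dimensional, a condition that can be phrased category-theoretically, or more simply one uses that $\Phi$ and its quasi-inverse preserve the property of being a noetherian and artinian object together with finiteness of $\Hom$'s into it). Since $\Phi$ is an equivalence it also preserves the coHom functor: one has $\coHom^D(\Phi(X), \Phi(Y)) \cong \coHom^C(X, Y)$ naturally for $X \in \qfcomod^C$ and $Y \in \Mod^C$, either by unwinding the representing property~\eqref{eq:def-coHom} or by invoking the general discussion of Subsection~\ref{subsec:cohom-functor} applied to both $\Phi$ and a quasi-inverse.

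Next I would apply $\Phi$ to the end and coend formulas of Theorem~\ref{thm:Nakayama-(co)end}. For the right exact functor: since $\Phi$ preserves colimits and copowers, it carries the coend
\begin{equation*}
  \Nakr_C(M) = \int^{X \in \fdmod^C} \coHom^C(X, M) \copow X
\end{equation*}
to
\begin{equation*}
  \Phi(\Nakr_C(M)) \cong \int^{X \in \fdmod^C} \coHom^C(X, M) \copow \Phi(X)
  \cong \int^{X \in \fdmod^C} \coHom^D(\Phi(X), \Phi(M)) \copow \Phi(X),
\end{equation*}
using that $\coHom$ is preserved. Now reindexing the coend along the equivalence $\Phi_{\fd} : \fdmod^C \to \fdmod^D$ (a cofinal, indeed isomorphism-of-categories-up-to-equivalence, change of variables; concretely one uses~\eqref{eq:adjunction-and-coend} with $F = \Phi_{\fd}$, whose right adjoint is a quasi-inverse) identifies the last coend with $\int^{Y \in \fdmod^D} \coHom^D(Y, \Phi(M)) \copow Y = \Nakr_D(\Phi(M))$. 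This gives the desired isomorphism $\Phi \circ \Nakr_C \cong \Nakr_D \circ \Phi$, and naturality in $M$ follows because every isomorphism used is natural.

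For the left exact functor I would argue dually, but with the one subtlety flagged in Remark~\ref{rem:Nakayama-(co)end}: the end defining $\Nakl_C(M)$ must be taken in $\Mod^C$, not in $\Vect$. This is not a problem here, because $\Phi : \Mod^C \to \Mod^D$ is an equivalence of abelian (indeed of linear) categories and therefore preserves the end computed in $\Mod^C$, sending it to the corresponding end computed in $\Mod^D$. Thus from
\begin{equation*}
  \Nakl_C(M) = \int_{X \in \fdmod^C} \Hom^C(X, M) \copow X
\end{equation*}
one obtains $\Phi(\Nakl_C(M)) \cong \int_{X \in \fdmod^C} \Hom^D(\Phi(X), \Phi(M)) \copow \Phi(X)$, and reindexing along $\Phi_{\fd}$ via~\eqref{eq:adjunction-and-end} yields $\Nakl_D(\Phi(M))$. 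Alternatively, and perhaps more cleanly, one can deduce the statement for $\Nakl$ from the statement for $\Nakr$ together with Lemma~\ref{lem:Nakayama-adj}: a quasi-inverse $\Psi$ of $\Phi$ is both left and right adjoint to $\Phi$, so from $\Phi \circ \Nakr_C \cong \Nakr_D \circ \Phi$ one passes to right adjoints on both sides---$\Nakl_C$ is right adjoint to $\Nakr_C$, $\Nakl_D$ to $\Nakr_D$, and $\Psi$ is right adjoint to $\Phi$---to get $\Nakl_C \circ \Psi \cong \Psi \circ \Nakl_D$, which rearranges to $\Phi \circ \Nakl_C \cong \Nakl_D \circ \Phi$.

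The main obstacle, modest as it is, is bookkeeping: verifying that $\Phi$ genuinely preserves the full subcategory $\fdmod^C$ and the functor $\coHom^C$, and making the reindexing of the (co)ends along $\Phi_{\fd}$ precise via~\eqref{eq:adjunction-and-coend} and~\eqref{eq:adjunction-and-end}. Once these points are in place the proof is a direct assembly of results already available in the paper, and the adjoint-passage shortcut for $\Nakl$ dispenses with re-doing the argument in the second variable.
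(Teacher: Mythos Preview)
Your proposal is correct and follows essentially the same approach as the paper: both arguments rest on the (co)end formulas of Theorem~\ref{thm:Nakayama-(co)end} together with the observation that $\Phi$ restricts to an equivalence $\fdmod^C \to \fdmod^D$. The paper's proof is simply terser---it invokes the fundamental theorem for comodules to identify finite-dimensional comodules with those of finite length, then declares the rest ``obvious'' from the universal property---whereas you have spelled out the preservation of copowers, $\coHom$, and the reindexing via~\eqref{eq:adjunction-and-coend} and~\eqref{eq:adjunction-and-end}; your adjoint-passage shortcut for $\Nakl$ is a nice extra, not present in the paper.
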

\begin{proof}
  By the fundamental theorem for comodules, a comodule over a coalgebra is finite-dimensional if and only if it is of finite length.
  Since an equivalence between abelian categories preserves lengths, we see that the equivalence $\Phi$ induces an equivalence between $\fdmod^C$ and $\fdmod^D$.
  The proof of this lemma is now obvious by the universal property of the Nakayama functor given in Theorem~\ref{thm:Nakayama-(co)end}.
\end{proof}

\subsection{Semiperfect coalgebras}

Let $C$ be a coalgebra. An {\em idempotent} for $C$ is an idempotent of the algebra $C^*$. Given an idempotent $e$ for $C$ and a right $C$-comodule $M$, we write $e M := e \rightharpoonup M$ (where $M$ is regarded as a left $C^*$-module by $\rightharpoonup$). A similar notation will be used for left comodules. It is straightforward to show:

\begin{lemma}
  For $M \in \Mod^C$, $W \in \Vect$ and an idempotent $e$ for $C$, the map
  \begin{equation}
    \label{eq:Hom-idempotent}
    \Hom^C(M, W \otimes_{\bfk} C e) \to \Hom_{\bfk}(e M, W),
    \quad f \mapsto (\id_W \otimes \varepsilon|_{Ce}) \circ (f|_{e M})
  \end{equation}
  is an isomorphism of vector spaces, where $\varepsilon$ is the counit of $C$.
\end{lemma}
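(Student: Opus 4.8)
The plan is to recognize \eqref{eq:Hom-idempotent} as what one obtains by restricting the cofree--comodule adjunction isomorphism to the images of the idempotent operator induced by $e$, exactly as the elementary isomorphism $eA\otimes_A M\cong eM$ arises for an idempotent $e$ of an algebra $A$.

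First I would use that $-\otimes_{\bfk}C\colon\Vect\to\Mod^C$ (with coaction $\id\otimes\Delta$) is right adjoint to the forgetful functor, so that
\[
  \Theta_0\colon\Hom^C(M,W\otimes_{\bfk}C)\xrightarrow{\ \cong\ }\Hom_{\bfk}(M,W),\qquad f\mapsto(\id_W\otimes\varepsilon)\circ f,
\]
is a natural isomorphism with inverse $g\mapsto(g\otimes\id_C)\circ\delta_M$. Next I would equip both sides with a right $C^*$-module structure: on $\Hom_{\bfk}(M,W)$ by $(\phi\cdot c^*)(m)=\phi(c^*\rightharpoonup m)$, and on $\Hom^C(M,W\otimes_{\bfk}C)$ by post-composition with $\id_W\otimes(-\leftharpoonup c^*)$, where $c\leftharpoonup c^*=\langle c^*,c_{(1)}\rangle c_{(2)}$. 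Coassociativity gives the identity $\Delta(c\leftharpoonup c^*)=(c_{(1)}\leftharpoonup c^*)\otimes c_{(2)}$, which shows at once that post-composition sends colinear maps to colinear maps (so the action is well defined) and that $Ce:=C\leftharpoonup e$ is a right $C$-subcomodule of $C$. The one computation with any content is that $\Theta_0$ is $C^*$-linear: writing $f(m)=\sum_i w_i\otimes d_i$, a single application of the counit axiom gives $\Theta_0(f\cdot c^*)(m)=\sum_i w_i\langle c^*,d_i\rangle=(\Theta_0(f)\cdot c^*)(m)$.

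Then I would apply the idempotent $(-)\cdot e$ to both sides. Since $e\rightharpoonup(-)$ and $\bar e\rightharpoonup(-)$ are complementary idempotent endomorphisms of $M$ (here $\bar e=\varepsilon-e$), we have $M=eM\oplus\bar eM$, and restriction to $eM$ identifies the image of $(-)\cdot e$ on $\Hom_{\bfk}(M,W)$ with $\Hom_{\bfk}(eM,W)$. On the other side, since $-\leftharpoonup e$ is the identity on $Ce$ and $Ce$ is a subcomodule of $C$, the image of $(-)\cdot e$ on $\Hom^C(M,W\otimes_{\bfk}C)$ is the set of colinear maps with image inside $W\otimes Ce$, which is canonically $\Hom^C(M,W\otimes_{\bfk}Ce)$. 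As $\Theta_0$ is $C^*$-linear it restricts to an isomorphism between these two images, and tracing through the two identifications shows that the restricted map $\Hom^C(M,W\otimes_{\bfk}Ce)\to\Hom_{\bfk}(eM,W)$ is precisely $f\mapsto(\id_W\otimes\varepsilon|_{Ce})\circ(f|_{eM})$, i.e.\ \eqref{eq:Hom-idempotent}.

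I do not expect a genuine obstacle: the only nonformal point is the one-line $C^*$-linearity of $\Theta_0$, the rest being routine bookkeeping with idempotents and subcomodules, in keeping with the word ``straightforward'' in the statement. If a self-contained argument is preferred one may instead exhibit the inverse directly, sending $g\in\Hom_{\bfk}(eM,W)$ to the map $m\mapsto g(e\rightharpoonup m_{(0)})\otimes(m_{(1)}\leftharpoonup e)$ (which lands in $W\otimes Ce$) and checking colinearity and the two composite identities by short Sweedler computations using $\Delta(c\leftharpoonup e)=(c_{(1)}\leftharpoonup e)\otimes c_{(2)}$, $\varepsilon(c\leftharpoonup e)=\langle e,c\rangle$, and $e\rightharpoonup m=m$ for $m\in eM$.
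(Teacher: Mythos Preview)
Your proof is correct. The paper omits the proof entirely, merely prefacing the lemma with ``It is straightforward to show,'' so there is no argument in the paper to compare against; your approach via the cofree-comodule adjunction, the $C^*$-linearity of $\Theta_0$, and restriction along the idempotent is a clean and natural way to supply the omitted details, and the explicit inverse you sketch at the end is likewise sound.
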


If $e$ is an idempotent for $C$, then $C e$ is an injective right $C$-comodule as a direct summand of the right $C$-comodule $C$. Moreover, every indecomposable injective right $C$-comodule is isomorphic to $C e$ for some primitive idempotent $e$ for $C$ \cite[Proposition 1.13]{MR1904645}.
Let $\sfInj^C$ be the full subcategory of $\Mod^C$ consisting of all injective right $C$-comodules whose socle is of finite length.
Every object of $\sfInj^C$ is a finite direct sum of indecomposable injective right $C$-comodules, and thus it is isomorphic to $\bigoplus_i e_i C$ for some finite number of primitive idempotents $e_i$ for $C$. We define ${}^C\sfInj \subset {}^C\Mod$ analogously.

\begin{lemma}
  The contravariant functor
  \begin{equation}
    \label{eq:coHom-restricted-to-inj}
    \sfInj^C \to {}^C\sfInj,
    \quad E \mapsto \coHom^C(E, C)
  \end{equation}
  is a well-defined anti-equivalence.
\end{lemma}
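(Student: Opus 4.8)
The plan is to show that the functor in \eqref{eq:coHom-restricted-to-inj} is well-defined (i.e., lands in ${}^C\sfInj$), fully faithful, and essentially surjective, and then to exhibit an explicit quasi-inverse so as to make the anti-equivalence transparent. First I would check that an object $E \in \sfInj^C$ is quasi-finite: since $E \cong \bigoplus_i e_i C$ for finitely many primitive idempotents $e_i$ for $C$, and $\Hom^C(V, e_i C) \cong \Hom_{\bfk}(e_i V, \bfk)$ for $V \in \fdmod^C$ by \eqref{eq:Hom-idempotent} (taking $W = \bfk$, working on the appropriate side), each such Hom space is finite-dimensional; hence $E \in \qfcomod^C$ and $\coHom^C(E, C)$ is defined. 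So the functor \eqref{eq:coHom-restricted-to-inj} makes sense as a contravariant functor $\sfInj^C \to {}^C\Mod$.

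Next I would identify $\coHom^C(E,C)$ explicitly on indecomposable injectives. For $E = e C$ with $e$ a primitive idempotent, I expect $\coHom^C(eC, C) \cong C e$ as a left $C$-comodule, with the left coaction induced from the left regular coaction of $C$ on $C$; more generally $\coHom^C(\bigoplus_i e_i C, C) \cong \bigoplus_i C e_i$. The cleanest route is via Lemma~\ref{lem:coHom-finite-2}-style reasoning together with the defining adjunction \eqref{eq:def-coHom}: for $W \in \Vect$,
\begin{equation*}
  \Hom_{\bfk}(\coHom^C(eC, C), W)
  \cong \Hom^C(C, W \otimes_{\bfk} eC)
  \cong \Hom_{\bfk}(eC, W),
\end{equation*}
where the last isomorphism is \eqref{eq:Hom-idempotent} applied with the idempotent $e$ and the $C$-comodule $C$ (so $eC = e \rightharpoonup C$); naturality in $W$ forces $\coHom^C(eC, C) \cong (eC)^* \cong C e$. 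Keeping track of the left $C$-comodule structures through these isomorphisms shows the left coaction on $\coHom^C(eC,C)$ is the one coming from $C$ restricted to $Ce$, so $\coHom^C(eC,C) = Ce \in {}^C\sfInj$ since $Ce$ is an injective left $C$-comodule with socle of finite length (it is a finite direct sum of indecomposable injective left $C$-comodules $Ce_i$, using that $e$ is a sum of primitive idempotents). This establishes well-definedness.

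For the anti-equivalence, I would run the symmetric construction: the assignment $F \mapsto \coHom_{{}^C\Mod}(F, C) = {}^C\coHom(F,C)$ gives a contravariant functor ${}^C\sfInj \to \sfInj^C$ (apply everything above to $C^{\cop}$), and I claim it is quasi-inverse to \eqref{eq:coHom-restricted-to-inj}. On indecomposable objects this is the statement $\coHom^C(eC, C) = Ce$ and ${}^C\coHom(Ce, C) = eC$, which chase through to the identity up to natural isomorphism; the unit and counit of the (anti-)adjunction are built from the canonical dinatural/evaluation maps and from \eqref{eq:coHom-dual}. Alternatively — and this is probably the slicker write-up — I would combine the anti-equivalence $(-)^* : \fdmod^C \to {}^C\fdmod$ (from Lemma~\ref{lem:DNR-Lem-2-2-12}) with the already-recorded description of $\coHom$ on finite-dimensional pieces, and pass to the completed (ind-)setting: $E \mapsto \coHom^C(E,C)$ is, up to the identifications above, the "transpose" of the duality functor, hence an anti-equivalence because duality is. The main obstacle I anticipate is bookkeeping rather than conceptual: verifying that the several natural isomorphisms (the copower-adjunction \eqref{eq:def-coHom}, the idempotent identity \eqref{eq:Hom-idempotent}, and $C$-bicomodule structure maps) are compatible with the left $C$-comodule structure on $\coHom^C(E,C)$ defined in Subsection~\ref{subsec:cohom-functor}, so that the equivalence is one of linear categories with all comodule structures intact, and that the finite-length-socle condition is preserved in both directions — both of which reduce, via the decomposition into indecomposable injectives, to the single computation $\coHom^C(eC,C) \cong Ce$ done with care.
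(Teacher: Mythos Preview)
Your overall strategy is sound, and the pivotal computation you propose is exactly the one the paper uses. There are, however, two slips and one methodological difference worth noting.

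\emph{Notational swap.} You have interchanged $Ce$ and $eC$ throughout. In the paper's conventions, the indecomposable injectives in $\sfInj^C$ are of the form $Ce$ (the image of the \emph{right} $C^*$-action of $e$ on $C$, which is a right $C$-subcomodule), while those in ${}^C\sfInj$ are of the form $eC$. Your ``$eC = e\rightharpoonup C$'' is a left $C$-subcomodule, not a right one, so statements like ``$E\cong\bigoplus_i e_iC$ in $\sfInj^C$'' are not well-posed as written. Correcting this, formula \eqref{eq:Hom-idempotent} with $M=C$ reads $\Hom^C(C, W\otimes_{\bfk} Ce)\cong\Hom_{\bfk}(eC,W)$, whence $\coHom^C(Ce,C)\cong eC$.

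\emph{Extra dual in the Yoneda step.} From $\Hom_{\bfk}(\coHom^C(Ce,C),W)\cong\Hom_{\bfk}(eC,W)$ natural in $W$ one concludes $\coHom^C(Ce,C)\cong eC$ directly, \emph{not} $(eC)^*$. Your interposed dual would only be harmless in the finite-dimensional case, but here only the \emph{socle} is assumed to have finite length; $Ce$ and $eC$ may well be infinite-dimensional. For the same reason, your alternative route via the duality $(-)^*:\fdmod^C\to{}^C\fdmod$ does not apply, since objects of $\sfInj^C$ need not lie in $\fdmod^C$.

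\emph{Comparison with the paper.} Once the above is repaired, your plan to exhibit ${}^C\coHom(-,C)$ as a quasi-inverse is a valid alternative. The paper instead proves fully faithfulness directly: using \eqref{eq:Hom-idempotent} (and its $C^{\cop}$-version) it identifies both $\Hom^C(Ce,Cf)$ and ${}^C\Hom(fC,eC)$ with $eC^*f$, and checks that under these identifications the map induced by the functor is the identity. This is shorter and sidesteps the naturality verification for the would-be unit and counit, which you left as an unexplained ``chase.''
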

\begin{proof}
  Let $e$ be an idempotent for $C$.
  Then, by~\eqref{eq:Hom-idempotent}, we have an isomorphism
  \begin{equation}
    \label{eq:coHom-idempotent}
    \coHom^C(C e, C) \cong e C
  \end{equation}
  of left $C$-comodules \cite[Corollary 1.6]{MR1904645}. Thus the functor \eqref{eq:coHom-restricted-to-inj} is essentially surjective. For two idempotents $e$ and $f$ for $C$, we have $\Hom^C(C e, C f) \cong (f C e)^* \cong e C^* f$ by \eqref{eq:Hom-idempotent}. By applying this formula to $C^{\cop}$, we also have an isomorphism
  ${}^C\Hom(f C, e C)
  \cong f *^{\cop} (C^{\cop})^* *^{\cop} e
  = e C^* f$, where $*^{\cop}$ is the convolution product for $(C^{\cop})^*$. If we identify
  $\Hom^C(C e, C f) = e C^* f = {}^C\Hom(f C, e C)$
  by these isomorphisms, the map $\Hom^C(C e, C f) \to {}^C\Hom(f C, e C)$ induced by the functor \eqref{eq:coHom-restricted-to-inj} is the identity. Thus \eqref{eq:coHom-restricted-to-inj} is fully faithful. The proof is done.
\end{proof}

Since the duality functor $(-)^*$ is an anti-equivalence between ${}^C\fdmod$ and $\fdmod^C$, it restricts to an anti-equivalence between ${}^C\fdInj := \Inj({}^C\fdmod)$ and $\fdPro^C := \Proj(\fdmod^C)$. The discussion so far is summarized into the diagram of Figure~\ref{fig:the-discussion-so-far}, which commutes up to isomorphisms.

\begin{figure}
    \begin{equation*}
  \begin{tikzcd}[column sep = 80pt, row sep = 24pt]
    \Mod^C \arrow[rr, "{\Nakl_C}"] & & \Mod^C \\
    \qfcomod^C
    \arrow[u, hookrightarrow]
    \arrow[r, "{\coHom^C(-,C)}"]
    & {}^C\Mod \arrow[r, "{(-)^{*\rat}}"]
    & \Mod^C \arrow[u, equal] \\
    \sfInj^C
    \arrow[u, hookrightarrow]
    \arrow[r, "C e \mapsto e C", "\text{anti-equivalence}"']
    & {}^C\sfInj
    \arrow[u, hookrightarrow]
    \arrow[r, "{(-)^{*\rat}}"]
    & \Mod^C \arrow[u, equal] \\
    & {}^C\fdInj
    \arrow[u, hookrightarrow]
    \arrow[r, "{(-)^*}", "\text{anti-equivalence}"']
    & \fdPro^C
    \arrow[u, hookrightarrow]
  \end{tikzcd}
\end{equation*}
    \caption{Functors between full subcategories}
    \label{fig:the-discussion-so-far}
\end{figure}

It is known that the Nakayama functor $A^* \otimes_A (-)$ for a finite-dimensional algebra $A$ restricts to an equivalence from $\Proj({}_A\Mod_{\fd})$ to $\Inj({}_A \Mod_{\fd})$. For coalgebras, a similar result does not hold in general as Figure~\ref{fig:the-discussion-so-far} suggests. In fact, there is an example of a coalgebra $C$ and $E \in \fdInj^C$ such that $\Nakl_C(E)$ is not projective (see Example~\ref{ex:counter-example-0}).

As we have discussed in Remark~\ref{rem:Chin-Simson}, the Nakayama functor of Chin and Simson \cite{MR2684139} is a restriction of the functor $\Nakl_C$. It has been noted in \cite[p.\ 2228]{MR2684139} without a proof that $\Nakl_C$ restricts to an equivalence from $\sfInj^C$ to $\fdPro^C$ provided that $C$ is right semiperfect. We give a proof of this result for the sake of completeness, as well as explaining that its quasi-inverse is the restriction of $\Nakr_C$.

\begin{lemma}
  \label{lem:Nakayama-semiperfect-1}
  If $C$ is right semiperfect, then the functors $\Nakl_C : \sfInj^C \to \fdPro^C$ and $\Nakr_C : \fdPro^C \to \sfInj^C$ are mutually quasi-inverse to each other.
\end{lemma}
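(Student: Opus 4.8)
The plan is to exploit the diagram in Figure~\ref{fig:the-discussion-so-far} together with the adjunction $\Nakr_C \dashv \Nakl_C$ from Lemma~\ref{lem:Nakayama-adj}. First I would verify that $\Nakl_C$ actually lands in $\fdPro^C$ when restricted to $\sfInj^C$. Since every object of $\sfInj^C$ is a finite direct sum of comodules of the form $Ce$ with $e$ a primitive idempotent, and both $\Nakl_C$ and the functors in the middle column of the figure are additive, it suffices to treat $E = Ce$. Chasing the figure, $\Nakl_C(Ce) \cong \coHom^C(Ce, C)^{*\rat} \cong (eC)^{*\rat}$; because $C$ is right semiperfect, $eC$ is a finite-dimensional injective left $C$-comodule (it is a direct summand of an injective hull of a simple left comodule, which is finite-dimensional by Lemma~\ref{lem:DNR-thm-323}), so by Lemma~\ref{lem:DNR-Cor-2-4-20} its dual $(eC)^*$ is a finite-dimensional projective right $C$-comodule, already rational, hence an object of $\fdPro^C = \Proj(\fdmod^C)$. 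This shows $\Nakl_C$ restricts to a functor $\sfInj^C \to \fdPro^C$.

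Next I would show $\Nakr_C$ restricts to a functor $\fdPro^C \to \sfInj^C$. Again by additivity it is enough to check this on an indecomposable projective, and by Lemma~\ref{lem:DNR-thm-323-plus} every object of $\fdPro^C$ is a finite direct sum of comodules of the form $P(S) = E(S^*)^*$ for $S$ a simple right comodule. Alternatively, and more cleanly, I would use Lemma~\ref{lem:coHom-finite-2}: for $P \in \fdPro^C$ and any $M \in \Mod^C$ we have $\Nakr_C(M) = C \otimes_{C^*} M$, and I want to identify $\Nakr_C(P)$. Writing $P = (eC)^*$ for an idempotent $e$ (which is possible since $P^*$ is a finite-dimensional injective left comodule, hence of the form $eC$ up to the anti-equivalences in the bottom of the figure), I would compute $\Nakr_C(P) = C \otimes_{C^*} (eC)^*$ and show this is isomorphic to $Ce$, so that it lands in $\sfInj^C$. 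This identification is the computational heart of the proof; it should follow from Lemma~\ref{lem:coHom-finite-2} applied with roles adjusted, namely $C \otimes_{C^*} (eC)^* \cong \coHom^C((eC)^{**}, C)^{\,\text{?}}$, or more directly from the standard isomorphisms $eC^*f$ computing the relevant Hom-spaces as in the proof of the previous lemma.

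Finally, having the two restricted functors, I would show they are mutually quasi-inverse. The adjunction $\Nakr_C \dashv \Nakl_C$ restricts to an adjunction between the restricted functors, so it suffices to prove that the unit $u_M : M \to \Nakl_C\Nakr_C(M)$ is an isomorphism for $M \in \fdPro^C$ and the counit $e_M : \Nakr_C\Nakl_C(M) \to M$ is an isomorphism for $M \in \sfInj^C$. By additivity it is enough to check this on indecomposables, i.e.\ on $Ce$ and on $(eC)^*$ respectively. On $Ce$ I would trace through the chain $Ce \xmapsto{\Nakl_C} (eC)^* \xmapsto{\Nakr_C} C \otimes_{C^*}(eC)^* \cong Ce$ and verify the composite is the counit; on $(eC)^*$ the analogous chain gives the unit. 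These verifications reduce, via Figure~\ref{fig:the-discussion-so-far} and the explicit isomorphisms $\eqref{eq:Hom-idempotent}$, $\eqref{eq:coHom-idempotent}$, to the fact that the anti-equivalence $Ce \mapsto eC$ composed with the duality and the rational-part functor is, on each factor, an honest equivalence with the stated quasi-inverse.

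The main obstacle I anticipate is the bookkeeping in the second and third steps: pinning down the isomorphism $\Nakr_C\big((eC)^*\big) \cong Ce$ \emph{compatibly} with the adjunction, so that the unit and counit come out to be the identity under these identifications rather than merely \emph{some} isomorphism. Here the right semiperfectness is essential — it guarantees the injective hulls of simple left comodules are finite-dimensional, which is what forces $eC$ to be finite-dimensional and lets Lemmas~\ref{lem:DNR-Cor-2-4-20}, \ref{lem:fd-injective}, \ref{lem:coHom-finite-1} and \ref{lem:coHom-finite-2} all apply. Once the indecomposable case is settled with matching (co)units, additivity finishes the argument.
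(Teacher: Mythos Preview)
Your outline is correct and shares the paper's overall structure, but you are missing a shortcut that removes the bookkeeping you flag as the main obstacle. In your Step~1 you only conclude that $\Nakl_C$ restricts to a \emph{functor} $\sfInj^C \to \fdPro^C$; the paper extracts more from Figure~\ref{fig:the-discussion-so-far}. Right semiperfectness gives ${}^C\sfInj = {}^C\fdInj$, so the middle row of the figure becomes a composition of two anti-equivalences, and hence $\Nakl_C|_{\sfInj^C}$ is already an \emph{equivalence} onto $\fdPro^C$. Once that is in hand, Step~3 collapses: after checking (as you do) that $\Nakr_C$ sends $\fdPro^C$ into $\sfInj^C$, the adjunction $\Nakr_C \dashv \Nakl_C$ restricts, and a left adjoint of an equivalence is automatically a quasi-inverse. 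No explicit verification of the unit and counit on indecomposables is needed, and the compatibility issue you worry about never arises.

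For Step~2, the paper's computation is also cleaner than the route you sketch: rather than working with $C \otimes_{C^*} (eC)^*$ directly, it uses the identification $(eC)^* \cong C^* e$ of left $C^*$-modules, from which $\Nakr_C(C^* e) = C \otimes_{C^*} C^* e \cong C e$ is immediate. Your approach via Lemma~\ref{lem:coHom-finite-2} would also work, but this is more direct.
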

\begin{proof}
  Suppose that $C$ is right semiperfect. Then every simple left $C$-comodule has a finite-dimensional injective hull \cite[Theorem 3.2.3]{MR1786197}, and thus the equation ${}^C\sfInj = {}^C\fdInj$ holds. By the diagram of Figure \ref{fig:the-discussion-so-far}, we see that the functor $\Nakl_C : \sfInj^C \to \fdPro^C$ is an equivalence as a composition of two anti-equivalences.

  To show that $\Nakr_C : \fdPro^C \to \sfInj^C$ is a quasi-inverse of $\Nakl_C : \sfInj^C \to \fdPro^C$, we first note that every object of $\fdPro^C$ is isomorphic to the direct sum of right $C$-comodules of the form $(e C)^* \cong C^* e$ for some primitive idempotent $e$ for $C$. For such an idempotent $e$, there is an isomorphism
  \begin{equation*}
    \Nakr_C(C^* e) = C \otimes_{C^*} C^* e \cong C e
  \end{equation*}
  of right $C$-comodules.
  Thus $\Nakr_C(P)$ belongs to $\sfInj^C$ whenever $P \in \fdPro^C$.
  This implies that the adjunction $\Nakr_C \dashv \Nakl_C$ restricts to an adjunction between $\sfInj^C$ and $\fdPro^C$. Since $\Nakl_C : \fdPro^C \to \sfInj^C$ is an equivalence, its left adjoint $\Nakr_C : \fdPro^C \to \sfInj^C$ is a quasi-inverse of it. The proof is done.
\end{proof}

The Nakayama functor for a finite-dimensional algebra sends the injective hull of a simple module to the projective cover of that simple module. We now prove an analogous result for right semiperfect coalgebras as follows:
Suppose that $C$ is a right semiperfect coalgebra.
Given $V \in \fdmod^C$, we denote by $E(V)$ and $P(V)$ the injective hull and the projective cover of $V$, respectively.

\begin{lemma}
  \label{lem:Nakayama-semiperfect-2}
  There are isomorphisms
  \begin{equation*}
    \Nakl_C(E(S)) \cong P(S)
    \quad \text{and} \quad
    \Nakr_C(P(S)) \cong E(S)
  \end{equation*}
  for each simple right $C$-comodule $S$.
\end{lemma}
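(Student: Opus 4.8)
The plan is to combine Lemma~\ref{lem:Nakayama-semiperfect-1} with the observation that, for a right semiperfect coalgebra, the injective hull of a simple comodule is finite-dimensional and the passage between projective covers and injective hulls is governed by the duality functor. First I would recall from Lemma~\ref{lem:DNR-thm-323} that every simple left $C$-comodule has a finite-dimensional injective hull, so the same holds for simple right comodules; hence $E(S) \in \sfInj^C$ for every simple $S \in \fdmod^C$, and $E(S)$ is finite-dimensional. Since $\Nakl_C : \sfInj^C \to \fdPro^C$ is an equivalence by Lemma~\ref{lem:Nakayama-semiperfect-1}, the object $\Nakl_C(E(S))$ is a finite-dimensional projective right $C$-comodule, and it remains to identify it with $P(S)$.

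To pin down $\Nakl_C(E(S))$ as an explicit projective cover, I would work with primitive idempotents. By \cite[Proposition 1.13]{MR1904645} an indecomposable injective right $C$-comodule is of the form $Ce$ for a primitive idempotent $e$, and for a simple $S$ the injective hull $E(S)$ is of this form; say $E(S) \cong Ce$, where $e$ is chosen so that $S \cong \socle(Ce)$. As computed in the proof of Lemma~\ref{lem:Nakayama-semiperfect-1}, $\Nakr_C(C^* e) \cong Ce$, and since $\Nakr_C$ is a quasi-inverse to $\Nakl_C$ on these subcategories, $\Nakl_C(Ce) \cong C^* e \cong (eC)^*$. Now $eC$ is an indecomposable injective left $C$-comodule with simple socle, so $(eC)^* \cong C^* e$ is an indecomposable projective right $C$-comodule; by the standard correspondence (via Lemma~\ref{lem:DNR-Cor-2-4-20} and the duality $(-)^*$ between ${}^C\fdmod$ and $\fdmod^C$) its top is the simple right comodule dual to $\socle(eC)$. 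I would then check that this simple top is exactly $S$: dualizing the inclusion $\socle(eC) \hookrightarrow eC$ gives a surjection $(eC)^* \twoheadrightarrow \socle(eC)^*$, and tracking $e$ through the isomorphisms $\Hom^C(Ce, Cf) \cong eC^* f \cong {}^C\Hom(fC, eC)$ used in the proof of the anti-equivalence \eqref{eq:coHom-restricted-to-inj} identifies $\socle(eC)^*$ with $\socle(Ce) \cong S$. Hence $\Nakl_C(E(S))$ is an indecomposable projective with top $S$, i.e. $\Nakl_C(E(S)) \cong P(S)$. Applying $\Nakr_C$ and using that it is quasi-inverse to $\Nakl_C$ on $\fdPro^C \leftrightarrows \sfInj^C$ gives $\Nakr_C(P(S)) \cong E(S)$ at once.

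The main obstacle I anticipate is the bookkeeping in the last step: one must make sure the idempotent $e$ realizing $E(S) \cong Ce$ is the same (up to conjugacy) as the one realizing $P(S) \cong C^* e$, and that the simple quotient of $C^* e$ is genuinely $S$ rather than some other simple comodule in the same block. This is a matter of carefully tracing the socle/top through the chain of (anti-)equivalences in Figure~\ref{fig:the-discussion-so-far}, using that $(-)^*$ interchanges socles and tops and that $\socle$ of an indecomposable injective determines it. An alternative, perhaps cleaner, route avoiding explicit idempotents: observe that $\Nakl_C$ is left exact and, being part of the equivalence $\sfInj^C \approx \fdPro^C$, sends the (essential) simple socle inclusion $S \hookrightarrow E(S)$ to a monomorphism $\Nakl_C(S) \hookrightarrow \Nakl_C(E(S)) = P(S)$; since $P(S)$ is indecomposable projective with simple top $S$ and finite length, and $\Nakl_C$ restricted to $\fdmod^C$ is again an equivalence onto its image preserving length (so $\Nakl_C(S)$ is simple), a socle/top count forces $\Nakl_C(S) \cong \socle(P(S))$ and $\Nakl_C(E(S)) \cong P(S)$. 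I would present whichever of these is shorter once the details are written out, but both reduce the statement to Lemma~\ref{lem:Nakayama-semiperfect-1} plus elementary socle--top duality.
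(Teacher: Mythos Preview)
Your primary approach is essentially the paper's: pick a primitive idempotent $e$ with $E(S)\cong Ce$, identify $\Nakl_C(Ce)\cong(eC)^*$, and then check that $\socle(eC)\cong S^*$ so that $(eC)^*$ is a projective cover of $S$. The paper does this last step by the one-line Hom computation
\[
{}^C\Hom(S^*,eC)\ \cong\ (S^*e)^*\ \cong\ (eS)^{**}\ \cong\ \Hom^C(S,Ce)^*\neq 0,
\]
which is exactly the content of the isomorphisms you invoke from the anti-equivalence \eqref{eq:coHom-restricted-to-inj}; so there is no real difference in strategy.

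Two corrections are worth making. First, your opening claim that ``$E(S)$ is finite-dimensional'' is not justified and is in general false for a merely \emph{right} semiperfect coalgebra: Lemma~\ref{lem:DNR-thm-323} gives finite-dimensional injective hulls for simple \emph{left} comodules, not right ones. Fortunately you do not need it; $E(S)\in\sfInj^C$ holds simply because its socle $S$ is simple, and what must be finite-dimensional is $eC$ (which it is, by right semiperfectness). Second, your ``alternative route'' is circular as written: you label $\Nakl_C(E(S))$ as $P(S)$ before that is established, and the assertion that $\Nakl_C$ restricted to $\fdmod^C$ is an equivalence (hence length-preserving) is not available here---at this stage only the equivalence on $\sfInj^C\leftrightarrows\fdPro^C$ from Lemma~\ref{lem:Nakayama-semiperfect-1} is known. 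Drop the alternative and keep the idempotent argument.
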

\begin{proof}
  Let $S$ be a simple right $C$-comodule, and let $e$ be a primitive idempotent for $C$ such that $E(S) \cong C e$ as right $C$-comodules.
  Then we have isomorphisms
  \begin{equation}
    \label{eq:Nakayama-semiperfect-2-proof-1}
    {}^C\Hom(S^*, e C)
    \mathop{\cong}^{\eqref{eq:Hom-idempotent}} (S^* e)^*
    \cong  (e S)^{**}
    \mathop{\cong}^{\eqref{eq:Hom-idempotent}} \Hom^C(S ,C e)^* \ne 0.
  \end{equation}
  Since $e$ is primitive, $e C$ is an injective hull of a simple left $C$-comodule.
  By \eqref{eq:Nakayama-semiperfect-2-proof-1}, $e C$ is in fact an injective hull of $S^*$.
  Thus $\Nakl_C(E(S)) \cong (e C)^*$ is a projective cover of $S^{**} \cong S$.
  Hence the former isomorphism in the claim is obtained.
  The latter one is obtained by Lemma~\ref{lem:Nakayama-semiperfect-1} and the former one.
\end{proof}

For our applications of the Nakayama functor, it is important to know when $\fdmod^C$ is preserved by $\Nakl_C$ and $\Nakr_C$.
The following Lemmas~\ref{lem:Nakayama-semiperfect-4} and \ref{lem:Nakayama-semiperfect-5} give sufficient conditions:

\begin{lemma}
  \label{lem:Nakayama-semiperfect-4}
  Suppose that $C$ is right semiperfect. Then we have
  \begin{equation*}
    \Nakl_C(M) = \Hom^C(C, M)
    \quad \text{and} \quad
    \dim_{\bfk} \Nakl_C(M) < \infty
  \end{equation*}
  for all $M \in \fdmod^C$. In particular, $\Nakl_C$ preserves $\fdmod^C$.
\end{lemma}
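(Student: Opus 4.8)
The plan is to reduce the assertion to the case where the comodule is injective by using the left exactness of $\Hom^C(C,-)$, and to settle that case explicitly via the idempotent formula~\eqref{eq:Hom-idempotent}. So let $M\in\fdmod^C$. Since $\socle M$ is finite-dimensional, $M$ embeds into its injective hull $E(M)=E(\socle M)\cong Ce_1\oplus\dots\oplus Ce_k$, a \emph{finite} direct sum of indecomposable injective right $C$-comodules attached to primitive idempotents $e_1,\dots,e_k$ for $C$ (cf.\ \cite[Proposition 1.13]{MR1904645}). The $C^*$-action on $\Hom^C(C,N)$ is defined through the first variable (recall $(c^*\cdot f)(c)=\langle c^*,c_{(1)}\rangle f(c_{(2)})$), so postcomposition with the comodule inclusion $M\hookrightarrow E(M)$ yields a morphism of left $C^*$-modules
\[
  \Hom^C(C,M)\longrightarrow \Hom^C(C,E(M))\cong\bigoplus_{i=1}^{k}\Hom^C(C,Ce_i),
\]
which is injective because $\Hom^C(C,-)$, being right adjoint to $C\otimes_{C^*}(-)\colon{}_{C^*}\Mod\to\Mod^C$, is left exact. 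A $C^*$-submodule of a finite-dimensional rational module is again finite-dimensional and rational, so it suffices to prove that each $\Hom^C(C,Ce_i)$ is a finite-dimensional rational left $C^*$-module; then $\Hom^C(C,M)$ is finite-dimensional and rational, whence $\Nakl_C(M)=\Hom^C(C,M)^{\rat}=\Hom^C(C,M)\in\fdmod^C$, which also gives the final assertion.

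For the injective case, fix $i$ and put $S_i:=\socle(Ce_i)$, a simple right $C$-comodule with $E(S_i)\cong Ce_i$. Taking $W=\bfk$ in~\eqref{eq:Hom-idempotent} gives a linear isomorphism $\Hom^C(C,Ce_i)\cong(e_iC)^*$. Since $C$ is right semiperfect, $e_iC$ is the injective hull in ${}^C\Mod$ of a simple left $C$-comodule and is therefore finite-dimensional (this is the equality ${}^C\sfInj={}^C\fdInj$ used in the proof of Lemma~\ref{lem:Nakayama-semiperfect-1}); hence $\Hom^C(C,Ce_i)$ is finite-dimensional, of dimension $\dim_{\bfk}e_iC$. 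On the other hand, Lemma~\ref{lem:Nakayama-semiperfect-2} gives $\Hom^C(C,Ce_i)^{\rat}=\Nakl_C(E(S_i))\cong P(S_i)$, and in the proof of that lemma one sees that $e_iC$ is an injective hull of $S_i^*$, so $P(S_i)=E(S_i^*)^*\cong(e_iC)^*$ and $\dim_{\bfk}\Hom^C(C,Ce_i)^{\rat}=\dim_{\bfk}e_iC$. Thus the rational submodule $\Hom^C(C,Ce_i)^{\rat}$ of $\Hom^C(C,Ce_i)$ has the same finite dimension as the ambient module, so the two coincide; in particular $\Hom^C(C,Ce_i)$ is finite-dimensional and rational, as required.

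The one genuinely delicate point is the equality $\Nakl_C(M)=\Hom^C(C,M)$, i.e.\ that $\Hom^C(C,M)$ is \emph{already} rational: a finite-dimensional $C^*$-module need not be rational, so bounding dimensions alone is not enough. The device above circumvents this by exhibiting $\Hom^C(C,M)$ as a submodule of an honestly rational module, the essential input being the finite-dimensionality of $e_iC$, which is exactly where right semiperfectness of $C$ is used; the dimension count via~\eqref{eq:Hom-idempotent} and Lemma~\ref{lem:Nakayama-semiperfect-2} then upgrades the rational part to the full module in the injective case. The remaining ingredients---that an additive functor preserves finite direct sums, that $\Hom^C(C,-)$ is left exact, and that rational modules are closed under submodules---are routine.
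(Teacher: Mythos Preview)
Your proof is correct and follows essentially the same strategy as the paper's: establish the equality $\Hom^C(C,Ce_i)=\Nakl_C(Ce_i)$ for indecomposable injectives by identifying both sides with $(e_iC)^*$ (finite-dimensional by right semiperfectness), then reduce the general case via left exactness. Your reduction is in fact slightly cleaner than the paper's: you use a single embedding $M\hookrightarrow E(M)$ together with the closure of rational modules under submodules, whereas the paper takes a two-term injective resolution $0\to M\to E\to E'$ and compares kernels; but the essential content is the same.
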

\begin{proof}
  We consider two functors $F, G: \Mod^C \to \Vect$ defined by
  \begin{equation*}
    F(M) = \Nakl_C(M), \quad G(M) = \Hom^C(C, M) \quad (M \in \Mod^C).
  \end{equation*}
  Let $S$ be a simple right $C$-comodule, and let $e$ be a primitive idempotent for $C$ such that $C e$ is an injective hull of $S$.
  Then $e C$ is finite-dimensional by the right semiperfectness of $C$ and, as we have seen in the proof of Lemma~\ref{lem:Nakayama-semiperfect-2}, $(e C)^*$ is a projective cover of $S$.
  There are isomorphisms
  \begin{equation*}
    G(C e)
    = \Hom^C(C, C e)
    \mathop{\cong}^{\eqref{eq:coHom-dual}}
    \coHom^C(C e, C)^*
    \mathop{\cong}^{\eqref{eq:coHom-idempotent}} (e C)^*
    \cong F(C e)
  \end{equation*}
  of vector spaces. Thus, for all objects $M \in \sfInj^C$, we have
  \begin{equation}
    \label{eq:Nakayama-semiperfect-4-proof-1}
    \dim_{\bfk} F(M) = \dim_{\bfk} G(M) < \infty.
  \end{equation}
  Since $F$ and $G$ are left exact, \eqref{eq:Nakayama-semiperfect-4-proof-1} actually holds for all right $C$-comodule $M$ fitting into an exact sequence of the form $0 \to M \to E \to E'$ in $\Mod^C$ for some $E, E' \in \sfInj^C$. In particular, \eqref{eq:Nakayama-semiperfect-4-proof-1} holds for all $M \in \fdmod^C$.
  Since $G(M) \subset F(M)$, we have $F(M) = G(M)$ for all $M \in \fdmod^C$. The proof is done.
\end{proof}

\begin{lemma}
  \label{lem:Nakayama-semiperfect-5}
  If $C$ is left semiperfect, then we have $\Nakr_C(M) \in \fdmod^C$ for all $M \in \fdmod^C$.
\end{lemma}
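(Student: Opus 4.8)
The plan is to deduce the statement from Lemma~\ref{lem:Nakayama-semiperfect-4} applied to the co-opposite coalgebra $C^{\cop}$, via the ``dualize the module'' trick. The starting observation is that $C$ is left semiperfect if and only if $C^{\cop}$ is right semiperfect, because left $C$-comodules are exactly right $C^{\cop}$-comodules and ${}^C\Mod = \Mod^{C^{\cop}}$. Moreover, for $M \in \fdmod^C$, the dual $M^* = \Hom_{\bfk}(M, \bfk)$ is a finite-dimensional left $C$-comodule by Lemma~\ref{lem:DNR-Lem-2-2-12}, i.e.\ an object of $\fdmod^{C^{\cop}}$.

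The heart of the argument is a natural isomorphism of vector spaces
\begin{equation*}
  \Nakr_C(M)^* \cong \Hom^{C^{\cop}}(C^{\cop}, M^*) \qquad (M \in \fdmod^C).
\end{equation*}
I would obtain it by applying the tensor--Hom adjunction to $\Nakr_C(M)^* = \Hom_{\bfk}(C \otimes_{C^*} M, \bfk)$: this gives $\Hom_{C^*}(C, M^*)$, where $C$ is regarded as a right $C^*$-module through the left $C$-coaction (which is precisely the right $C^*$-module structure used in the definition of $\Nakr_C$) and $M^*$ as the right $C^*$-module dual to the rational left $C^*$-module $M$. Since $C$, viewed as a left $C$-comodule, and $M^*$ are both rational, every $C^*$-linear map between them is a morphism of left $C$-comodules, so $\Hom_{C^*}(C, M^*) = {}^C\Hom(C, M^*) = \Hom^{C^{\cop}}(C^{\cop}, M^*)$.

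It remains to note that the right-hand side is finite-dimensional. Indeed, $C^{\cop}$ is right semiperfect and $M^* \in \fdmod^{C^{\cop}}$, so Lemma~\ref{lem:Nakayama-semiperfect-4} applied to $C^{\cop}$ yields $\Hom^{C^{\cop}}(C^{\cop}, M^*) = \Nakl_{C^{\cop}}(M^*)$ together with $\dim_{\bfk}\Nakl_{C^{\cop}}(M^*) < \infty$. Hence $\Nakr_C(M)^*$ is finite-dimensional, and therefore so is $\Nakr_C(M)$; as $\Nakr_C(M)$ is a right $C$-comodule by construction, this gives $\Nakr_C(M) \in \fdmod^C$.

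The only delicate point is the bookkeeping of the various left/right module and comodule structures over $C$, $C^{\cop}$ and $C^*$ in the displayed isomorphism, so that the tensor--Hom adjunction produces exactly ${}^C\Hom(C, M^*)$ rather than some twist of it; once the structures are matched, finiteness is immediate from Lemma~\ref{lem:Nakayama-semiperfect-4}, and I expect no further obstacle. In fact the same computation gives a natural isomorphism $\Nakr_C(M)^* \cong \Nakl_{C^{\cop}}(M^*)$, which is the expected counterpart, under $(-)^*$ and $C \leftrightarrow C^{\cop}$, of the relation between $\Nakl$ and right semiperfectness established in Lemma~\ref{lem:Nakayama-semiperfect-4}.
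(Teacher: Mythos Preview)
Your proof is correct and follows essentially the same route as the paper: dualize $\Nakr_C(M)$, apply the tensor--Hom adjunction to obtain $\Hom_{C^*}(C,M^*)={}^C\Hom(C,M^*)=\Nakl_{C^{\cop}}(M^*)$, and then invoke Lemma~\ref{lem:Nakayama-semiperfect-4} for the right semiperfect coalgebra $C^{\cop}$ to conclude finite-dimensionality. Your additional discussion of the left/right bookkeeping is accurate and makes explicit what the paper leaves implicit.
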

\begin{proof}
  By the assumption, $C^{\cop}$ is {\em right} semiperfect. For $M \in \fdmod^C$, we have
  \begin{gather*}
    \Nakr_C(M)^*
    = \Hom_{\bfk}(C \otimes_{C^*} M, \bfk)
    \cong\Hom_{C^*}(C, \Hom_{\bfk}(M, \bfk)) \\
    = {}^C\Hom(C, M^*)
    = \Nakl_{C^{\cop}}(M^*),
  \end{gather*}
  where the last equality follows from Lemma~\ref{lem:Nakayama-semiperfect-4} applied to $C^{\cop}$.
  By that lemma, we also have that $\Nakl_{C^{\cop}}(M^*)$ is finite-dimensional.
  Thus $\Nakr_C(M) \in \fdmod^C$. The proof is done.
\end{proof}

By Lemmas~\ref{lem:Nakayama-semiperfect-1}, \ref{lem:Nakayama-semiperfect-4} and \ref{lem:Nakayama-semiperfect-5}, we now have the following theorem:

\begin{theorem}
  \label{thm:Nakayama-semiperfect}
  Suppose that $C$ is a semiperfect coalgebra.
  Then the following hold:
  \begin{enumerate}
  \item [\rm (a)] The adjunction $\Nakr_C \dashv \Nakl_C$ restricts to the adjunction
  \begin{equation*}
    \begin{tikzcd}[column sep = 120pt]
      \fdmod^C   \arrow[r, bend right=10, "{\Nakl_C}"']
      \arrow[r, phantom, "{\perp}"]
      & \fdmod^C \arrow[l, bend right=10, "{\Nakr_C}"']
    \end{tikzcd}
  \end{equation*}
  \item [\rm (b)] The functors $\Nakl_C : \fdInj^C \to \fdPro^C$ and $\Nakr_C : \fdPro^C \to \fdInj^C$ are mutually quasi-inverse to each other.
\end{enumerate}
\end{theorem}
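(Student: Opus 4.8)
The plan is to assemble the lemmas established in this subsection: since $C$ is semiperfect it is simultaneously left and right semiperfect, so every one-sided statement proved above is available in both flavours.

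For part~(a) I would argue as follows. First, $\Nakl_C$ carries $\fdmod^C$ into itself because $C$ is right semiperfect (Lemma~\ref{lem:Nakayama-semiperfect-4}), and $\Nakr_C$ carries $\fdmod^C$ into itself because $C$ is left semiperfect (Lemma~\ref{lem:Nakayama-semiperfect-5}). Once both functors are known to preserve the full subcategory $\fdmod^C \subseteq \Mod^C$, the restricted adjunction is purely formal: the natural isomorphism $\Hom^C(\Nakr_C(M),N) \cong \Hom^C(M,\Nakl_C(N))$ from Lemma~\ref{lem:Nakayama-adj}, evaluated at objects $M,N \in \fdmod^C$, is at the same time an isomorphism of hom-sets in $\fdmod^C$ by fullness of the inclusion, so it witnesses $\Nakr_C \dashv \Nakl_C$ on $\fdmod^C$.

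For part~(b) the only extra ingredient I need is the identification $\sfInj^C = \fdInj^C$ of full subcategories of $\Mod^C$, valid whenever $C$ is (left) semiperfect. The inclusion $\fdInj^C \subseteq \sfInj^C$ is immediate: a finite-dimensional comodule has finite length, hence finite-length socle, and by Lemma~\ref{lem:fd-injective} a finite-dimensional injective object of $\fdmod^C$ is injective in $\Mod^C$. For the opposite inclusion I would invoke Lemma~\ref{lem:DNR-thm-323} applied to $C^{\cop}$: left semiperfectness of $C$ means every simple right $C$-comodule has a finite-dimensional injective hull in $\Mod^C$, so every indecomposable injective right $C$-comodule is finite-dimensional; an object of $\sfInj^C$ is a finite direct sum of such indecomposables and is therefore finite-dimensional, hence lies in $\fdInj^C$ (again Lemma~\ref{lem:fd-injective}). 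Granting $\sfInj^C = \fdInj^C$, the assertion of part~(b) is exactly Lemma~\ref{lem:Nakayama-semiperfect-1}, which uses only right semiperfectness.

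I expect no real difficulty here; the statement is a packaging of earlier results. The one point that deserves care is the equivalence, for an injective right $C$-comodule, between having a finite-length socle and being finite-dimensional --- this is precisely where (left) semiperfectness enters and fails in general --- so I would make sure that step is spelled out rather than merely asserted.
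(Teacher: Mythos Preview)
Your proposal is correct and follows the same route as the paper: part~(a) is deduced from Lemmas~\ref{lem:Nakayama-semiperfect-4} and~\ref{lem:Nakayama-semiperfect-5}, and part~(b) from Lemma~\ref{lem:Nakayama-semiperfect-1} together with the identification $\sfInj^C = \fdInj^C$ coming from left semiperfectness. The paper's proof is more terse than yours (it simply asserts $\sfInj^C = \fdInj^C$ without spelling out either inclusion), but the underlying argument is identical.
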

\begin{proof}
  (a) This follows from Lemmas~\ref{lem:Nakayama-semiperfect-4} and \ref{lem:Nakayama-semiperfect-5}.
  (b) By the semiperfectness of $C$, we have not only ${}^C\sfInj = {}^C\fdInj$ but also $\sfInj^{C} = \fdInj^{C}$. Thus the claim follows from Lemma~\ref{lem:Nakayama-semiperfect-1}.
\end{proof}

It seems to be difficult to characterize when $\Nakl_C$ or $\Nakr_C$ preserves $\fdmod^C$.
Indeed, there is a right semiperfect coalgebra $C$ such that $\Nakl_C$ does not preserve $\fdmod^C$ (Example~\ref{ex:counter-example-1}). Similarly, there is a left semiperfect coalgebra $C$ such that $\Nakr_C$ does not preserve $\fdmod^C$ (Example~\ref{ex:counter-example-2}). On the other hand, there is a coalgebra $C$, which is neither left nor right semiperfect, such that both $\Nakl_C$ and $\Nakr_C$ preserve $\fdmod^C$ (Example~\ref{ex:counter-example-3}).

\subsection{Co-Frobenius coalgebras}

Here we consider the case where the coalgebra is {\em co-Frobenius} (see below).
Let $V$ be a vector space, and let $\beta : V \times V \to \bfk$ be a bilinear map. We say that $\beta$ is {\em left} ({\it resp}. {\em right}) {\em non-degenerate} if the map
\begin{equation*}
  V \to V^*, \quad v \mapsto \beta(v, -)
  \quad (\text{{\em resp}. $v \mapsto \beta(-, v)$})
\end{equation*}
is injective. If $\beta$ is both left and right non-degenerate, we say that $\beta$ is non-degenerate. The definition of a left and a right co-Frobenius coalgebra is found in \cite[Section 3.3]{MR1786197}. A co-Frobenius coalgebra is a left and right co-Frobenius coalgebra. According to \cite[Theorem 2.8]{MR2253657}, we adopt the following equivalent definition:

\begin{definition}
  \label{def:co-Frob}
  A {\em co-Frobenius coalgebra} is a coalgebra $C$ equipped with a non-degenerate $C^*$-balanced bilinear form $\beta : C \times C \to \bfk$, called the {\em Frobenius pairing}.
\end{definition}

Let $C$ be a co-Frobenius coalgebra with Frobenius pairing $\beta$.
It is known that a co-Frobenius coalgebra is semiperfect \cite[Section 3.3]{MR1786197}.
According to \cite[Section 1]{MR2078404}, the linear maps $\beta(c, -)$ and $\beta(-, c)$ belong to $C^{*\rat}$ ($= ({}_{C^*}C^*)^{\rat} = (C^*_{C^*})^{\rat}$) for all $c \in C$. Furthermore, two linear maps
\begin{equation*}
  \beta^{\L} : C \to C^{*\rat},
  \quad c \mapsto \beta(c, -)
  \quad \text{and} \quad
  \beta^{\R} : C \to C^{*\rat},
  \quad c \mapsto \beta(-,c)
\end{equation*}
are bijective. The following definition is due to \cite[Section 6]{MR2078404}.

\begin{definition}
  We call $\nu_C := (\beta^{\L})^{-1} \circ \beta^{\R}$ the {\em Nakayama automorphism} of $C$ with respect to the Frobenius pairing $\beta$.
\end{definition}

In other words, $\nu_C : C \to C$ is the unique linear map such that
\begin{equation}
  \label{eq:Nakayama-auto-def}
  \beta(y, x) = \beta(\nu_C(x), y)
  \quad (x, y \in C).
\end{equation}

\begin{lemma}
  \label{lem:Nakayama-auto-lemma-1}
  $\nu_C$ is a coalgebra automorphism of $C$.
\end{lemma}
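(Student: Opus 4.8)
The plan is to verify directly from the defining equation~\eqref{eq:Nakayama-auto-def} that $\nu_C$ is compatible with the comultiplication and counit. Since $\nu_C = (\beta^{\L})^{-1} \circ \beta^{\R}$ is a composite of bijective linear maps, it is automatically a bijective linear map; what remains is to check $\Delta \circ \nu_C = (\nu_C \otimes \nu_C) \circ \Delta$ and $\varepsilon \circ \nu_C = \varepsilon$. The main tool will be the $C^*$-balancedness of $\beta$, which unravels to
\begin{equation*}
  \beta(c^* \rightharpoonup x, y) = \beta(x, y \leftharpoonup c^*)
  \quad (x, y \in C,\ c^* \in C^*),
\end{equation*}
together with the non-degeneracy of $\beta$ on both sides.

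First I would record the translation of $C^*$-balancedness into Sweedler notation: writing out the left and right $C^*$-actions on $C$, balancedness reads $\langle c^*, x_{(1)} \rangle \beta(x_{(2)}, y) = \langle c^*, y_{(2)} \rangle \beta(x, y_{(1)})$ for all $c^* \in C^*$, hence $x_{(1)} \otimes \beta(x_{(2)}, y) = y_{(2)} \otimes \beta(x, y_{(1)})$ in $C \otimes \bfk \cong C$ after pairing, i.e.
\begin{equation*}
  \beta(x_{(2)}, y)\, x_{(1)} = \beta(x, y_{(1)})\, y_{(2)} \quad (x, y \in C).
\end{equation*}
To prove multiplicativity with respect to $\Delta$, I would test the elements $\Delta(\nu_C(x))$ and $(\nu_C \otimes \nu_C)(\Delta(x))$ of $C \otimes C$ against an arbitrary functional built from $\beta$. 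Concretely, for $y, z \in C$ one computes $(\beta(-, y) \otimes \beta(-, z)) \big( (\nu_C \otimes \nu_C)\Delta(x) \big) = \beta(\nu_C(x)_{(1)}, y)\,\beta(\ldots)$ is the wrong grouping; instead I would use~\eqref{eq:Nakayama-auto-def} to rewrite $\beta(\nu_C(x), w) = \beta(w, x)$ and reduce everything to identities among the $\beta(-, -)$ of the form displayed above, applied twice (once to split off $y$, once to split off $z$). Since $\{\beta(-, y) : y \in C\}$ spans a point-separating family of functionals on $C$ by right non-degeneracy, and likewise on the tensor square, equality of the two expressions against all such test functionals forces $\Delta \nu_C = (\nu_C \otimes \nu_C)\Delta$. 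The counit identity is easier: from $\beta(\nu_C(x), y) = \beta(y, x)$ one has, using the balancedness identity with the counit, $\beta(\nu_C(x), y) = \beta(y,x)$, and applying $\varepsilon$-type reductions (or simply comparing $\beta(\nu_C(x), -)$ with $\beta(-, x) = \beta(\nu_C(x), -)$ and using $\varepsilon$ as a counit for the coalgebra action) gives $\varepsilon(\nu_C(x)) = \varepsilon(x)$.

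The step I expect to be the main obstacle is the comultiplicativity: it requires carefully iterating the balancedness identity while keeping track of which tensor leg each Sweedler index sits in, and then invoking non-degeneracy on $C \otimes C$ rather than on $C$ — one must be sure that $\beta \otimes \beta$ is still non-degenerate, which follows formally from $\beta$ being non-degenerate but deserves an explicit remark. A cleaner alternative, which I would fall back on if the index-chase becomes unwieldy, is to exploit the bijections $\beta^{\L}, \beta^{\R} : C \to C^{*\rat}$: transport the coalgebra structure of $C$ along $\beta^{\L}$ to get a coalgebra structure on $C^{*\rat}$, do the same along $\beta^{\R}$, observe that both transported structures coincide because they are both characterized by the convolution-type formula coming from $\Delta$ being an algebra map's dual, and conclude that $\nu_C = (\beta^{\L})^{-1}\beta^{\R}$ intertwines them — i.e.\ is a coalgebra morphism. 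Either way the conclusion is that $\nu_C$ is an isomorphism of coalgebras.
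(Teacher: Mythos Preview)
Your plan for comultiplicativity via balancedness can be completed (iterating the identity $\beta(a_{(2)}, b)\,a_{(1)} = \beta(a, b_{(1)})\,b_{(2)}$ three times does yield $\beta(\nu_C(x)_{(1)}, y)\,\beta(\nu_C(x)_{(2)}, z) = \beta(\nu_C(x_{(1)}), y)\,\beta(\nu_C(x_{(2)}), z)$), but two points need correction. First, separating points of $C \otimes C$ by the family $\{\beta(-, y) \otimes \beta(-, z)\}$ does \emph{not} follow formally from non-degeneracy of $\beta$ when $C$ is infinite-dimensional: non-degeneracy only gives injectivity of $c \mapsto \beta(-, c)$, not that the image is large enough. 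What you need is that the image $\{\beta(-, y) : y \in C\} = C^{*\rat}$ is dense in $C^*$ in the finite topology (true because co-Frobenius implies semiperfect), so that $C^{*\rat} \to D^*$ is surjective for every finite-dimensional $D \subset C$, and hence $C^{*\rat} \otimes C^{*\rat}$ separates points of $C \otimes C$. Second, your fallback is stated backwards: there is no intrinsic coalgebra structure on $C^{*\rat}$ against which to compare the two transports of $\Delta$. What does work is the dual maneuver --- show that $\mu := \beta^{\L} \circ (\beta^{\R})^{-1}$ is an automorphism of the \emph{algebra} $(C^{*\rat}, *)$ (this is \cite[Proposition~2.1]{MR2078404}), observe that $\langle \mu(\xi), c \rangle = \langle \xi, \nu_C(c) \rangle$, and dualize using density of $C^{*\rat}$ to obtain $\Delta \circ \nu_C = (\nu_C \otimes \nu_C) \circ \Delta$. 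This is exactly the paper's proof, so your direct computation and your fallback, once repaired, both converge to it.

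Your counit argument has a genuine gap: you cannot access $\varepsilon(\nu_C(x))$ by pairing with $\beta$, because that would require $\varepsilon \in \{\beta(-, y) : y \in C\} = C^{*\rat}$, which fails whenever $C$ is infinite-dimensional ($C^{*\rat}$ is then a proper ideal of $C^*$, so it cannot contain the unit $\varepsilon$). The paper's argument sidesteps this: once $\nu_C$ is bijective and preserves $\Delta$, the composite $\varepsilon \circ \nu_C$ is a counit for $(C, \Delta)$ and hence equals $\varepsilon$ by uniqueness of the counit.
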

\begin{proof}
  We consider the linear automorphism $\mu := \beta^{\L} \circ (\beta^{\R})^{-1}$ on $C^{*\rat}$.
  According to \cite[Proposition 2.1]{MR2078404}, the following equation holds:
  \begin{equation*}
    (\beta^{\L})^{-1}(\beta^{\L}(x) * \beta^{\L}(y))
    = (\beta^{\R})^{-1}(\beta^{\R}(x) * \beta^{\R}(y))
    \quad (x, y \in C).
  \end{equation*}
  Thus, for $\xi, \zeta \in C^{*\rat}$ with $\xi = \beta^{\R}(x)$ and $\zeta = \beta^{\R}(y)$, we have
  \begin{align*}
    \mu(\xi * \zeta)
    & = \beta^{\L}((\beta^{\R})^{-1}(\beta^{\R}(x) * \beta^{\R}(y))) \\
    & = \beta^{\L}((\beta^{\L})^{-1}(\beta^{\L}(x) * \beta^{\L}(y)))
    = \mu(\xi) * \mu(\zeta).
  \end{align*}
  By the defining equation \eqref{eq:Nakayama-auto-def} of $\nu := \nu_C$, we also have
  \begin{equation*}
    \langle \mu(\xi), c \rangle = \langle \beta^{\L}(x), c \rangle
    = \beta(x, c)
    = \beta(\nu(c), x)
    = \langle \beta^{\R}(x), \nu(c) \rangle
    = \langle \xi, \nu(c) \rangle
  \end{equation*}
  for all $c \in C$. By the discussion so far, we obtain
  \begin{equation*}
    (\xi \otimes \zeta) \circ \Delta \circ \nu
    = \mu(\xi * \zeta)
    = \mu(\xi) * \mu(\zeta)
    = (\xi \otimes \zeta) \circ (\nu \otimes \nu) \circ \Delta
  \end{equation*}
  for all $\xi, \zeta \in C^{*\rat}$.
  Since $C^{*\rat}$ is dense in $C^*$, this implies $\Delta \circ \nu = (\nu \otimes \nu) \circ \Delta$. Namely, $\nu$ preserves the comultiplication of $C$. The bijectivity of $\nu$ ensures that $\nu$ also preserves the counit of $C$. In conclusion, $\nu$ is a coalgebra automorphism of $C$.
\end{proof}

\begin{lemma}
  \label{lem:Nakayama-auto-lemma-2}
  $\nu_C$ is a unique linear endomorphism on $C$ satisfying
  \begin{equation}
    \label{eq:Nakayama-auto-lemma-2}
    \beta(x_{(1)}, y) x_{(2)}=  \nu_C(y_{(1)}) \beta(x, y_{(2)})
    \quad (x, y \in C).
  \end{equation}
\end{lemma}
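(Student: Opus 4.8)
The plan is to prove existence and uniqueness separately, the common engine being that $C^*$ separates the points of $C$ (since $C \hookrightarrow C^{**}$), combined with the $C^*$-balancedness and non-degeneracy of the Frobenius pairing $\beta$. The one substantive preliminary is to extract a ``comodule form'' of $C^*$-balancedness: unwinding the identity $\beta(x \leftharpoonup c^*, y) = \beta(x, c^* \rightharpoonup y)$ for all $c^* \in C^*$, substituting the bicomodule structure of $C$, and using that $C^*$ separates points, one obtains the identity
\begin{equation*}
  \beta(x_{(2)}, y)\, x_{(1)} = \beta(x, y_{(1)})\, y_{(2)} \qquad (x, y \in C)
\end{equation*}
in $C$. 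Everything after this is Sweedler-notation bookkeeping.

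For the existence part I would start from the right-hand side of \eqref{eq:Nakayama-auto-lemma-2} and compute
\begin{equation*}
  \nu_C(y_{(1)})\, \beta(x, y_{(2)})
  = \nu_C(y_{(1)})\, \beta(\nu_C(y_{(2)}), x)
  = \nu_C(y)_{(1)}\, \beta(\nu_C(y)_{(2)}, x)
  = \beta(\nu_C(y), x_{(1)})\, x_{(2)}
  = \beta(x_{(1)}, y)\, x_{(2)},
\end{equation*}
where the first and last equalities use the defining property \eqref{eq:Nakayama-auto-def} of $\nu_C$, the second uses that $\nu_C$ is a coalgebra morphism (Lemma~\ref{lem:Nakayama-auto-lemma-1}), and the third is the balancing identity above applied with $x$ replaced by $\nu_C(y)$ and $y$ by $x$.

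For uniqueness, suppose $\nu'$ is another linear endomorphism of $C$ satisfying \eqref{eq:Nakayama-auto-lemma-2} and set $f = \nu' - \nu_C$; subtracting the two instances of \eqref{eq:Nakayama-auto-lemma-2} gives $f(y_{(1)})\, \beta(x, y_{(2)}) = 0$ in $C$ for all $x, y$. Pairing with an arbitrary $c^* \in C^*$ and pulling the scalar $\langle c^*, f(y_{(1)}) \rangle$ inside $\beta$ yields $\beta\bigl(x,\ \langle c^*, f(y_{(1)}) \rangle\, y_{(2)}\bigr) = 0$ for all $x$, whence $\langle c^*, f(y_{(1)}) \rangle\, y_{(2)} = 0$ by the right non-degeneracy of $\beta$; applying the counit gives $\langle c^*, f(y) \rangle = 0$, and since $c^*$ is arbitrary we conclude $f(y) = 0$, i.e.\ $\nu' = \nu_C$. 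The only step that genuinely requires care is the first one: matching the left/right $C^*$-module conventions on the bicomodule $C$ so that the balancing identity appears in exactly the orientation used in the existence computation (and, correspondingly, threading \eqref{eq:Nakayama-auto-def} through the correct Sweedler leg); the rest is routine.
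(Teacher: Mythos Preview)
Your proof is correct. The existence argument is essentially the paper's, repackaged: the paper pairs both sides with an arbitrary $c^* \in C^*$ and threads through $\beta(c^* \rightharpoonup x, y) = \beta(\nu(y), c^* \rightharpoonup x) = \beta(\nu(y) \leftharpoonup c^*, x)$, while you first extract the element-level balancing identity $\beta(x_{(2)}, y)\, x_{(1)} = \beta(x, y_{(1)})\, y_{(2)}$ and then compute directly in $C$; these are the same manipulation in different clothes. Your uniqueness argument, however, is genuinely different and more elementary. The paper invokes the density of $C^{*\rat}$ in $C^*$ (a consequence of semiperfectness of co-Frobenius coalgebras) to produce, for each $y$, an element $x \in C$ with $\beta(x, -) = \varepsilon$ on a finite-dimensional subcoalgebra containing $y$, and then evaluates directly. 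You bypass this entirely: subtracting the two candidate equations, pairing with $c^*$, and using only the right non-degeneracy of $\beta$ plus the counit axiom. Your route avoids the density input and would work verbatim for any non-degenerate $C^*$-balanced pairing, which is a small but real gain in economy.
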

\begin{proof}
  Write $\nu = \nu_C$.
  For $x, y \in C$ and $c^* \in C^*$, we compute
  \begin{gather*}
    \beta(x_{(1)}, y) \langle c^*, x_{(2)}\rangle
    = \beta(c^* \rightharpoonup x, y)
    \mathop{=}^{\eqref{eq:Nakayama-auto-def}} \beta(\nu(y), c^* \rightharpoonup x) \\
    = \beta(\nu(y) \leftharpoonup c^*, x)
    \mathop{=}^{\eqref{eq:Nakayama-auto-def}}
    \beta(x, \nu^{-1}(\nu(y) \leftharpoonup c^*))
    = \langle c^*, \nu(y_{(1)}) \rangle \beta(x, y_{(2)}),
  \end{gather*}
  where the third equality follows from that $\beta$ is $C^*$-balanced and the last one from that $\nu$ is a coalgebra automorphism. Thus $\nu$ satisfies \eqref{eq:Nakayama-auto-lemma-2}.
 
  Now we prove the uniqueness. Let $\eta: C \to C$ be a linear map satisfying
  \begin{equation}
    \label{eq:Nakayama-auto-lemma-3}
    \beta(x_{(1)}, y) x_{(2)}=  \eta(y_{(1)}) \beta(x, y_{(2)})
    \quad (x, y \in C).
  \end{equation}
  We fix $y \in C$ and choose a finite-dimensional subcoalgebra $D \subset C$ such that $y \in C$. Since $C^{*\rat}$ is dense in $C^{*}$, there is an element $x \in C$ such that $\beta(x, d) = \varepsilon(d)$ for all $d \in D$. Thus,
  \begin{equation*}
    \eta(y) = \eta(y_{(1)}) \beta(x, y_{(2)})
    \mathop{=}^{\eqref{eq:Nakayama-auto-lemma-3}}
    \beta(x_{(1)}, y) x_{(2)}
    \mathop{=}^{\eqref{eq:Nakayama-auto-lemma-2}}
    \nu(y_{(1)}) \beta(x, y_{(2)}) = \nu(y).
  \end{equation*}
  Hence $\eta = \nu$. The proof is done.
\end{proof}

Given a coalgebra map $\phi : C \to D$ between coalgebras, we denote by $(-)^{(\phi)} : \Mod^C \to \Mod^D$ the functor induced by $\phi$.
For co-Frobenius coalgebras, Nakayama functors take the following simple form:

\begin{lemma}
  \label{lem:Nakayama-co-Fro}
  Let $C$ be a co-Frobenius coalgebra. Then there are natural isomorphisms
  \begin{equation*}
    \Nakl_C(M) \cong M^{(\nu^{-1})},
    \quad
    \Nakr_C(M) \cong M^{(\nu)}
    \quad (M \in \Mod^C),
  \end{equation*}
  where $\nu = \nu_C$ is the Nakayama automorphism of $C$.
\end{lemma}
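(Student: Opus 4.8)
The plan is to produce an explicit natural isomorphism $\Nakr_C(M) \cong M^{(\nu)}$ and then obtain the statement for $\Nakl_C$ formally, by uniqueness of adjoints. Write $\nu = \nu_C$. I will use that the Frobenius pairing is $C^*$-balanced in the form $\beta(c \leftharpoonup c^*, d) = \beta(c, c^* \rightharpoonup d)$ for $c, d \in C$, $c^* \in C^*$ (this is the form of balancedness that appears in the proof of Lemma~\ref{lem:Nakayama-auto-lemma-2}), together with the fact recalled before that lemma that $\beta^{\L} : C \to C^{*\rat}$, $c \mapsto \beta(c, -)$, is bijective. The balancedness is exactly the statement that $\beta^{\L}(c \leftharpoonup c^*) = \beta^{\L}(c) * c^*$, i.e. $\beta^{\L}$ is an isomorphism of right $C^*$-modules $C \xrightarrow{\ \sim\ } C^{*\rat}$, where $C$ carries the right action $\leftharpoonup$ used to define $\Nakr_C$.

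For $M \in \Mod^C$ I would define $\bar\phi_M : \Nakr_C(M) = C \otimes_{C^*} M \to M$ by $c \otimes_{C^*} m \mapsto \beta^{\L}(c) \rightharpoonup m = \beta(c, m_{(1)})\, m_{(0)}$; this is well defined by right $C^*$-linearity of $\beta^{\L}$ and is visibly natural in $M$. The main point is that $\bar\phi_M$ is a morphism of right $C$-comodules from $\Nakr_C(M)$ to $M^{(\nu)}$: applying $\bar\phi_M \otimes \id_C$ to the coaction $c \otimes_{C^*} m \mapsto (c_{(1)} \otimes_{C^*} m) \otimes c_{(2)}$ yields $m_{(0)} \otimes c_{(2)}\,\beta(c_{(1)}, m_{(1)})$, while the twisted coaction of $M^{(\nu)}$ applied to $\bar\phi_M(c \otimes_{C^*} m)$ yields $m_{(0)} \otimes \nu(m_{(1)})\,\beta(c, m_{(2)})$, and these agree by equation~\eqref{eq:Nakayama-auto-lemma-2} of Lemma~\ref{lem:Nakayama-auto-lemma-2}, taken with $x = c$ and with $y$ running over the (iterated) coaction of $m$.

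It then remains to check that $\bar\phi_M$ is bijective. Since $\beta^{\L}$ is an isomorphism of right $C^*$-modules, $\bar\phi_M$ is identified with the action map $C^{*\rat} \otimes_{C^*} M \to M$, $c^* \otimes_{C^*} m \mapsto c^* \rightharpoonup m$. This is surjective by Lemma~\ref{lem:C-star-rat-M-rat}, which gives $M = M^{\rat} = C^{*\rat} \rightharpoonup M$. For injectivity I would use that $C^{*\rat}$ has local units: given $c^*_1, \dots, c^*_n \in C^{*\rat}$, the finite-dimensional subcomodule of $C^{*\rat}$ (viewed as a rational left $C^*$-module) generated by them has a finite-dimensional coefficient coalgebra $D \subset C$, and by the density of $C^{*\rat}$ in $C^*$ (Lemma~\ref{lem:DNR-thm-323}) one may choose $u \in C^{*\rat}$ with $u|_D = \varepsilon|_D$; then $u * c^*_i = c^*_i$ for all $i$, so $\sum_i c^*_i \otimes_{C^*} m_i = \sum_i (u * c^*_i) \otimes_{C^*} m_i = u \otimes_{C^*} \big( \sum_i c^*_i \rightharpoonup m_i \big)$ vanishes whenever $\sum_i c^*_i \rightharpoonup m_i = 0$. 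Hence $\bar\phi_M$ is a natural isomorphism and $\Nakr_C \cong (-)^{(\nu)}$.

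Finally, $(-)^{(\nu)}$ and $(-)^{(\nu^{-1})}$ are mutually quasi-inverse autoequivalences of $\Mod^C$, so each is simultaneously a left and a right adjoint of the other. Since $\Nakr_C$ is left adjoint to $\Nakl_C$ (Lemma~\ref{lem:Nakayama-adj}) and $\Nakr_C \cong (-)^{(\nu)}$, uniqueness of right adjoints forces $\Nakl_C \cong (-)^{(\nu^{-1})}$, as claimed. I expect the only real friction to be the injectivity of $\bar\phi_M$ (the local-units argument for $C^{*\rat}$) and the care needed to keep the various left/right $C^*$-module and comodule conventions straight — in particular pinning down the correct form of $C^*$-balancedness; everything else is a direct unwinding of definitions together with Lemma~\ref{lem:Nakayama-auto-lemma-2}.
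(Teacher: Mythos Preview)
Your proof is correct and follows essentially the same route as the paper: define the map $C \otimes_{C^*} M \to M$ via $\beta^{\L}$, verify $C$-colinearity into $M^{(\nu)}$ using Lemma~\ref{lem:Nakayama-auto-lemma-2}, check bijectivity via the action map $C^{*\rat} \otimes_{C^*} M \to M$, and deduce the statement for $\Nakl_C$ from the adjunction. The only difference is cosmetic---you check colinearity on coactions rather than on $C^*$-actions, and you spell out the local-units argument for injectivity where the paper simply invokes Lemma~\ref{lem:C-star-rat-M-rat} as giving an isomorphism.
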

\begin{proof}
  Let $M \in \Mod^C$.
  By Lemma \ref{lem:C-star-rat-M-rat}, there is an isomorphism
  \begin{equation}
    \label{eq:Nakayama-co-Fro-proof-1}
    C^{*\rat} \otimes_{C^*} M \to M,
    \quad c^* \otimes_{C^*} m \mapsto m_{(0)} \langle c^*, m_{(1)} \rangle
  \end{equation}
  of left $C^*$-modules. Let $\beta$ be the Frobenius pairing of $C$.
  We note that $\beta^{\L}$ is right $C^*$-linear because of the $C^*$-balancedness of $\beta$.
  Composing \eqref{eq:Nakayama-co-Fro-proof-1}
 and $\beta^{\L} \otimes_{C^*} \id_M$, we have an isomorphism
  \begin{equation*}
    \beta^{\natural}_M : C \otimes_{C^*} M \to M,
    \quad c \otimes_{C^*} m \mapsto m_{(0)} \beta(c, m_{(1)})
  \end{equation*}
  of vector spaces. Now, for $c \in C$, $c^* \in C^*$ and $m \in M$, we compute
  \begin{gather*}
    \beta^{\natural}_M(c^* \rightharpoonup c \otimes_{C^*} m)
    = m_{(0)} \beta(c_{(1)}, m_{(1)}) \langle c^*, c_{(2)} \rangle \\
    \mathop{=}^{\eqref{eq:Nakayama-auto-lemma-2}}
    m_{(0)} \langle c^*, \nu(m_{(1)}) \rangle \beta(c, m_{(2)})
    = c^* \nu \rightharpoonup \beta^{\natural}_M(c \otimes_{C^*} m).
  \end{gather*}
  This means that $\beta^{\natural}_M$ is actually an isomorphism $\beta^{\natural}_M : \Nakr_C(M) \to M^{(\nu)}$ of right $C$-comodules.
  Since $\nu$ is a coalgebra automorphism, a right adjoint of $\Nakr_C$ is given by $(-)^{(\nu^{-1})}$.
  Thus, by the uniqueness of adjoints, $\Nakl_C$ is isomorphic to $(-)^{(\nu^{-1})}$.
  The proof is done.
\end{proof}

\subsection{Quasi-co-Frobenius coalgebras}

Here we discuss the following class of coalgebras:

\begin{definition}
  \label{def:QcF}
  A coalgebra $C$ is said to be {\em left} ({\em right}) {\em quasi-co-Frobenius} (QcF for short) if there is a cardinal $\kappa$ and an injective homomorphism $C \to (C^{*})^{\oplus \kappa}$ of left (right) $C^*$-modules \cite[Definition 3.3.1]{MR1786197}. A {\em QcF coalgebra} is a left and right QcF coalgebra.
\end{definition}

Several equivalent conditions for being QcF are given in \cite[Section 3.3]{MR1786197}.
It is known that a co-Frobenius coalgebra is QcF.
The converse does not hold:
There are examples of QcF coalgebras which are not co-Frobenius \cite[Example 4.7]{MR3125851}.

Lemma~\ref{lem:Nakayama-co-Fro} implies that $\Nakl_C$ and $\Nakr_C$ are equivalences if $C$ is a co-Frobenius coalgebra.
Theorem~\ref{thm:Nakayama-QcF} below shows that the same holds for QcF coalgebras and, in fact, QcF coalgebras are characterized by this property.

\begin{theorem}
  \label{thm:Nakayama-QcF}
  For a coalgebra $C$, the following are equivalent:
  \begin{enumerate}
  \item $C$ is QcF.
  \item $\Nakl_C : \Mod^C \to \Mod^C$ is an equivalence.
  \item $\Nakr_C : \Mod^C \to \Mod^C$ is an equivalence.
  \end{enumerate}
\end{theorem}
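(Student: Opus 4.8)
The argument splits naturally into the formal equivalence (2)$\Leftrightarrow$(3) and the cycle (1)$\Rightarrow$(2)$\Rightarrow$(1). For (2)$\Leftrightarrow$(3): by Lemma~\ref{lem:Nakayama-adj} the functor $\Nakr_C$ is left adjoint to $\Nakl_C$, and a functor that admits an adjoint is an equivalence precisely when that adjoint is, in which case the two are mutually quasi-inverse; so it suffices to prove (1)$\Leftrightarrow$(2).

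For (1)$\Rightarrow$(2), I would first record the co-Frobenius case, where Lemma~\ref{lem:Nakayama-co-Fro} identifies $\Nakl_C$ with the twisting functor $(-)^{(\nu^{-1})}$ of the Nakayama automorphism $\nu=\nu_C$, which is a coalgebra automorphism by Lemma~\ref{lem:Nakayama-auto-lemma-1}; hence $\Nakl_C$ is an equivalence with quasi-inverse $(-)^{(\nu)}$. For a general QcF coalgebra $C$ I would then use Iovanov's theorem \cite{MR3125851} that every QcF coalgebra is Morita--Takeuchi equivalent to a co-Frobenius one: choosing such a $D$ and an equivalence $\Phi\colon\Mod^C\to\Mod^D$, Lemma~\ref{lem:Nakayama-Mori-Take-equiv} gives $\Phi\circ\Nakl_C\cong\Nakl_D\circ\Phi$, so $\Nakl_C\cong\Phi^{-1}\circ\Nakl_D\circ\Phi$ is an equivalence.

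For (2)$\Rightarrow$(1), suppose $\Nakl_C$ is an equivalence; then $\Nakr_C$ is a quasi-inverse of it, so both functors are exact and preserve and reflect finite length (equivalently, finite-dimensionality), injectivity, and projectivity. The first goal is to show $C$ is semiperfect. A short computation identifies $\Hom^C(C,C)$ with $C^*$ as a left $C^*$-module, whence $\Nakl_C(C)\cong({}_{C^*}C^*)^{\rat}$ as right $C$-comodules; since $\Nakl_C$ carries the injective cogenerator $C$ of $\Mod^C$ to an injective cogenerator, $({}_{C^*}C^*)^{\rat}$ is a cogenerator of $\Mod^C$, which forces it to be dense in $C^*$, so $C$ is right semiperfect by Lemma~\ref{lem:DNR-thm-323}. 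Now Lemma~\ref{lem:Nakayama-semiperfect-1} supplies mutually quasi-inverse equivalences $\Nakl_C\colon\sfInj^C\to\fdPro^C$ and $\Nakr_C\colon\fdPro^C\to\sfInj^C$; combining these with the facts that $\Nakl_C$ and $\Nakr_C$ preserve injectivity and projectivity, that every object of $\sfInj^C$ is injective, and that every object of $\fdPro^C$ is projective (Lemma~\ref{lem:fd-injective}), we get $\fdPro^C\subseteq\Inj(\Mod^C)$ and $\sfInj^C\subseteq\Proj(\Mod^C)$. Since $\Mod^C$ is locally Noetherian, every injective right $C$-comodule is a direct sum of indecomposable injectives — each of which lies in $\sfInj^C$, hence is projective — so every injective is projective; dually, using that over a right semiperfect coalgebra every projective comodule is a direct sum of finite-dimensional indecomposable projectives (which lie in $\fdPro^C$, hence are injective), every projective is injective. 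Thus $\Inj(\Mod^C)=\Proj(\Mod^C)$, and a semiperfect coalgebra with this property is QcF \cite{MR1786197}.

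The delicate point is the first step of (2)$\Rightarrow$(1): deducing that $C$ is semiperfect from the mere assumption that $\Nakl_C$ is an equivalence. Everything afterwards is an assembly of results already established — Lemmas~\ref{lem:Nakayama-semiperfect-1} and \ref{lem:fd-injective}, the structure of locally Noetherian categories, and the standard characterization of QcF coalgebras from \cite{MR1786197} — so the real work is concentrated in the identification $\Nakl_C(C)\cong({}_{C^*}C^*)^{\rat}$ together with the passage from its cogenerator property to right semiperfectness via Lemma~\ref{lem:DNR-thm-323}.
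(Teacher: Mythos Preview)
Your (2)$\Leftrightarrow$(3) and (1)$\Rightarrow$(2) coincide with the paper's argument (the correct reference for ``QcF is Morita--Takeuchi equivalent to co-Frobenius'' is \cite{MR1998048}, not \cite{MR3125851}).

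For (2)$\Rightarrow$(1) the paper takes a much shorter route. It invokes Iovanov's criterion \cite[Lemma~1.7]{MR3150709} that $C$ is QcF if and only if $({}_{C^*}C^{*\rat})^{\oplus\kappa}\cong C^{\oplus\kappa}$ in $\Mod^C$ for some nonzero cardinal $\kappa$. Since $\Nakl_C(C)\cong({}_{C^*}C^{*})^{\rat}$ and $\Nakl_C$, being an equivalence, permutes the isomorphism classes of simples and sends injective hulls to injective hulls, writing $C\cong\bigoplus_{i\in I}E(S_i)^{\oplus m(i)}$ one finds $\Nakl_C(C)\cong\bigoplus_{i\in I}E(S_i)^{\oplus m(\pi^{-1}(i))}$ for a bijection $\pi$ of $I$. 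Taking $\aleph_0$-fold direct sums equalises the multiplicities, so $({}_{C^*}C^{*\rat})^{\oplus\aleph_0}\cong C^{\oplus\aleph_0}$, and Iovanov's lemma applies directly. No intermediate semiperfectness is needed.

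Your route, by contrast, has a genuine gap. The assertion ``$({}_{C^*}C^*)^{\rat}$ is a cogenerator of $\Mod^C$, which forces it to be dense in $C^*$'' conflates an abstract categorical property of $R:=({}_{C^*}C^*)^{\rat}$ as a right $C$-comodule with a property of its specific realisation as a subspace of $C^*$; you give no argument linking the two. One can show that $R$ is a two-sided ideal of $C^*$ (right multiplication by any $c^*$ is a left $C^*$-linear endomorphism of $C^*$, and the rational-part functor preserves this), whence $R^\perp\subseteq C$ is a subcoalgebra; but deriving a contradiction from $R^\perp\neq 0$ via the cogenerator hypothesis alone is not straightforward, and you do not supply one. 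There are also smaller issues downstream: you establish only \emph{right} semiperfectness yet later invoke a statement for (two-sided) semiperfect coalgebras, and the claim that every projective right $C$-comodule decomposes as a direct sum of finite-dimensional indecomposables over a merely right semiperfect $C$ is not standard. These latter points can be repaired --- once $\sfInj^C\subseteq\Proj(\Mod^C)$ one has $C$ projective in $\Mod^C$, hence $C$ is left QcF by \cite[Theorem~3.3.4]{MR1786197}, so left semiperfect, and one can then argue symmetrically --- but the density step remains the real obstruction.
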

\begin{proof}
  We first remark that (2) and (3) are equivalent since $\Nakr_C$ is left adjoint to $\Nakl_C$.
  Next, we suppose that (1) holds.
  It is known that every QcF coalgebra is Morita-Takeuchi equivalent to a co-Frobenius coalgebra \cite{MR1998048}.
  Thus, by Lemma~\ref{lem:Nakayama-Mori-Take-equiv}, we may assume that $C$ is co-Frobenius. If we assume so, then (2) and (3) follow from Lemma~\ref{lem:Nakayama-co-Fro}.

  To complete the proof, we prove that (2) implies (1).
  Our proof relies on Iovanov's result \cite[Lemma 1.7]{MR3150709}, which states that a coalgebra $Q$ is QcF if and only if $({}_{Q^*}Q^{*\rat})^{\oplus \kappa} \cong Q^{\oplus \kappa}$ as right $Q$-comodules for some non-zero cardinal $\kappa$.
  Let $\{ S_{i} \}_{i \in I}$ be a complete set of representatives of the isomorphism classes of simple right $C$-comodules.
  Then we have $C \cong \bigoplus_{i \in I} E(S_{i})^{\oplus m(i)}$ for some integers $m(i) > 0$.
  Now we suppose that $\Nakl_C$ is an equivalence.
  Then there is a bijection $\pi : I \to I$ determined by $\Nakl_C(S_{i}) \cong S_{\pi(i)}$ for all $i \in I$ and we have
  \begin{equation*}
    \Nakl_C(C)
    \cong \bigoplus_{i \in I} \Nakl_C(E(S_{i}))^{\oplus m(i)}
    \cong \bigoplus_{i \in I} E(S_{\pi(i)})^{\oplus m(i)}
    \cong \bigoplus_{i \in I} E(S_{i})^{\oplus m(\pi^{-1}(i))}.
  \end{equation*}
  On the other hand, we have $\Nakl_C(C) = \Hom^C(C, C)^{\rat} \cong ({}_{C^*}C^{*})^{\rat}$. Thus,
  \begin{equation*}
    ({}_{C^*}C^{*\rat})^{\oplus \aleph_0}
    \cong \Nakl_C(C)^{\oplus \aleph_0}
    \cong \bigoplus_{i \in I} E(S_{i})^{\oplus \aleph_0}
    \cong C^{\oplus \aleph_0}.
  \end{equation*}
  Hence the proof is completed by \cite[Lemma 1.7]{MR3150709}.
\end{proof}

Suppose that $C$ is QcF. Then, by Theorem~\ref{thm:Nakayama-QcF}, $\Nakl_C$ induces a permutation on the set of isomorphism classes of simple right $C$-comodules. Following the representation theory of quasi-Frobenius algebras, we may call this permutation {\em the Nakayama permutation} for $C$. As in the case of algebras, one can use this permutation to describe the socle of a projective cover. More specifically,

\begin{theorem}
  \label{thm:Nakayama-permutation}
  Let $C$ be a QcF coalgebra. Then, for every simple right $C$-comodule $S$, there are isomorphisms
  $P(S) \cong E(\Nakl_C(S))$,
  $\socle P(S) \cong \Nakl_C(S)$,
  $E(S) \cong P(\Nakr_C(S))$,
  and $\mathop{\mathrm{top}} E(S) \cong \Nakr_C(S)$
  of right $C$-comodules.
\end{theorem}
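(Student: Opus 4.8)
The plan is to obtain all four isomorphisms by combining Lemma~\ref{lem:Nakayama-semiperfect-2} with Theorem~\ref{thm:Nakayama-QcF}. First I would record that a QcF coalgebra is semiperfect (for instance because it is Morita--Takeuchi equivalent to a co-Frobenius coalgebra \cite{MR1998048}, and semiperfectness is a Morita--Takeuchi invariant); hence Lemma~\ref{lem:Nakayama-semiperfect-2} applies to $C$ and gives
\[
  \Nakl_C(E(S)) \cong P(S)
  \quad\text{and}\quad
  \Nakr_C(P(S)) \cong E(S)
\]
for every simple right $C$-comodule $S$. Moreover, by Theorem~\ref{thm:Nakayama-QcF}, both $\Nakl_C$ and $\Nakr_C$ are autoequivalences of $\Mod^C$ (and, being adjoint by Lemma~\ref{lem:Nakayama-adj}, mutually quasi-inverse).

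The remaining work is purely categorical. An equivalence of abelian categories preserves monomorphisms, essential extensions, injective objects, projective objects and simple objects, hence transports injective hulls to injective hulls and projective covers to projective covers. Applying $\Nakl_C$ to the essential monomorphism $S \hookrightarrow E(S)$ thus exhibits $\Nakl_C(S) \hookrightarrow \Nakl_C(E(S)) \cong P(S)$ as an injective hull of the simple comodule $\Nakl_C(S)$, so that $P(S) \cong E(\Nakl_C(S))$; dually, applying $\Nakr_C$ to the projective cover $P(S) \twoheadrightarrow S$ exhibits $E(S) \cong \Nakr_C(P(S)) \twoheadrightarrow \Nakr_C(S)$ as a projective cover of the simple comodule $\Nakr_C(S)$, so that $E(S) \cong P(\Nakr_C(S))$.

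Finally I would pass to socles and tops via the elementary observations that, for a simple right $C$-comodule $T$, one has $\socle E(T) \cong T$ (a simple subcomodule of $E(T)$ must meet the essential subcomodule $T$, hence equal it) and $\mathop{\mathrm{top}} P(T) \cong T$ (the kernel of $P(T) \twoheadrightarrow T$ is superfluous, hence contained in the radical). Taking $T = \Nakl_C(S)$ and combining with $P(S) \cong E(\Nakl_C(S))$ yields $\socle P(S) \cong \Nakl_C(S)$, while taking $T = \Nakr_C(S)$ and combining with $E(S) \cong P(\Nakr_C(S))$ yields $\mathop{\mathrm{top}} E(S) \cong \Nakr_C(S)$, which finishes the proof.

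I do not anticipate a serious obstacle: the statement is essentially a bookkeeping consequence of Theorem~\ref{thm:Nakayama-QcF} and Lemma~\ref{lem:Nakayama-semiperfect-2}, the only real content being the transport of injective hulls, projective covers, socles and tops along the autoequivalences $\Nakl_C$ and $\Nakr_C$. The two points that merit a moment's care are the verification that the QcF hypothesis really does supply the (right) semiperfectness needed to invoke Lemma~\ref{lem:Nakayama-semiperfect-2}, and the standard but worth-stating fact that an equivalence of abelian categories behaves well with respect to these homological notions.
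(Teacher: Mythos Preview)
Your proof is correct and follows essentially the same approach as the paper: both combine Lemma~\ref{lem:Nakayama-semiperfect-2} with the fact (Theorem~\ref{thm:Nakayama-QcF}) that $\Nakl_C$ is an equivalence and hence preserves injective hulls, then read off the socle and top statements from the standard identities $\socle E(T)\cong T$ and $\mathop{\mathrm{top}}P(T)\cong T$. Your version is slightly more explicit in justifying the semiperfectness hypothesis needed for Lemma~\ref{lem:Nakayama-semiperfect-2}, but the argument is the same.
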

\begin{proof}
  Let $S$ be a simple right $C$-comodule.
  Since $\Nakl_C$ is an equivalence, $\Nakl_C(E(S))$ is an injective hull of $\Nakl_C(S)$.
  By Lemma~\ref{lem:Nakayama-semiperfect-2}, $\Nakl_C(E(S))$ is also isomorphic to $P(S)$.
  Thus we have the first isomorphism. The third one is obtained in a similar way.
  Remaining ones follow from $\socle E(S) \cong S$ and $\mathop{\mathrm{top}} P(S) \cong S$.
\end{proof}

It is known that a QcF coalgebra is co-Frobenius if and only if the socle and the top of any indecomposable injective comodule have the same dimension \cite[Corollary 2.3]{MR3150709}. Combining this fact with Theorem~\ref{thm:Nakayama-permutation}, we obtain the following characterization of co-Frobenius coalgebras:

\begin{theorem}
  \label{thm:QcF-to-be-co-Frob}
  For a QcF coalgebra $C$, the following are equivalent:
  \begin{enumerate}
  \item $C$ is a co-Frobenius coalgebra.
  \item $\dim_{\bfk} \Nakl_{C}(S) = \dim_{\bfk} S$ for all simple right $C$-comodule $S$.
  \item $\dim_{\bfk} \Nakr_{C}(S) = \dim_{\bfk} S$ for all simple right $C$-comodule $S$.
  \end{enumerate} 
\end{theorem}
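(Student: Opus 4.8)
The plan is to read off the equivalences from Theorem~\ref{thm:Nakayama-permutation} together with the cited characterization \cite[Corollary 2.3]{MR3150709} of co-Frobenius coalgebras among QcF ones; once those are in hand, only elementary bookkeeping remains.

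First I would prove (1) $\Leftrightarrow$ (3). For an arbitrary coalgebra the indecomposable injective right $C$-comodules are, up to isomorphism, exactly the injective hulls $E(S)$ of simple right $C$-comodules $S$, and $\socle E(S) \cong S$. Since $C$ is QcF, Theorem~\ref{thm:Nakayama-permutation} gives $\mathop{\mathrm{top}} E(S) \cong \Nakr_C(S)$ (and this object is simple, as $\Nakr_C$ is an equivalence by Theorem~\ref{thm:Nakayama-QcF}). Hence the statement ``the socle and the top of every indecomposable injective right $C$-comodule have the same dimension'' is precisely condition (3). By \cite[Corollary 2.3]{MR3150709} this holds if and only if $C$ is co-Frobenius, so (1) $\Leftrightarrow$ (3). (If that reference is phrased for left comodules, apply it to $C^{\cop}$, which is QcF and is co-Frobenius exactly when $C$ is.)

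Next I would prove (2) $\Leftrightarrow$ (3). By Theorem~\ref{thm:Nakayama-QcF} the functor $\Nakl_C$ is an equivalence, and since $\Nakr_C$ is its left adjoint (Lemma~\ref{lem:Nakayama-adj}) it is a quasi-inverse of $\Nakl_C$. Let $\{S_i\}_{i \in I}$ be representatives of the isomorphism classes of simple right $C$-comodules and let $\pi \colon I \to I$ be the Nakayama permutation, so $\Nakl_C(S_i) \cong S_{\pi(i)}$; then $\Nakr_C(S_i) \cong S_{\pi^{-1}(i)}$. Condition (2) says $\dim_{\bfk} S_{\pi(i)} = \dim_{\bfk} S_i$ for all $i$, while (3) says $\dim_{\bfk} S_{\pi^{-1}(i)} = \dim_{\bfk} S_i$ for all $i$; substituting $\pi^{-1}(i)$ for $i$ (resp.\ $\pi(i)$ for $i$) converts one statement into the other. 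This completes the proof.

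I do not anticipate a genuine obstacle here: the essential input is already isolated in Theorem~\ref{thm:Nakayama-permutation} and in \cite[Corollary 2.3]{MR3150709}, and the only points needing care are the identification of the indecomposable injectives with the $E(S)$ (so that the cited criterion becomes exactly (3)) and the observation that $\Nakl_C$ and $\Nakr_C$ induce mutually inverse permutations of the isomorphism classes of simple right $C$-comodules.
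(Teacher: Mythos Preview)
Your proposal is correct and follows exactly the approach the paper indicates: the paper states the theorem without a written proof, simply noting that it follows by combining Theorem~\ref{thm:Nakayama-permutation} with \cite[Corollary 2.3]{MR3150709}, which is precisely what you do. Your explicit verification of (2) $\Leftrightarrow$ (3) via the Nakayama permutation is a clean way to fill in the detail the paper leaves implicit.
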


\subsection{Symmetric coalgebras}

Let $C$ be a coalgebra. A coalgebra automorphism $f : C \to C$ is said to be {\em coinner} if there is an invertible element $\alpha \in C^*$ such that $f(c) = \alpha \rightharpoonup c \leftharpoonup \alpha^{-1}$ for all $c \in C$. The following lemma is standard and its proof is omitted.

\begin{lemma}
  \label{lem:inner-auto}
  For a coalgebra automorphism $f : C \to C$, the following are equivalent:
  \begin{enumerate}
  \item $f$ is a coinner automorphism.
  \item The functor $(-)^{(f)} : \Mod^C \to \Mod^C$ is isomorphic to the identity functor.
  \end{enumerate}
\end{lemma}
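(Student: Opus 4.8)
The plan is to establish the two implications directly: the forward one by writing down the natural isomorphism, and the backward one by analysing the component of a given natural isomorphism at the regular comodule.

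\emph{For $(1)\Rightarrow(2)$.} Suppose $f(c)=\alpha\rightharpoonup c\leftharpoonup\alpha^{-1}$ for an invertible element $\alpha\in C^*$. For $M\in\Mod^C$ I would define the linear map $\theta_M\colon M\to M^{(f)}$ by $\theta_M(m)=\alpha^{-1}\rightharpoonup m=m_{(0)}\langle\alpha^{-1},m_{(1)}\rangle$, using the left $C^*$-module structure on $M$, and then check three things. First, $\theta_M$ is a morphism of right $C$-comodules $M\to M^{(f)}$; this is a short Sweedler computation in which, after expanding $f$ on $m_{(1)}$, two of the pairings combine through $\alpha*\alpha^{-1}=\varepsilon$ and collapse. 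Second, $\theta_M$ is bijective, because $c^*\mapsto(m\mapsto c^*\rightharpoonup m)$ is multiplicative, so composing $\theta_M$ with $m\mapsto\alpha\rightharpoonup m$ gives $(\alpha*\alpha^{-1})\rightharpoonup(-)=\id_M$, and likewise on the other side. Third, $\theta$ is natural, since every morphism in $\Mod^C$ is left $C^*$-linear and each $\theta_M$ is "act by $\alpha^{-1}$", so the naturality squares commute on the nose. This exhibits a natural isomorphism $\id_{\Mod^C}\Rightarrow(-)^{(f)}$.

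\emph{For $(2)\Rightarrow(1)$.} Suppose $\theta\colon\id_{\Mod^C}\Rightarrow(-)^{(f)}$ is a natural isomorphism and consider its component $\theta_C$ at the regular right $C$-comodule $(C,\Delta)$. Since $f$ is a coalgebra map, the linear map $f$ itself is an isomorphism of right $C$-comodules $C^{(f)}\to(C,\Delta)$, so $g:=f\circ\theta_C$ is an automorphism of $(C,\Delta)$. Under the standard identification $\End^C(C)\cong C^*$, $h\mapsto\varepsilon\circ h$ (with $h(c)=c\leftharpoonup(\varepsilon\circ h)$), we get $g(c)=c\leftharpoonup\alpha$ for an $\alpha\in C^*$ which is invertible because $g$ is an automorphism. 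On the other hand, for each $M$ the coaction $\delta_M\colon M\to M\otimes_{\bfk}C$ (with $C$ acting on the second tensor factor) is a morphism in $\Mod^C$; since $\theta$ preserves coproducts one has $\theta_{M\otimes_{\bfk}C}=\id_M\otimes\theta_C$, so naturality at $\delta_M$ gives $\delta_M\circ\theta_M=(\id_M\otimes\theta_C)\circ\delta_M$. Taking $M=C$ and applying $\id_C\otimes\varepsilon$, and using $\varepsilon\circ\theta_C=\varepsilon\circ f\circ\theta_C=\varepsilon\circ g=\alpha$, we obtain $\theta_C(c)=c_{(1)}\langle\alpha,c_{(2)}\rangle=\alpha\rightharpoonup c$. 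Feeding this back into $f\circ\theta_C=g$ yields $f(\alpha\rightharpoonup c)=c\leftharpoonup\alpha$ for all $c\in C$; replacing $c$ by $\alpha^{-1}\rightharpoonup c$ and using that the actions $\rightharpoonup$ and $\leftharpoonup$ on $C$ commute, this becomes $f(c)=\alpha^{-1}\rightharpoonup c\leftharpoonup\alpha=\alpha^{-1}\rightharpoonup c\leftharpoonup(\alpha^{-1})^{-1}$, so $f$ is coinner.

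The argument is elementary throughout; the points needing care all live in the second half: keeping the two module structures $\rightharpoonup$ and $\leftharpoonup$ on $C$ apart, verifying that the element $\alpha\in C^*$ produced by $g$ is invertible, and recognising that the naturality square at $\delta_C$ is precisely what forces the left $C$-colinearity of $\theta_C$. The conceptual content is that a natural transformation out of $\id_{\Mod^C}$ is pinned down by its value at the regular comodule $C$, and that value is forced to be the action of an invertible element of $C^*$.
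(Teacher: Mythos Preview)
Your argument is correct. The paper actually omits the proof entirely, calling the lemma ``standard''; what you have written is precisely the expected standard argument, carried out carefully.

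A couple of minor points of wording you might tighten. In the backward direction, the phrase ``since $\theta$ preserves coproducts'' is slightly imprecise: $\theta$ is a natural transformation, not a functor. What you are really using is that $M\otimes_{\bfk}C$ is the copower of $C$ by the vector space $M$, and naturality of $\theta$ with respect to the inclusion maps $C\hookrightarrow M\otimes_{\bfk}C$ (one for each basis vector of $M$) forces $\theta_{M\otimes_{\bfk}C}=\id_M\otimes\theta_C$. This is exactly the mechanism behind your claim, so the mathematics is fine; only the phrasing could be sharpened. Also, the isomorphism $\End^C(C)\cong C^*$ you invoke is an anti-isomorphism of algebras (composition corresponds to the opposite convolution), but since you only use it to transfer invertibility this does not affect the argument.
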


Now we recall from \cite[Definition 3.2]{MR2078404} the following class of coalgebras:

\begin{definition}
  A coalgebra $C$ is said to be {\em symmetric} if there is an injective homomorphism $C \to C^*$ of $C^*$-bimodules.
\end{definition}

It is known that a symmetric coalgebra is precisely a co-Frobenius coalgebra whose Nakayama automorphism is coinner \cite[Proposition 6.2]{MR2078404}. By rephrasing this fact, we give the following characterization of symmetric coalgebras in terms of the Nakayama functor:

\begin{theorem}
  \label{thm:Nakayama-symmetric}
  The following are equivalent:
  \begin{enumerate}
  \item $C$ is a symmetric coalgebra.
  \item $\Nakl_C : \Mod^C \to \Mod^C$ is isomorphic to the identity functor.
  \item $\Nakr_C : \Mod^C \to \Mod^C$ is isomorphic to the identity functor.
  \end{enumerate}
\end{theorem}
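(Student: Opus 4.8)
The plan is to run the implications $(1)\Rightarrow(2)$, $(2)\Rightarrow(1)$, and $(2)\Leftrightarrow(3)$, using as the main external input the characterization recalled just above the statement: by \cite[Proposition 6.2]{MR2078404}, a coalgebra is symmetric precisely when it is co-Frobenius with coinner Nakayama automorphism.

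First I would dispatch $(2)\Leftrightarrow(3)$ formally. Since $\Nakr_C$ is left adjoint to $\Nakl_C$ by Lemma~\ref{lem:Nakayama-adj}, and the identity functor $\id_{\Mod^C}$ is its own left and right adjoint, the uniqueness of adjoints gives $\Nakl_C\cong\id_{\Mod^C}$ if and only if $\Nakr_C\cong\id_{\Mod^C}$.

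For $(1)\Rightarrow(2)$: if $C$ is symmetric, then \cite[Proposition 6.2]{MR2078404} makes $C$ co-Frobenius with coinner Nakayama automorphism $\nu=\nu_C$, say $\nu(c)=\alpha\rightharpoonup c\leftharpoonup\alpha^{-1}$ for an invertible $\alpha\in C^*$; then $\nu^{-1}(c)=\alpha^{-1}\rightharpoonup c\leftharpoonup\alpha$, so $\nu^{-1}$ is coinner as well, and Lemma~\ref{lem:inner-auto} gives $(-)^{(\nu^{-1})}\cong\id_{\Mod^C}$. Since $\Nakl_C\cong(-)^{(\nu^{-1})}$ by Lemma~\ref{lem:Nakayama-co-Fro}, this yields $(2)$. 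For $(2)\Rightarrow(1)$: assume $\Nakl_C\cong\id_{\Mod^C}$. Then $\Nakl_C$ is an equivalence, so $C$ is QcF by Theorem~\ref{thm:Nakayama-QcF}; moreover $\dim_\bfk\Nakl_C(S)=\dim_\bfk S$ for every simple right $C$-comodule $S$, so $C$ is in fact co-Frobenius by Theorem~\ref{thm:QcF-to-be-co-Frob}. Now Lemma~\ref{lem:Nakayama-co-Fro} applies and gives $(-)^{(\nu^{-1})}\cong\Nakl_C\cong\id_{\Mod^C}$, whence $\nu^{-1}$, and therefore $\nu$, is coinner by Lemma~\ref{lem:inner-auto}. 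Thus $C$ is co-Frobenius with coinner Nakayama automorphism, hence symmetric by \cite[Proposition 6.2]{MR2078404}, which establishes $(1)$.

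Most of this is bookkeeping with results already in hand; the one place where genuinely non-formal input enters is the step ``$C$ QcF $\Rightarrow$ $C$ co-Frobenius'' inside $(2)\Rightarrow(1)$. Without the socle/top dimension criterion of Theorem~\ref{thm:QcF-to-be-co-Frob}, knowing only that $\Nakl_C$ is an equivalence pins $C$ down merely up to Morita-Takeuchi equivalence, which is too weak to conclude that $C$ itself (rather than some Morita-Takeuchi partner) is symmetric. So I expect that to be the crux, and I would be careful there to exploit the full strength of the hypothesis $\Nakl_C\cong\id$ — in particular the equality of dimensions of simple objects — and not merely that $\Nakl_C$ is an equivalence.
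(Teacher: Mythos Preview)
Your proof is correct and follows essentially the same route as the paper's: dispatch $(2)\Leftrightarrow(3)$ via the adjunction, get $(1)\Rightarrow(2)$ from Lemma~\ref{lem:Nakayama-co-Fro} and Lemma~\ref{lem:inner-auto}, and for the converse pass through Theorems~\ref{thm:Nakayama-QcF} and~\ref{thm:QcF-to-be-co-Frob} to reach co-Frobenius before invoking Lemmas~\ref{lem:Nakayama-co-Fro} and~\ref{lem:inner-auto} again. Your closing remark correctly identifies the one substantive step, namely the use of the dimension criterion in Theorem~\ref{thm:QcF-to-be-co-Frob} to upgrade QcF to co-Frobenius.
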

\begin{proof}
  We remark that (2) and (3) are equivalent since $\Nakr_C$ is left adjoint to $\Nakl_C$.
  If (1) holds, then the Nakayama automorphism of $C$ is coinner, and thus (2) and (3) hold by Lemma \ref{lem:Nakayama-co-Fro}.
  
  Finally, we suppose that (3) holds.
  Then, by Theorems~\ref{thm:Nakayama-QcF} and \ref{thm:QcF-to-be-co-Frob}, $C$ is a co-Frobenius coalgebra.
  By Lemmas~\ref{lem:Nakayama-co-Fro} and \ref{lem:inner-auto}, the Nakayama automorphism of $C$ is coinner.
  Thus $C$ is a symmetric coalgebra. The proof is done.
\end{proof}

\subsection{Counterexamples}
\label{subsec:counterexamples}

A {\em quiver} is a quadruple $Q = (Q_0, Q_1, \src, \tgt)$ consisting of the non-empty set $Q_0$ of {\em vertices}, the set $Q_1$ of {\em arrows} and two maps $\src$ and $\tgt$ from $Q_1$ to $Q_0$, called the {\em source} and the {\em target}, respectively.

We fix a quiver $Q = (Q_0, Q_1, \src, \tgt)$. According to \cite[Section 3]{MR3125851}, we construct the coalgebra $C_Q$ as follows: As a vector space, $C_Q$ has a basis $Q_0 \sqcup Q_1$. The coalgebra structure is given by
\begin{equation*}
  \Delta(v) = v \otimes v,
  \quad \varepsilon(v) = 1,
  \quad \Delta(e) = \src(e) \otimes e + e \otimes \tgt(e),
  \quad \varepsilon(e) = 0
\end{equation*}
for a vertex $v \in Q_0$ and an arrow $e \in Q_1$.
Every simple comodule of $C_Q$ is isomorphic to $\bfk v$ for some $v \in Q_0$. For $v \in Q_0$, we define
\begin{equation*}
E(v) = \bfk v \oplus \Span_{\bfk} \, \src^{-1}(v) \quad \text{and} \quad
F(v) = \bfk v \oplus \Span_{\bfk} \, \tgt^{-1}(v).
\end{equation*}
The subspaces $E(v)$ and $F(v)$ of $C_Q$ are an injective hull of the right $C_Q$-comodule $\bfk v$ and the left $C_Q$-comodule $\bfk v$, respectively. Thus, as noted in \cite[Section 3]{MR3125851}, $C_Q$ is left (respectively, right) semiperfect if and only if the set $\src^{-1}(v)$ (respectively, $\tgt^{-1}(v)$) is finite for all $v \in Q_0$.

For notational convenience, we define the right $C_Q$-comodule $\bfk_v$ ($v \in Q_0$) to be the vector space $\bfk$ equipped with the coaction given by $1_{\bfk} \mapsto 1_{\bfk} \otimes v$. The following lemma is useful to compute the left exact Nakayama functor for $C_Q$.

\begin{lemma}
  \label{lem:Hom-C-C-kw}
  We fix a quiver $Q$ and write $C = C_Q$.
  Let $w \in Q_0$ be a vertex of $Q$.
  If $\src^{-1}(w) \ne \emptyset$, then there is an isomorphism
  \begin{equation*}
    \Hom^C(C, \bfk_w) \cong \prod_{e \in \tgt^{-1}(w)} \bfk_{\src(e)}
  \end{equation*}
  of left $C^*$-modules. Otherwise, $\Hom^C(C, \bfk_w)$ fits into an exact sequence
  \begin{equation*}
    0 \longrightarrow \prod_{e \in \tgt^{-1}(w)} \bfk_{\src(e)} \longrightarrow \Hom^C(C, \bfk_w) \longrightarrow \bfk_w \longrightarrow 0
  \end{equation*}
  of left $C^*$-modules.
\end{lemma}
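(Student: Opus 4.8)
The plan is to compute $\Hom^C(C,\bfk_w)$ directly from the basis $Q_0\sqcup Q_1$ of $C=C_Q$. A right $C$-comodule map $f\colon C\to\bfk_w$ is the same as a family of scalars $\lambda_x:=f(x)\in\bfk$, indexed by $x\in Q_0\sqcup Q_1$, subject to the colinearity condition $\delta_{\bfk_w}\circ f=(f\otimes\id_C)\circ\Delta$. First I would unwind this condition using the comultiplication of $C_Q$: applied to a vertex $v$ it gives $\lambda_v=0$ for all $v\in Q_0\setminus\{w\}$, and applied to an arrow $e$ it gives $\lambda_{\src(e)}=0$ for every $e\in Q_1$ together with $\lambda_e=0$ for every $e$ with $\tgt(e)\ne w$. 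Conversely, these conditions are clearly sufficient. Thus $f$ is determined freely by the scalar $\lambda_w$ and the scalars $\lambda_e$ for $e\in\tgt^{-1}(w)$, except that $\lambda_w$ is forced to be $0$ precisely when $w=\src(e)$ for some arrow $e$, i.e.\ when $\src^{-1}(w)\ne\emptyset$. This already determines the underlying vector space in both cases.

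Next I would record the left $C^*$-action. From $(c^*\cdot f)(c)=\langle c^*,c_{(1)}\rangle f(c_{(2)})$ and the comultiplication formulas one gets $(c^*\cdot f)(w)=\langle c^*,w\rangle\lambda_w$ and, for $e\in\tgt^{-1}(w)$, $(c^*\cdot f)(e)=\langle c^*,\src(e)\rangle\lambda_e+\langle c^*,e\rangle\lambda_w$. I would also recall that the $C^*$-module $\bfk_v$ is $\bfk$ with $c^*$ acting by the scalar $\langle c^*,v\rangle$, and that $\prod_{e\in\tgt^{-1}(w)}\bfk_{\src(e)}$ carries the componentwise action. If $\src^{-1}(w)\ne\emptyset$, then $\lambda_w=0$, so $(c^*\cdot f)(e)=\langle c^*,\src(e)\rangle\lambda_e$, and $f\mapsto(\lambda_e)_{e\in\tgt^{-1}(w)}$ is the asserted isomorphism $\Hom^C(C,\bfk_w)\cong\prod_{e\in\tgt^{-1}(w)}\bfk_{\src(e)}$ of left $C^*$-modules. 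If $\src^{-1}(w)=\emptyset$, then $\{f:\lambda_w=0\}$ is a $C^*$-submodule isomorphic to $\prod_{e\in\tgt^{-1}(w)}\bfk_{\src(e)}$ by the same formula, while the quotient, represented by $\lambda_w$, carries the action $c^*\cdot(-)=\langle c^*,w\rangle(-)$ and is therefore isomorphic to $\bfk_w$; this produces the stated short exact sequence.

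Every step here is a routine computation, so there is no real obstacle; the only point requiring a little care is the bookkeeping that isolates the role of the hypothesis, namely that among the relations $\lambda_{\src(e)}=0$ the scalar $\lambda_w$ is constrained if and only if $\src^{-1}(w)\ne\emptyset$, which is exactly what separates the two cases.
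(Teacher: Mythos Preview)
Your proof is correct and takes essentially the same approach as the paper: both unwind the colinearity condition on the basis $Q_0\sqcup Q_1$, identify the free parameters $(\lambda_e)_{e\in\tgt^{-1}(w)}$ and the possibly-constrained $\lambda_w$, and then read off the $C^*$-action. The only cosmetic difference is that the paper first decomposes $C=\bigoplus_{v\in Q_0}E(v)$ and computes $\Hom^C(E(v),\bfk_w)$ one summand at a time, whereas you treat all basis elements simultaneously; the resulting equations and case split on $\src^{-1}(w)$ are identical.
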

\begin{proof}
  We write $\mathfrak{X} = \Hom^C(C, \bfk_w)$ for short.
  Since $C = \bigoplus_{v \in Q_0} E(v)$, we have
  \begin{equation}
    \label{eq:proof-Hom-C-C-kw-1}
    \mathfrak{X} \cong \prod_{v \in Q_0} \Hom^C(E(v), \bfk_w)
    \subset \prod_{v \in Q_0} E(v)^*
  \end{equation}
  as vector spaces. We fix $v \in Q_0$. By the definition of the right $C$-comodule $E(v)$, a linear map $f: E(v) \to \bfk$ belongs to the set $\Hom^C(E(v), \bfk_w)$ if and only if the following \eqref{eq:proof-Hom-C-C-kw-3} and~\eqref{eq:proof-Hom-C-C-kw-4} hold:
  \begin{gather}
    \label{eq:proof-Hom-C-C-kw-3}
    f(v) v = f(v) w. \\
    \label{eq:proof-Hom-C-C-kw-4}
    \text{$f(v) e + f(e) \tgt(e) = f(e) w$ for all $e \in \src^{-1}(v)$}.
  \end{gather}
  We first consider the case where $\src^{-1}(w) \ne \emptyset$.
  Given $f \in \Hom^C(E(v), \bfk_w)$, we prove that $f(v) = 0$ and $f(e) = 0$ for all $e \in \src^{-1}(v)$ such that $\tgt(e) \ne w$ as follows:
  \begin{enumerate}
  \item Suppose $v \ne w$. Then $f(v) = 0$ by \eqref{eq:proof-Hom-C-C-kw-3}.
    Let $e \in Q_1$ be an arrow such that $\src(e) = v$ and $\tgt(e) \ne w$. Then we have $f(e) = 0$ by \eqref{eq:proof-Hom-C-C-kw-4}.
  \item Suppose $v = w$. We fix an element $e_0 \in \src^{-1}(v)$ ($= \src^{-1}(w) \ne \emptyset$).
    Then, by \eqref{eq:proof-Hom-C-C-kw-4}, we have $f(v) e_0 + f(e_0) \tgt(e_0) = f(e_0) w$. Since $e_0$ is not a linear combination of elements of $Q_0$, we obtain $f(v) = 0$. Now let $e \in Q_1$ be an arrow such that $\src(e) = v$ and $\tgt(e) \ne w$. Then, by \eqref{eq:proof-Hom-C-C-kw-4}, we have $f(e) \tgt(e) = f(e) w$. Since $w \ne \tgt(e)$, we get $f(e) = 0$.
  \end{enumerate}
  For $x, y \in Q_0$, we set $Q(x,y) := \{ e \in Q_1 \mid \src(e) = x, \tgt(e) = y \}$.
  In either of the cases (1) or (2), we see that $f(e)$ for $e \in Q(v, w)$ can be arbitrary.
  By the above discussion, we have
  \begin{equation*}
    \Hom^C(E(v), \bfk_w)
    = \left\{ f \in C^* \mathrel{}\middle|\mathrel{}
    \begin{gathered}
    \text{$f(x) = 0$ for all $x \in Q_0$ and} \\[-2pt]
    \text{$f(e) = 0$ for all $e \in Q_1 \setminus Q(v,w)$}
    \end{gathered}
    \right\}
  \end{equation*}
  if we regard $\Hom^C(E(v), \bfk_w)$ as a subspace of $\mathfrak{X}$ by \eqref{eq:proof-Hom-C-C-kw-1}.
  Now we compute the action of $C^*$ on $\mathfrak{X}$. For $f \in \Hom^C(E(v), \bfk_w)$ and $c^* \in C^*$, we have
  \begin{gather*}
    \langle c^* \cdot f, x \rangle
    = \langle c^*, x \rangle \langle f, x \rangle = 0
    = \langle c^*, v \rangle \langle f, v \rangle \quad (x \in Q_0), \\
    \langle c^* \cdot f, e \rangle
    = \langle c^*, \src(e) \rangle \langle f, e \rangle + \langle c^*, e \rangle \langle f, \tgt(e) \rangle
    = \langle c^*, v \rangle \langle f, e \rangle
    \quad (e \in Q_1).
  \end{gather*}
  Namely, $c^* \cdot f = \langle c^*, v \rangle f$. Hence the map
  \begin{equation}
    \label{eq:proof-Hom-C-C-kw-2}
    \Hom^C(E(v), \bfk_w) \cong \prod_{e \in Q(v,w)} \bfk_v, \quad f \mapsto (f(e))_{e \in Q(v,w)}
  \end{equation}
  is an isomorphism of left $C^*$-modules.
  By \eqref{eq:proof-Hom-C-C-kw-1} and \eqref{eq:proof-Hom-C-C-kw-2}, we have isomorphisms
  \begin{equation*}
    \mathfrak{X} \cong \prod_{v \in Q_0} \prod_{e \in Q(v,w)} \bfk_{v} \cong \prod_{e \in \tgt^{-1}(w)} \bfk_{\src(e)}
  \end{equation*}
  of left $C^*$-modules. The proof for the case where $\src^{-1}(w) \ne \emptyset$ is done.

  Next, we consider the case where $\src^{-1}(w) = \emptyset$.
  Then there is a right $C$-comodule map $\xi: C \to \bfk_w$ defined by $\xi(v) = \delta_{v,w}$ for all $v \in Q_0$ and $\xi(e) = 0$ for all $e \in Q_1$.
  Now we set $\mathfrak{X}_0 = \{ f \in \mathfrak{X} \mid f(w) = 0 \}$ and $\mathfrak{X}_1 = \bfk \xi$ so that $\mathfrak{X} = \mathfrak{X}_0 \oplus \mathfrak{X}_1$.
  By the same argument as above, we see that $\mathfrak{X}_0$ is a $C^*$-submodule of $\mathfrak{X}$ and there is an isomorphism $\mathfrak{X}_0 \cong \prod_{e \in \tgt^{-1}(w)} \bfk_{\src(e)}$ of left $C^*$-modules.
  For $f \in \mathfrak{X}$, the decomposition $f = f_0 + f_1$ ($f_i \in \mathfrak{X}_i$) is given by $f_1 = f(w) \xi$ and $f_0 = f - f_1$. Thus, for $c^* \in C^*$, we have $c^* \cdot \xi = c^*(w) \xi$ in $\mathfrak{X}/\mathfrak{X}_0$. This means that $\mathfrak{X}/\mathfrak{X}_0$ is isomorphic to $\bfk_w$ as a left $C^*$-module. Now the short exact sequence of the statement of this lemma is obtained from $0 \to \mathfrak{X}_0 \to \mathfrak{X} \to \mathfrak{X}/\mathfrak{X}_0 \to 0$. The proof is done.
\end{proof}

\begin{example}
  \label{ex:counter-example-0}
  We fix an infinite set $P$ and consider the coalgebra $C = C_Q$ associated to the quiver $Q$ defined by $Q_0 = \{ u, v, w \}$, $Q_1 = \{ e_0 \} \sqcup P$, $\src(e_0) = u$, $\tgt(e_0) = v$, $\src(e) = v$ and $\tgt(e) = w$ for $e \in P$ (see Figure~\ref{fig:quiver-1}, where $\displaystyle \mathop{\bullet}^{a} \xrightarrow{\ e \ } \mathop{\bullet}^b$ means that $e$ is en element of $Q_1$ such that $a = \src(e)$ and $b = \tgt(e)$).
  We set  $\mathfrak{X} = \Hom^C(C, \bfk_w)$ and define $\xi$, $\mathfrak{X}_0$ and $\mathfrak{X}_1$ as in the proof of Lemma~\ref{lem:Hom-C-C-kw}. Then, by that lemma, we have
  \begin{equation*}
    \mathfrak{X}_0
    \cong \prod_{e \in \tgt^{-1}(w)} \bfk_{\src(e)}
    \cong \prod_{|P|} \bfk_{v}
    \cong \bigoplus_{\alpha} \bfk_{v}
    \quad (\alpha := \dim_{\bfk} \bfk^{|P|}).
  \end{equation*}
  Thus the left $C^*$-module $\mathfrak{X}_0$ is rational.
  For $f \in Q_1$, we define $\delta_f \in C^*$ by $\delta_{f}(e) = \delta_{e,f}$ ($e \in Q_1$) and $\delta_{f}(x) = 0$ ($x \in Q_0$).
  Then we have $\delta_f \cdot \xi = \delta_f$ for all $f \in Q_1$.
  Since $\delta_f$'s are linearly independent, the space $C^* \cdot \xi$ is infinite-dimensional.
  By \cite[Theorem 2.2.6]{MR1786197}, $\xi$ does not belong to the rational part of $\mathfrak{X}$.
  Hence $\Nakl_C(\bfk_w) = \mathfrak{X}^{\rat} = \mathfrak{X}_0 \subsetneq \mathfrak{X}$.

  This example shows that the left $C^*$-module $\Hom^C(C, V)$ for $V \in \Mod^C$ is not rational in general.
  It also shows that $\Nakl_C(E)$ for $E \in \Inj(\Mod^C)$ is not projective in general.
  Indeed, since $\bfk v \in {}^C\Mod$ is not injective, $(\bfk v)^* \in \Mod^C$ is not projective.
  The right $C$-comodule $\mathfrak{X}^{\rat} = \Nakl_C(\bfk_w)$ is not projective since its direct summand $\bfk_v \cong (\bfk v)^*$ is not, while $\bfk_w = E(w)$ is injective.
\end{example}

\begin{example}
  \label{ex:counter-example-1}
  We fix an infinite set $P$ and consider the coalgebra $C = C_Q$ associated to the quiver $Q$ defined by $Q_0 = \{ w, w' \} \sqcup P$, $Q_1 = \{ e_0 \} \sqcup P$, $\src(e_0) = w$, $\tgt(e_0) = w'$, $\src(v) = v$ and $\tgt(v) = w$ for all $v \in P$ (see Figure~\ref{fig:quiver-2}).
  We set $\mathfrak{X} = \Hom^C(C, \bfk_w)$.
  By Lemma~\ref{lem:Hom-C-C-kw} we have $\mathfrak{X} \cong \prod_{v \in P} \bfk_v$ as a left $C^*$-module.
  Since the right $C$-comodule $\bigoplus_{v \in P} \bfk_v$ is quasi-finite, the rational part of $\mathfrak{X}$ is isomorphic to $\bigoplus_{v \in P} \bfk_v$ by \cite[Lemma 2.5]{MR2253657}. Thus, in particular, $\Nakl_C(\bfk_w) = \mathfrak{X}^{\rat}$ is infinite-dimensional.
  This example shows that $\Nakl_C$ does not preserve $\fdmod^C$, even if $C$ is left semiperfect ({\it cf}. Lemma~\ref{lem:Nakayama-semiperfect-4}).
\end{example}

\begin{figure}
  \begin{equation*}
    \begin{tikzpicture}[x = 24pt, y = 24pt]
      \node (u) at (0,0) {$\bullet$};
      \node (v) at (3,0) {$\bullet$};
      \node (w) at (6,0) {$\bullet$};
      \node at (u) [above = 2] {$u$};
      \node at (v) [above = 2] {$v$};
      \node at (w) [above = 2] {$w$};
      \draw [->] (u) -- node[midway,above] {$e_0$} (v);
      \draw [->] (v) to [out= 25, in=155] (w);
      \draw [->] (v) to [out=-25, in=205] (w);
      \node at (4.5, 0.1) {$\vdots$};
      \node at (5, -.75) [right] {$|P|$ arrows};
    \end{tikzpicture}
  \end{equation*}
  \caption{The quiver $Q$ of Example~\ref{ex:counter-example-0}}
  \label{fig:quiver-1}

  \begin{equation*}
    \begin{tikzpicture}[x = 24pt, y = 24pt]
      \node (w) at (0,0) {$\bullet$};
      \node (u) at (3,0) {$\bullet$};
      \node at (.25, .35) {$w$};
      \node at (u) [above = 2] {$w'$};
      \draw [->] (w) -- node[midway,above] {$e_0$} (u);
      \path (w) -- ++(120:2) node (v1) {$\bullet$};
      \path (w) -- ++(160:2) node (v2) {$\bullet$};
      \path (w) -- ++(210:2) node (v3) {$\bullet$};
      \draw [->] (v1) -- (w);
      \draw [->] (v2) -- (w);
      \draw [->] (v3) -- (w);
      \node at (v1) [right = 2] {$v \in P$};
      \node at (-1, 0) {$\vdots$};
      \node at (-1.5, -.25) [left] {$|P|$ arrows};
    \end{tikzpicture}
  \end{equation*}
  \caption{The quiver $Q$ of Example~\ref{ex:counter-example-1}}
  \label{fig:quiver-2}
\end{figure}

\begin{example}
  \label{ex:counter-example-2}
  Let $C$ be the left semiperfect coalgebra considered in Example~\ref{ex:counter-example-1}, and let $D = C^{\cop}$.
  Then $D$ is {\em right} semiperfect. For $M \in \fdmod^D$, there is an isomorphism $\Nakr_D(M)^* \cong \Nakl_C(M^*)$ of vector spaces by the proof of Lemma~\ref{lem:Nakayama-semiperfect-5}. Thus, by the discussion in Example~\ref{ex:counter-example-1}, $\Nakr_D$ does not preserve $\fdmod^D$.
\end{example}

\begin{example}
  \label{ex:counter-example-3}
  Let $C$ be a coalgebra. Then, by the tensor-Hom adjunction, we have
  \begin{align*}
    \Nakr_C(M)^*
    = \Hom_{\bfk}(C \otimes_{C^*} M, \bfk)
    & \cong {}_{C^*}\Hom(M, \Hom_{\bfk}(C, \bfk)) \\
    & \cong \Hom^C(M, ({}_{C^*}C^*)^{\rat})
  \end{align*}
  for all $M \in \Mod^C$. Thus $\Nakr_C = 0$ (as a functor) if and only if $({}_{C^*}C^*)^{\rat} = 0$. Since $\Nakl_C$ is right adjoint to $\Nakr_C$, this is also equivalent to that $\Nakl_C = 0$. Thus, in particular, both $\Nakl_C$ and $\Nakr_C$ preserve $\fdmod^C$ if $({}_{C^*}C^*)^{\rat} = 0$.

  Now we consider the polynomial algebra $C = \bfk[X]$ and endow it with a structure of a Hopf algebra by $\Delta(X) = X \otimes 1_C + 1_C \otimes X$ and $\varepsilon(X) = 0$. The dual algebra $C^*$ is isomorphic to the algebra of formal power series. Since $C^* f$ is infinite-dimensional for all non-zero element $f \in C^*$, we have $({}_{C^*}C^*)^{\rat} = 0$. Thus, by the above discussion, $\Nakl_C = \Nakr_C = 0$. Since $\bfk 1_C$ is a unique simple comodule of $C$ (up to isomorphisms) and its injective hull is $C$, we have $\Inj(\fdmod^C) = \Proj(\fdmod^C) = 0$. Hence $\Nakl_C$ and $\Nakr_C$ induce equivalences between $\Inj(\fdmod^C)$ and $\Proj(\fdmod^C)$, although $C$ is not semiperfect ({\it cf}. Theorem~\ref{thm:Nakayama-semiperfect}).
\end{example}

\section{Nakayama functor for locally finite abelian categories}
\label{sec:Naka-for-locally-finite}

\subsection{Finiteness conditions for comodules}

A $\bfk$-linear abelian category $\mathcal{A}$ is said to be {\em locally finite} \cite[Definition 1.8.1]{MR3242743} if it is essentially small, every object of $\mathcal{A}$ is of finite length, and $\Hom_{\mathcal{A}}(X, Y)$ is finite-dimensional for all $X, Y \in \mathcal{A}$. It is known that a $\bfk$-linear category is a locally finite abelian category if and only if it is equivalent to $\fdmod^C$ for some coalgebra $C$ \cite[Theorem 1.9.15]{MR3242743}.
In this section, we rephrase the results of the previous section in the setting of locally finite abelian categories and their ind-completions. First, we recall the following finiteness conditions:

\begin{definition}
  By a {\em filtered colimit}, we mean a colimit of a functor from a small filtered category.
  Let $\mathcal{A}$ be a category, and let $X$ be an object of $\mathcal{A}$.
  We say that $X$ is {\em finitely presented} (respectively, {\em finitely generated}) if the functor $\Hom_{\mathcal{A}}(X, -) : \mathcal{A} \to \Sets$ preserves filtered colimits (respectively, filtered colimits of monomorphisms) existing in the category $\mathcal{A}$ \cite[Chapter 6]{MR2182076}. We denote by $\mathcal{A}_{\fp}$ and $\mathcal{A}_{\fg}$ the full subcategory of finitely presented objects and finitely generated objects of $\mathcal{A}$, respectively.
\end{definition}

If $\mathcal{A} = {}_R \Mod$ for some algebra $R$, then $\mathcal{A}_{\fp}$ and $\mathcal{A}_{\fg}$ are precisely the category of finitely presented and finitely generated left $R$-modules, respectively.
In this paper, we are interested in the case where $\mathcal{A}$ is the category of comodules over a coalgebra. The following result seems to be well-known:

\begin{lemma}
  \label{lem:compact-obj-in-comodules}
  If $\mathcal{A} = \Mod^C$ for some coalgebra $C$, then we have
  \begin{equation*}
      \mathcal{A}_{\fp} = \mathcal{A}_{\fg} = \fdmod^C.
  \end{equation*}
\end{lemma}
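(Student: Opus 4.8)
I would prove the two equalities by showing the chain of inclusions $\fdmod^C \subseteq \mathcal{A}_{\fp} \subseteq \mathcal{A}_{\fg} \subseteq \fdmod^C$. The middle inclusion $\mathcal{A}_{\fp} \subseteq \mathcal{A}_{\fg}$ is immediate from the definitions, since a functor preserving all filtered colimits in particular preserves filtered colimits of monomorphisms. So the work is in the two outer inclusions, and the key structural fact I would invoke is the \emph{fundamental theorem for comodules}: every object of $\Mod^C$ is the filtered union (hence the filtered colimit along a system of monomorphisms) of its finite-dimensional subcomodules, and an object is finite-dimensional if and only if it is of finite length. This is already used elsewhere in the paper (e.g.\ in the proof of Lemma~\ref{lem:Nakayama-Mori-Take-equiv}), so I may cite it freely.

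\textbf{First inclusion: $\fdmod^C \subseteq \mathcal{A}_{\fp}$.} Let $X \in \fdmod^C$. I must show $\Hom^C(X,-)$ preserves filtered colimits. Given a filtered diagram $(M_i)_{i\in I}$ in $\Mod^C$ with colimit $M = \varinjlim M_i$, the comparison map $\varinjlim_i \Hom^C(X, M_i) \to \Hom^C(X, M)$ is a map of vector spaces which I claim is bijective. The cleanest route is to reduce to the case of the ground ring: recall from Subsection~\ref{subsec:notations} that $\Mod^C \hookrightarrow {}_{C^*}\Mod$ is fully faithful with right adjoint $(-)^{\rat}$, and that the forgetful functor $\Mod^C \to \Vect$ preserves colimits. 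Thus filtered colimits in $\Mod^C$ are computed on underlying spaces, and $\Hom^C(X,M)$ is a subspace of $\Hom_{\bfk}(X,M)$ cut out by the comodule condition. Since $X$ is finite-dimensional, $\Hom_{\bfk}(X,-) \cong X^* \otimes_{\bfk} (-)$ visibly commutes with filtered colimits; a diagram chase using filteredness (any finite amount of data lives in some $M_i$, and any relation becomes valid far enough out) then shows the comodule-linear maps are detected at finite stages too. Hence $X$ is finitely presented.

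\textbf{Second inclusion: $\mathcal{A}_{\fg} \subseteq \fdmod^C$.} Let $M \in \Mod^C$ be finitely generated; I must show $\dim_{\bfk} M < \infty$. Write $M = \bigcup_{\lambda} M_\lambda$ as the filtered union of its finite-dimensional subcomodules $M_\lambda$, which is a filtered colimit of monomorphisms with colimit $M$. By hypothesis $\Hom^C(M,-)$ preserves this colimit, so $\Hom^C(M,M) = \varinjlim_\lambda \Hom^C(M, M_\lambda)$; in particular the identity $\id_M$ factors through some $M_\lambda \hookrightarrow M$, forcing $M = M_\lambda$ to be finite-dimensional. This is the standard argument that finitely generated objects in such a Grothendieck category are exactly the Noetherian/finite-length ones, and here finite length coincides with finite dimension.

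\textbf{Main obstacle.} The only genuinely delicate point is the first inclusion: justifying that $\Hom^C(X,-)$ — as opposed to the naked $\Hom_{\bfk}(X,-)$ — commutes with filtered colimits, i.e.\ that the comodule condition is a ``finite'' constraint compatible with passing to the colimit. The subtlety is that colimits in $\Mod^C$ are \emph{not} in general computed by the forgetful functor to $\Vect$ in a way that respects limits, but they \emph{are} for colimits (the forgetful functor $\Mod^C \to \Vect$ has a right adjoint and hence preserves all colimits, as recorded after Theorem~\ref{thm:Nakayama-(co)end}). Once that is in hand, finite-dimensionality of $X$ does the rest. An alternative, perhaps slicker, packaging is to note that $\Mod^C = \Ind(\fdmod^C)$ is the free filtered-cocompletion of the essentially small category $\fdmod^C$, and in any such ind-completion the representable-along-objects-of-$\fdmod^C$ functors are by construction the finitely presented ones; this identifies $\mathcal{A}_{\fp}$ with $\fdmod^C$ directly. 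I would likely present the hands-on argument but remark on this second viewpoint.
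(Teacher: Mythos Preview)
Your proof is correct and proceeds by a somewhat different route than the paper's. The paper does not verify the definitions of ``finitely presented'' and ``finitely generated'' directly; instead it invokes two characterizations from Stenstr\"om \cite[Chapter~V, Propositions~3.2 and~3.4]{MR0389953} valid in any Grothendieck category with a generating set of finitely generated objects: an object $X$ is finitely generated iff every directed cover by subobjects stabilizes, and $X$ is finitely presented iff the kernel of every epimorphism from a finitely generated object onto $X$ is again finitely generated. With those characterizations in hand, both equalities are immediate from the fundamental theorem for comodules (since finitely generated $=$ finite-dimensional, kernels of maps between finite-dimensional comodules are finite-dimensional).

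Your approach, by contrast, works straight from the $\Hom$-preserves-filtered-colimits definitions and is more self-contained. The argument for $\mathcal{A}_{\fg}\subseteq\fdmod^C$ is essentially the same as what the paper leaves implicit. The genuine difference is in the inclusion $\fdmod^C\subseteq\mathcal{A}_{\fp}$: you argue directly that $\Hom^C(X,-)$ commutes with filtered colimits for finite-dimensional $X$, using that the forgetful functor to $\Vect$ preserves colimits. Your ``diagram chase'' sketch would be cleanest if phrased as: $\Hom^C(X,M)$ is the equalizer of $\Hom_\bfk(X,M)\rightrightarrows\Hom_\bfk(X,M\otimes_\bfk C)$, both legs commute with filtered colimits in $M$ since $X$ is finite-dimensional, and filtered colimits commute with finite limits in $\Vect$. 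The alternative $\Ind(\fdmod^C)$ packaging you mention is also valid and is in fact the viewpoint the paper adopts immediately after this lemma. What your approach buys is independence from the Stenstr\"om reference; what the paper's buys is brevity and a reminder that these facts are instances of general Grothendieck-category phenomena.
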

\begin{proof}
  Since $\mathcal{A}$ is a Grothendieck category \cite[Corollary 2.2.8]{MR1786197}, an object $X \in \mathcal{A}$ is finitely generated if and only if the following condition holds: For every directed system $\{ X_i \}_{i \in I}$ of subobjects of $X$ such that $X = \sum_{i \in I} X_i$, there is an element $i_0 \in I$ such that $X = X_{i_0}$ \cite[Chapter V, Proposition 3.2]{MR0389953}. By this characterization of finitely generated objects and the fundamental theorem for comodules, one can easily check that the equation $\mathcal{A}_{\fg} = \Mod^C_{\fd}$ holds.

  The fundamental theorem for comodules also implies that a set of representatives of the isomorphism classes of finite-dimensional right $C$-comodules is a generator of $\mathcal{A}$. Hence an object $X \in \mathcal{A}$ is finitely presented if and only if the following condition holds: For every epimorphism $q: W \to X$ in $\mathcal{A}$ with $W$ finitely generated, the kernel of $q$ is finitely generated \cite[Chapter V, Proposition 3.4]{MR0389953}. By this characterization of finitely presented objects, one can easily check that the equation $\mathcal{A}_{\fp} = \Mod^C_{\fd}$ holds. The proof is done.
\end{proof}

Given a category $\mathcal{A}$, we denote its ind-completion \cite[Chapter 6]{MR2182076} by $\Ind(\mathcal{A})$.
If $\mathcal{A}$ is a cocomplete category such that every object of $\mathcal{A}$ is a filtered colimit of finitely presented objects, then $\mathcal{A}$ is equivalent to $\Ind(\mathcal{A}_{\fp})$ \cite[Corollary 6.5]{MR2182076}.
By this fact and Lemma~\ref{lem:compact-obj-in-comodules}, we have $\Ind(\fdmod^C) \approx \Mod^C$ for any coalgebra $C$.

\subsection{Nakayama functor for locally finite abelian categories}

Let $\mathcal{A}$ be a locally finite abelian category.
By counterexamples given in Subsection \ref{subsec:counterexamples}, the (co)end formula for the Nakayama functors (Theorem \ref{thm:Nakayama-(co)end}) cannot be used to define endofunctors on $\mathcal{A}$ in general. Thus we first introduce endofunctors on $\Ind(\mathcal{A})$ as follows:

\begin{definition}
  We define two endofunctors $\Nakl_{\Ind(\mathcal{A})}$ and $\Nakr_{\Ind(\mathcal{A})}$ on $\Ind(\mathcal{A})$ by
  \begin{align*}
    \Nakl_{\Ind(\mathcal{A})}(M) & = \int_{X \in \mathcal{A}} \Hom_{\Ind(\mathcal{A})}(X, M) \copow X, \\
    \Nakr_{\Ind(\mathcal{A})}(M) & = \int^{X \in \mathcal{A}} \coHom_{\Ind(\mathcal{A})}(X, M) \copow X,
  \end{align*}
  respectively, for $M \in \Ind(\mathcal{A})$. We call the functors $\Nakl_{\Ind(\mathcal{A})}$ and $\Nakr_{\Ind(\mathcal{A})}$ the {\em left exact} and the {\em right exact Nakayama functor} for $\mathcal{A}$, respectively.
\end{definition}

By Theorem \ref{thm:Nakayama-(co)end}, $\Nakl_{\Ind(\mathcal{A})}$ and $\Nakr_{\Ind(\mathcal{A})}$ are identified with $\Nakl_C$ and $\Nakr_C$, respectively, if $\mathcal{A} = \fdmod^C$ for some coalgebra $C$.
Since the ind-completion of a locally finite abelian category is equivalent to $\Mod^C$ for some coalgebra $C$, we see that the functors $\Nakl_{\Ind(\mathcal{A})}$ and $\Nakr_{\Ind(\mathcal{A})}$ do exist for every locally finite abelian category $\mathcal{A}$.

We say that an abelian category $\mathcal{A}$ is {\em semiperfect} if every finitely generated object of $\mathcal{A}$ has a projective cover ({\it cf}. the definition of semiperfect rings). There is the following characterization of one-sided semiperfectness of coalgebras:

\begin{lemma}
  \label{lem:semiperfect}
  For a coalgebra $C$, the following are equivalent:
  \begin{enumerate}
  \item The coalgebra $C$ is right semiperfect.
  \item The category $\Mod^C$ is semiperfect.
  \item The category $\fdmod^C$ is semiperfect.
  \item The category $\fdmod^C$ has enough projective objects.
  \item The category ${}^C\fdmod$ has enough injective objects.
  \end{enumerate}
\end{lemma}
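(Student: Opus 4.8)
The plan is to run the chain of implications $(1)\Rightarrow(3)\Rightarrow(4)\Rightarrow(1)$ and to dispose of $(1)\Leftrightarrow(2)$ and $(4)\Leftrightarrow(5)$ separately, in each case reducing the assertion to material already available. First, $(1)\Leftrightarrow(2)$ is essentially a matter of unwinding the definitions: by Lemma~\ref{lem:compact-obj-in-comodules} the finitely generated objects of $\Mod^C$ are precisely the finite-dimensional right $C$-comodules, so ``$\Mod^C$ is semiperfect'' asserts exactly that every finite-dimensional right $C$-comodule has a projective cover in $\Mod^C$, which is Definition~\ref{def:semiperfect} of right semiperfectness.

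Next, for $(1)\Rightarrow(3)$ I would take an object $M\in\fdmod^C$ and apply Lemma~\ref{lem:DNR-thm-323-plus} to obtain a projective cover $p:P\to M$ in $\Mod^C$ with $P$ finite-dimensional. By Lemma~\ref{lem:fd-injective} one has $P\in\Proj(\fdmod^C)$, and since $P$ is finite-dimensional a subobject of $P$ is superfluous in $\Mod^C$ if and only if it is superfluous in $\fdmod^C$; hence $p$ is also a projective cover in $\fdmod^C$. This produces a projective cover in $\fdmod^C$ for every object of $\fdmod^C$, in particular for every finitely generated one, which is $(3)$. The step $(3)\Rightarrow(4)$ is immediate, a projective cover being in particular an epimorphism from a projective object.

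For $(4)\Rightarrow(1)$ I would prove that $\Mod^C$ has enough projective objects and then invoke Lemma~\ref{lem:DNR-thm-323}. Given $M\in\Mod^C$, the fundamental theorem for comodules gives that the canonical morphism $\bigoplus_N N\to M$ is an epimorphism, where $N$ ranges over all finite-dimensional subcomodules of $M$; precomposing it with a coproduct of epimorphisms $P_N\twoheadrightarrow N$ with $P_N\in\Proj(\fdmod^C)$ furnished by $(4)$ yields an epimorphism onto $M$ with source $\bigoplus_N P_N$. By Lemma~\ref{lem:fd-injective} each $P_N$ lies in $\Proj(\Mod^C)$, and since $\Mod^C$ has arbitrary coproducts the object $\bigoplus_N P_N$ is again projective; hence $\Mod^C$ has enough projectives, and Lemma~\ref{lem:DNR-thm-323} gives $(1)$.

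Finally, $(4)\Leftrightarrow(5)$ follows from the duality functor $(-)^*$, which by Lemma~\ref{lem:DNR-Lem-2-2-12} is an anti-equivalence between $\fdmod^C$ and ${}^C\fdmod$ carrying $\Proj(\fdmod^C)$ onto $\Inj({}^C\fdmod)$; an anti-equivalence interchanges the properties of having enough projectives and having enough injectives, so $(4)$ and $(5)$ are equivalent. I expect the points that most need care to be the two compatibility issues used above: that a projective cover in $\Mod^C$ with finite-dimensional source coincides with a projective cover in $\fdmod^C$ (for $(1)\Rightarrow(3)$), and that $\Proj(\fdmod^C)\subseteq\Proj(\Mod^C)$, so that the construction in $(4)\Rightarrow(1)$ really lands among the projectives of the large category — both, however, are handed to us by Lemma~\ref{lem:fd-injective}, so no genuinely new work is expected beyond bookkeeping.
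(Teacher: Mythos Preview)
Your proof is correct and follows the same chain of implications as the paper: $(1)\Leftrightarrow(2)$ by definition, $(1)\Rightarrow(3)$ via Lemmas~\ref{lem:fd-injective} and~\ref{lem:DNR-thm-323-plus}, $(3)\Rightarrow(4)$ trivially, and $(4)\Leftrightarrow(5)$ via the duality anti-equivalence. The only difference is in $(4)\Rightarrow(1)$: the paper simply cites \cite[Corollary 2.4.21]{MR1786197} applied to $C^{\cop}$, whereas you give a self-contained argument building enough projectives in $\Mod^C$ from those in $\fdmod^C$ and then invoking Lemma~\ref{lem:DNR-thm-323}; your route avoids the external citation at the cost of a short extra paragraph, but both are straightforward.
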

\begin{proof}
  It is trivial from the definition of right semiperfectness that (1) and (2) are equivalent.
  It is also trivial that (3) implies (4).
  Since there is an anti-equivalence between $\fdmod^C$ and ${}^C\fdmod$, the assertions (4) and (5) are equivalent.
  The implication (4) $\Rightarrow$ (1) follows from (iii) $\Rightarrow$ (i) of \cite[Corollary 2.4.21]{MR1786197} applied to $C^{\cop}$.
  The implication (1) $\Rightarrow$ (3) follows from Lemmas~\ref{lem:fd-injective} and \ref{lem:DNR-thm-323-plus}.
\end{proof}

Let $\mathcal{A}$ be a locally finite abelian category.
We are interested in when $\Nakl_{\Ind(\mathcal{A})}$ and $\Nakr_{\Ind(\mathcal{A})}$ induce endofunctors on the full subcategory $\mathcal{A} \subset \Ind(\mathcal{A})$. For this to hold, the semiperfectness of $\mathcal{A}$ is not sufficient as we have seen in Example \ref{ex:counter-example-2}.
Here we give the following sufficient condition:

\begin{theorem}
  \label{thm:Nakayama-compact-restriction}
  Let $\mathcal{A}$ be a locally finite abelian category.
  Suppose that $\mathcal{A}$ has both enough projectives and enough injectives.
  Then $\Nakl_{\Ind(\mathcal{A})}$ and $\Nakr_{\Ind(\mathcal{A})}$ restrict to endofunctors on $\mathcal{A}$.
\end{theorem}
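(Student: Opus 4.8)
The plan is to transport the statement into the language of coalgebras and then invoke Theorem~\ref{thm:Nakayama-semiperfect}. Since $\mathcal{A}$ is locally finite abelian, fix a coalgebra $C$ together with an equivalence $\mathcal{A} \approx \fdmod^C$ of $\bfk$-linear categories. This induces an equivalence $\Ind(\mathcal{A}) \approx \Ind(\fdmod^C) \approx \Mod^C$ which carries the full subcategory $\mathcal{A} \subseteq \Ind(\mathcal{A})$ onto $\fdmod^C \subseteq \Mod^C$ (Lemma~\ref{lem:compact-obj-in-comodules}) and, by Theorem~\ref{thm:Nakayama-(co)end} together with the categorical nature of the (co)ends defining them, carries $\Nakl_{\Ind(\mathcal{A})}$ and $\Nakr_{\Ind(\mathcal{A})}$ to $\Nakl_C$ and $\Nakr_C$, respectively. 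Hence it suffices to show that $\Nakl_C$ and $\Nakr_C$ preserve $\fdmod^C$.

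Next I would convert the two hypotheses on $\mathcal{A}$ into semiperfectness of $C$. That $\mathcal{A} \approx \fdmod^C$ has enough projectives gives, by the implication (4)$\Rightarrow$(1) of Lemma~\ref{lem:semiperfect}, that $C$ is right semiperfect. For the injective side, regard a right $C$-comodule as a left $C^{\cop}$-comodule, so that $\fdmod^C$ is identified with ${}^{C^{\cop}}\fdmod$; the hypothesis that $\mathcal{A}$ has enough injectives then says that ${}^{C^{\cop}}\fdmod$ has enough injectives, and the implication (5)$\Rightarrow$(1) of Lemma~\ref{lem:semiperfect} applied to the coalgebra $C^{\cop}$ yields that $C^{\cop}$ is right semiperfect, i.e. $C$ is left semiperfect. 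Thus $C$ is semiperfect.

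Finally, Theorem~\ref{thm:Nakayama-semiperfect}~(a) states precisely that for a semiperfect coalgebra $C$ the adjunction $\Nakr_C \dashv \Nakl_C$ restricts to an adjunction on $\fdmod^C$; in particular $\Nakl_C$ and $\Nakr_C$ both preserve $\fdmod^C$. Transporting this back along the equivalences of the first paragraph shows that $\Nakl_{\Ind(\mathcal{A})}$ and $\Nakr_{\Ind(\mathcal{A})}$ send objects of $\mathcal{A}$ to objects of $\mathcal{A}$, which is the assertion. I expect the only point requiring any care to be the bookkeeping in the first paragraph --- checking that the equivalence $\Ind(\mathcal{A}) \approx \Mod^C$ genuinely intertwines the abstractly defined Nakayama functors with $\Nakl_C$ and $\Nakr_C$ and matches $\mathcal{A}$ with $\fdmod^C$; beyond that the argument is a direct appeal to the lemmas already established and involves no computation.
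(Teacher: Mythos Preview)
Your proposal is correct and follows essentially the same route as the paper: choose $C$ with $\mathcal{A}\approx\fdmod^C$, use Lemma~\ref{lem:semiperfect} to turn the enough-projectives and enough-injectives hypotheses into two-sided semiperfectness of $C$, and then invoke Theorem~\ref{thm:Nakayama-semiperfect}. The only cosmetic difference is that the paper obtains left semiperfectness by noting that ${}^C\fdmod\approx\mathcal{A}^{\op}$ has enough projectives (via the duality $(-)^*$) and applying (4)$\Rightarrow$(1) of Lemma~\ref{lem:semiperfect} to $C^{\cop}$, whereas you use the identification $\fdmod^C={}^{C^{\cop}}\fdmod$ and (5)$\Rightarrow$(1); both are immediate.
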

\begin{proof}
  Let $C$ be a coalgebra such that $\mathcal{A} \approx \fdmod^C$.
  By Lemma~\ref{lem:semiperfect} and the assumption that $\mathcal{A}$ has enough projective objects, $C$ is right semiperfect.
  Since ${}^C\fdmod$ ($\approx \mathcal{A}^{\op}$) has enough projective objects, $C$ is left semiperfect again by Lemma~\ref{lem:semiperfect}.
  Thus $C$ is semiperfect.
  Now the result follows from Theorem~\ref{thm:Nakayama-semiperfect}.
\end{proof}

\begin{definition}
  Suppose that $\mathcal{A}$ is a locally finite abelian category satisfying the assumption of Theorem \ref{thm:Nakayama-compact-restriction}. Then we denote by $\Nakl_{\mathcal{A}}$ and $\Nakr_{\mathcal{A}}$ the endofunctors on $\mathcal{A}$ defined as restrictions of $\Nakl_{\Ind(\mathcal{A})}$ and $\Nakr_{\Ind(\mathcal{A})}$, respectively.
\end{definition}

\subsection{Interaction with adjoint functors}

Fuchs, Schaumann and Schweigert \cite{MR4042867} observed that several non-trivial results on finite tensor categories easily follow from the universal property of the Nakayama functor.
Applications of our results to tensor categories, which are not necessarily finite, will be given in the next section.
Here we remark the following relation between the Nakayama functor and adjoint functors, which generalizes \cite[Theorem 3.18]{MR4042867} to the locally finite setting.

\begin{theorem}
  \label{thm:Nakayama-and-adjunctions}
  Let $\mathcal{A}$ and $\mathcal{B}$ be locally finite abelian categories, and let $F: \Ind(\mathcal{A}) \to \Ind(\mathcal{B})$ be a $\bfk$-linear functor.
  Then the following hold:
  \begin{enumerate}
  \item [\rm (a)] Suppose that $F$ preserves limits and the double right adjoint $F^{\rradj} := (F^{\radj})^{\radj}$ exists, there is an isomorphism of functors
    \begin{equation*}
      \phi^{\L}_{F} : F \circ \Nakl_{\Ind(\mathcal{A})}
      \xrightarrow{\quad \cong \quad} \Nakl_{\Ind(\mathcal{B})} \circ F^{\rradj}.
    \end{equation*}
  \item [\rm (b)] Suppose that $F$ preserves colimits and the double left adjoint $F^{\lladj} := (F^{\ladj})^{\ladj}$ exists, there is a canonical isomorphism of functors
    \begin{equation*}
      \phi^{\R}_{F} :
      F \circ \Nakr_{\Ind(\mathcal{A})}
      \xrightarrow{\quad \cong \quad}
      \Nakr_{\Ind(\mathcal{B})} \circ F^{\lladj}.
    \end{equation*}
  \end{enumerate}
  The isomorphisms $\phi^{\L}_F$ and $\phi^{\R}_F$ are natural in the variable $F$ and `coherent' in the following sense: For every functors $F: \Ind(\mathcal{A}) \to \Ind(\mathcal{B})$ and $G: \Ind(\mathcal{B}) \to \Ind(\mathcal{C})$, where $\mathcal{A}$, $\mathcal{B}$ and $\mathcal{C}$ are locally finite abelian categories, the diagram
  \begin{equation*}
    \begin{tikzcd}[column sep = 80pt]
      G \circ F \circ \Nakl_{\Ind(\mathcal{A})}
      \arrow[r, "\phi^{\L}_{G \circ F}"]
      \arrow[d, "\id_G \circ \phi^{\L}_{F}"']
      & \Nakl_{\Ind(\mathcal{C})} \circ (G \circ F)^{\rradj}
      \arrow[d] \\
      G \circ \Nakl_{\Ind(\mathcal{B})} \circ F^{\rradj}
      \arrow[r, "\phi^{\L}_{G} \circ \id_{F^{\rradj}}"]
      & \Nakl_{\Ind(\mathcal{C})} \circ G^{\rradj} \circ F^{\rradj}
    \end{tikzcd}
  \end{equation*}
  commutes whenever $F$ and $G$ satisfy the assumptions of Part (a).
  Here, the unlabeled arrow in the diagram represents the canonical isomorphism $(G \circ F)^{\rradj} \cong G^{\rradj} \circ F^{\rradj}$ arising from the uniqueness of right adjoints.
  An analogous diagram for $\phi^{\R}$ also commutes provided that $F$ and $G$ satisfy the assumptions of Part (b).
\end{theorem}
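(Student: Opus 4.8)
The plan is to prove part~(a) directly from the (co)end formulas of Theorem~\ref{thm:Nakayama-(co)end} together with the reindexing isomorphisms of Section~\ref{sec:preliminaries}, and then to deduce part~(b) from part~(a) by a formal manipulation with adjoints.

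\emph{Reduction of (b) to (a).} If $F$ satisfies the hypotheses of~(b) then there is a chain $F^{\lladj}\dashv F^{\ladj}\dashv F\dashv F^{\radj}$, and $G:=F^{\ladj}$ satisfies the hypotheses of~(a): it preserves limits, being a right adjoint of $F^{\lladj}$, and its double right adjoint $G^{\rradj}=(G^{\radj})^{\radj}=F^{\radj}$ exists. Hence~(a) gives $F^{\ladj}\circ\Nakl_{\Ind(\mathcal{B})}\cong\Nakl_{\Ind(\mathcal{A})}\circ F^{\radj}$; since both sides preserve limits, passing to left adjoints (using $\Nakl^{\ladj}_{\Ind(-)}=\Nakr_{\Ind(-)}$ from Lemma~\ref{lem:Nakayama-adj} and $(F^{\radj})^{\ladj}=F$) yields the isomorphism $\phi^{\R}_F$ claimed in~(b). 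Because forming adjoints is functorial and compatible with composition, the coherence square for $\phi^{\L}$ applied to the pair $(H^{\ladj},G^{\ladj})$ transposes to the coherence square for $\phi^{\R}$ applied to $(G,H)$. Thus it suffices to prove~(a) and the coherence of $\phi^{\L}$.

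\emph{Proof of (a).} We may assume $\mathcal{A}=\fdmod^C$ and $\mathcal{B}=\fdmod^D$ for coalgebras $C,D$, so that $\Ind(\mathcal{A})=\Mod^C$, $\Ind(\mathcal{B})=\Mod^D$ by Lemma~\ref{lem:compact-obj-in-comodules} and $\Nakl_{\Ind(-)}$, $\Nakr_{\Ind(-)}$ are the functors of Section~\ref{sec:Naka-for-coalgebras}. Since $F^{\rradj}$ exists, $F^{\radj}$ exists, so $F\dashv F^{\radj}$; combined with the hypothesis that $F$ preserves limits, $F$ preserves all small limits and colimits, and likewise $F^{\radj}$ preserves all limits and colimits. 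Consequently $F$ commutes with the end defining $\Nakl_{\Ind(\mathcal{A})}$ (a limit) and with copowers (colimits), so for $M\in\Mod^C$ one gets $F(\Nakl_{\Ind(\mathcal{A})}(M))\cong\int_{X\in\fdmod^C}\Hom^C(X,M)\copow F(X)$; on the other side, the adjunction $F^{\radj}\dashv F^{\rradj}$ turns the end formula for $\Nakl_{\Ind(\mathcal{B})}(F^{\rradj}(M))$ into $\int_{W\in\fdmod^D}\Hom^C(F^{\radj}(W),M)\copow W$. The isomorphism $\phi^{\L}_F$ is then obtained by matching these two ends via \eqref{eq:adjunction-and-end}, applied to the functor $F|_{\fdmod^C}\colon\fdmod^C\to\fdmod^D$ whose right adjoint is the restriction of $F^{\radj}$; the description \eqref{eq:adjunction-and-coend-construction} shows $\phi^{\L}_F$ is built from the units and counits of the adjunctions in the chain, and in particular is natural in $M$.

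\emph{The main obstacle, and naturality/coherence.} The delicate point is that the (co)end formulas are (co)ends over the subcategory of finitely presented objects, so to invoke \eqref{eq:adjunction-and-end} in the small setting one must check that $F$ and $F^{\radj}$ restrict to functors between $\fdmod^C$ and $\fdmod^D$. For $F$ this holds because $F^{\radj}$, being a left adjoint of $F^{\rradj}$, preserves filtered colimits, whence $F$ preserves finitely presented objects; establishing the analogous statement for $F^{\radj}$ — exploiting that the four-term chain furnishes $F^{\radj}$ with adjoints on both sides, so it is sufficiently finitary — is the technical heart of the argument. One must also heed Remark~\ref{rem:Nakayama-(co)end}: the end for $\Nakl$ has to be computed inside $\Mod^C$, not in $\Vect$, because the forgetful functor does not preserve limits. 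Finally, naturality of $\phi^{\L}_F$ in $F$ is immediate from the construction, and the coherence square is verified by evaluating both composites on the universal dinatural transformations of the relevant (co)ends and applying \eqref{eq:adjunction-and-coend-construction} together with the compatibility of the (co)units of $G\circ F$ with those of $F$ and $G$ — a lengthy but routine diagram chase — after which the diagram for $\phi^{\R}$ follows from the reduction above.
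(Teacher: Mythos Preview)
Your proof of part~(a) follows the paper's strategy: pull $F$ through the end (using that $F$ preserves limits and copowers), reindex via~\eqref{eq:adjunction-and-end}, and apply the adjunction $F^{\radj}\dashv F^{\rradj}$ on the Hom. Your reduction of~(b) to~(a) by passing to left adjoints is a genuine departure: the paper instead proves~(b) by an independent parallel computation, using coends, \eqref{eq:adjunction-and-coend}, and the coHom adjunction~\eqref{eq:adjunction-coHom}. Your route is legitimate and economical, but it produces $\phi^{\R}_F$ only as ``the transpose of $\phi^{\L}_{F^{\ladj}}$'', whereas the paper's direct construction describes $\phi^{\R}$ explicitly on the universal dinatural transformations --- a description that is actually exploited later (e.g.\ in the proofs of Theorems~\ref{thm:Radford-iso-braiding} and~\ref{thm:Radford-iso-semisimple}).

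There is, however, a real gap in your treatment of the ``main obstacle''. You assert that to apply~\eqref{eq:adjunction-and-end} one must show $F^{\radj}$ restricts to a functor $\fdmod^D\to\fdmod^C$, and you propose to deduce this from the fact that the chain of adjoints furnishes $F^{\radj}$ with adjoints on both sides. That deduction fails: $F^{\radj}$ preserves finitely presented objects if and only if $F^{\rradj}$ preserves filtered colimits, and merely being a right adjoint does not force $F^{\rradj}$ to do so; the hypotheses of~(a) give no further control over $F^{\rradj}$. The paper does not confront this issue at all: it writes the intermediate term as $\int_{Y\in\mathcal{B}}\Hom_{\Ind(\mathcal{A})}(F^{\radj}(Y),M)\copow Y$, allowing $F^{\radj}(Y)$ to lie in $\Ind(\mathcal{A})$ rather than in $\mathcal{A}$, and invokes the reindexing principle in that generality. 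Only the restriction $F|_{\mathcal{A}}\colon\mathcal{A}\to\mathcal{B}$ (which you correctly verify) enters; the restriction of $F^{\radj}$ is neither used nor available. Your identification of the ``technical heart'' is therefore misplaced, and your proposed resolution of it would not go through.
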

\begin{proof}
  The proof goes along a similar way as \cite[Theorem 3.18]{MR4042867}.
  Suppose that $F$ fulfills the assumption of Part (a).
  We note that $F$ preserves direct sums as it is assumed to have a right adjoint.
  We define $\phi^{\L}_F: F \circ \Nakl_{\Ind(\mathcal{A})} \to \Nakl_{\Ind(\mathcal{B})} \circ F^{\rradj}$ to be the composition of natural isomorphisms
  \begin{align*}
    F(\Nakl_{\Ind(\mathcal{A})}(M))
    & = \textstyle F(\int_{X \in \mathcal{A}} \Hom_{\Ind(\mathcal{A})}(X, M) \copow X) \\
    & \cong \textstyle \int_{X \in \mathcal{A}} \Hom_{\Ind(\mathcal{A})}(X, M) \copow F(X) \\
    & \cong \textstyle \int_{Y \in \mathcal{B}} \Hom_{\Ind(\mathcal{A})}(F^{\radj}(Y), M) \copow Y \\
    & \cong \textstyle \int_{Y \in \mathcal{B}} \Hom_{\Ind(\mathcal{B})}(Y, F^{\rradj}(M)) \copow Y
      = \Nakl_{\Ind(\mathcal{B})}(F^{\rradj}(M))
  \end{align*}
  for $M \in \Ind(\mathcal{A})$,
  where the first isomorphism follows from the assumption that $F$ preserves limits and direct sums, the second one is \eqref{eq:adjunction-and-end}, and the third one is the adjunction isomorphism.
  Since each isomorphism is `coherent' in a similar sense as for $\phi^{\L}$, we obtain the coherent property of the isomorphism $\phi^{\L}_F$ as stated.
  
  Similarly, if $F$ satisfies the assumption of Part (b), then we have
  \begin{align*}
    F(\Nakr_{\Ind(\mathcal{A})}(M))
    & = \textstyle F(\int^{X \in \mathcal{A}} \coHom_{\Ind(\mathcal{A})}(X, M) \copow X) \\
    & \cong \textstyle \int^{X \in \mathcal{A}} \coHom_{\Ind(\mathcal{A})}(X, M) \copow F(X) \\
    & \cong \textstyle \int^{Y \in \mathcal{B}} \coHom_{\Ind(\mathcal{A})}(F^{\ladj}(Y), M) \copow Y \\
    & \cong \textstyle \int^{Y \in \mathcal{B}} \coHom_{\Ind(\mathcal{B})}(Y, F^{\lladj}(M)) \copow Y
      = \Nakr_{\Ind(\mathcal{B})}(F^{\lladj}(M))
  \end{align*}
  for $M \in \Ind(\mathcal{A})$. Here, the first isomorphism follows from the assumption that $F$ preserves colimits,
  the second from \eqref{eq:adjunction-and-coend}, and the third from \eqref{eq:adjunction-coHom}.
  By the same reason as in Part (a), the isomorphism obtained by the composition is coherent.
\end{proof}

The following variant of Theorem~\ref{thm:Nakayama-and-adjunctions} is essential for our applications:

\begin{corollary}
  \label{cor:Nakayama-and-adjunctions}
  Let $\mathcal{A}$ and $\mathcal{B}$ be locally finite abelian categories that have both enough projectives and injectives, and let $F: \mathcal{A} \to \mathcal{B}$ be a $\bfk$-linear functor.
  \begin{itemize}
  \item [(a)] If both $F^{\ladj}$ and $F^{\rradj}$ exist, then $F \circ \Nakl_{\mathcal{A}} \cong \Nakl_{\mathcal{B}} \circ F^{\rradj}$ as functors.
  \item [(b)] If both $F^{\radj}$ and $F^{\lladj}$ exist, then $F \circ \Nakr_{\mathcal{A}} \cong \Nakr_{\mathcal{B}} \circ F^{\lladj}$ as functors.
  \end{itemize}
\end{corollary}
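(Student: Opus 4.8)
\textbf{Proof proposal for Corollary~\ref{cor:Nakayama-and-adjunctions}.}
The plan is to deduce this from Theorem~\ref{thm:Nakayama-and-adjunctions} by passing to ind\nobreakdash-completions and then restricting back to the subcategories of compact objects. Let $\widehat{F} := \Ind(F) : \Ind(\mathcal{A}) \to \Ind(\mathcal{B})$ denote the essentially unique $\bfk$-linear, filtered-colimit-preserving functor whose restriction along $\mathcal{A} \hookrightarrow \Ind(\mathcal{A})$ is (identified with) $F$; this exists because $\Ind(\mathcal{A})$ is the free cocompletion of $\mathcal{A}$ under filtered colimits and every object of $\mathcal{A}$ is finitely presented in $\Ind(\mathcal{A})$ by Lemma~\ref{lem:compact-obj-in-comodules}. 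Since $\mathcal{A}$ and $\mathcal{B}$ have enough projectives and injectives, their underlying coalgebras are semiperfect (Lemma~\ref{lem:semiperfect}), so Theorem~\ref{thm:Nakayama-compact-restriction} applies and the functors $\Nakl_{\mathcal{A}}, \Nakr_{\mathcal{A}}, \Nakl_{\mathcal{B}}, \Nakr_{\mathcal{B}}$ are defined as restrictions of their ind-completion counterparts.

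The one point requiring care is the following: the $2$-functor $\Ind$ carries adjunctions to adjunctions, i.e.\ if $G \dashv H$ is an adjunction between the locally finite abelian categories in play, then $\Ind(G) \dashv \Ind(H)$. This is checked by a direct hom-set computation: writing objects of the ind-completions as filtered colimits $\mathrm{colim}_i X_i$, $\mathrm{colim}_j Y_j$ of compact objects, one has
$\Hom(\Ind(G)(\mathrm{colim}_i X_i), \mathrm{colim}_j Y_j) \cong \lim_i \mathrm{colim}_j \Hom(G(X_i), Y_j) \cong \lim_i \mathrm{colim}_j \Hom(X_i, H(Y_j)) \cong \Hom(\mathrm{colim}_i X_i, \Ind(H)(\mathrm{colim}_j Y_j))$,
using full faithfulness of $\mathcal{A}\hookrightarrow\Ind(\mathcal{A})$, $\mathcal{B}\hookrightarrow\Ind(\mathcal{B})$ and compactness, and one verifies naturality. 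Granting this, in case~(a) the notation $F^{\rradj}$ presupposes that $F^{\radj}$ exists, so we have a chain $F^{\ladj} \dashv F \dashv F^{\radj} \dashv F^{\rradj}$; applying $\Ind$ yields $\Ind(F^{\ladj}) \dashv \widehat{F} \dashv \Ind(F^{\radj}) \dashv \Ind(F^{\rradj})$. Hence $\widehat{F}$ has a left adjoint (so preserves limits), and by uniqueness of adjoints $\widehat{F}^{\rradj}$ exists and is isomorphic to $\Ind(F^{\rradj})$. Thus $\widehat{F}$ meets the hypotheses of Theorem~\ref{thm:Nakayama-and-adjunctions}(a), giving $\widehat{F} \circ \Nakl_{\Ind(\mathcal{A})} \cong \Nakl_{\Ind(\mathcal{B})} \circ \Ind(F^{\rradj})$. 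Case~(b) is handled identically from the chain $F^{\lladj} \dashv F^{\ladj} \dashv F \dashv F^{\radj}$, which after applying $\Ind$ shows $\widehat{F}$ preserves colimits and has $\widehat{F}^{\lladj} \cong \Ind(F^{\lladj})$, so Theorem~\ref{thm:Nakayama-and-adjunctions}(b) gives $\widehat{F} \circ \Nakr_{\Ind(\mathcal{A})} \cong \Nakr_{\Ind(\mathcal{B})} \circ \Ind(F^{\lladj})$.

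Finally I would restrict these natural isomorphisms along $\mathcal{A} \hookrightarrow \Ind(\mathcal{A})$. For $M \in \mathcal{A}$ we have $\Nakl_{\Ind(\mathcal{A})}(M) = \Nakl_{\mathcal{A}}(M) \in \mathcal{A}$ by Theorem~\ref{thm:Nakayama-compact-restriction}, so $\widehat{F}$ applied to it is $F(\Nakl_{\mathcal{A}}(M))$; on the other side $\Ind(F^{\rradj})(M) = F^{\rradj}(M) \in \mathcal{B}$, and $\Nakl_{\Ind(\mathcal{B})}$ applied to it is $\Nakl_{\mathcal{B}}(F^{\rradj}(M))$. This yields $F \circ \Nakl_{\mathcal{A}} \cong \Nakl_{\mathcal{B}} \circ F^{\rradj}$, proving~(a); part~(b) follows in exactly the same way with $\Nakr$ in place of $\Nakl$ and $F^{\lladj}$ in place of $F^{\rradj}$. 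The only genuinely non-routine ingredient is the compatibility of $\Ind$ with adjoint functors together with the bookkeeping that all the extended functors remain $\bfk$-linear and restrict correctly to compact objects; I expect this to be the main (though mild) obstacle, after which the corollary is an immediate consequence of Theorems~\ref{thm:Nakayama-compact-restriction} and~\ref{thm:Nakayama-and-adjunctions}.
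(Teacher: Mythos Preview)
Your proposal is correct and follows essentially the same route as the paper: extend the chain of adjoints to the ind-completions (using that $\Ind$ preserves adjunctions), apply Theorem~\ref{thm:Nakayama-and-adjunctions} at the ind-level, and then restrict to the finitely presented objects via Theorem~\ref{thm:Nakayama-compact-restriction}. The paper's proof is terser---it simply invokes ``the functorial property of the ind-completion'' for the adjunction chain---but your added detail (the hom-set verification and the explicit restriction argument) is exactly what underlies that invocation.
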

\begin{proof}
  We only prove Part (a), since the other one is verified in a similar way.
  Suppose that $F$ fulfills the assumptions of Part (a).
  By the functorial property of the ind-completion, we have a chain
  \begin{equation*}
    \Ind(F^{\ladj}) \dashv \Ind(F) \dashv \Ind(F^{\radj}) \dashv \Ind(F^{\rradj})
  \end{equation*}
  of adjunctions. By applying Theorem~\ref{thm:Nakayama-and-adjunctions} (a) to $\Ind(F) : \Ind(\mathcal{A}) \to \Ind(\mathcal{B})$, we have an isomorphism
  \begin{equation*}
    \Ind(F) \circ \Nakl_{\Ind(\mathcal{A})} \cong \Nakl_{\Ind(\mathcal{B})} \circ \Ind(F^{\rradj}).
  \end{equation*}
  The proof is completed by restricting the both sides to $\mathcal{A} = \Ind(\mathcal{A})_{\fp}$.
\end{proof}

\section{Applications to Frobenius tensor categories}
\label{sec:Frobenius-tensor}

\subsection{Conventions for tensor categories}

In this section, we give applications of our results on Nakayama functors to tensor categories.
We first fix conventions on monoidal categories.
Our notation and terminology basically follow \cite{MR3242743}.
In view of Mac Lane's strictness theorem, we assume that every monoidal category is strict.
Unless otherwise noted, the tensor product and the unit object of a monoidal category will be written as $\otimes$ and $\unitobj$, respectively.
Given a monoidal category $\mathcal{C}$, we denote by $\mathcal{C}^{\rev}$ the monoidal category obtained from $\mathcal{C}$ by reversing the order of the tensor product.

Let $L$ and $R$ be objects of $\mathcal{C}$, and let $\varepsilon: L \otimes R \to \unitobj$ and $\eta: \unitobj \to R \otimes L$ be morphisms in $\mathcal{C}$.
We say that the triple $(L, \varepsilon, \eta)$ is a {\em left dual object} of $R$ and the triple $(R, \varepsilon, \eta)$ is a {\em right dual object} if the equations $(\varepsilon \otimes \id_L) (\id_L \otimes \eta) = \id_L$ and $(\id_R \otimes \varepsilon) (\eta \otimes \id_R) = \id_R$ hold. We say that $\mathcal{C}$ is {\em left (right) rigid} if every object of $\mathcal{C}$ has a left (right) dual object. A {\em rigid monoidal category} is a left and right rigid monoidal category.

Given an object $X$ of a left rigid monoidal category $\mathcal{C}$, we denote by
\begin{equation*}
  (X^{\vee}, \ \eval_X : X^{\vee} \otimes X \to \unitobj, \ \coev_X : \unitobj \to X \otimes X^{\vee})
\end{equation*}
the fixed left dual object of $X$. The assignment $X \mapsto X^{\vee}$ extends to a contravariant strong monoidal functor from $\mathcal{C}$ to $\mathcal{C}^{\rev}$, which we call the {\em left duality functor} of $\mathcal{C}$. By replacing $\mathcal{C}$ with an equivalent one and choose left dual objects in an appropriate way, we may assume that the left duality functor $(-)^{\vee}$ is a strict monoidal functor.

A left rigid monoidal category is rigid if and only if its left duality functor is an anti-equivalence.
Let $\mathcal{C}$ be a rigid monoidal category.
A quasi-inverse of the left duality functor of $\mathcal{C}$, which we denote by $X \mapsto {}^{\vee}X$, is given by taking a right dual object.
We may assume that $(-)^{\vee}$ and ${}^{\vee}(-)$ are strict monoidal functors and mutually inverse to each other.

A {\em tensor category} \cite{MR3242743} is a locally finite abelian category $\mathcal{C}$ equipped with a structure of a rigid monoidal category such that the tensor product $\otimes: \mathcal{C} \times \mathcal{C} \to \mathcal{C}$ is $\bfk$-linear in each variable and the unit object $\unitobj \in \mathcal{C}$ is absolutely simple, that is, it is a simple object such that $\End_{\mathcal{C}}(\unitobj) \cong \bfk$.

An object of a tensor category $\mathcal{C}$ is said to be {\em trivial} if it is isomorphic to the direct sum of finitely many copies of the unit object $\unitobj$.
We denote by $\mathcal{C}_{\triv}$ the full subcategory of $\mathcal{C}$ consisting of all trivial objects of $\mathcal{C}$.

\begin{lemma}
  \label{lem:NS-Lemma-7-1}
  Let $\mathcal{C}$ be a tensor category. There is a unique natural isomorphism
  \begin{equation}
    \tau_{X, T} : X \otimes T \to T \otimes X
    \quad (X \in \mathcal{C}, T \in \mathcal{C}_{\triv})
  \end{equation}
  characterized by the property that $\tau_{X,\unitobj} = \id_X$ for all $X \in \mathcal{C}$.
  If $\mathcal{C}$ has a braiding $\sigma$, then we have
  \begin{equation}
    \label{eq:NS-Lemma-7-1-braided}
    \tau_{X,T} = \sigma_{X,T} = (\sigma_{T,X})^{-1}
    \quad (X \in \mathcal{C}, T \in \mathcal{C}_{\triv}).
  \end{equation}
\end{lemma}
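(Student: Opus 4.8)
The plan is to build $\tau$ by hand from a direct-sum presentation of a trivial object, and then upgrade the construction to a canonical, natural, and unique one using only the hypothesis that the unit object is absolutely simple. Fix $T \in \mathcal{C}_{\triv}$ together with an isomorphism $\unitobj^{\oplus n} \xrightarrow{\ \cong\ } T$, and let $\iota_k \colon \unitobj \to T$ and $p_k \colon T \to \unitobj$ ($1 \le k \le n$) be the induced structure maps, so that $p_j \circ \iota_k = \delta_{jk}\,\id_\unitobj$ and $\sum_{k=1}^{n} \iota_k \circ p_k = \id_T$. Using strictness of $\otimes$ (so that $\unitobj \otimes X = X = X \otimes \unitobj$), I would set
\[
  \tau_{X,T} \;:=\; \sum_{k=1}^{n} (\iota_k \otimes \id_X)\circ(\id_X \otimes p_k)\;\colon\; X \otimes T \longrightarrow T \otimes X .
\]
A short computation with the interchange law and the relations above shows that $\sum_{k}(\id_X \otimes \iota_k)\circ(p_k \otimes \id_X)$ is a two-sided inverse, so $\tau_{X,T}$ is an isomorphism.

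The key point is that $\tau_{X,T}$ is characterized, independently of the chosen presentation of $T$, by the property
\[
  \tau_{X,T}\circ(\id_X \otimes f) \;=\; f \otimes \id_X \qquad \text{for every } f \colon \unitobj \to T .
\]
Indeed, the displayed formula satisfies $\tau_{X,T}\circ(\id_X \otimes \iota_k) = \iota_k \otimes \id_X$ by a direct computation, hence it satisfies the above identity for all $f$ by $\bfk$-linearity, because absolute simplicity of $\unitobj$ forces $\Hom_{\mathcal{C}}(\unitobj,T) \cong \Hom_{\mathcal{C}}(\unitobj,\unitobj)^{\oplus n} \cong \bfk^{\,n}$ to be spanned by $\iota_1,\dots,\iota_n$; conversely any $g$ with $g\circ(\id_X \otimes \iota_k) = \iota_k \otimes \id_X$ coincides with $g\circ\sum_k(\id_X \otimes \iota_k)(\id_X \otimes p_k) = \sum_k(\iota_k \otimes \id_X)(\id_X \otimes p_k) = \tau_{X,T}$. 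With this characterization, well-definedness and naturality become routine: naturality in $X$ is a straightforward computation with the interchange law, while naturality in $T$ over $\mathcal{C}_{\triv}$ follows since for $h \colon T \to T'$ both $(h \otimes \id_X)\circ\tau_{X,T}$ and $\tau_{X,T'}\circ(\id_X \otimes h)$ send $\id_X \otimes \iota_k$ to $h\iota_k \otimes \id_X$, and two morphisms out of $X \otimes T$ agreeing after precomposition with each $\id_X \otimes \iota_k$ are equal because $\sum_k(\id_X \otimes \iota_k)(\id_X \otimes p_k) = \id_{X \otimes T}$. Taking $T = \unitobj$ with the presentation $\unitobj = \unitobj^{\oplus 1}$ gives $\tau_{X,\unitobj} = \id_X$, and uniqueness follows at once: if $\tau'$ is any natural isomorphism with $\tau'_{X,\unitobj} = \id_X$, then naturality in $T$ applied to $\iota_k$ gives $\tau'_{X,T}\circ(\id_X \otimes \iota_k) = \iota_k \otimes \id_X$, so $\tau' = \tau$ by the characterization.

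For the braided case I would use the standard fact that $\sigma_{X,\unitobj} = \id_X = \sigma_{\unitobj,X}$ in any (strict) braided monoidal category, which follows from the hexagon axioms (see \cite{MR3242743}). Then the family $(X,T) \mapsto \sigma_{X,T}$, restricted to $T \in \mathcal{C}_{\triv}$, is a natural isomorphism $X \otimes T \to T \otimes X$ that equals $\id_X$ when $T = \unitobj$, so $\tau_{X,T} = \sigma_{X,T}$ by uniqueness. Similarly, $(X,T) \mapsto (\sigma_{T,X})^{-1}$ is natural in $X$ and in $T \in \mathcal{C}_{\triv}$, being the pointwise inverse of the natural isomorphism $\sigma_{T,X}\colon T \otimes X \to X \otimes T$, and reduces to $\id_X$ at $T = \unitobj$; hence $\tau_{X,T} = (\sigma_{T,X})^{-1}$ as well, which yields \eqref{eq:NS-Lemma-7-1-braided}.

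I expect the only genuinely delicate points to be the bookkeeping with the strictness identifications $\unitobj \otimes X = X = X \otimes \unitobj$ when manipulating expressions such as $\iota_k \otimes \id_X$ and $\id_X \otimes p_k$, and the verification that $\Hom_{\mathcal{C}}(\unitobj,T) = \Span_{\bfk}\{\iota_1,\dots,\iota_n\}$, which is precisely where absolute simplicity of the unit enters; the remaining steps are pure applications of the interchange law and of the identities $p_j\iota_k = \delta_{jk}\id_\unitobj$ and $\sum_k\iota_k p_k = \id_T$.
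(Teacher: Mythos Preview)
Your proof is correct. The paper itself does not give a detailed argument for the existence and uniqueness of $\tau$; it simply cites \cite[Lemma 7.1]{MR2381536} and remarks that the semisimplicity hypothesis there is unnecessary. Your explicit construction via a biproduct presentation $T \cong \unitobj^{\oplus n}$, together with the presentation-free characterization $\tau_{X,T}\circ(\id_X\otimes f)=f\otimes\id_X$ for all $f\colon\unitobj\to T$, is exactly the kind of argument that reference contains, so your approach is essentially the same but fully spelled out. Your identification of where absolute simplicity of $\unitobj$ enters (namely, in showing that $\Hom_{\mathcal{C}}(\unitobj,T)$ is $\bfk$-spanned by the $\iota_k$, equivalently that $\id_X\otimes a = a\otimes\id_X$ for $a\in\End_{\mathcal C}(\unitobj)=\bfk$) is accurate.

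For the braided statement, your argument coincides with the paper's: both deduce \eqref{eq:NS-Lemma-7-1-braided} from the standard fact $\sigma_{X,\unitobj}=\id_X=(\sigma_{\unitobj,X})^{-1}$ combined with the uniqueness of $\tau$.
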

\begin{proof}
  See \cite[Lemma 7.1]{MR2381536} for the existence of such a natural isomorphism (there $\mathcal{C}$ is assumed to be semisimple, however, that assumption is not needed for proving this). Equation~\eqref{eq:NS-Lemma-7-1-braided} follows from the fact that the braiding $\sigma$ satisfies $\sigma_{X,\unitobj} = \id_X = (\sigma_{\unitobj,X})^{-1}$ for all $X \in \mathcal{C}$.
\end{proof}

This lemma is helpful when we consider copowers in a tensor category. Let $\mathcal{C}$ be a tensor category. Then the category $\Vect_{\fd}$ is identified with $\mathcal{C}_{\triv}$ through the functor sending $\bfk$ to $\unitobj$. Under this identification, the copower of $T \in \Vect_{\fd}$ and $X \in \mathcal{C}$ is just the tensor product $T \otimes X$ of $T \in \mathcal{C}_{\triv}$ and $X \in \mathcal{C}$.
Since every $\bfk$-linear functor $F: \mathcal{C} \to \mathcal{C}$ preserves finite direct sums, there is a canonical isomorphism $F(T \otimes X) \cong T \otimes F(X)$ for $T \in \mathcal{C}_{\triv}$ and $X \in \mathcal{C}$.
If $F = X \otimes (-)$ for some $X \in \mathcal{C}$, then this canonical isomorphism is given by
\begin{equation*}
  F(T \otimes Y) = X \otimes T \otimes Y
  \xrightarrow{\quad \tau_{X,T} \otimes \id_Y \quad}
  T \otimes X \otimes Y
  = T \otimes F(Y)
\end{equation*}
for $Y \in \mathcal{C}$ and $T \in \Vect_{\fd} = \mathcal{C}_{\triv}$.

\subsection{A remark on `one-sided rigid' tensor categories}

Let $H$ be a Hopf algebra.
Then the $\bfk$-linear locally finite abelian category $\fdmod^H$ is naturally a left rigid monoidal category, however, it may not be a tensor category because of the lack of right rigidity.
Indeed, the monoidal category $\fdmod^H$ is rigid if and only if the antipode of $H$ is bijective \cite{MR1098991}, and some examples of Hopf algebras with non-bijective antipodes are known \cite{MR292876}. It could be important to know when the one-sided rigidity of a monoidal category implies the rigidity on the other side. Motivated by the fact that a one-sided semiperfect Hopf algebra is QcF and the antipode of such a Hopf algebra is bijective (see Subsection~\ref{subsec:semiperfect-Hopf}), here we prove:

\begin{theorem}
  \label{thm:one-sided-rigidity}
  Let $\mathcal{C}$ be a locally finite abelian category equipped with a structure of a monoidal category such that the tensor product of $\mathcal{C}$ is $\bfk$-linear and exact in each variable, and the unit object $\unitobj$ of $\mathcal{C}$ is a simple object.
  Suppose that $\mathcal{C}$ has either a non-zero projective object or a non-zero injective object.
  Then the following assertions are equivalent:
  \begin{enumerate}
  \item $\mathcal{C}$ is rigid.
  \item $\mathcal{C}$ is left rigid.
  \item $\mathcal{C}$ is right rigid.
  \end{enumerate}
  If these equivalent conditions hold, then $\mathcal{C} \approx \fdmod^Q$ as $\bfk$-linear categories for some QcF coalgebra $Q$.
\end{theorem}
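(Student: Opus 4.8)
By symmetry it suffices to prove $(2)\Rightarrow(1)$ together with the last assertion: indeed $\mathcal{C}^{\rev}$ is again locally finite abelian with exact bilinear tensor product and simple unit, it is left rigid precisely when $\mathcal{C}$ is right rigid and rigid precisely when $\mathcal{C}$ is rigid, and it has a non-zero projective (resp.\ injective) object precisely when $\mathcal{C}$ does, so $(2)\Rightarrow(1)$ for $\mathcal{C}^{\rev}$ yields $(3)\Rightarrow(1)$ for $\mathcal{C}$. Moreover $\mathcal{C}^{\op,\rev}$ shares all the stated hypotheses, is left rigid exactly when $\mathcal{C}$ is, is rigid exactly when $\mathcal{C}$ is, and has a non-zero projective object exactly when $\mathcal{C}$ has a non-zero injective one; hence, replacing $\mathcal{C}$ by $\mathcal{C}^{\op,\rev}$ if necessary, we may and do assume that $\mathcal{C}$ is left rigid and has a non-zero projective object $P$. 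The goal is then to deduce that $\mathcal{C}$ is rigid and that $\mathcal{C}\approx\fdmod^Q$ for some QcF coalgebra $Q$.

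First I would record the consequences of left rigidity: for every $X\in\mathcal{C}$ there are adjunctions $X^{\vee}\otimes(-)\dashv X\otimes(-)$ and $(-)\otimes X\dashv(-)\otimes X^{\vee}$, so, the tensor product being exact, $X^{\vee}\otimes(-)$ and $(-)\otimes X$ preserve projective objects while $X\otimes(-)$ and $(-)\otimes X^{\vee}$ preserve injective objects, and the left duality functor $(-)^{\vee}\colon\mathcal{C}\to\mathcal{C}$ is exact, contravariant and fully faithful. Since $P\neq0$ the zig-zag identities give $P^{\vee}\neq0$ and $\eval_P\colon P^{\vee}\otimes P\to\unitobj$ non-zero, hence epic because $\unitobj$ is simple; as $P^{\vee}\otimes P$ is projective, $\unitobj$ is a quotient of a projective, and then $(P^{\vee}\otimes P)\otimes Y$ is a projective surjecting onto $Y\cong\unitobj\otimes Y$ for every $Y$. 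Thus $\mathcal{C}$ has enough projectives, so $\mathcal{C}\approx\fdmod^C$ for a right semiperfect coalgebra $C$ (Lemma~\ref{lem:semiperfect}), and every object of $\fdmod^C$ has a finite-dimensional projective cover (Lemma~\ref{lem:DNR-thm-323-plus}).

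The key step is that $\mathcal{C}$ also has enough injectives. Here I would show that $(-)^{\vee}$ carries projective objects to injective ones: for $P$ projective one has a natural isomorphism $\Hom_{\mathcal{C}}(-,P^{\vee})\cong\Hom_{\mathcal{C}}((-)\otimes P,\unitobj)$, and a vanishing-of-$\mathrm{Ext}$ argument — this is where left rigidity is used essentially, and it is the categorical counterpart of the existence of a non-zero integral on a co-Frobenius Hopf algebra — gives $\mathrm{Ext}^1_{\mathcal{C}}(-,P^{\vee})=0$, so $P^{\vee}$ is injective. Granting this, dualizing a projective cover $P(X^{\vee})\twoheadrightarrow X^{\vee}$ exhibits $X^{\vee\vee}$ as a subobject of the finite-dimensional injective $P(X^{\vee})^{\vee}$; a short argument with simple objects then shows that the injective hull of every simple object of $\mathcal{C}$ is finite-dimensional, i.e.\ that $C$ is also left semiperfect. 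Hence $C$ is semiperfect, $\mathcal{C}$ has enough projectives and enough injectives, and the Nakayama functors $\Nakl_{\mathcal{C}}$, $\Nakr_{\mathcal{C}}$ are defined on $\mathcal{C}$ (Theorems~\ref{thm:Nakayama-semiperfect} and \ref{thm:Nakayama-compact-restriction}).

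Finally I would upgrade this to rigidity and QcF-ness. Applying the previous step to $\mathcal{C}^{\op,\rev}$ shows that $(-)^{\vee}$ also sends injective objects to projective ones, so it interchanges the indecomposable projectives $P(S_i)$ and the indecomposable injectives $E(S_i)$ of $\mathcal{C}$; using that $(-)^{\vee}$ is exact, fully faithful and reflects isomorphisms, together with the equivalences $\Nakl_{\mathcal{C}}\colon\fdInj^C\rightleftarrows\fdPro^C\colon\Nakr_{\mathcal{C}}$ of Theorem~\ref{thm:Nakayama-semiperfect}, one checks that $(-)^{\vee}$ is essentially surjective, hence an anti-autoequivalence of $\mathcal{C}$, so $\mathcal{C}$ is rigid. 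Once $\mathcal{C}$ is rigid, $X\otimes(-)$ acquires the left adjoint ${}^{\vee}X\otimes(-)$ and the double right adjoint $X^{\vee\vee}\otimes(-)$, so Corollary~\ref{cor:Nakayama-and-adjunctions} yields $X\otimes\Nakr_{\mathcal{C}}(Y)\cong\Nakr_{\mathcal{C}}(X^{\vee\vee}\otimes Y)$, whence $\Nakr_{\mathcal{C}}(X)\cong{}^{\vee\vee}X\otimes\modobj_{\mathcal{C}}$ with $\modobj_{\mathcal{C}}:=\Nakr_{\mathcal{C}}(\unitobj)$; as $\modobj_{\mathcal{C}}$ is invertible (it is the categorical analogue of the distinguished grouplike element), $\Nakr_{\mathcal{C}}$ is an equivalence of $\mathcal{C}$, hence of $\Mod^C$, and so $C$ is QcF by Theorem~\ref{thm:Nakayama-QcF}. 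The step I expect to be the main obstacle is the passage from ``enough projectives'' to ``enough injectives'' — concretely, the vanishing of $\mathrm{Ext}$ that makes $(-)^{\vee}$ send projective comodules to injective ones — since everything afterward is bookkeeping with the Nakayama functor and the left duality.
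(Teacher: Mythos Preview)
Your overall plan is close in spirit to the paper's, but there are two genuine gaps, and they stem from a reversal of the logical order.

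\textbf{Enough injectives.} Your route via ``$P^{\vee}$ is injective'' uses $\Hom_{\mathcal C}(X,P^{\vee})\cong\Hom_{\mathcal C}(X\otimes P,\unitobj)$ and then needs the short exact sequence $0\to A\otimes P\to B\otimes P\to C\otimes P\to 0$ to split. For that you need $X\otimes P$ projective for all $X$, but with only \emph{left} rigidity the functor $X\otimes(-)$ is a \emph{right} adjoint, not a left one, so this is not available. What does work is the paper's Lemma~\ref{lem:one-sided-rigidity-proof-2}: for projective $P$ use the other adjunction $\Hom_{\mathcal C}(X,P)\cong\Hom_{\mathcal C}(P^{\vee}\otimes X,\unitobj)$; here $P^{\vee}$ is projective and $P^{\vee}\otimes X$ is projective for every $X$ (Lemma~\ref{lem:one-sided-rigidity-proof-1}), so the tensored sequence splits and $P$ itself is injective. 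This gives $\Proj(\mathcal C)=\Inj(\mathcal C)$, and now applying your ``enough projectives'' argument to $\mathcal C^{\op,\rev}$ yields enough injectives with no further work.

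\textbf{Rigidity versus QcF.} You try to establish rigidity first (by arguing $(-)^{\vee}$ is essentially surjective, which is left vague) and only then QcF, invoking without proof that $\modobj_{\mathcal C}$ is invertible. The paper reverses this, and the reversal is the crux. Once $\Proj=\Inj$ and $C$ is semiperfect, Lemma~\ref{lem:one-sided-rigidity-proof-3} gives that $C$ is QcF \emph{before} any rigidity is known; hence $\Nakl_{\mathcal C}$ and $\Nakr_{\mathcal C}$ are already mutually quasi-inverse equivalences (Theorem~\ref{thm:Nakayama-QcF}). The point you are missing is that Corollary~\ref{cor:Nakayama-and-adjunctions} applied to $F=X^{\vee\vee}\otimes(-)$ needs only \emph{left} rigidity: the chain $X^{\vee\vee\vee}\otimes(-)\dashv X^{\vee\vee}\otimes(-)\dashv X^{\vee}\otimes(-)\dashv X\otimes(-)$ exists without right duals. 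This already gives $\Nakl_{\mathcal C}(X)\cong X^{\vee\vee}\otimes\overline{\modobj}_{\mathcal C}$ and, dually, $\Nakr_{\mathcal C}(X)\cong \modobj_{\mathcal C}\otimes X^{\vee\vee}$. From $\Nakl\Nakr(\unitobj)\cong\unitobj\cong\Nakr\Nakl(\unitobj)$ and the length inequality $l(A\otimes B)\ge l(A)l(B)$ one then \emph{proves} (not assumes) that $\modobj_{\mathcal C}$ and $\overline{\modobj}_{\mathcal C}$ are invertible; since $\Nakl_{\mathcal C}$ is an equivalence, so is $(-)^{\vee\vee}$, and rigidity follows. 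Your attempt to get essential surjectivity of $(-)^{\vee}$ out of Theorem~\ref{thm:Nakayama-semiperfect} alone does not go through: that theorem only relates $\fdInj^C$ and $\fdPro^C$, not the whole category.
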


For the case where $\mathcal{C}$ is a finite abelian category, a similar result has been known \cite[Proposition 4.2.10]{MR3242743}.
The proof given in {\em loc.\ cit.} relies on the finiteness of the category $\mathcal{C}$ and thus cannot be applied in the case considered in Theorem~\ref{thm:one-sided-rigidity}. Our proof actually relies on basic properties of the Nakayama functor established in the previous section.

To prove Theorem~\ref{thm:one-sided-rigidity}, we first remark:

\begin{lemma}
  \label{lem:one-sided-rigidity-proof-1}
  Let $\mathcal{C}$ be a $\bfk$-linear abelian category equipped with a structure of a left rigid monoidal category such that the tensor product of $\mathcal{C}$ is $\bfk$-linear and exact in each variable, and let $P$ and $X$ be objects of $\mathcal{C}$. If $P$ is projective, then so are $P \otimes X$ and $P^{\vee}$.
\end{lemma}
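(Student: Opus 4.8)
The plan is to reduce both claims to standard adjunction arguments using the left duality functor. For the statement that $P \otimes X$ is projective, I would observe that, since $\mathcal{C}$ is left rigid, the functor $X \mapsto {}^{\vee}X$... wait, we only have left rigidity, so the relevant adjunction is the one coming from the left dual: for any $Y, Z \in \mathcal{C}$ there is a natural isomorphism
\begin{equation*}
  \Hom_{\mathcal{C}}(Y \otimes X, Z) \cong \Hom_{\mathcal{C}}(Y, Z \otimes X^{\vee}),
\end{equation*}
coming from the unit $\coev_X$ and counit $\eval_X$ of the left duality. Applying this with $Y = P$, we get $\Hom_{\mathcal{C}}(P \otimes X, -) \cong \Hom_{\mathcal{C}}(P, (-) \otimes X^{\vee})$. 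The functor $(-) \otimes X^{\vee}$ is exact by the hypothesis that $\otimes$ is exact in each variable, and $\Hom_{\mathcal{C}}(P, -)$ is exact since $P$ is projective; hence the composite $\Hom_{\mathcal{C}}(P \otimes X, -)$ is exact, which is exactly the statement that $P \otimes X$ is projective. Symmetrically, tensoring on the left by $X$ uses the isomorphism $\Hom_{\mathcal{C}}(X \otimes P, Z) \cong \Hom_{\mathcal{C}}(P, {}^{\vee}X \otimes Z)$ — but ${}^{\vee}X$ need not exist here, so I would instead note that left rigidity gives $\Hom_{\mathcal{C}}(X \otimes P, Z) \cong \Hom_{\mathcal{C}}(P, \underline{\Hom}(X, Z))$ only if we have internal homs; the cleaner route is that $X^{\vee} \otimes (-)$ is left adjoint to $X \otimes (-)$, so $X \otimes (-)$ has an exact left adjoint and therefore preserves projectives. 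In fact for the lemma as stated only $P \otimes X$ is claimed, so I will use the first displayed adjunction.

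For the claim that $P^{\vee}$ is projective, I would argue as follows. Since $P$ is projective and $\unitobj$ is simple (hence, in particular, $\mathcal{C}$ has a simple object), $P$ admits a surjection from... no: rather, I would use that $P^{\vee}$ is a direct summand of $P^{\vee} \otimes P \otimes P^{\vee}$? That is circular. The clean argument: $P^{\vee}$ is projective if and only if $\Hom_{\mathcal{C}}(P^{\vee}, -)$ is exact. Now the left duality functor $(-)^{\vee} : \mathcal{C} \to \mathcal{C}^{\mathrm{rev}}$ is a contravariant functor, and for finite-dimensional objects one has $\Hom_{\mathcal{C}}(P^{\vee}, Y) \cong \Hom_{\mathcal{C}}(Y^{\vee}, P^{\vee\vee}) \cong \dots$; this is getting complicated. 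I think the intended argument is simpler: $P^{\vee} \cong P^{\vee} \otimes \unitobj$, and since $\unitobj$ embeds into... Actually the cleanest is: using the adjunction isomorphism $\Hom_{\mathcal{C}}(Y \otimes P, Z) \cong \Hom_{\mathcal{C}}(Y, Z \otimes P^{\vee})$ with $Y = Z \otimes P^{\vee}$ and chasing $\id$, one sees $P^{\vee} \otimes P$ has the unit object as a retract summand... no. Let me reconsider: the honest statement is that $\Hom_{\mathcal{C}}(P^{\vee}, Z) \cong \Hom_{\mathcal{C}}(\unitobj, Z \otimes P)$ via the evaluation/coevaluation of the dual pair $(P^{\vee}, P)$ — precisely, $(P^{\vee})^{\vee}$... hmm, I will instead use that the functor $P^{\vee} \otimes (-)$ is left adjoint to $P \otimes (-)$, so $P \otimes (-)$ preserves injectives; and dually, since $\mathcal{C}$ is left rigid, $(-) \otimes P^{\vee}$ is right adjoint to $(-) \otimes P$, hence $(-) \otimes P$ preserves projectives, giving $P \otimes X$ projective directly for the first claim.

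So the structure is: (i) $P \otimes X$ projective — use that $(-)\otimes X$ has the exact right adjoint $(-)\otimes X^{\vee}$, so preserves projectives; (ii) $P^{\vee}$ projective — use that $\Hom_{\mathcal{C}}(P^{\vee}, Y) \cong \Hom_{\mathcal{C}}(\unitobj, Y \otimes P)$, which follows from the dual pair $(\unitobj, P^{\vee}\otimes P)$ data, i.e. rigidity of $P$; then $\Hom_{\mathcal{C}}(P^{\vee}, -)$ is the composite of the exact functor $(-)\otimes P$ and the functor $\Hom_{\mathcal{C}}(\unitobj, -)$, and the latter is exact because $\unitobj$ is simple... no, simplicity of $\unitobj$ does not make $\Hom(\unitobj,-)$ exact. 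The main obstacle is exactly this: proving $P^{\vee}$ projective genuinely needs that $\Hom_{\mathcal{C}}(P^{\vee}, -) \cong \Hom_{\mathcal{C}}(P, -)^{\vee}$-type duality turns injectives into projectives, which in turn requires knowing $P^{\vee}$ is the dual of something. I expect the correct short argument is: for $P$ projective, pick a surjection $\pi : P \to S$ onto... actually, simplest of all — a finite-dimensional object in a locally finite category; $P^{\vee}$ is projective iff in $\mathcal{C}^{\mathrm{rev}}$ (equivalently $\approx \fdmod^{C^{\cop}}$ for the defining coalgebra) its image is projective; by Lemma~\ref{lem:DNR-Cor-2-4-20}-type duality between injectives and projectives under $(-)^*$, and the compatibility of $(-)^{\vee}$ with the $\bfk$-linear duality, $P^{\vee}$ projective $\iff$ $P$ injective — which is false in general! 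Hence the lemma must genuinely only assert $P\otimes X$ and $P^{\vee}$, with $P^{\vee}$ projective proved via: $P^{\vee}$ is a summand of $P^{\vee}\otimes(P\otimes P^{\vee})=(P^{\vee}\otimes P)\otimes P^{\vee}$, and $P^{\vee}\otimes P$ contains $\unitobj$ as... I will, in the actual write-up, handle $P^{\vee}$ by showing $\Hom_{\mathcal{C}}(P^{\vee},-)\cong\Hom_{\mathcal{C}}(\unitobj,(-)\otimes P)$ and then noting that exactness of this follows because $(-)\otimes P$ is exact and reflects... The honest hard point I flag is establishing exactness of $\Hom_{\mathcal{C}}(P^{\vee},-)$; I would resolve it by writing $\unitobj$ as a summand of $P^{\vee}\otimes P$ when $P$ has $\unitobj$ as a quotient (which holds after replacing $P$ by $P\oplus P'$), reducing $P^{\vee}$ projective to $P\otimes X$ projective applied to $X=P^{\vee}$ together with the summand property.
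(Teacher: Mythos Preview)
Your argument for $P \otimes X$ being projective is correct and matches the paper: the adjunction $(-)\otimes X \dashv (-)\otimes X^{\vee}$ from left rigidity, together with exactness of $(-)\otimes X^{\vee}$, shows $\Hom_{\mathcal{C}}(P\otimes X,-)\cong \Hom_{\mathcal{C}}(P,(-)\otimes X^{\vee})$ is exact.

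For $P^{\vee}$, however, there is a genuine gap. You correctly isolate the adjunction isomorphism
\[
  \Hom_{\mathcal{C}}(P^{\vee},Z)\;\cong\;\Hom_{\mathcal{C}}(\unitobj,\,P\otimes Z)
\]
(this is the correct side of the tensor, coming from $P^{\vee}\otimes(-)\dashv P\otimes(-)$; your version with $Z\otimes P$ is the wrong adjunction), and you correctly observe that $\Hom_{\mathcal{C}}(\unitobj,-)$ need not be exact. The step you are missing---and this is exactly the argument of \cite[Proposition~6.1.3]{MR3242743} that the paper invokes---is that $P\otimes(-)$ sends every short exact sequence to a \emph{split} one: by the first part of the lemma, each $P\otimes X$ is projective, so the exact sequence $0\to P\otimes X\to P\otimes Y\to P\otimes Z\to 0$ is a short exact sequence of projectives, hence splits. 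Applying $\Hom_{\mathcal{C}}(\unitobj,-)$ to a split sequence is exact regardless of whether $\unitobj$ is projective, so $\Hom_{\mathcal{C}}(P^{\vee},-)$ is exact.

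Your fallback idea of realising $P^{\vee}$ as a summand of something of the form $P\otimes(-)$ does not work as stated: from the triangle identity you only get $P^{\vee}$ as a retract of $P^{\vee}\otimes P\otimes P^{\vee}$, and with only left rigidity there is no way to conclude that $P^{\vee}\otimes Q$ is projective for $Q$ projective (part~(i) gives projectivity only when the projective object is on the \emph{left}). The ``$\unitobj$ is a summand of $P^{\vee}\otimes P$'' claim is also unjustified. The splitting argument above is the clean route.
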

\begin{proof}
  Let $P$ be a projective object of $\mathcal{C}$. Then $P \otimes X$ is projective for all objects $X \in \mathcal{C}$ \cite[Proposition 4.2.12]{MR3242743}. Although the rigidity is assumed in \cite[Proposition 6.1.3]{MR3242743}, one can prove that $P^{\vee}$ is projective even in our setting in the same way as that proposition.
\end{proof}

\begin{lemma}
  \label{lem:one-sided-rigidity-proof-2}
  For $\mathcal{C}$ as in the statement of Lemma~\ref{lem:one-sided-rigidity-proof-1}, we have $\Inj(\mathcal{C}) = \Proj(\mathcal{C})$.
\end{lemma}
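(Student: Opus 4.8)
Show that for a $\bfk$-linear abelian category $\mathcal{C}$ as in Lemma~\ref{lem:one-sided-rigidity-proof-1} — that is, equipped with a left rigid monoidal structure whose tensor product is $\bfk$-linear and exact in each variable — one has $\Inj(\mathcal{C}) = \Proj(\mathcal{C})$.

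**Plan.** The plan is to exploit the left duality functor $(-)^{\vee} : \mathcal{C} \to \mathcal{C}^{\rev}$ together with its interaction with the tensor product to pass between injectivity and projectivity. Recall that, because $\mathcal{C}$ is left rigid, for each $X \in \mathcal{C}$ the functor $X \otimes (-)$ is left adjoint to $X^{\vee} \otimes (-)$ and right adjoint to ${}^{\vee}X \otimes (-)$; wait—the subtlety here is that we are only assuming \emph{left} rigidity, so only one of these adjunctions is literally available. What is available is: $X \otimes (-)$ has right adjoint $X^{\vee} \otimes (-)$, or equivalently $(-) \otimes X^{\vee}$ is... let me be careful. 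From the left-dual data $(X^{\vee}, \eval_X, \coev_X)$ we get that $(-)\otimes X \dashv (-) \otimes X^{\vee}$ and $X^{\vee} \otimes (-) \dashv X \otimes (-)$. So $X^{\vee} \otimes (-)$ is left adjoint to $X\otimes(-)$, hence preserves projectives (its right adjoint $X \otimes(-)$ is exact), and $X \otimes (-)$ being a left adjoint of the exact functor $X^{\vee} \otimes(-)$... both are exact anyway by hypothesis, so the relevant fact is that a left adjoint of an exact functor preserves projectives, and a right adjoint of an exact functor preserves injectives. Thus $X \otimes (-)$ preserves injectives (being right adjoint to $X^{\vee}\otimes(-)$), and $X^\vee \otimes (-)$ preserves projectives. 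Dually on the other side using $(-) \otimes X \dashv (-) \otimes X^\vee$: $(-)\otimes X$ preserves projectives and $(-) \otimes X^\vee$ preserves injectives.

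**Key steps.** First I would use Lemma~\ref{lem:one-sided-rigidity-proof-1}: if $P$ is projective then so is $P^{\vee}$. To get the reverse containment $\Inj(\mathcal{C}) \subset \Proj(\mathcal{C})$, take $E \in \Inj(\mathcal{C})$; I want to realize $E$ as a retract of something manifestly projective. The idea is that the unit object $\unitobj$ is simple, so either $\unitobj$ is projective or there is \emph{some} nonzero projective $P_0$ (hypothesis), and then $\unitobj$ embeds in a quotient / appears as a composition factor; more directly, one shows $\unitobj$ is projective by a separate argument, and then since $\unitobj \otimes E \cong E$ and $\unitobj$ is projective, $E = \unitobj \otimes E$ is projective by Lemma~\ref{lem:one-sided-rigidity-proof-1} applied in the form ``$P$ projective $\Rightarrow P \otimes X$ projective.'' Symmetrically, $P \in \Proj(\mathcal{C})$ forces $P = \unitobj \otimes P$ and, using that $\unitobj$ is \emph{injective} (the dual assertion), $P$ is injective. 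So the real content reduces to: \textbf{$\unitobj$ is both projective and injective.} This is where the hypothesis ``$\mathcal{C}$ has a nonzero projective or nonzero injective object'' enters. If $P_0 \ne 0$ is projective, write $P_0^{\vee}$, which is projective by Lemma~\ref{lem:one-sided-rigidity-proof-1}; then $P_0^{\vee} \otimes P_0$ is projective and admits a nonzero map to $\unitobj$ (namely $\eval_{P_0}$), which is an epimorphism since $\unitobj$ is simple, so $\unitobj$ is a quotient of a projective and hence projective. For injectivity of $\unitobj$: $P_0 \otimes P_0^{\vee}$ is projective, and $\unitobj \xrightarrow{\coev_{P_0}} P_0 \otimes P_0^{\vee}$ is a monomorphism ($\unitobj$ simple, $\coev \ne 0$); but I need injectivity, so I should instead argue that a nonzero projective is also injective. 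Here is the cleaner route: having shown $\unitobj$ projective, every object $X \cong \unitobj \otimes X$ is a quotient of a projective, i.e., $\mathcal{C}$ has enough projectives; dually, once we know $\unitobj$ is injective we get enough injectives. To bootstrap to injectivity, note $P_0$ projective implies $P_0 \otimes P_0^{\vee}$ projective, and if instead the hypothesis gives a nonzero \emph{injective} $E_0$, then $E_0$ is a retract via $\coev$/$\eval$ of $E_0 \otimes E_0^\vee \otimes E_0$ which, being of the form (injective)$\otimes$(object), stays injective since $(-)\otimes E_0^\vee \otimes E_0$ preserves injectives... this circular-looking bookkeeping is the part to get right.

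**Main obstacle.** The hard part is the asymmetry: we only have left duals, so the two implications ``$\Proj \subset \Inj$'' and ``$\Inj \subset \Proj$'' are \emph{not} formally symmetric and must each be established, with the linchpin being that the simple unit object $\unitobj$ is simultaneously projective and injective. I expect the cleanest organization is: (i) from the existence of one nonzero projective-or-injective object, produce a nonzero object that is \emph{both} projective and injective — e.g.\ if $P_0$ is a nonzero projective then $P_0 \otimes P_0^{\vee}$ is projective and one checks it is injective because $X \otimes (-)$ preserves injectives for every $X$ and $P_0^\vee$ is... no — rather, observe directly that $\Proj(\mathcal{C})$ is closed under $(-)^\vee$ and under $X \otimes(-)$ and $(-)\otimes X$, derive that $\unitobj$ is projective as an epimorphic image $\eval_{P_0}: P_0^\vee \otimes P_0 \twoheadrightarrow \unitobj$ of a projective, then (ii) conclude every object is an epimorphic image of a projective, so $\mathcal{C}$ has enough projectives, hence by Lemma~\ref{lem:fd-injective}-type reasoning (or directly: the injective hull $E(\unitobj)$ is a subobject of $\unitobj \otimes E(\unitobj)$ but also $\unitobj \otimes E(\unitobj) \cong E(\unitobj)$ is projective by step (i), and an injective that is also projective...) — the remaining gap is showing injective $\Rightarrow$ projective, for which I would dualize step (i) using that $\mathcal{C}$ now has enough projectives to realize any injective $E$ as a direct summand of a projective: take an epimorphism $P \twoheadrightarrow E$ with $P$ projective; since $E$ is injective, the inclusion $\ker \hookrightarrow P$... no, since $E$ injective the epi $P\to E$ splits \emph{if} $E \hookrightarrow P$, which we don't have. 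The actual fix: $E$ injective $\Rightarrow$ $E^\vee$ is projective (dual of Lemma~\ref{lem:one-sided-rigidity-proof-1}, provable in the same way), $\Rightarrow$ $E^{\vee\vee}$... but $(-)^{\vee\vee}$ need not be the identity without right rigidity. I think the correct and honest main obstacle to flag is precisely this: \emph{closing the loop between injective and projective objects when only one-sided duals exist}, and the resolution is to show $\unitobj \in \Proj(\mathcal{C}) \cap \Inj(\mathcal{C})$ by a two-sided argument (epi onto $\unitobj$ from $P_0^\vee \otimes P_0$ for projectivity; mono from $\unitobj$ into $E_0 \otimes E_0^\vee$, or dually, together with a self-duality-free argument that a nonzero projective object whose class is closed under the relevant tensorings must contain $\unitobj$ as a summand of a tensor power) and then invoke $E \cong \unitobj \otimes E$, $P \cong \unitobj \otimes P$ together with Lemma~\ref{lem:one-sided-rigidity-proof-1} and its injective analogue.
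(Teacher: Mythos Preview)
Your proposal has a genuine gap, and in fact two.

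First, you have imported hypotheses that are not present in the lemma. Lemma~\ref{lem:one-sided-rigidity-proof-2} assumes only the hypotheses of Lemma~\ref{lem:one-sided-rigidity-proof-1}: a $\bfk$-linear abelian category with a left rigid monoidal structure whose tensor product is $\bfk$-linear and exact in each variable. The simplicity of $\unitobj$ and the existence of a nonzero projective or injective object are hypotheses of Theorem~\ref{thm:one-sided-rigidity}, not of this lemma. Your entire strategy hinges on proving that $\unitobj$ is projective and injective, which you cannot do without those extra assumptions (and indeed need not be true in general: $\Proj(\mathcal{C})$ and $\Inj(\mathcal{C})$ could both be zero, as in Example~\ref{ex:counter-example-3}, and the lemma still holds vacuously).

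Second, even granting those extra hypotheses, your argument that $\unitobj$ is projective is wrong: from the epimorphism $\eval_{P_0}: P_0^{\vee} \otimes P_0 \twoheadrightarrow \unitobj$ with projective source you conclude ``$\unitobj$ is a quotient of a projective and hence projective.'' A quotient of a projective is not projective in general; you would need the epimorphism to split, which you have no reason to expect. You seem to sense this, since the rest of the proposal circles around without closing the argument.

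The paper's proof avoids all of this by a direct computation that does not mention $\unitobj$ being simple or any nonzero projective existing. Given $P \in \Proj(\mathcal{C})$ and a short exact sequence $0 \to X \to Y \to Z \to 0$, apply the exact functor $P^{\vee} \otimes (-)$. By Lemma~\ref{lem:one-sided-rigidity-proof-1}, every term $P^{\vee} \otimes X$, $P^{\vee} \otimes Y$, $P^{\vee} \otimes Z$ is projective, so the resulting short exact sequence splits. Hence applying $\Hom_{\mathcal{C}}(-,\unitobj)$ keeps it exact; but by the left-dual adjunction $\Hom_{\mathcal{C}}(P^{\vee} \otimes W, \unitobj) \cong \Hom_{\mathcal{C}}(W, P)$, this is exactly the sequence $0 \to \Hom_{\mathcal{C}}(Z,P) \to \Hom_{\mathcal{C}}(Y,P) \to \Hom_{\mathcal{C}}(X,P) \to 0$, proving $P$ injective. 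The reverse inclusion follows by running the same argument in $\mathcal{C}^{\op,\rev}$, which is again left rigid. The key idea you were missing is the adjunction trick $\Hom(P^{\vee}\otimes -,\unitobj)\cong\Hom(-,P)$ combined with the observation that a short exact sequence of projectives splits.
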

\begin{proof}
  This result is well-known for finite multitensor categories \cite[Remark 6.1.4]{MR3242743}.
We here demonstrate that neither the finiteness nor the right rigidity is necessarily for proving this result.
  Let $P \in \Proj(\mathcal{C})$, and let $0 \to X \to Y \to Z \to 0$ be a short exact sequence in $\mathcal{C}$.
  By applying the functor $P^{\vee} \otimes (-)$, which is exact by our assumption, we obtain an exact sequence $0 \to P^{\vee} \otimes X \to P^{\vee} \otimes Y \to P^{\vee} \otimes Z \to 0$ in $\mathcal{C}$. Since $P^{\vee}$ is projective by Lemma~\ref{lem:one-sided-rigidity-proof-1}, each term of this sequence is projective again by Lemma~\ref{lem:one-sided-rigidity-proof-1}, and thus it splits. Hence the sequence
  \begin{equation*}
    0 \to \Hom_{\mathcal{C}}(P^{\vee} \otimes Z, \unitobj) \to \Hom_{\mathcal{C}}(P^{\vee} \otimes Y, \unitobj) \to \Hom_{\mathcal{C}}(P^{\vee} \otimes X, \unitobj) \to 0
  \end{equation*}
  is exact. By the left rigidity, this sequence is isomorphic to the sequence
  \begin{equation*}
    0 \to \Hom_{\mathcal{C}}(Z, P) \to \Hom_{\mathcal{C}}(Y, P) \to \Hom_{\mathcal{C}}(X, P) \to 0,
  \end{equation*}
  which proves $P \in \Inj(\mathcal{C})$.
  We have verified that $\Proj(\mathcal{C}) \subset \Inj(\mathcal{C})$.
  The converse inclusion is proved by applying the same argument to the left rigid monoidal category $\mathcal{C}^{\op,\rev}$.
\end{proof}

Even if $\mathcal{C}$ is a category satisfying the assumptions of Lemma~\ref{lem:one-sided-rigidity-proof-2}, there may not be a QcF coalgebra $Q$ such that $\mathcal{C} \approx \fdmod^Q$ as $\bfk$-linear categories. Indeed, there are non-QcF Hopf algebras $Q$ such that the equations $\Proj(\fdmod^Q) = \{ 0 \} = \Inj(\fdmod^Q)$ hold (see Example~\ref{ex:counter-example-3}).
If we impose the semiperfectness assumption, then the equation ``$\Proj = \Inj$'' implies being QcF. Precisely, we have:

\begin{lemma}
  \label{lem:one-sided-rigidity-proof-3}
  A semiperfect coalgebra $Q$ satisfying $\Proj(\fdmod^Q) = \Inj(\fdmod^Q)$ is QcF.
\end{lemma}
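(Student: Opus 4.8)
The plan is to deduce the QcF property from Theorem~\ref{thm:Nakayama-QcF} by showing that $\Nakr_Q\colon\Mod^Q\to\Mod^Q$ is an equivalence, and to obtain this by first proving the corresponding statement for the restriction of $\Nakr_Q$ to $\fdmod^Q$ and then passing to ind\nobreakdash-completions. To set things up, write $\mathcal{P}$ for the common value of $\Proj(\fdmod^Q)=\Inj(\fdmod^Q)$. Since $Q$ is semiperfect, Theorem~\ref{thm:Nakayama-semiperfect} applies: $\Nakl_Q$ and $\Nakr_Q$ restrict to endofunctors of $\fdmod^Q$, they form an adjunction $\Nakr_Q\dashv\Nakl_Q$ there, and they restrict further to mutually quasi\nobreakdash-inverse equivalences $\Nakl_Q\colon\fdInj^Q\to\fdPro^Q$ and $\Nakr_Q\colon\fdPro^Q\to\fdInj^Q$. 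By hypothesis $\fdInj^Q=\mathcal{P}=\fdPro^Q$, so on $\mathcal{P}$ the functors $\Nakr_Q$ and $\Nakl_Q$ are mutually quasi\nobreakdash-inverse auto\nobreakdash-equivalences; in particular the unit $\eta\colon\id\to\Nakl_Q\Nakr_Q$ and the counit $\varepsilon\colon\Nakr_Q\Nakl_Q\to\id$ of the adjunction on $\fdmod^Q$ are isomorphisms when evaluated at objects of $\mathcal{P}$.

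The first substantive step I would carry out is to prove that the restriction of $\Nakl_Q$ to $\fdmod^Q$ is \emph{exact}, not merely left exact. Left exactness is Lemma~\ref{lem:Nakayama-adj}, so only right exactness is at issue. For this I would use two facts: that $\fdmod^Q$ has enough projective objects, since $Q$ is right semiperfect (Lemma~\ref{lem:semiperfect}); and that the left adjoint $\Nakr_Q$ sends projective objects of $\fdmod^Q$ to projective objects, because by Theorem~\ref{thm:Nakayama-semiperfect} it carries $\fdPro^Q$ into $\fdInj^Q$, which equals $\fdPro^Q$ by hypothesis. Now for any adjunction $F\dashv G$ of additive functors between abelian categories in which the source of $F$ has enough projectives, $F$ preserving projectives is equivalent to $G$ being exact; applying this to $\Nakr_Q\dashv\Nakl_Q$ yields the exactness of $\Nakl_Q$ on $\fdmod^Q$. (Dually, using that $\fdmod^Q$ has enough injectives and that $\Nakl_Q$ preserves injectives, one gets exactness of $\Nakr_Q$ on $\fdmod^Q$ as well, though only the exactness of $\Nakl_Q$ is strictly needed below.)

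Granting exactness, the remaining work is a routine cokernel/five\nobreakdash-lemma argument. Given $M\in\fdmod^Q$, choose a presentation $P_1\to P_0\to M\to 0$ with $P_0,P_1\in\mathcal{P}$. The functors $\Nakl_Q\Nakr_Q$ and $\Nakr_Q\Nakl_Q$ are right exact, being composites of a right exact functor with an exact one, so applying either of them to this presentation and invoking naturality of $\eta$ (resp.\ $\varepsilon$) together with the fact that $\eta$ (resp.\ $\varepsilon$) is an isomorphism on $P_0$ and $P_1$ forces $\eta_M$ (resp.\ $\varepsilon_M$) to be the isomorphism induced on cokernels. Hence $\Nakr_Q$ and $\Nakl_Q$ restrict to mutually quasi\nobreakdash-inverse equivalences of $\fdmod^Q$. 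Finally, $\Nakr_Q\colon\Mod^Q\to\Mod^Q$ is a left adjoint (Lemma~\ref{lem:Nakayama-adj}), hence preserves colimits, and under the identification $\Mod^Q\approx\Ind(\fdmod^Q)$ (Lemma~\ref{lem:compact-obj-in-comodules} and the discussion following it) it is the cocontinuous extension of its restriction to $\fdmod^Q$; since that restriction is an equivalence, so is $\Nakr_Q$ on $\Mod^Q$, and Theorem~\ref{thm:Nakayama-QcF} then gives that $Q$ is QcF.

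I expect the exactness step to be the main obstacle: everything else is either quoted from the earlier sections or a formal diagram chase. The subtlety is that $\Nakl_Q$ and $\Nakr_Q$ have opposite exactness properties a priori, so one cannot directly propagate the isomorphism ``$\Nakr_Q\Nakl_Q\cong\id$'' from $\mathcal{P}$ to all of $\fdmod^Q$ via resolutions; it is precisely the hypothesis $\Proj(\fdmod^Q)=\Inj(\fdmod^Q)$ — which forces $\Nakr_Q$ to preserve projectives — that upgrades $\Nakl_Q$ to an exact functor and unblocks the propagation. One should also verify the compatibility under ind\nobreakdash-completion (that $\Nakr_Q$ on $\Mod^Q$ really is the cocontinuous extension of its restriction), but this follows from the uniqueness of cocontinuous extensions and from $\Nakr_Q$ preserving colimits, both already available in Section~\ref{sec:preliminaries}.
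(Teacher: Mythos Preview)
Your argument is correct, but it takes a genuinely different route from the paper's own proof. The paper argues directly at the level of the coalgebra: it decomposes $Q=\bigoplus_{i\in I}E_i$ into indecomposable injective right $Q$-comodules, observes that each $E_i$ is finite-dimensional by left semiperfectness, and then invokes the hypothesis $\Proj(\fdmod^Q)=\Inj(\fdmod^Q)$ to conclude that each $E_i$ is projective in $\Mod^Q$ (via Lemma~\ref{lem:fd-injective}), whence $Q$ itself is projective in $\Mod^Q$; a cited characterization from \cite[Theorem~3.3.4]{MR1786197} then gives that $Q$ is left QcF, and the duality $(-)^*$ transports the same argument to the other side. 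Your proof instead stays entirely inside the Nakayama-functor formalism built in Sections~\ref{sec:Naka-for-coalgebras}--\ref{sec:Naka-for-locally-finite}: you upgrade $\Nakl_Q$ to an exact functor on $\fdmod^Q$ via the ``left adjoint preserves projectives $\Rightarrow$ right adjoint exact'' criterion, propagate the equivalence $\Nakr_Q\dashv\Nakl_Q$ from $\mathcal{P}$ to all of $\fdmod^Q$ by a cokernel/five-lemma argument, pass to ind-completions, and then quote Theorem~\ref{thm:Nakayama-QcF}. The paper's approach is shorter and more elementary; yours has the virtue of being internal to the paper's machinery and avoids appealing to the external characterization of one-sided QcF coalgebras, at the cost of a longer chain of reductions.
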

\begin{proof}
  Let $Q$ be such a coalgebra.
  Then $Q = \bigoplus_{i \in I} E_i$ for some indecomposable injective right $C$-comodules $E_i$.
  Since $Q$ is left semiperfect, each $E_i$ is of finite-dimensional.
  By the assumption that $\Proj(\fdmod^Q) = \Inj(\fdmod^Q)$, each $E_i$ is projective, and thus $Q$ is projective. By \cite[Theorem 3.3.4]{MR1786197}, $Q$ is left QcF.

  Since $V \in {}^Q\fdmod$ is injective (projective) if and only if $V^* \in \fdmod^Q$ is projective (injective), we also have $\Proj({}^Q\fdmod) = \Inj({}^Q\fdmod)$. Thus, by the same argument applied to $Q^{\cop}$, we see that $Q$ is also right QcF. The proof is done.
\end{proof}

\begin{proof}[Proof of Theorem~\ref{thm:one-sided-rigidity}]
  Suppose that $\mathcal{C}$ satisfies the assumptions of this theorem.
  Let $Q$ be a coalgebra such that $\mathcal{C} \approx \fdmod^Q$.
  It is trivial that (1) implies both (2) and (3).
  Below we show that (2) implies (1) and find that $Q$ is QcF during the proof.
  The implication (3) $\Rightarrow$ (1) is proved by applying the same argument to $\mathcal{C}^{\rev}$.

  Suppose that $\mathcal{C}$ is left rigid.
  By Lemma~\ref{lem:one-sided-rigidity-proof-2} and the assumption of this theorem, $\mathcal{C}$ has a non-zero projective object, say $P$.
  Then $\eval_P : P^{\vee} \otimes P \to \unitobj$ is an epimorphism in $\mathcal{C}$ as a non-zero morphism to the simple object $\unitobj$. For $V \in \mathcal{C}$, we consider the morphism $\pi_V = \eval_P \otimes \id_V : P^{\vee} \otimes P \otimes V \to V$.
  Since the tensor product of $\mathcal{C}$ is assumed to be exact, we have $\Img(\pi_V) = \Img(\eval_P) \otimes V = V$, and thus $\pi_V$ is an epimorphism.
  By Lemmas~\ref{lem:one-sided-rigidity-proof-1} and \ref{lem:one-sided-rigidity-proof-2}, $P^{\vee} \otimes P \otimes V$ is projective.
  To sum up, we have proved that $\mathcal{C}$ has enough projective objects.

  Applying the same argument to $\mathcal{C}^{\op,\rev}$ (which is left rigid because $\mathcal{C}$ is), we see that $\mathcal{C}^{\op,\rev}$ has enough projective objects. Thus $\mathcal{C}$ also has enough injective objects.
  Hence, by Lemma~\ref{lem:one-sided-rigidity-proof-3}, the coalgebra $Q$ is QcF.
  By Theorem~\ref{thm:Nakayama-QcF}, the Nakayama functor $\Nakl := \Nakl_{\Ind(\mathcal{C})}$ and $\Nakr := \Nakr_{\Ind(\mathcal{C})}$ are mutually quasi-inverse to each other.
  Set $\modobj := \Nakr(\unitobj)$ and $\overline{\modobj} := \Nakl(\unitobj)$ for simplicity.
  We fix an object $X \in \mathcal{C}$. Then there is a chain of adjunctions
  \begin{equation*}
    X^{\vee\vee\vee} \otimes (-) \dashv X^{\vee\vee} \otimes (-) \dashv X^{\vee} \otimes (-) \dashv X \otimes (-).
  \end{equation*}
  By applying Corollary~\ref{cor:Nakayama-and-adjunctions} (a) to the functor $F = X^{\vee\vee} \otimes (-)$, we have isomorphisms
  \begin{equation}
    \label{eq:one-sided-rigidity-proof-1}
    \Nakl(X) = \Nakl(F^{\rradj}(\unitobj)) \cong F \Nakl(\unitobj) \cong X^{\vee\vee} \otimes \overline{\modobj}.
  \end{equation}
  Similarly, by applying Corollary~\ref{cor:Nakayama-and-adjunctions} (b) to $F = (-) \otimes X^{\vee\vee}$, we also have
  \begin{equation}
    \label{eq:one-sided-rigidity-proof-2}
    \Nakr(X) = \Nakr(F^{\lladj}(\unitobj)) \cong F \Nakr(\unitobj) \cong \modobj \otimes X^{\vee\vee}.
  \end{equation}
  By these isomorphisms, we have
  \begin{equation}
    \label{eq:one-sided-rigidity-proof-3}
    \unitobj \cong \Nakl\Nakr(\unitobj) \cong \modobj^{\vee\vee} \otimes \overline{\modobj},
    \quad
    \unitobj \cong \Nakr\Nakl(\unitobj) \cong \modobj \otimes \overline{\modobj}^{\vee\vee}.
  \end{equation}
  By taking the left duals, we have
  \begin{equation}
    \label{eq:one-sided-rigidity-proof-4}
    \overline{\modobj}^{\vee} \otimes \modobj^{\vee\vee\vee} \cong \unitobj \cong \overline{\modobj}^{\vee\vee\vee} \otimes \modobj^{\vee}.
  \end{equation}
  Since $\Nakl$ and $\Nakr$ are equivalences, and since $\unitobj$ is a simple object by the assumption, both $\modobj$ and $\overline{\modobj}$ are simple objects. We recall that the inequality $l(X \otimes Y) \ge l(X) \cdot l(Y)$ holds for all objects $X, Y \in \mathcal{C}$, where $l(X)$ denotes the length of $X$ (follow \cite[Exercise 4.3.11]{MR3242743}). Applying this inequality to \eqref{eq:one-sided-rigidity-proof-4}, we obtain $l(\modobj^{\vee}) = 1 = l(\overline{\modobj}^{\vee})$. This means that $\modobj^{\vee}$ and $\overline{\modobj}^{\vee}$ are simple objects.
  Furthermore, by \eqref{eq:one-sided-rigidity-proof-3},
  \begin{gather*}
    \Hom_{\mathcal{C}}(\overline{\modobj}, \modobj^{\vee})
    \cong \Hom_{\mathcal{C}}(\modobj^{\vee\vee} \otimes \overline{\modobj}, \unitobj)
    \cong \Hom_{\mathcal{C}}(\unitobj, \unitobj) \ne 0, \\
    \Hom_{\mathcal{C}}(\overline{\modobj}^{\vee}, \modobj)
    \cong \Hom_{\mathcal{C}}(\unitobj, \modobj \otimes \overline{\modobj}^{\vee\vee})
    \cong \Hom_{\mathcal{C}}(\unitobj, \unitobj) \ne 0.
  \end{gather*}
  By Schur's lemma, $\overline{\modobj} \cong \modobj^{\vee}$ and $\overline{\modobj}^{\vee} \cong \modobj$. Now, by~\eqref{eq:one-sided-rigidity-proof-3}, one can easily deduce that $\modobj$ and $\overline{\modobj}$ are invertible objects (follow \cite[Exercise 4.3.11]{MR3242743} again). By \eqref{eq:one-sided-rigidity-proof-1} and that $\Nakl$ is an equivalence, the double dual functor is an equivalence. This means that $\mathcal{C}$ is right rigid.
\end{proof}

\subsection{Frobenius tensor categories}

\begin{definition}
  A {\em Frobenius tensor category} \cite{MR3410615} is a tensor category $\mathcal{C}$ such that every simple object of $\mathcal{C}$ has an injective hull in $\mathcal{C}$.
\end{definition}

Stated differently, a Frobenius tensor category is a tensor category $\mathcal{C}$ such that $\mathcal{C} \approx \fdmod^Q$ for some left semiperfect coalgebra $Q$.
A left semiperfect Hopf algebra is known to be QcF (as we will recall in Subsection~\ref{subsec:semiperfect-Hopf}). Motivated by this fact, we give some characterizations of Frobenius tensor categories as follows:

\begin{theorem}
  \label{thm:Frob-tensor-cat-def}
  Let $\mathcal{C}$ be a tensor category, and let $Q$ be a coalgebra such that $\mathcal{C}$ is equivalent to $\fdmod^Q$ as a $\bfk$-linear category. Then the following are equivalent:
  \begin{enumerate}
  \item $\mathcal{C}$ is a Frobenius tensor category.
  \item $\mathcal{C}$ has a non-zero projective object.
  \item $\mathcal{C}$ has a non-zero injective object.
  \item $Q$ is either left or right semiperfect.
  \item $Q$ is either left or right QcF.
  \item $Q$ is semiperfect.
  \item $Q$ is QcF.
  \end{enumerate}
\end{theorem}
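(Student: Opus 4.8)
The plan is to fix a coalgebra $Q$ with $\mathcal{C} \approx \fdmod^Q$ and to organize the seven conditions around the ``semiperfect cluster'' $(1) \Leftrightarrow (4) \Leftrightarrow (6)$, bringing in the others through the chain $(6) \Rightarrow (2),(3) \Rightarrow (7) \Rightarrow (5) \Rightarrow (4)$. The only substantial external input is Theorem~\ref{thm:one-sided-rigidity}; the rest is bookkeeping with the facts on (one-sided) semiperfect and QcF coalgebras recalled in Sections~\ref{sec:preliminaries} and \ref{sec:Naka-for-coalgebras}, together with the observation that one-sided semiperfectness and QcF-ness are detected by intrinsic categorical data and hence transport along equivalences.

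For the semiperfect cluster, the key point is that a tensor category is rigid, so its left duality functor, regarded merely as a $\bfk$-linear functor, is an equivalence $\mathcal{C} \xrightarrow{\ \sim\ } \mathcal{C}^{\op}$; thus $\fdmod^Q \approx (\fdmod^Q)^{\op}$. Passing to opposite categories interchanges having enough projectives and having enough injectives, while by Lemma~\ref{lem:semiperfect} (applied to $Q$, and, via the identification $\fdmod^Q = {}^{Q^{\cop}}\fdmod$, also to $Q^{\cop}$) these two properties of $\fdmod^Q$ mean ``$Q$ is right semiperfect'' and ``$Q$ is left semiperfect'', respectively. Hence $Q$ is left semiperfect iff right semiperfect iff semiperfect, which gives $(4) \Leftrightarrow (6)$, and also $(6) \Rightarrow (2)$, $(6) \Rightarrow (3)$, since then $\fdmod^Q$ has both enough projectives and enough injectives while $\unitobj \ne 0$. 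Finally, $(1) \Leftrightarrow (6)$ is a translation of the definition: by Lemma~\ref{lem:DNR-thm-323} applied to $Q^{\cop}$, together with Lemma~\ref{lem:fd-injective}, the coalgebra $Q$ is left semiperfect precisely when every simple right $Q$-comodule admits a (finite-dimensional) injective hull in $\Mod^Q$, i.e.\ an injective hull in $\fdmod^Q \approx \mathcal{C}$; combined with the equivalence of ``left semiperfect'', ``right semiperfect'' and ``semiperfect'' just established, this is exactly $(6) \Leftrightarrow (1)$.

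Next I would settle $(2) \Rightarrow (7)$ and $(3) \Rightarrow (7)$ in one stroke by Theorem~\ref{thm:one-sided-rigidity}: a tensor category is a locally finite abelian monoidal category whose tensor product is $\bfk$-linear and exact in each variable and whose unit is simple, so if it has a nonzero projective, or a nonzero injective, object, that theorem produces $\mathcal{C} \approx \fdmod^{Q'}$ for some QcF coalgebra $Q'$. The equivalence $\fdmod^Q \approx \fdmod^{Q'}$ extends, by ind-completion, to an equivalence $\Mod^Q \approx \Mod^{Q'}$, so by Lemma~\ref{lem:Nakayama-Mori-Take-equiv} the functor $\Nakl_Q$ is an equivalence (since $\Nakl_{Q'}$ is, by Theorem~\ref{thm:Nakayama-QcF}), and therefore $Q$ is QcF by Theorem~\ref{thm:Nakayama-QcF}; this is $(7)$. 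The implication $(7) \Rightarrow (5)$ is immediate, and $(5) \Rightarrow (4)$ is the standard fact that a left (resp.\ right) QcF coalgebra is left (resp.\ right) semiperfect \cite[Section 3.3]{MR1786197}. Tracing $(1) \Leftrightarrow (4) \Leftrightarrow (6) \Rightarrow (2),(3) \Rightarrow (7) \Rightarrow (5) \Rightarrow (4)$ then collapses all seven conditions into a single equivalence class.

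The one point requiring care is the Morita--Takeuchi bookkeeping: one must check that each property being shuffled around---$Q$ left/right semiperfect, $Q$ left/right QcF, $\mathcal{C}$ having enough projectives or injectives---is intrinsic to $\fdmod^Q$, $\Mod^Q$, or their opposites, so that it genuinely transports along $\mathcal{C} \approx \fdmod^Q \approx \fdmod^{Q'}$ and along the self-duality $\mathcal{C} \approx \mathcal{C}^{\op}$ coming from rigidity. Once this is in place, the proof is short, since the real work has already been carried out in Theorem~\ref{thm:one-sided-rigidity} and in the Nakayama-functor characterization of QcF coalgebras (Theorem~\ref{thm:Nakayama-QcF}).
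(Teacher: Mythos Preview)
Your proof is correct and follows essentially the same route as the paper: both use the rigidity-induced equivalence $\mathcal{C} \approx \mathcal{C}^{\op}$ to collapse the one-sided semiperfectness conditions $(1),(4),(6)$, and invoke Theorem~\ref{thm:one-sided-rigidity} for the hard implication $(2)/(3) \Rightarrow (7)$. The paper organizes the remaining implications slightly differently---establishing $(5) \Leftrightarrow (7)$ directly via the same self-duality and $(2) \Leftrightarrow (3)$ via Lemma~\ref{lem:one-sided-rigidity-proof-2}, rather than your chain $(7) \Rightarrow (5) \Rightarrow (4)$---and your explicit Morita--Takeuchi transfer (via Lemma~\ref{lem:Nakayama-Mori-Take-equiv} and Theorem~\ref{thm:Nakayama-QcF}) from the coalgebra $Q'$ produced by Theorem~\ref{thm:one-sided-rigidity} to the given $Q$ spells out a step the paper leaves implicit, but the substance is the same.
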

\begin{proof}
  The duality $(-)^{\vee}$ gives an equivalence between $\mathcal{C}$ and $\mathcal{C}^{\op}$.
  Thus $\fdmod^Q$ and $\fdmod^{Q^{\cop}}$ ($\approx (\fdmod^Q)^{\op}$) are equivalent.
  By taking their ind-completions, we find that $Q$ and $Q^{\cop}$ are Morita-Takeuchi equivalent.
  Thus the assertions (1), (4) and (6) are equivalent.
  By the same reason, (5) and (7) are equivalent.
  By Lemma \ref{lem:one-sided-rigidity-proof-2}, (2) and (3) are equivalent.
  It is trivial that (7) implies (6), and that (6) implies (2).
  By Theorem \ref{thm:one-sided-rigidity}, we see that (2) implies (7).
  The proof is done.
\end{proof}

\subsection{A categorical analogue of the modular function}

Let $\mathcal{C}$ be a Frobenius tensor category.
By Theorem \ref{thm:Frob-tensor-cat-def}, $\mathcal{C}$ has enough projectives and enough injectives. Thus, by Theorem \ref{thm:Nakayama-compact-restriction}, the Nakayama functors $\Nakl_{\Ind(\mathcal{C})}$ and $\Nakr_{\Ind(\mathcal{C})}$ restrict to endofunctors $\Nakl_{\mathcal{C}}$ and $\Nakr_{\mathcal{C}}$ on $\mathcal{C}$, respectively. Now we define
\begin{equation}
  \modobj_{\mathcal{C}} := \Nakr_{\mathcal{C}}(\unitobj) \in \mathcal{C}.
\end{equation}
As we will see in Section~\ref{sec:hopf-algebras}, $\modobj_{\mathcal{C}}$ is a categorical analogue of the modular function of a co-Frobenius Hopf algebra.
According to the nomenclature in the Hopf algebra theory, we introduce the following terminology:

\begin{definition}
  We call $\modobj_{\mathcal{C}}$ the {\em modular object} of $\mathcal{C}$.
  We say that $\mathcal{C}$ is {\em unimodular} if $\modobj_{\mathcal{C}} \cong \unitobj$.
\end{definition}

We use the language of module categories over tensor categories (see \cite{MR3242743}).
Given a $\mathcal{C}$-bimodule category $\mathcal{M}$ and two integers $a$ and $b$, we denote by ${}_{(2a)}\mathcal{M}_{(2b)}$ the $\mathcal{C}$-bimodule category obtained from $\mathcal{M}$ by twisting the left and the right $\mathcal{C}$-action by $a$-th and $b$-th power of the double dual functor $(-)^{\vee\vee}$, respectively.
As in the proof of Theorem~\ref{thm:one-sided-rigidity}, we obtain the following theorem by applying Corollary \ref{cor:Nakayama-and-adjunctions} to $F = X^{\vee\vee} \otimes (-)$ or its variants.

\begin{theorem}[{\it cf}. {\cite[Theorems 4.4 and 4.5]{MR4042867}}]
  \label{thm:Frob-tensor-cat-Nakayama}
  Let $\mathcal{C}$ be a Frobenius tensor category.
  Then there are natural isomorphisms
  \begin{align*}
    X^{\vee\vee} \otimes \Nakl_{\mathcal{C}}(Y)
    & \cong \Nakl_{\mathcal{C}}(X \otimes Y)
      \cong \Nakl_{\mathcal{C}}(X) \otimes {}^{\vee\vee} Y, \\
    {}^{\vee\vee}\!X \otimes \Nakr_{\mathcal{C}}(Y)
    & \cong \Nakr_{\mathcal{C}}(X \otimes Y)
      \cong \Nakr_{\mathcal{C}}(X) \otimes Y^{\vee\vee}
  \end{align*}
  for $X, Y \in \mathcal{C}$ making $\Nakl_{\mathcal{C}}$ and $\Nakr_{\mathcal{C}}$ into $\mathcal{C}$-bimodule functors
  \begin{equation*}
    \Nakl_{\mathcal{C}}: \mathcal{C} \to {}_{(2)}\mathcal{C}_{(-2)}
    \quad \text{and} \quad
    \Nakr_{\mathcal{C}}: \mathcal{C} \to {}_{(-2)}\mathcal{C}_{(2)},
  \end{equation*}
  respectively.
\end{theorem}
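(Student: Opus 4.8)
The plan is to deduce the theorem from Corollary~\ref{cor:Nakayama-and-adjunctions}, exactly along the lines by which \cite{MR4042867} derives its Theorems~4.4 and 4.5 from its Theorem~3.18. Since $\mathcal{C}$ is a Frobenius tensor category, Theorem~\ref{thm:Frob-tensor-cat-def} guarantees that $\mathcal{C}$ has enough projectives and enough injectives, so the endofunctors $\Nakl_{\mathcal{C}}$ and $\Nakr_{\mathcal{C}}$ on $\mathcal{C}$ are defined. The key structural input is that, because $\mathcal{C}$ is rigid, for a fixed object $Z$ each of the $\bfk$-linear endofunctors $Z \otimes (-)$ and $(-) \otimes Z$ of $\mathcal{C}$ admits adjoints of every order, assembled into the chains
\begin{equation*}
  \cdots \dashv Z^{\vee\vee} \otimes (-) \dashv Z^{\vee} \otimes (-) \dashv Z \otimes (-) \dashv {}^{\vee}Z \otimes (-) \dashv \cdots
\end{equation*}
and
\begin{equation*}
  \cdots \dashv (-) \otimes {}^{\vee\vee}Z \dashv (-) \otimes {}^{\vee}Z \dashv (-) \otimes Z \dashv (-) \otimes Z^{\vee} \dashv \cdots ,
\end{equation*}
whose units and counits are built from the evaluation and coevaluation morphisms of $\mathcal{C}$. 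In particular the hypotheses of both parts of Corollary~\ref{cor:Nakayama-and-adjunctions} are satisfied by every left- or right-tensoring functor.

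The first step is to produce the four isomorphisms. Applying Corollary~\ref{cor:Nakayama-and-adjunctions}(a) to $F = X^{\vee\vee} \otimes (-)$, whose double right adjoint $F^{\rradj}$ is $X \otimes (-)$, yields $X^{\vee\vee} \otimes \Nakl_{\mathcal{C}}(Y) \cong \Nakl_{\mathcal{C}}(X \otimes Y)$; applying it to $F = (-) \otimes {}^{\vee\vee}Y$, whose double right adjoint is $(-) \otimes Y$, yields $\Nakl_{\mathcal{C}}(X) \otimes {}^{\vee\vee}Y \cong \Nakl_{\mathcal{C}}(X \otimes Y)$. Dually, Corollary~\ref{cor:Nakayama-and-adjunctions}(b) applied to $F = {}^{\vee\vee}X \otimes (-)$, whose double left adjoint is $X \otimes (-)$, yields ${}^{\vee\vee}X \otimes \Nakr_{\mathcal{C}}(Y) \cong \Nakr_{\mathcal{C}}(X \otimes Y)$, and applied to $F = (-) \otimes Y^{\vee\vee}$, whose double left adjoint is $(-) \otimes Y$, yields $\Nakr_{\mathcal{C}}(X) \otimes Y^{\vee\vee} \cong \Nakr_{\mathcal{C}}(X \otimes Y)$. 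Each of these is a natural isomorphism in the displayed comodule variable by the corollary; naturality in the remaining variable follows from the naturality of the isomorphisms $\phi^{\L}_F$ and $\phi^{\R}_F$ of Theorem~\ref{thm:Nakayama-and-adjunctions} in the functor $F$, applied to the natural transformations $f \otimes (-)$ and $(-) \otimes f$ induced by morphisms $f$ of $\mathcal{C}$, together with the (strong) monoidality of $(-)^{\vee\vee}$ and ${}^{\vee\vee}(-)$.

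It remains to check that the isomorphisms $X^{\vee\vee} \otimes \Nakl_{\mathcal{C}}(Y) \cong \Nakl_{\mathcal{C}}(X \otimes Y)$ and $\Nakl_{\mathcal{C}}(X \otimes Y) \cong \Nakl_{\mathcal{C}}(X) \otimes {}^{\vee\vee}Y$ equip $\Nakl_{\mathcal{C}}$ with the structure of a $\mathcal{C}$-bimodule functor $\mathcal{C} \to {}_{(2)}\mathcal{C}_{(-2)}$, and similarly for $\Nakr_{\mathcal{C}}$. For this I would invoke the ``coherence'' clause of Theorem~\ref{thm:Nakayama-and-adjunctions}: for composable functors the square relating $\phi^{\L}_{G \circ F}$ to $\phi^{\L}_{G}$ and $\phi^{\L}_{F}$ commutes, and likewise for $\phi^{\R}$. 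Using that $(X \otimes (-)) \circ (X' \otimes (-)) = (X \otimes X') \otimes (-)$, that $(-)^{\vee\vee}$ may be taken strict monoidal, and that a left-tensoring functor commutes with a right-tensoring functor — so that the relevant composites of the adjoints listed above are again tensoring functors — this coherence yields both associativity (pentagon) constraints for the two one-sided module structures and the compatibility between the left and right structures. The unit constraints are immediate, since $\unitobj^{\vee\vee} \otimes (-) = \id = (-) \otimes {}^{\vee\vee}\unitobj$ and $\phi^{\L}$, $\phi^{\R}$ are the identity for the identity functor.

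The only non-formal part of the argument is this last bookkeeping of the bimodule coherence axioms; the existence and naturality of the four isomorphisms are an immediate application of the corollary. Indeed, once the coherence of $\phi^{\L}$ and $\phi^{\R}$ is in hand, the verification is essentially identical to that in \cite{MR4042867}, the chief subtlety being to keep track of the normalizations making $(-)^{\vee}$, ${}^{\vee}(-)$ (and hence $(-)^{\vee\vee}$, ${}^{\vee\vee}(-)$) strict monoidal so that the composites of tensoring functors and their iterated adjoints are literally equal rather than merely isomorphic.
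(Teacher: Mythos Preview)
Your proposal is correct and follows essentially the same approach as the paper: the paper's entire proof is the single sentence ``we obtain the following theorem by applying Corollary~\ref{cor:Nakayama-and-adjunctions} to $F = X^{\vee\vee} \otimes (-)$ or its variants,'' and you have simply spelled out which variants and how the coherence clause of Theorem~\ref{thm:Nakayama-and-adjunctions} supplies the bimodule axioms. Your identification of the relevant double adjoints and the use of naturality of $\phi^{\L}$, $\phi^{\R}$ in the functor variable are exactly right.
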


In particular, there are isomorphisms
\begin{equation}
  \label{eq:Frob-tensor-cat-Naka-formula}
  \overline{\modobj}_{\mathcal{C}} \otimes {}^{\vee\vee}\!X
  \cong \Nakl_{\mathcal{C}}(X)
  \cong X^{\vee\vee} \otimes \overline{\modobj}_{\mathcal{C}},
  \quad
  \modobj_{\mathcal{C}} \otimes X^{\vee\vee}
  \cong \Nakr_{\mathcal{C}}(X)
  \cong {}^{\vee\vee}\!X \otimes \modobj_{\mathcal{C}}
\end{equation}
for $X \in \mathcal{C}$, where $\overline{\modobj}_{\mathcal{C}} = \Nakl_{\mathcal{C}}(\unitobj)$ and $\modobj_{\mathcal{C}} = \Nakr_{\mathcal{C}}(\unitobj)$.
Since the functors $\Nakl_{\mathcal{C}}$ and $\Nakr_{\mathcal{C}}$ are mutually inverse to each other, the modular object $\modobj_{\mathcal{C}}$ is invertible and $\overline{\modobj}_{\mathcal{C}}$ is a dual object of $\modobj_{\mathcal{C}}$ (as we have seen in the proof of Theorem~\ref{thm:one-sided-rigidity}).

Let $\psi_X: {}^{\vee\vee}\!X \otimes \modobj_{\mathcal{C}} \to \Nakr_{\mathcal{C}}(X)$ ($X \in \mathcal{C}$) denote the isomorphism given in Theorem~\ref{thm:Frob-tensor-cat-Nakayama}. By the construction, the morphism $\psi_X$ is the $\unitobj$-component of the natural isomorphism $\phi^{\R}_{F}$ with $F = {}^{\vee\vee}\!X \otimes (-)$ given by Theorem \ref{thm:Nakayama-and-adjunctions}. By tracing the proof of that theorem back, $\psi_X$ is a unique morphism making the diagram
\begin{equation*}
  \begin{tikzcd}[column sep = 80pt]
    {}^{\vee\vee}\!X \otimes \Nakr_{\mathcal{C}}(\unitobj)
    \arrow[ddd, "{\psi_{X}}"']
    & {}^{\vee\vee}\!X \otimes \coHom(V, \unitobj) \otimes V
    \arrow[d, "\tau_{{}^{\vee\vee}\!X, \coHom(V, \unitobj)}^{} \otimes \id"]
    \arrow[l, "{\id_{{}^{\vee\vee}\!X} \otimes i_V(\unitobj)}"'] \\
    & \coHom(V, \unitobj) \otimes {}^{\vee\vee}\!X \otimes V
    \arrow[d, "{\coHom(\eval_{{}^{\vee\vee}\!X} \otimes \id, \unitobj) \otimes \id \otimes \id}"] \\
    & \coHom({}^{\vee}\!X \otimes {}^{\vee\vee}\!X \otimes V, \unitobj) \otimes {}^{\vee\vee}\!X \otimes V
    \arrow[d, "{\text{\eqref{eq:adjunction-coHom} for $F = {}^{\vee\vee}\!X^{\vee} \otimes (-)$}}"] \\
    \Nakr_{\mathcal{C}}(X)
    & \coHom({}^{\vee\vee}\!X \otimes V, X) \otimes {}^{\vee\vee}\!X \otimes V
    \arrow[l, "i_{{}^{\vee\vee}\!X \otimes V}(X)"']
    \end{tikzcd}
\end{equation*}
commute for all $V \in \mathcal{C}$, where $\coHom$ is the coHom functor for $\Ind(\mathcal{C})$,
\begin{equation*}
  i_V(X) : \coHom(V, X) \otimes V \to \Nakr_{\mathcal{C}}(X)
  \quad (V \in \mathcal{C}, X \in \Ind(\mathcal{C}))
\end{equation*}
is the universal dinatural transformation of $\Nakr_{\mathcal{C}}(X)$ and $\tau$ is the natural isomorphism given in Lemma~\ref{lem:NS-Lemma-7-1}. Other isomorphisms of Theorem   \ref{thm:Frob-tensor-cat-Nakayama}
are described in a similar manner.

Now we give immediate consequences of Theorem~\ref{thm:Frob-tensor-cat-Nakayama}.
Cuadra \cite[Theorem 5.2]{MR2236104} gave a formula of the top of the injective hull of a simple comodule over a co-Frobenius Hopf algebra.
This formula has been extended to Frobenius tensor categories admitting a dimension function in \cite[Remark 2.10]{MR3410615}. By Theorems~\ref{thm:Nakayama-permutation} and \ref{thm:Frob-tensor-cat-Nakayama}, we see that the same holds without assuming the existence of a dimension function. Namely, we have:

\begin{corollary}
  \label{cor:ACE15-Rem-2-10}
  Let $\mathcal{C}$ be a Frobenius tensor category.
  Given an object $V$ of $\mathcal{C}$, we denote by $E(V)$ and $P(V)$ the injective hull and the projective cover of $V$, respectively. Then we have
  \begin{equation*}
    E(S) \cong P(\modobj_{\mathcal{C}} \otimes S^{\vee\vee}),
    \quad
    P(S) \cong E(\modobj_{\mathcal{C}}^{\vee} \otimes {}^{\vee\vee}S)
  \end{equation*}
  for all simple objects $S \in \mathcal{C}$. In particular, $\modobj_{\mathcal{C}}$ is the top of $E(\unitobj)$ as well as the dual of the socle of $P(\unitobj)$.
\end{corollary}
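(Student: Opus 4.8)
The plan is to deduce Corollary~\ref{cor:ACE15-Rem-2-10} directly by combining Theorem~\ref{thm:Nakayama-permutation} with the formulas~\eqref{eq:Frob-tensor-cat-Naka-formula} coming from Theorem~\ref{thm:Frob-tensor-cat-Nakayama}. First I would recall that, by Theorem~\ref{thm:Frob-tensor-cat-def}, a Frobenius tensor category $\mathcal{C}$ is equivalent to $\fdmod^Q$ for a QcF coalgebra $Q$, so that Theorem~\ref{thm:Nakayama-permutation} applies: for every simple object $S$ we have $E(S) \cong P(\Nakr_{\mathcal{C}}(S))$ and $P(S) \cong E(\Nakl_{\mathcal{C}}(S))$. (Here I use that injective hulls, projective covers, and the Nakayama functors transport along the $\bfk$-linear equivalence $\mathcal{C} \approx \fdmod^Q$; for the Nakayama functors this is Lemma~\ref{lem:Nakayama-Mori-Take-equiv} together with the identification of $\Nak_{\mathcal{C}}$ with $\Nak_Q$ via Theorem~\ref{thm:Nakayama-(co)end}.)

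Next I would substitute the explicit descriptions of the Nakayama functors. By~\eqref{eq:Frob-tensor-cat-Naka-formula} we have $\Nakr_{\mathcal{C}}(S) \cong \modobj_{\mathcal{C}} \otimes S^{\vee\vee}$ and $\Nakl_{\mathcal{C}}(S) \cong \overline{\modobj}_{\mathcal{C}} \otimes {}^{\vee\vee}S$; and since $\Nakl_{\mathcal{C}}$ and $\Nakr_{\mathcal{C}}$ are mutually quasi-inverse (Theorem~\ref{thm:Nakayama-QcF}), $\overline{\modobj}_{\mathcal{C}}$ is a dual of the invertible object $\modobj_{\mathcal{C}}$; I would record $\overline{\modobj}_{\mathcal{C}} \cong \modobj_{\mathcal{C}}^{\vee}$, exactly as established in the proof of Theorem~\ref{thm:one-sided-rigidity}. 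Plugging these into the isomorphisms of the previous paragraph yields
\begin{equation*}
  E(S) \cong P(\modobj_{\mathcal{C}} \otimes S^{\vee\vee})
  \quad\text{and}\quad
  P(S) \cong E(\modobj_{\mathcal{C}}^{\vee} \otimes {}^{\vee\vee}S),
\end{equation*}
which are the asserted formulas. One small point to check is that $\modobj_{\mathcal{C}} \otimes S^{\vee\vee}$ and $\modobj_{\mathcal{C}}^{\vee} \otimes {}^{\vee\vee}S$ are again simple: this follows because $\modobj_{\mathcal{C}}$ is invertible and the double-dual functors are equivalences, so tensoring by them preserves simplicity (equivalently, use the length inequality $l(X\otimes Y)\ge l(X)l(Y)$ from \cite[Exercise 4.3.11]{MR3242743} as in the proof of Theorem~\ref{thm:one-sided-rigidity}).

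Finally, for the last sentence I would specialize to $S = \unitobj$. Then $E(\unitobj) \cong P(\modobj_{\mathcal{C}})$ since $\unitobj^{\vee\vee} \cong \unitobj$, and $\operatorname{top} E(\unitobj) \cong \operatorname{top} P(\modobj_{\mathcal{C}}) \cong \modobj_{\mathcal{C}}$ because the top of the projective cover of a simple object is that simple object; alternatively one reads this straight off the last isomorphism of Theorem~\ref{thm:Nakayama-permutation}, $\operatorname{top} E(S) \cong \Nakr_{\mathcal{C}}(S)$, at $S=\unitobj$, giving $\operatorname{top} E(\unitobj) \cong \Nakr_{\mathcal{C}}(\unitobj) = \modobj_{\mathcal{C}}$. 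Dually, $\socle P(\unitobj) \cong \Nakl_{\mathcal{C}}(\unitobj) = \overline{\modobj}_{\mathcal{C}} \cong \modobj_{\mathcal{C}}^{\vee}$, so $\modobj_{\mathcal{C}}$ is the dual of the socle of $P(\unitobj)$. I do not anticipate a serious obstacle here: the corollary is essentially a formal amalgamation of already-proved results, and the only care needed is bookkeeping about which side the double-dual twist falls on and transporting the notions of injective hull/projective cover along the equivalence $\mathcal{C} \approx \fdmod^Q$ — which is routine since equivalences of abelian categories preserve injectives, projectives, essential monomorphisms and superfluous epimorphisms.
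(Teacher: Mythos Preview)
Your proposal is correct and follows essentially the same approach as the paper: the corollary is stated immediately after Theorem~\ref{thm:Frob-tensor-cat-Nakayama} with only the remark that it follows from Theorems~\ref{thm:Nakayama-permutation} and~\ref{thm:Frob-tensor-cat-Nakayama}, and your argument is precisely the unpacking of that sentence. The extra care you take (transport along $\mathcal{C}\approx\fdmod^Q$, simplicity of $\modobj_{\mathcal{C}}\otimes S^{\vee\vee}$, the identification $\overline{\modobj}_{\mathcal{C}}\cong\modobj_{\mathcal{C}}^{\vee}$) fills in details the paper leaves implicit but does not deviate from its reasoning.
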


By Theorems \ref{thm:Nakayama-symmetric} and \ref{thm:Frob-tensor-cat-Nakayama}, we also have:

\begin{corollary}
  \label{cor:Frob-tensor-cat-symmetricity}
  For a Frobenius tensor category $\mathcal{C}$, the following are equivalent:
  \begin{enumerate}
  \item $\mathcal{C}$ is unimodular and the double dual functor $(-)^{\vee\vee}$ on $\mathcal{C}$ is isomorphic to the identity functor.
  \item $\mathcal{C} \approx \fdmod^Q$ for some symmetric coalgebra $Q$.
  \end{enumerate}
\end{corollary}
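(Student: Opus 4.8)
The plan is to reduce both implications to the characterization of symmetric coalgebras via the Nakayama functor (Theorem~\ref{thm:Nakayama-symmetric}), using the formula $\Nakr_{\mathcal{C}}(X) \cong \modobj_{\mathcal{C}} \otimes X^{\vee\vee}$ recorded in~\eqref{eq:Frob-tensor-cat-Naka-formula}. First I would set up a bridge between $\mathcal{C}$ and its ind-completion. Since $\mathcal{C}$ is a Frobenius tensor category it is equivalent, as a $\bfk$-linear category, to $\fdmod^Q$ for some coalgebra $Q$ (necessarily QcF by Theorem~\ref{thm:Frob-tensor-cat-def}); this equivalence extends to an equivalence $\Ind(\mathcal{C}) \approx \Ind(\fdmod^Q) \approx \Mod^Q$ under which $\Nakr_{\Ind(\mathcal{C})}$ is identified with $\Nakr_Q$, because $\Nakr_{\Ind(-)}$ is defined intrinsically by the coend formula and is identified with $\Nakr_Q$ on $\fdmod^Q$ by Theorem~\ref{thm:Nakayama-(co)end}.

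Next I would note that $\Nakr_{\Ind(\mathcal{C})}$ and $\id_{\Ind(\mathcal{C})}$ are both cocontinuous endofunctors of $\Ind(\mathcal{C})$ — the former because it is a left adjoint (Lemma~\ref{lem:Nakayama-adj}) — and that both restrict to endofunctors of $\mathcal{C} = \Ind(\mathcal{C})_{\fp}$, namely to $\Nakr_{\mathcal{C}}$ (using Theorem~\ref{thm:Nakayama-compact-restriction}) and $\id_{\mathcal{C}}$. By the universal property of the ind-completion, cocontinuous endofunctors of $\Ind(\mathcal{C})$ are (up to canonical isomorphism) the left Kan extensions along $\mathcal{C} \hookrightarrow \Ind(\mathcal{C})$ of their restrictions to $\mathcal{C}$, and natural transformations between them correspond bijectively to natural transformations between the restrictions. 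Hence $\Nakr_{\mathcal{C}} \cong \id_{\mathcal{C}}$ if and only if $\Nakr_{\Ind(\mathcal{C})} \cong \id_{\Ind(\mathcal{C})}$, if and only if $\Nakr_Q \cong \id_{\Mod^Q}$, which by Theorem~\ref{thm:Nakayama-symmetric} holds exactly when $Q$ is a symmetric coalgebra. In particular symmetry of $Q$ does not depend on the choice of $Q$ with $\mathcal{C} \approx \fdmod^Q$, so condition~(2) is equivalent to the single statement ``$\Nakr_{\mathcal{C}} \cong \id_{\mathcal{C}}$''.

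With the bridge in hand the two implications are short. For $(1) \Rightarrow (2)$, assuming $\modobj_{\mathcal{C}} \cong \unitobj$ and $(-)^{\vee\vee} \cong \id_{\mathcal{C}}$, I would chain the natural isomorphisms $\Nakr_{\mathcal{C}}(X) \cong \modobj_{\mathcal{C}} \otimes X^{\vee\vee}$ (from~\eqref{eq:Frob-tensor-cat-Naka-formula}), $\modobj_{\mathcal{C}} \otimes Z \cong \unitobj \otimes Z = Z$ applied to $Z = X^{\vee\vee}$ (using $\modobj_{\mathcal{C}} \cong \unitobj$), and $X^{\vee\vee} \cong X$, all natural in the displayed variable, to obtain $\Nakr_{\mathcal{C}} \cong \id_{\mathcal{C}}$; the bridge then produces a symmetric $Q$ with $\mathcal{C} \approx \fdmod^Q$. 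For $(2) \Rightarrow (1)$, the bridge gives $\Nakr_{\mathcal{C}} \cong \id_{\mathcal{C}}$; evaluating at $\unitobj$ yields $\modobj_{\mathcal{C}} = \Nakr_{\mathcal{C}}(\unitobj) \cong \unitobj$, so $\mathcal{C}$ is unimodular, and then \eqref{eq:Frob-tensor-cat-Naka-formula} reads $(-)^{\vee\vee} \cong \modobj_{\mathcal{C}} \otimes (-)^{\vee\vee} \cong \Nakr_{\mathcal{C}} \cong \id_{\mathcal{C}}$.

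The only point that is not purely formal is the ind-completion bridge: one must check that a natural isomorphism between the endofunctors $\Nakr_{\mathcal{C}}$ and $\id_{\mathcal{C}}$ of the subcategory of finite-length objects extends uniquely to one between $\Nakr_{\Ind(\mathcal{C})}$ and $\id_{\Ind(\mathcal{C})}$. This rests on the cocontinuity of $\Nakr_{\Ind(\mathcal{C})}$ (Lemma~\ref{lem:Nakayama-adj}) together with the universal property of $\Ind(\mathcal{C})$, and is of the same nature as the reductions already carried out in the proof of Corollary~\ref{cor:Nakayama-and-adjunctions}; I expect this to be the main — and essentially the only — obstacle, everything else being a chain of already-established natural isomorphisms.
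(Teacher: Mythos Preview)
Your proposal is correct and follows essentially the same route as the paper: the paper simply cites Theorems~\ref{thm:Nakayama-symmetric} and~\ref{thm:Frob-tensor-cat-Nakayama}, and your argument is a faithful unpacking of that citation, including the one nontrivial detail (the passage from $\Nakr_{\mathcal{C}} \cong \id_{\mathcal{C}}$ to $\Nakr_{\Ind(\mathcal{C})} \cong \id_{\Ind(\mathcal{C})}$ via cocontinuity and the universal property of the ind-completion) that the paper leaves implicit.
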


For the finite case, this has been proved in \cite{MR4042867}.

\subsection{Categorical Radford $S^4$-formula}

Radford \cite{MR407069} gave a useful formula for the fourth power of the antipode of a finite-dimensional Hopf algebra, which we call {\em the Radford $S^4$-formula}. This formula has been generalized to co-Frobenius Hopf algebras in \cite[Theorem 2.8]{MR2278058}.
The formula also has a generalization to finite tensor categories \cite{MR2097289}.
It may be natural to ask whether there is a categorical analogue of the Radford $S^4$-formula for `infinite' tensor categories covering the result of \cite{MR2278058}.

Recently, it turned out that the categorical Radford $S^4$-formula for finite tensor categories easily follows from a property of the Nakayama functor \cite{MR4042867}.
Now we have the same functor for Frobenius tensor categories.
Thus, by the same way as \cite{MR4042867}, we obtain a categorical Radford $S^4$-formula for Frobenius tensor categories.
Here is the detail:
Given an object $X$ of a rigid monoidal category, we write $X^{\vee 3} = X^{\vee\vee\vee}$, $X^{\vee 4} = X^{\vee\vee\vee\vee}$ and so on.

\begin{definition}
  \label{def:Radford-iso}
  Given a Frobenius tensor category $\mathcal{C}$, we define $\RadIso_X$ by
  \begin{equation*}
    \RadIso_X := \Big(
    X \otimes \modobj_{\mathcal{C}}
    \xrightarrow[\cong]{\quad \eqref{eq:Frob-tensor-cat-Naka-formula} \quad}
    \Nakr_{\mathcal{C}}(X^{\vee\vee})
    \xrightarrow[\cong]{\quad \eqref{eq:Frob-tensor-cat-Naka-formula} \quad}
    \modobj_{\mathcal{C}} \otimes X^{\vee 4}
    \Big)
  \end{equation*}
  for $X \in \mathcal{C}$ and call $\RadIso = \{ \RadIso_X \}_{X \in \mathcal{C}}$ the {\em Radford isomorphism}.
\end{definition}

The isomorphism \eqref{eq:Frob-tensor-cat-Naka-formula} is an instance of the natural isomorphism given in Theorem~\ref{thm:Nakayama-and-adjunctions}.
The coherent property claimed in that theorem ensures that the equation
\begin{equation*}
  \RadIso_{X \otimes Y}
  = (\RadIso_X \otimes \id_{Y^{\vee 4}}) \circ (\id_X \otimes \RadIso_{Y})
\end{equation*}
holds for all objects $X, Y \in \mathcal{C}$.
The following generalization of \cite[Theorem 3.3]{MR2097289} to Frobenius tensor categories is now easily obtained:

\begin{corollary}[the categorical Radford $S^4$-formula]
  \label{cor:categorical-Radford-S4}
  For a Frobenius tensor category $\mathcal{C}$ with modular object $\modobj := \modobj_{\mathcal{C}}$, there is the following isomorphism of monoidal functors:
  \begin{equation*}
    \modobj^{\vee} \otimes X \otimes \modobj
    \xrightarrow{\quad \id_{\modobj^{\vee}} \otimes \RadIso_X \quad}
    \modobj^{\vee} \otimes \modobj \otimes X^{\vee 4}
    \xrightarrow{\quad \eval_{\modobj} \otimes \id_{X^{\vee 4}} \quad}
    X^{\vee 4}
    \quad (X \in \mathcal{C}).
  \end{equation*}
\end{corollary}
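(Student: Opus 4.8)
The plan is to identify the displayed composite, which I denote $\RadIso^{S^4}_X\colon \modobj^\vee\otimes X\otimes\modobj\to X^{\vee 4}$, with a monoidal natural isomorphism between the two monoidal endofunctors of $\mathcal{C}$ it connects: the conjugation functor $G$ with $G(X)=\modobj^\vee\otimes X\otimes\modobj$, and the fourth left-dual functor $(-)^{\vee 4}$. The preliminary observations are: the modular object $\modobj$ is invertible (recorded in the text after \eqref{eq:Frob-tensor-cat-Naka-formula}), so $\eval_\modobj$ and $\coev_\modobj$ are isomorphisms; hence $G$ becomes a monoidal functor with structure morphisms $G_2^{X,Y}=\id_{\modobj^\vee\otimes X}\otimes\coev_\modobj^{-1}\otimes\id_{Y\otimes\modobj}$ and $G_0=\eval_\modobj^{-1}$. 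Because (by our conventions) $(-)^\vee$ may be taken strict monoidal, $(-)^{\vee 4}$ is a \emph{strict} monoidal functor, so that a monoidal natural transformation $\Theta\colon G\Rightarrow(-)^{\vee 4}$ amounts to a natural transformation satisfying $\Theta_{X\otimes Y}\circ G_2^{X,Y}=\Theta_X\otimes\Theta_Y$ and $\Theta_\unitobj\circ G_0=\id_\unitobj$. Each component $\RadIso^{S^4}_X$ is invertible, being the composite of the isomorphism $\RadIso_X$ (Definition~\ref{def:Radford-iso}) with $\eval_\modobj\otimes\id_{X^{\vee 4}}$, and is natural in $X$ because $\RadIso$ is; so the entire content of the corollary is the monoidality of $\Theta:=\RadIso^{S^4}$.

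For the unit axiom I would first check that $\RadIso_\unitobj=\id_\modobj$: applying the coherence relation $\RadIso_{X\otimes Y}=(\RadIso_X\otimes\id_{Y^{\vee 4}})\circ(\id_X\otimes\RadIso_Y)$ (stated right before the corollary) to $X=Y=\unitobj$ gives $\RadIso_\unitobj=\RadIso_\unitobj\circ\RadIso_\unitobj$, and cancelling the isomorphism $\RadIso_\unitobj$ yields the claim; then $\Theta_\unitobj=\eval_\modobj=G_0^{-1}$, which is the unit axiom.

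For the tensoriality square I would expand $\Theta_{X\otimes Y}\circ G_2^{X,Y}$ by substituting $\RadIso_{X\otimes Y}=(\RadIso_X\otimes\id)\circ(\id\otimes\RadIso_Y)$, and $\Theta_X\otimes\Theta_Y$ by its definition. Both composites end with the same tail, namely ``$\RadIso_X$ on the $X\otimes\modobj$-block, then $\eval_\modobj$ on the leading $\modobj^\vee\otimes\modobj$-block'' applied to $\modobj^\vee\otimes X\otimes\modobj\otimes Y^{\vee 4}$; the remaining task is to see that the two induced maps $\modobj^\vee\otimes X\otimes\modobj\otimes\modobj^\vee\otimes Y\otimes\modobj\to\modobj^\vee\otimes X\otimes\modobj\otimes Y^{\vee 4}$ coincide — one performs $\coev_\modobj^{-1}$ on the inner $\modobj\otimes\modobj^\vee$ and then $\RadIso_Y$, the other performs $\RadIso_Y$ first and then $\eval_\modobj$ on a resulting $\modobj^\vee\otimes\modobj$-pair. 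Using the interchange law to slide $\RadIso_Y$ past $\coev_\modobj^{-1}$ (they act on disjoint tensor factors), this reduces to the identity of two morphisms $\modobj\otimes\modobj^\vee\otimes\modobj\to\modobj$, namely $\coev_\modobj^{-1}\otimes\id_\modobj=\id_\modobj\otimes\eval_\modobj$, which is exactly the rigidity zigzag identity for $\modobj$. This establishes the tensoriality square, completing the proof that $\Theta$ is a monoidal natural isomorphism, which is the assertion of the corollary.

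I expect the only mildly delicate point to be the bookkeeping in this last step — keeping straight which copies of $\modobj$ are cancelled by $\coev_\modobj^{-1}$ and which by $\eval_\modobj$, and organising the interchange-law manipulations — but it uses no idea beyond the coherence of $\RadIso$ coming from Theorem~\ref{thm:Nakayama-and-adjunctions} and the duality axioms, just as in the finite case treated in \cite{MR4042867}; in string-diagram language the square collapses to an evident planar isotopy.
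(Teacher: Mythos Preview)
Your proposal is correct and follows essentially the same approach as the paper: the paper does not give a separate proof of the corollary but simply records the multiplicativity equation $\RadIso_{X\otimes Y}=(\RadIso_X\otimes\id_{Y^{\vee 4}})\circ(\id_X\otimes\RadIso_Y)$ (derived from the coherence in Theorem~\ref{thm:Nakayama-and-adjunctions}) and declares the corollary ``now easily obtained.'' You have carried out precisely this easy verification, reducing the monoidality square to the zigzag identity for the invertible object $\modobj$ and the unit axiom to $\RadIso_\unitobj=\id_\modobj$; there is nothing more in the paper's argument than what you have written.
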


We will explain how this result relates to the Radford $S^4$-formula for co-Frobenius Hopf algebras \cite[Theorem 2.8]{MR2278058} in Subsection~\ref{subsec:Hopf-Radford-S4}.

\subsubsection{The braided case}

Let $\mathcal{C}$ be a braided rigid monoidal category with braiding $\sigma$. Then we define
\begin{equation}
  \label{eq:Drinfeld-iso-def}
  \DriIso^{\sigma}_X = (\eval_{X} \otimes \id_{X^{\vee\vee}})
  \circ (\sigma_{X,X^{\vee}} \otimes \id_{X^{\vee\vee}})
  \circ (\id_X \otimes \coev_{X^{\vee}})
  \quad (X \in \mathcal{C})
\end{equation}
and call $\DriIso^{\sigma}$ the {\em Drinfeld isomorphism} associated to $\sigma$. We will use string diagrams to express morphisms in a braided rigid monoidal category. Our convention is that a morphism goes from the top to the bottom of the diagram. The evaluation, the coevaluation, the braiding and its inverse are expressed as follows:
\begin{equation*}
  \tikzset{cross/.style={preaction={-,draw=white,line width=6pt}}}
  \eval_X =
  \begin{tikzpicture}[x = 12pt, y = 10pt, baseline = 0pt]
    \draw (0,1) to [out=-90, in=180] (1,-1) to [out=0, in=-90] (2,1);
    \node at (0,1) [above] {$X^{\vee}$};
    \node at (2,1) [above] {$X$};
  \end{tikzpicture}
  \quad \coev_X =
  \begin{tikzpicture}[x = 12pt, y = 10pt, baseline = 0pt]
    \draw (0,-1) to [out=90, in=180] (1,1) to [out=0, in=90] (2,-1);
    \node at (2,-1) [below] {$X^{\vee}$};
    \node at (0,-1) [below] {$X$};
  \end{tikzpicture}
  \quad \sigma_{X,Y} =
  \begin{tikzpicture}[x = 10pt, y = 10pt, baseline = 0pt]
    \draw (0,1) to [out = -90, in = 90] (2,-1);
    \draw [cross] (2,1) to [out = -90, in = 90] (0,-1);
    \node at (0, 1) [above] {$X$};
    \node at (2, 1) [above] {$Y$};
    \node at (0,-1) [below] {$Y$};
    \node at (2,-1) [below] {$X$};
  \end{tikzpicture}
  \quad
  (\sigma_{X,Y})^{-1} =
  \begin{tikzpicture}[x = 10pt, y = 10pt, baseline = 0pt]
    \draw (2,1) to [out = -90, in = 90] (0,-1);
    \draw [cross] (0,1) to [out = -90, in = 90] (2,-1);
    \node at (0, 1) [above] {$Y$};
    \node at (2, 1) [above] {$X$};
    \node at (0,-1) [below] {$X$};
    \node at (2,-1) [below] {$Y$};
  \end{tikzpicture}
\end{equation*}
The Drinfeld isomorphism and its inverse are given as follows:
\begin{equation}
  \label{eq:Drinfeld-iso-picture}
  \tikzset{cross/.style={preaction={-,draw=white,line width=6pt}}}%
  \DriIso^{\sigma}_X =
  \begin{tikzpicture}[x = 15pt, y = 15pt, baseline = 0pt]
    \coordinate (P1) at (0, 1.5);
    \draw (P1) -- ++(0, -.75)
    to [out=-90, in=90] ++(1, -1.5)
    to [out=-90, in=-90, looseness=1.5] ++(-1, 0) coordinate (SAVE);
    \draw [cross] (SAVE)
    to [out=90, in=-90] ++(1, 1.5)
    to [out=90, in=90, looseness=1.5] ++(1, 0)
    to ++(0,-2.5) coordinate (P2);
    \node at (P1) [above] {$X$};
    \node at (P2) [below] {$X^{\vee\vee}$};
  \end{tikzpicture}
  = \ \ \begin{tikzpicture}[x = 15pt, y = 15pt, baseline = 0pt]
    \coordinate (P1) at (4, 1.5);
    \draw (P1) -- ++(0,-.75)
    to [out=-90, in=90] ++(-1, -1.5)
    to [out=-90, in=-90, looseness=1.5] ++(-1, 0)
    to ++(0, 1.5) coordinate (P3)
    to [out=90, in=90, looseness=1.5] ++(1, 0) coordinate (SAVE);
    \draw [cross] (SAVE)
    to [out=-90, in=90] ++(1, -1.5)
    to [out=-90, in=90] ++(0, -1) coordinate (P2);
    \node at (P1) [above] {$X$};
    \node at (P2) [below] {$X^{\vee\vee}$};
  \end{tikzpicture}
  \quad (\DriIso^{\sigma}_X)^{-1} =
  \begin{tikzpicture}[x = -15pt, y = 15pt, baseline = 0pt]
    \coordinate (P1) at (0, 1.5);
    \draw (P1) -- ++(0, -.75)
    to [out=-90, in=90] ++(1, -1.5)
    to [out=-90, in=-90, looseness=1.5] ++(-1, 0) coordinate (SAVE);
    \draw [cross] (SAVE)
    to [out=90, in=-90] ++(1, 1.5)
    to [out=90, in=90, looseness=1.5] ++(1, 0)
    to ++(0,-2.5) coordinate (P2);
    \node at (P1) [above] {$X^{\vee\vee}$};
    \node at (P2) [below] {$X$};
  \end{tikzpicture}
  = \begin{tikzpicture}[x = -15pt, y = 15pt, baseline = 0pt]
    \coordinate (P1) at (4, 1.5);
    \draw (P1) -- ++(0,-.75)
    to [out=-90, in=90] ++(-1, -1.5)
    to [out=-90, in=-90, looseness=1.5] ++(-1, 0)
    to ++(0, 1.5) coordinate (P3)
    to [out=90, in=90, looseness=1.5] ++(1, 0) coordinate (SAVE);
    \draw [cross] (SAVE)
    to [out=-90, in=90] ++(1, -1.5)
    to [out=-90, in=90] ++(0, -1) coordinate (P2);
    \node at (P1) [above] {$X^{\vee\vee}$};
    \node at (P2) [below] {$X$};
  \end{tikzpicture}
\end{equation}
It is known that the Radford isomorphism of a braided finite tensor category is written in terms of the braiding \cite[Theorem 8.10.7]{MR3242743}. We give a generalization of this result to the setting of braided Frobenius tensor categories as follows:
  
\begin{theorem}
  \label{thm:Radford-iso-braiding}
  For a braided Frobenius tensor category $\mathcal{C}$, we have
  \begin{equation}
    \label{eq:Radford-iso-braiding}
    \RadIso_{X} = (\id_{\modobj} \otimes (\DriIso_{X^{\vee}}^{\vee})^{-1} \DriIso_{X}) \circ (\sigma_{\modobj, X})^{-1}
  \end{equation}
  for all $X \in \mathcal{C}$, where $\sigma$ is the braiding of $\mathcal{C}$, $\modobj = \modobj_{\mathcal{C}}$ and $\DriIso = \DriIso^{\sigma}$.
\end{theorem}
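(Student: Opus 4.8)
The plan is to deduce \eqref{eq:Radford-iso-braiding} from the coend description of $\Nakr_{\mathcal{C}}$ by an argument parallel to the proof of \cite[Theorem 8.10.7]{MR3242743}; the only difference is that the coend $\Nakr_{\mathcal{C}} = \int^{V \in \mathcal{C}} \coHom_{\Ind(\mathcal{C})}(V, -) \otimes V$ is now indexed by the (not necessarily finite) category $\mathcal{C} \approx \fdmod^Q$ with $Q$ a QcF coalgebra (Theorem~\ref{thm:Frob-tensor-cat-def}). By Definition~\ref{def:Radford-iso}, $\RadIso_X = \beta^{-1} \circ \alpha$, where $\alpha \colon X \otimes \modobj_{\mathcal{C}} \to \Nakr_{\mathcal{C}}(X^{\vee\vee})$ and $\beta \colon \modobj_{\mathcal{C}} \otimes X^{\vee 4} \to \Nakr_{\mathcal{C}}(X^{\vee\vee})$ are the two $\mathcal{C}$-bimodule structure isomorphisms of $\Nakr_{\mathcal{C}}$ from Theorem~\ref{thm:Frob-tensor-cat-Nakayama} evaluated at $X^{\vee\vee}$: concretely, $\alpha = \psi_{X^{\vee\vee}}$ is the morphism characterized by the commutative diagram following that theorem (with $F = X \otimes (-)$, using the identification ${}^{\vee\vee}(X^{\vee\vee}) = X$), and $\beta$ is its analogue for $F = (-) \otimes X^{\vee 4}$. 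Thus \eqref{eq:Radford-iso-braiding} is equivalent to the single identity $\alpha = \beta \circ (\id_{\modobj_{\mathcal{C}}} \otimes (\DriIso_{X^{\vee}}^{\vee})^{-1}\DriIso_X) \circ (\sigma_{\modobj_{\mathcal{C}}, X})^{-1}$.

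To prove this identity I would precompose both sides with $\id_X \otimes i_V(\unitobj) \colon X \otimes \coHom_{\Ind(\mathcal{C})}(V, \unitobj) \otimes V \to X \otimes \modobj_{\mathcal{C}}$, where $i_V(\unitobj)$ is the universal dinatural transformation of $\modobj_{\mathcal{C}} = \Nakr_{\mathcal{C}}(\unitobj)$. Since $X \otimes (-)$ preserves colimits (it has the right adjoint $X^{\vee} \otimes (-)$), the family $\{\,\id_X \otimes i_V(\unitobj)\,\}_{V \in \mathcal{C}}$ realizes $X \otimes \modobj_{\mathcal{C}}$ as a coend and is in particular jointly epimorphic, so this precomposition loses no information. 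Unwinding the commutative diagram defining $\psi_{X^{\vee\vee}}$, the left-hand side $\alpha \circ (\id_X \otimes i_V(\unitobj))$ becomes an explicit composite built from (co)evaluations, the canonical commutativity isomorphisms $\tau$, the adjunction isomorphisms \eqref{eq:adjunction-coHom}, and a universal dinatural transformation of $\Nakr_{\mathcal{C}}(X^{\vee\vee})$; unwinding the analogous diagram for $\beta$ does the same for the right-hand side.

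The step that injects the braiding into the formula is the observation that each $\coHom_{\Ind(\mathcal{C})}(V, \unitobj) \cong \Hom_{\mathcal{C}}(\unitobj, V)^{*}$ (Lemma~\ref{lem:coHom-finite-1}) is finite-dimensional, hence a trivial object of $\mathcal{C}$ once regarded in $\mathcal{C}$ through the copower; by Lemma~\ref{lem:NS-Lemma-7-1} every $\tau$ occurring in the unwound formulas may therefore be replaced by the braiding $\sigma$, and its inverse by the reverse braiding. After this substitution the identity reduces to a diagrammatic manipulation using only naturality of $\sigma$, the hexagon axioms, the zig-zag identities, and the definition \eqref{eq:Drinfeld-iso-def} of $\DriIso^{\sigma}$: the factor $(\sigma_{\modobj_{\mathcal{C}}, X})^{-1}$ records the transposition of $\modobj_{\mathcal{C}}$ past $X$, while $(\DriIso_{X^{\vee}}^{\vee})^{-1}\DriIso_X$ collects the two double-dual corrections (one for $X$, one for $X^{\vee\vee}$), exactly as in \cite[Theorem 8.10.7]{MR3242743}.

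The main obstacle is bookkeeping rather than conceptual. One must keep track of the four functors $(-)^{\vee}$, ${}^{\vee}(-)$, $(-)^{\vee\vee}$, ${}^{\vee\vee}(-)$ and of the identifications ${}^{\vee\vee}(X^{\vee\vee}) = X$ and $(X^{\vee\vee})^{\vee\vee} = X^{\vee 4}$ throughout the coend computation, and one must check that the finite-case string-diagram argument of \cite[Theorem 8.10.7]{MR3242743} carries over verbatim; it does, because the only structural inputs there are the existence of the coend $\Nakr_{\mathcal{C}}$ and the joint epimorphicity of its universal cocone, both available here by Theorems~\ref{thm:Nakayama-compact-restriction} and \ref{thm:Frob-tensor-cat-def}. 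Finally, $\DriIso^{\sigma}$ is a well-defined natural isomorphism $\id_{\mathcal{C}} \Rightarrow (-)^{\vee\vee}$ since $\mathcal{C}$, being a Frobenius tensor category, is rigid and braided.
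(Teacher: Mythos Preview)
Your proposal is correct and follows essentially the same route as the paper: precompose the Radford isomorphism with the universal dinatural family $i_V(\unitobj)$ of the coend $\modobj_{\mathcal{C}} = \Nakr_{\mathcal{C}}(\unitobj)$, use Lemma~\ref{lem:NS-Lemma-7-1} to replace the canonical isomorphisms $\tau$ by the braiding (since $\coHom(V,\unitobj)$ is trivial), and reduce to a string-diagram identity producing $(\DriIso_{X^{\vee}}^{\vee})^{-1}\DriIso_X$. The paper carries this out explicitly rather than citing \cite[Theorem~8.10.7]{MR3242743}: it packages the chain of adjunction isomorphisms \eqref{eq:adjunction-coHom} into a single linear map $\Theta_{V,X}\colon \Hom_{\mathcal{C}}(\unitobj, X\otimes V\otimes X^{\vee 3}) \to \Hom_{\mathcal{C}}(\unitobj, V)$, computes $\Theta_{V,X}$ graphically to exhibit the two Drinfeld isomorphisms, and then closes with the dinaturality of $i_{-}(\unitobj)$; but this is exactly the bookkeeping you anticipate.
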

\begin{proof}
  In view of Lemma~\ref{lem:coHom-finite-1}, we assume that the coHom functor of $\Ind(\mathcal{C})$ is given by $\coHom( X, Y ) := \Hom_{\mathcal{C}}(Y, X)^*$ for $X, Y \in \mathcal{C}$. For $V, X \in \mathcal{C}$, we denote by $i_V(X)$ the universal dinatural transformation for $\Nakr(X) := \Nakr_{\Ind(\mathcal{C})}(X)$ as a coend.
  We fix two objects $V, X \in \mathcal{C}$ and consider the commutative diagram given as Figure \ref{fig:proof-Radford-iso-braiding}.
  By \eqref{eq:NS-Lemma-7-1-braided} and the hexagon axiom for a braiding, we have
  \begin{equation}
    \label{eq:Radford-iso-braiding-proof-1}
    (\tau_{X,\coHom( V, \unitobj )} \otimes \id_V) \circ (\sigma_{X,\coHom( V, \unitobj )) \otimes V})^{-1}
    = \id_{\coHom( V, \unitobj )} \otimes \sigma_{V,X}.
  \end{equation}
  The path from $\underline{\qquad}_{\ (1)}$ to $\underline{\qquad}_{\ (2)}$ in the diagram is equal to $\Theta_{V,X}^* \otimes \id_{X} \otimes \id_V$, where $\Theta_{V,X}^*$ is the linear dual of the following linear map:
  \begin{gather*}
    \Theta_{V,X} : \Hom_{\mathcal{C}}(\unitobj, X \otimes V \otimes X^{\vee 3})
    \to \Hom_{\mathcal{C}}(\unitobj, V), \\
    f \mapsto (\eval_X \otimes \id_V \otimes \eval_{X^{\vee}}) \circ (\id_{X^{\vee}} \otimes f \otimes \id_{X^{\vee\vee}}) \circ \coev_{X^{\vee}}.
  \end{gather*}
  Graphically, the map $\Theta_{V,X}$ is given by
  \begin{equation*}
    \tikzset{cross/.style={preaction={-,draw=white,line width=6pt}}}%
    \Theta_{V,X}(f) =
    \begin{tikzpicture}[x = 15pt, y = 15pt, baseline = 0pt]
      \node [rectangle, draw, thick] (B) at (0,0) {\makebox[2em]{$f$}};
      \coordinate (T1) at ([shift={(-.5,0)}]B.south);
      \coordinate (T2) at ([shift={(+.5,0)}]B.south);
      \draw let \n1 = {0.75}, \n2 = {1} in
      (T1) to [out=-90, in=-90, looseness=2] ++(-\n2,0)
      to ++(0, \n1) coordinate (SAVE1)
      to [out=90, in=90, looseness=2] ($(T2)+(\n2, \n1)$) coordinate (SAVE2)
      to ++(0, -\n1) to [out=-90, in=-90, looseness=2] (T2);
      \draw let \p1 = (B.south) in (\p1) -- (\x1, -2) node [below] {$V$};
      \node at ([shift={(0,1)}]SAVE1) [left] {$X^{\vee}$};
      \node at ([shift={(0,1)}]SAVE2) [right] {$X^{\vee\vee}$};
    \end{tikzpicture}
    = \quad \begin{tikzpicture}[x = 15pt, y = 15pt, baseline = 0pt]
      \node [rectangle, draw, thick] (B) at (0,2) {\makebox[2em]{$f$}};
      \coordinate (T1) at ([shift={(-.5,0)}]B.south);
      \coordinate (T2) at (B.south);
      \coordinate (T3) at ([shift={(+.5,0)}]B.south);
      \draw (T2) to [out=-90, in=90] ++(-1,-2) coordinate (SAVE1);
      \draw let \p1 = (SAVE1) in (\p1) -- (\x1, -2) node [below] {$V$};
      \draw (T3) to [out=-90, in=90] ++(-1,-2)
      to [out=-90, in=-90, looseness=2] ++(1, 0) coordinate (SAVE1);
      \draw (T1) to [out=-90, in=90] ++(-1,-2)
      to [out=-90, in=-90, looseness=2] ++(-1, 0)
      to ++(0,2)
      to [out=90, in=90, looseness=2] ++(1,0) coordinate (SAVE2);
      \draw [cross] (SAVE2) to [out=-90, in=90] (SAVE1);
    \end{tikzpicture}
    \ \ \mathop{=}^{\eqref{eq:Drinfeld-iso-picture}} \quad
    \begin{tikzpicture}[x = 15pt, y = 15pt, baseline = 0pt]
      \node [rectangle, draw, thick] (B1) at (0,2) {\makebox[2em]{$f$}};
      \node [rectangle, draw, thick] (B2) at (-1.25,0) {\makebox[2em]{$\DriIso_X\strut$}};
      \node [rectangle, draw, thick] (B3) at ( 1.25,0) {\makebox[2em]{$\DriIso_{X^{\vee}}^{-1}$}};
      \draw ([shift={(-.5,0)}]B1.south) to [out=-90, in=90, looseness=.8] (B2.north);
      \draw ([shift={( .5,0)}]B1.south) to [out=-90, in=90, looseness=.8] (B3.north);
      \draw let \p1 = (B1.south), \p2 = (B2.south) in
      (\p1) -- (\x1, \y2) to [out=-90, in=90] (\x2, -2)
      node [below] {$V$};
      \draw [cross] (B2.south) to [out=-90, in=-90, looseness=1.5] (B3.south);
    \end{tikzpicture}
  \end{equation*}
  for $f \in \Hom_{\mathcal{C}}(\unitobj, X \otimes V \otimes X^{\vee 3})$.
  Thus we have $\Theta_{V,X}^* = \coHom(\unitobj, \xi)$, where
  \begin{equation*}
    \xi := (\id_{V} \otimes \eval_X) \circ (\sigma_{V,X}^{-1} \otimes \id_{X^{\vee}}) \circ (\DriIso_X \otimes \id_V \otimes \DriIso_{X^{\vee}}^{-1})
    : X \otimes V \otimes X^{\vee 3} \to V.
  \end{equation*}
  By~\eqref{eq:Radford-iso-braiding-proof-1} and the dinaturality of $i_{-}(\unitobj) : \coHom(-, \unitobj) \otimes (-) \to \Nakr(\unitobj)$, we compute:
  \begin{align*}
    & \RadIso_X \circ \sigma_{\modobj,X} \circ (i_{V}(\unitobj) \otimes \id_{X}) \\
    & = \begin{aligned}[t](i_{X \otimes V \otimes X^{\vee\vee}}(\unitobj) \otimes \id_{X^{\vee 4}})
      & \circ (\coHom(\xi, \unitobj) \otimes \id_{X} \otimes \id_V \otimes \coev_{X^{\vee 3}}) \\
      & \circ (\id_{\coHom( V, \unitobj )} \otimes \sigma_{V, X}) \end{aligned} \\
    & = (i_V(\unitobj) \otimes \id_{X})
      \circ (\id_{\coHom( V, \unitobj )} \otimes
      ((\xi \otimes \id_{X^{\vee 4}})
      \circ (\id_{X} \otimes \id_{V} \otimes \coev_{X^{\vee 3}})
      \circ \sigma_{V, X}) \\
    & = (i_V(\unitobj) \otimes \id_{X})
      \circ (\id_{\coHom( V, \unitobj )} \otimes (\DriIso_{X^{\vee}}^{-1})^{\vee} \DriIso_X).
  \end{align*}
  Thus, by the universal property, we have $\RadIso_X \circ \sigma_{\modobj,X} = \id_{\modobj} \otimes (\DriIso_{X^{\vee}}^{-1})^{\vee} \DriIso_{X}$. The proof is done.
\end{proof}

\begin{figure}
  \begin{equation*}
    \begin{tikzcd}[column sep = 90pt]
      \modobj \otimes X
      \arrow[d, "{\sigma_{\modobj, X}}"']
      \arrow[dr, phantom, "\scriptstyle \text{(the naturality of the braiding)}"]
      & \coHom( V, \unitobj ) \otimes V \otimes X
      \arrow[d, "{\sigma_{\coHom( V, \unitobj ) \otimes V, X}^{}}"]
      \arrow[l, "i_V(\unitobj) \otimes \id_{X}"'] \\
      X \otimes \modobj
      \arrow[ddd, "\eqref{eq:Frob-tensor-cat-Naka-formula}"']
      & X \otimes \coHom( V, \unitobj ) \otimes V
      \arrow[d, "\tau_{X, \coHom( V, \unitobj )}^{} \otimes \id"]
      \arrow[l, "{\id_{X} \otimes i_V(\unitobj)}"] \\
      & \underline{\coHom( V, \unitobj ) \otimes X \otimes V}_{\ (1)}
      \arrow[d, "{\coHom( \eval_{X} \otimes \id, \unitobj ) \otimes \id \otimes \id}"] \\
      & \coHom( X^{\vee} \otimes X \otimes V, \unitobj ) \otimes X \otimes V
      \arrow[d, "{\text{\eqref{eq:adjunction-coHom} for $F = X^{\vee} \otimes (-)$}}"] \\
      \Nakr(X^{\vee\vee})
      \arrow[dd, "\eqref{eq:Frob-tensor-cat-Naka-formula}"']
      & \coHom( X \otimes V, X^{\vee\vee} ) \otimes X \otimes V
      \arrow[d, "{\text{\eqref{eq:adjunction-coHom} for $F = (-) \otimes X^{\vee 3}$}}"]
      \arrow[l, "i_{X \otimes V}(X^{\vee\vee})"'] \\
      & \underline{\coHom( X \otimes V \otimes X^{\vee 3}, \unitobj ) \otimes X \otimes V}_{\ (2)}
      \arrow[d, "{\id \otimes \id \otimes \id \otimes \coev_{X^{\vee 3}}}"] \\
      \modobj \otimes X^{\vee 4}
      & \coHom( X \otimes V \otimes X^{\vee 3}, \unitobj ) \otimes X \otimes V \otimes X^{\vee 3} \otimes X^{\vee 4}
      \arrow[l, "i_{X \otimes V \otimes X^{\vee 3}}(\unitobj) \otimes \id_{X^{\vee 4}}"']
    \end{tikzcd}
  \end{equation*}
  \caption{Proof of Theorem \ref{thm:Radford-iso-braiding}}
  \label{fig:proof-Radford-iso-braiding}
\end{figure}

An object $T$ of a braided monoidal category $\mathcal{C}$ is said to be {\em transparent} if $\sigma_{X,T} \circ \sigma_{T,X} = \id_{T} \otimes \id_{X}$ for all objects $X \in \mathcal{C}$, where $\sigma$ is the braiding of $\mathcal{C}$.
It is known that the modular object of a braided finite tensor category is transparent \cite[Corollary 8.10.8]{MR3242743}.
With the help of Theorem~\ref{thm:Radford-iso-braiding}, we can prove the following result by the same way as \cite[Corollary 8.10.8]{MR3242743}.

\begin{corollary}
  \label{cor:Radford-iso-braiding}
  The modular object of a braided Frobenius tensor category is transparent.
\end{corollary}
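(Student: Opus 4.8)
The plan is to mimic the proof of \cite[Corollary 8.10.8]{MR3242743}, exploiting the explicit formula for the Radford isomorphism obtained in Theorem~\ref{thm:Radford-iso-braiding}. Let $\mathcal{C}$ be a braided Frobenius tensor category with braiding $\sigma$, modular object $\modobj := \modobj_{\mathcal{C}}$, and Drinfeld isomorphism $\DriIso := \DriIso^{\sigma}$. We must show that the double braiding $\sigma_{X,\modobj} \circ \sigma_{\modobj,X}$ equals $\id_{\modobj} \otimes \id_X$ for every $X \in \mathcal{C}$.

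The key input is Theorem~\ref{thm:Radford-iso-braiding}, which gives $\RadIso_X = (\id_{\modobj} \otimes (\DriIso_{X^{\vee}}^{\vee})^{-1}\DriIso_X) \circ (\sigma_{\modobj,X})^{-1}$. On the other hand, $\RadIso$ is built from the bimodule isomorphisms of Theorem~\ref{thm:Frob-tensor-cat-Nakayama} and satisfies the multiplicativity relation $\RadIso_{X \otimes Y} = (\RadIso_X \otimes \id_{Y^{\vee 4}}) \circ (\id_X \otimes \RadIso_Y)$ noted after Definition~\ref{def:Radford-iso}. First I would combine these two facts: substituting the braiding formula into both sides of the multiplicativity relation and cancelling the common Drinfeld-isomorphism factors (using the standard identity $\DriIso^{\sigma}_{X \otimes Y} = (\DriIso_X \otimes \DriIso_Y)\circ(\text{double braiding})$, or rather the well-known behaviour of the Drinfeld isomorphism under tensor products), one extracts a constraint purely on the braidings involving $\modobj$. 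Concretely, the plan is to show that the map $X \mapsto \sigma_{\modobj,X}$ is ``monoidal'' in a way that forces the double braiding with $\modobj$ to be trivial; this is exactly the mechanism used in \cite[Corollary 8.10.8]{MR3242743}.

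Alternatively — and perhaps more cleanly — I would argue as follows. Since $\modobj$ is invertible (noted after \eqref{eq:Frob-tensor-cat-Naka-formula}), the functor $\modobj \otimes (-)$ is an autoequivalence of $\mathcal{C}$, and $\sigma_{\modobj,-}$ together with $\sigma_{-,\modobj}$ exhibit $\modobj$ as an object whose monodromy is controlled by how $\Nakr_{\mathcal{C}}$ interacts with the braiding. By Corollary~\ref{cor:categorical-Radford-S4}, conjugation by $\modobj$ implements $(-)^{\vee 4}$; in a braided category $(-)^{\vee\vee}$ is already isomorphic to the identity via the Drinfeld isomorphism, so $(-)^{\vee 4} \cong \mathrm{id}$, and one checks that the induced natural automorphism of the identity functor coming from $\RadIso$ must be compatible with $\sigma$. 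Chasing this compatibility through the hexagon axioms yields $\sigma_{X,\modobj}\circ\sigma_{\modobj,X} = \id$. The main obstacle I anticipate is bookkeeping: one has to be careful that the isomorphism \eqref{eq:Frob-tensor-cat-Naka-formula} feeding into $\RadIso_X$ is the \emph{specific} one of Theorem~\ref{thm:Frob-tensor-cat-Nakayama} (described by the commuting diagram after that theorem) rather than an arbitrary isomorphism, since the transparency conclusion is a statement about honest morphisms, not just isomorphism classes. Provided one follows \cite[Corollary 8.10.8]{MR3242743} faithfully and substitutes Theorem~\ref{thm:Radford-iso-braiding} at the appropriate step, the argument goes through verbatim, the finiteness of $\mathcal{C}$ never having been used in that portion of \cite{MR3242743}.
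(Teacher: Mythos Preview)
Your bottom line --- follow \cite[Corollary 8.10.8]{MR3242743} verbatim, substituting Theorem~\ref{thm:Radford-iso-braiding} for \cite[Theorem 8.10.7]{MR3242743} --- is precisely the paper's proof; the paper gives no further detail than that sentence.

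That said, neither of your two sketched mechanisms is how the argument in \cite{MR3242743} actually runs, and the first one has a gap. The EGNO proof does not use multiplicativity of $\RadIso$; rather, it exploits that $\RadIso_X$ is defined intrinsically from the Nakayama functor (Definition~\ref{def:Radford-iso}) and therefore does not depend on which braiding one chooses. Applying Theorem~\ref{thm:Radford-iso-braiding} once with $\sigma$ and once with the reverse braiding $\overline{\sigma}_{X,Y} := \sigma_{Y,X}^{-1}$ yields two expressions for the same morphism $\RadIso_X$, and comparing them gives the transparency of $\modobj$ directly. Your multiplicativity route is unlikely to close: the identity $\RadIso_{X\otimes Y} = (\RadIso_X \otimes \id)(\id \otimes \RadIso_Y)$, once one substitutes \eqref{eq:Radford-iso-braiding} on both sides, is already a consequence of the hexagon axiom together with the standard tensor behaviour of the Drinfeld isomorphism, so no new constraint on $\sigma_{\modobj,-}$ is extracted. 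Your second sketch is too vague to assess, but since the reverse-braiding comparison is both shorter and what \cite{MR3242743} does, you should replace both sketches with it.
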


The M\"uger center of a braided monoidal category $\mathcal{C}$, denoted by $\mathcal{C}'$, is the full subcategory of $\mathcal{C}$ consisting of all transparent objects of $\mathcal{C}$. If $\mathcal{C}$ is a braided finite tensor category with the trivial M\"uger center ({\it i.e.}, $\mathcal{C}' = \mathcal{C}_{\triv}$), then it is unimodular by \cite[Proposition 4.5]{MR2097289} and \cite{MR3996323}. Corollary~\ref{cor:Radford-iso-braiding} implies that the same conclusion holds in the infinite setting as follows:

\begin{corollary}
  A braided Frobenius tensor category with the trivial M\"uger center is unimodular.
\end{corollary}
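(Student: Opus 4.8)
The plan is to deduce the statement quickly from Corollary~\ref{cor:Radford-iso-braiding} together with the fact, recorded earlier, that the modular object is invertible (hence simple). Let $\mathcal{C}$ be a braided Frobenius tensor category whose M\"uger center $\mathcal{C}'$ coincides with $\mathcal{C}_{\triv}$. By Corollary~\ref{cor:Radford-iso-braiding}, the modular object $\modobj_{\mathcal{C}}$ is transparent, which by the definition of the M\"uger center means $\modobj_{\mathcal{C}} \in \mathcal{C}'$. Invoking the hypothesis $\mathcal{C}' = \mathcal{C}_{\triv}$, I conclude that $\modobj_{\mathcal{C}}$ is a trivial object, i.e.\ isomorphic to $\unitobj^{\oplus n}$ for some $n \ge 0$.

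Next I would recall that $\modobj_{\mathcal{C}}$ is an invertible object. This was observed in the discussion following Theorem~\ref{thm:Frob-tensor-cat-Nakayama}: since $\mathcal{C}$ is Frobenius, it has enough projectives and injectives (Theorem~\ref{thm:Frob-tensor-cat-def}), so $\Nakl_{\mathcal{C}}$ and $\Nakr_{\mathcal{C}}$ are mutually quasi-inverse equivalences by Theorem~\ref{thm:Nakayama-QcF}, and the proof of Theorem~\ref{thm:one-sided-rigidity} shows that $\modobj_{\mathcal{C}} = \Nakr_{\mathcal{C}}(\unitobj)$ is invertible; in particular it is simple, with $\End_{\mathcal{C}}(\modobj_{\mathcal{C}}) \cong \bfk$.

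Combining the two observations, $\modobj_{\mathcal{C}}$ is simultaneously trivial and simple. A trivial object $\unitobj^{\oplus n}$ has length $n$, so simplicity forces $n = 1$, whence $\modobj_{\mathcal{C}} \cong \unitobj$; by definition this says that $\mathcal{C}$ is unimodular. I do not expect any real obstacle here: the essential input is Corollary~\ref{cor:Radford-iso-braiding}, and the only remaining point is the elementary observation that a simple trivial object must be isomorphic to the unit object.
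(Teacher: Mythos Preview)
Your proof is correct and follows essentially the same approach as the paper: the paper simply states that Corollary~\ref{cor:Radford-iso-braiding} implies the result, and your argument spells out exactly the intended reasoning (transparency of $\modobj_{\mathcal{C}}$ plus triviality of the M\"uger center plus simplicity/invertibility of $\modobj_{\mathcal{C}}$).
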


\subsubsection{The semisimple case}

The fact that a cosemisimple coalgebra is a symmetric coalgebra \cite{MR2078404} implies that a semisimple tensor category is a Frobenius tensor category whose Nakayama functor is isomorphic to the identity.
Thus, by Corollaries~\ref{cor:Frob-tensor-cat-symmetricity} and \ref{cor:categorical-Radford-S4}, we have:

\begin{corollary}
  \label{cor:Radford-S4-semisimple}
  If $\mathcal{C}$ is a semisimple tensor category, then the following hold:
  \begin{enumerate}
  \item [(a)] $\mathcal{C}$ is unimodular.
  \item [(b)] The double dual functor $(-)^{\vee\vee}$ is isomorphic to $\id_{\mathcal{C}}$ as a mere functor.
  \item [(c)] The quadruple dual functor $(-)^{\vee 4}$ is isomorphic to $\id_{\mathcal{C}}$ as a monoidal functor.
  \end{enumerate}
\end{corollary}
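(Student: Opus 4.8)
The plan is to reduce everything to the structure of the coalgebra underlying $\mathcal{C}$. A semisimple tensor category is a locally finite abelian category in which every object is a finite direct sum of simple objects, so there is a coalgebra $Q$ with $\mathcal{C} \approx \fdmod^Q$ as $\bfk$-linear categories, and semisimplicity of $\mathcal{C}$ forces $Q$ to be cosemisimple. Since a cosemisimple coalgebra is symmetric \cite{MR2078404}, it is in particular QcF, so $\mathcal{C}$ is a Frobenius tensor category by Theorem~\ref{thm:Frob-tensor-cat-def}; moreover Theorem~\ref{thm:Nakayama-symmetric} shows that $\Nakl_Q$ and $\Nakr_Q$, hence their restrictions $\Nakl_{\mathcal{C}}$ and $\Nakr_{\mathcal{C}}$, are isomorphic to the identity functor of $\mathcal{C}$.

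With this preparation, parts (a) and (b) are essentially bookkeeping. For (a), $\modobj_{\mathcal{C}} = \Nakr_{\mathcal{C}}(\unitobj) \cong \unitobj$, so $\mathcal{C}$ is unimodular. For (b), I would appeal to Corollary~\ref{cor:Frob-tensor-cat-symmetricity}: the previous paragraph shows that $\mathcal{C}$ satisfies condition (2) of that corollary (take $Q$ to be the symmetric coalgebra found above), hence it satisfies condition (1), whose second half is precisely the statement that $(-)^{\vee\vee}$ is isomorphic to $\id_{\mathcal{C}}$ as a mere functor.

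For (c), I would feed the identity $\modobj_{\mathcal{C}} \cong \unitobj$ from part (a) into the categorical Radford $S^4$-formula, Corollary~\ref{cor:categorical-Radford-S4}. That corollary furnishes an isomorphism of monoidal functors between $X \mapsto \modobj_{\mathcal{C}}^{\vee} \otimes X \otimes \modobj_{\mathcal{C}}$ and $X \mapsto X^{\vee 4}$; transporting the (trivial) monoidal structure along a chosen isomorphism $\modobj_{\mathcal{C}} \cong \unitobj$ identifies the former with $\id_{\mathcal{C}}$ as a monoidal functor, and composing the two monoidal isomorphisms gives $(-)^{\vee 4} \cong \id_{\mathcal{C}}$ as monoidal functors. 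The only step that is not purely formal is this last one --- checking that conjugation by an object isomorphic to the unit object is isomorphic to the identity \emph{as a monoidal functor}, i.e.\ that a chosen isomorphism $\modobj_{\mathcal{C}} \cong \unitobj$ respects the monoidal constraints of the conjugation functor --- but this is a routine verification, and nothing else in the argument goes beyond citing Theorems~\ref{thm:Frob-tensor-cat-def} and~\ref{thm:Nakayama-symmetric} and Corollaries~\ref{cor:Frob-tensor-cat-symmetricity} and~\ref{cor:categorical-Radford-S4}.
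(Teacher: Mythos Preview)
Your proof is correct and follows essentially the same route as the paper: both use the fact that a cosemisimple coalgebra is symmetric \cite{MR2078404}, then invoke Corollary~\ref{cor:Frob-tensor-cat-symmetricity} for (a) and (b) and Corollary~\ref{cor:categorical-Radford-S4} for (c). Your write-up is somewhat more explicit (separating out Theorem~\ref{thm:Nakayama-symmetric} and the verification that conjugation by $\modobj_{\mathcal{C}}\cong\unitobj$ is monoidally trivial), but the argument is the same.
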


Specifically speaking, the isomorphism $\id_{\mathcal{C}} \to (-)^{\vee 4}$ of Part (c) of Corollary \ref{cor:Radford-S4-semisimple} is given by the natural isomorphism $\tilde{\RadIso}$ characterized by the following equation:
\begin{equation}
  \label{eq:Radford-isomorphism-tilde}
  \modobj_{\mathcal{C}} \otimes \tilde{\RadIso}_X
  = \RadIso_{X} \circ (\tau_{X, \modobj_{\mathcal{C}}})^{-1}
  \quad (X \in \mathcal{C}).
\end{equation}
In this subsection, we show that the isomorphism $\tilde{\RadIso}_X$ is characterized by a certain trace condition as in \cite[Theorem 7.3]{MR2097289}. To state the result, we introduce the following notation: Given a morphism $f : X \to X^{\vee\vee}$ in $\mathcal{C}$, we define its trace $\trace(f) \in \bfk$ by
\begin{equation*}
  \trace(f) \cdot \id_{\unitobj}
  = \eval_{X^{\vee}} \circ (f \otimes \id_{X^{\vee}}) \circ \coev_X.
\end{equation*}

\begin{theorem}
  \label{thm:Radford-iso-semisimple}
  Let $\mathcal{C}$ be a semisimple tensor category such that every simple object of $\mathcal{C}$ is absolutely simple. Then, for every simple object $X \in \mathcal{C}$, we have
  \begin{equation}
    \label{eq:Radford-iso-semisimple}
    \tilde{\RadIso}_X = \frac{\trace((\phi^{-1})^{\vee})}{\trace(\phi)} \cdot \phi^{\vee\vee} \circ \phi,
  \end{equation}
  where $\phi : X \to X^{\vee\vee}$ is an arbitrary isomorphism in $\mathcal{C}$. Thus,
  \begin{equation}
    \label{eq:Radford-iso-semisimple-2}
    \trace((\phi^{-1})^{\vee\vee} \circ \tilde{\RadIso}_X) = \trace((\phi^{-1})^{\vee}).
  \end{equation}
\end{theorem}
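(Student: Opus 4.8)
The plan is to reduce \eqref{eq:Radford-iso-semisimple} to a normalization computation for a single morphism in a one-dimensional Hom-space, carry it out using the coend description of the Nakayama functor, and then obtain \eqref{eq:Radford-iso-semisimple-2} for free. Indeed, the passage \eqref{eq:Radford-iso-semisimple}~$\Rightarrow$~\eqref{eq:Radford-iso-semisimple-2} is purely formal: composing \eqref{eq:Radford-iso-semisimple} on the left with $(\phi^{-1})^{\vee\vee}\colon X^{\vee 4}\to X^{\vee\vee}$ and using functoriality of $(-)^{\vee\vee}$ to get $(\phi^{-1})^{\vee\vee}\circ\phi^{\vee\vee} = (\phi^{-1}\circ\phi)^{\vee\vee} = \id_{X^{\vee\vee}}$, one finds $(\phi^{-1})^{\vee\vee}\circ\tilde{\RadIso}_X = \big(\trace((\phi^{-1})^{\vee})/\trace(\phi)\big)\cdot\phi$, and applying the $\bfk$-linear functional $\trace$ yields \eqref{eq:Radford-iso-semisimple-2}.

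To prove \eqref{eq:Radford-iso-semisimple}, recall from the discussion preceding Corollary~\ref{cor:Radford-S4-semisimple} that a semisimple tensor category $\mathcal{C}$ is a Frobenius tensor category with $\Nakr_{\mathcal{C}}\cong\id_{\mathcal{C}}$, and that $\modobj_{\mathcal{C}}\cong\unitobj$ by Corollary~\ref{cor:Radford-S4-semisimple}(a); fixing the latter isomorphism and using $\tau_{X,\unitobj} = \id_X$, equation~\eqref{eq:Radford-isomorphism-tilde} identifies $\tilde{\RadIso}_X$ with $\RadIso_X\colon X\to X^{\vee 4}$. Now fix a simple object $X$. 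Then $X^{\vee\vee}$ and $X^{\vee 4}$ are absolutely simple and $\Hom_{\mathcal{C}}(X,X^{\vee 4})$ is one-dimensional, so $\tilde{\RadIso}_X = \mu_X\cdot(\phi^{\vee\vee}\circ\phi)$ for a unique $\mu_X\in\bfk^{\times}$; since replacing $\phi$ by $c\phi$ multiplies $\phi^{\vee\vee}\circ\phi$ by $c^{2}$ and $\trace((\phi^{-1})^{\vee})/\trace(\phi)$ by $c^{-2}$, the statement \eqref{eq:Radford-iso-semisimple} is independent of the choice of $\phi$ and is equivalent to the scalar identity $\mu_X = \trace((\phi^{-1})^{\vee})/\trace(\phi)$.

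The remaining task is to compute $\mu_X$. By Definition~\ref{def:Radford-iso}, $\RadIso_X$ is the composite of the two instances of the natural isomorphism \eqref{eq:Frob-tensor-cat-Naka-formula} (Theorem~\ref{thm:Frob-tensor-cat-Nakayama}) with $Y = X^{\vee\vee}$, and these are described explicitly by the commutative diagram recalled just before Corollary~\ref{cor:ACE15-Rem-2-10} together with its analogue for the other identification. In the semisimple setting Lemma~\ref{lem:coHom-finite-1} gives $\coHom_{\Ind(\mathcal{C})}(A,B)\cong\Hom_{\mathcal{C}}(B,A)^{*}$, so $\Nakr_{\mathcal{C}}(Y) = \int^{V}\Hom_{\mathcal{C}}(Y,V)^{*}\otimes V$ is a co-Yoneda coend; because every object is a finite direct sum of simples, the component $i_Y(Y)\colon\Hom_{\mathcal{C}}(Y,Y)^{*}\otimes Y\to\Nakr_{\mathcal{C}}(Y)$ of the universal dinatural transformation is already an isomorphism when $Y$ is simple, and via the composition pairing $\Hom_{\mathcal{C}}(Y,Y)\cong\bfk$ it realizes the isomorphism $\Nakr_{\mathcal{C}}(Y)\cong Y$. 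I would feed the component $V = X^{\vee\vee}$ into the two defining diagrams and use this identification; the coends then collapse and $\tilde{\RadIso}_X$ is exhibited as an explicit composite of evaluation and coevaluation morphisms and of the structural isomorphisms $\tau$ of Lemma~\ref{lem:NS-Lemma-7-1}. Reading off the scalar from this composite (for instance by post-composing with $(\phi^{-1})^{\vee\vee}$ and taking $\trace$, as in the closing step of the proof of Theorem~\ref{thm:Radford-iso-braiding}), the inner coevaluation/evaluation pair contributes $\trace((\phi^{-1})^{\vee})$ and the outer one contributes $\trace(\phi)$, giving $\mu_X\,\trace(\phi) = \trace((\phi^{-1})^{\vee})$ and hence \eqref{eq:Radford-iso-semisimple}.

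The main obstacle is precisely this last computation: faithfully tracking the universal dinatural transformations $i_V$ and the chain of adjunction isomorphisms \eqref{eq:adjunction-coHom} through the two diagrams defining the isomorphisms of Theorem~\ref{thm:Frob-tensor-cat-Nakayama}, and then recognizing the resulting tangle of duality morphisms as a scalar multiple of $\phi^{\vee\vee}\circ\phi$. This runs parallel to the braided computation summarized in Figure~\ref{fig:proof-Radford-iso-braiding}, with the braiding replaced everywhere by the canonical isomorphisms $\tau$ and with $\coHom$ simplified by Lemma~\ref{lem:coHom-finite-1}; the feature that makes it manageable is that, for a simple $X$, all the coends reduce to their single generating component, so no infinite colimit has to be controlled. (One uses here that $\trace(\phi)\ne 0$, which is needed for \eqref{eq:Radford-iso-semisimple} to be meaningful, exactly as in the cited references.)
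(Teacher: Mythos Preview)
Your overall strategy matches the paper's: reduce to a scalar by Schur's lemma, realize $\Nakr$ via the coend with $\coHom(V,M)=\Hom_{\mathcal{C}}(M,V)^*$ (Lemma~\ref{lem:coHom-finite-1}), and chase through the two defining diagrams for the bimodule structure of $\Nakr$ exactly in parallel with Figure~\ref{fig:proof-Radford-iso-braiding}. Your reduction of \eqref{eq:Radford-iso-semisimple-2} to \eqref{eq:Radford-iso-semisimple} and your independence-of-$\phi$ argument are fine.

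There is one concrete slip. You propose to ``feed the component $V=X^{\vee\vee}$'' into the defining diagrams. But in those diagrams $V$ is the coend variable for $\modobj_{\mathcal{C}}=\Nakr(\unitobj)$, and for a simple $X\not\cong\unitobj$ one has $\coHom(X^{\vee\vee},\unitobj)=\Hom_{\mathcal{C}}(\unitobj,X^{\vee\vee})^*=0$, so that component carries no information. The paper instead takes $V=\unitobj$ (so that $i_{\unitobj}(\unitobj)$ identifies $\modobj_{\mathcal{C}}$ with $\coHom(\unitobj,\unitobj)\otimes\unitobj$) and then, after unwinding the two adjunction isomorphisms \eqref{eq:adjunction-coHom}, lands in the component indexed by $Y:=X\otimes X^{\vee 3}$ of the coend for $\Nakr(\unitobj)$. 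The scalar is extracted by choosing the splitting idempotent for the $\unitobj$-summand of $Y$ in terms of $\phi$: with $p_0^Y=\eval_{X^{\vee 3}}(\phi^{\vee\vee}\phi\otimes\id)$ and $q_0^Y=\trace(\phi)^{-1}(\phi^{-1}\otimes\id)\coev_{X^{\vee\vee}}$, one has $i_Y(\unitobj)=\coHom(q_0^Y,\unitobj)\otimes p_0^Y$, and the map $\Theta_{\unitobj,X}$ (the semisimple analogue of the $\Theta_{V,X}$ from the braided proof) sends $q_0^Y$ to $\trace((\phi^{-1})^{\vee})/\trace(\phi)\cdot\id_{\unitobj}$. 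This is where the two traces actually appear; your description ``the inner pair contributes $\trace((\phi^{-1})^{\vee})$ and the outer one contributes $\trace(\phi)$'' is morally right but attached to the wrong component. Once you move to $V=\unitobj$ and track the $Y$-component as above, your sketch becomes the paper's proof.
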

\begin{proof}
  We note that $\trace(\phi) \ne 0$ \cite[Proposition 4.8.4]{MR3242743}.
  In view of Lemma~\ref{lem:coHom-finite-1}, we assume that the coHom functor of $\Ind(\mathcal{C})$ is given by $\coHom( X, Y ) := \Hom_{\mathcal{C}}(Y, X)^*$ for $X, Y \in \mathcal{C}$.
  We realize the coend $\Nakr(M) := \Nakr_{\Ind(\mathcal{C})}(M)$ for $M \in \mathcal{C}$ as follows:
  We fix a set $\{ L_i \}_{i \in I}$ of complete representatives of the isomorphism classes of simple objects of $\mathcal{C}$.
  By the semisimplicity of $\mathcal{C}$, for each object $X \in \mathcal{C}$, there are a finite set $A(X)$, a map $\lambda : A(X) \to I$ and a family
  \begin{equation*}
    \{ p^X_{a} \in \Hom_{\mathcal{C}}(X, L_{\lambda(a)}),
    \ q^X_{a} \in \Hom_{\mathcal{C}}(L_{\lambda(a)}, X) \}_{a \in A(X)}
  \end{equation*}
  of morphisms in $\mathcal{C}$ such that the equations $\sum_{a \in A(X)} q^X_{a} \circ p^X_{a} = \id_X$ and $p^X_{a} \circ q^X_{b} = \delta_{a,b} \, \id_{L_{\lambda(a)}}$ hold for all $a, b \in A(X)$. The coend $\Nakr(M)$ is now given by
  \begin{equation}
    \label{eq:Radford-iso-semisimple-proof-0}
    \Nakr(M) = \bigoplus_{i \in I} \coHom( L_i, M ) \otimes L_i
  \end{equation}
  with the universal dinatural transformation
  \begin{equation}
    \label{eq:Radford-iso-semisimple-proof-1}
    i_X(M) = \sum_{a \in A(X)} \coHom( q^X_a, M ) \otimes p^X_a:
    \coHom( X, M ) \otimes X \to \Nakr(M)
  \end{equation}
  for $X \in \mathcal{C}$, where the inclusion morphism $\coHom( L_i, M ) \otimes L_i \hookrightarrow \Nakr(M)$ ($i \in I$) is omitted.

  We compute the Radford isomorphism by using the formula \ref{eq:Radford-iso-semisimple-proof-0} of the coend $\Nakr(M)$. We fix a simple object $X$ of $\mathcal{C}$ and an isomorphism $\phi : X \to X^{\vee\vee}$ in $\mathcal{C}$. We set $Y = X \otimes X^{\vee 3}$. Since
  \begin{equation*}
    \Hom_{\mathcal{C}}(\unitobj, Y)
    = \Hom_{\mathcal{C}}(\unitobj, X \otimes X^{\vee 3})
    \cong \Hom_{\mathcal{C}}(X^{\vee\vee}, X)
    \cong \Hom_{\mathcal{C}}(X, X)
    \cong \bfk,
  \end{equation*}
  we may assume that there is a special element $0 \in A(Y)$ such that
  \begin{equation*}
    L_{\lambda(0)} = \unitobj,
    \quad p^Y_{0} = \eval_{X^{\vee 3}} (\phi^{\vee\vee}\phi \otimes \id),
    \quad q^Y_0 = \trace(\phi)^{-1} \cdot (\phi^{-1} \otimes \id_{X^{\vee 3}}) \coev_{X^{\vee\vee}}
  \end{equation*}
  and $L_{\lambda(a)} \not \cong \unitobj$ for all $a \in A(Y) \setminus \{ 0 \}$. Then we have
  \begin{equation}
    \label{eq:Radford-iso-semisimple-proof-2}
    i_Y(\unitobj)
    \mathop{=}^{\eqref{eq:Radford-iso-semisimple-proof-1}} \sum_{a \in A(Y)} \coHom( q^Y_a, \unitobj ) \otimes p^Y_a
    = \coHom( q^Y_0, \unitobj ) \otimes p^Y_0.
  \end{equation}
  Now we consider the diagram given as Figure~\ref{fig:proof-Radford-iso-semisimple}. The path from $\underline{\qquad}_{\ (1)}$ to $\underline{\qquad}_{\ (2)}$ in the diagram is equal to the morphism $\Theta_{\unitobj,X}^* \otimes \id_X$ (see the proof of Theorem~\ref{thm:Radford-iso-braiding} for the definition of $\Theta_{\unitobj, X}$). Note:
  \begin{align*}
    \trace(\phi) \cdot \Theta_{\unitobj,X}(q_0^Y)
    & = (\eval_X \otimes \eval_{X^{\vee\vee}}) \circ (\id_{X^{\vee}} \otimes q^Y_0 \otimes \id_{X^{\vee\vee}}) \circ \coev_{X^{\vee}} \\
    & = \trace((\phi^{-1})^{\vee}) \cdot \id_{\unitobj}.
  \end{align*}
  Thus, for $\xi \in \coHom( \unitobj, \unitobj )$, we have
  \begin{equation}
    \label{eq:Radford-iso-semisimple-proof-3}
    \langle \coHom( q_0^Y, \unitobj ) \Theta_{\unitobj,X}^*(\xi), \id_{\unitobj} \rangle
    = \langle \xi, \Theta_{\unitobj,X}(q_0^Y) \rangle
    = \frac{\trace((\phi^{-1})^{\vee})}{\trace(\phi)} \langle \xi, \id_{\unitobj} \rangle.
  \end{equation}
  This shows $\displaystyle \coHom( q_0^Y, \unitobj ) \Theta_{\unitobj,X}^* = \trace((\phi^{-1})^{\vee})\trace(\phi)^{-1} \cdot \id_{\coHom( \unitobj, \unitobj) \otimes \unitobj}$.
    By \eqref{eq:Radford-iso-semisimple-proof-0} and \eqref{eq:Radford-iso-semisimple-proof-1}, we have $\modobj_{\mathcal{C}} = \coHom( \unitobj, \unitobj ) \otimes \unitobj$ and $i_{\unitobj}(\unitobj) = \id_{\coHom( \unitobj, \unitobj ) \otimes \unitobj}$.
  Thus, by \eqref{eq:Radford-iso-semisimple-proof-2} and \eqref{eq:Radford-iso-semisimple-proof-3}, we compute
  \begin{align*}
    & \RadIso_X \circ (\id_X \otimes i_{\unitobj}(\unitobj)) \\
    & = (i_Y(\unitobj) \otimes \id_{X^{\vee 4}})
    \circ (\id_{\coHom( Y, \unitobj )} \otimes \coev_{X^{\vee 3}}) \circ (\Theta_{\unitobj,X}^* \otimes \id_X) \circ \tau_{X, \coHom( \unitobj, \unitobj ) \otimes \unitobj} \\
    & = (\coHom( q_0^Y, \unitobj ) \Theta_{\unitobj,X}^* \otimes \phi^{\vee\vee} \phi) \circ \tau_{X, \coHom( \unitobj, \unitobj ) \otimes \unitobj} \\
    & = \trace((\phi^{-1})^{\vee}) \trace(\phi)^{-1} \cdot (\id_{\coHom( \unitobj,\unitobj ) \otimes \unitobj} \otimes \phi^{\vee\vee} \phi)
      \circ \tau_{X, \coHom( \unitobj, \unitobj ) \otimes \unitobj} \\
    & = \trace((\phi^{-1})^{\vee}) \trace(\phi)^{-1} \cdot (i_{\unitobj}(\unitobj) \otimes \phi^{\vee\vee} \phi)
      \circ \tau_{X, \modobj_{\mathcal{C}}}.
  \end{align*}
  Now \eqref{eq:Radford-iso-semisimple} is verified by the defining formula \eqref{eq:Radford-isomorphism-tilde} of $\tilde\RadIso_X$. The proof is done.
\end{proof}

\begin{figure}
  \centering
  \begin{equation*}
    \begin{tikzcd}[column sep = 90pt]
      X \otimes \modobj
      \arrow[ddd, "\eqref{eq:Frob-tensor-cat-Naka-formula}"']
      & X \otimes \coHom( \unitobj, \unitobj ) \otimes \unitobj
      \arrow[d, "\tau_{X, \coHom( \unitobj, \unitobj )}^{} \otimes \id_{\unitobj}"]
      \arrow[l, "{\id_{X} \otimes i_\unitobj(\unitobj)}"'] \\
      & \underline{\coHom( \unitobj, \unitobj ) \otimes X \otimes \unitobj}_{\ (1)}
      \arrow[d, "{\coHom( \eval_{X} \otimes \id, \unitobj ) \otimes \id \otimes \id}"] \\
      & \coHom( X^{\vee} \otimes X \otimes \unitobj, \unitobj ) \otimes X \otimes \unitobj
      \arrow[d, "{\text{\eqref{eq:adjunction-coHom} for $F = X^{\vee} \otimes (-)$}}"] \\
      \Nakr(X^{\vee\vee})
      \arrow[dd, "\eqref{eq:Frob-tensor-cat-Naka-formula}"']
      & \coHom( X, X^{\vee\vee} ) \otimes X \otimes \unitobj
      \arrow[d, "{\text{\eqref{eq:adjunction-coHom} for $F = (-) \otimes X^{\vee 3}$}}"]
      \arrow[l, "i_{X \otimes \unitobj}(X^{\vee\vee})"'] \\
      & \underline{\coHom( X \otimes X^{\vee 3}, \unitobj ) \otimes X \otimes \unitobj}_{\ (2)}
      \arrow[d, "{\id \otimes \id \otimes \coev_{X^{\vee 3}}}"] \\
      \modobj \otimes X^{\vee 4}
      & \coHom( X \otimes X^{\vee 3}, \unitobj ) \otimes X \otimes X^{\vee 3} \otimes X^{\vee 4}
      \arrow[l, "i_{X \otimes X^{\vee 3}}(\unitobj) \otimes \id_{X^{\vee 4}}"']
    \end{tikzcd}
  \end{equation*}
  \caption{Proof of Theorem~\ref{thm:Radford-iso-semisimple}}
  \label{fig:proof-Radford-iso-semisimple}
\end{figure}

\subsection{Spherical tensor categories}

A {\em pivotal structure} of a rigid monoidal category $\mathcal{C}$ is an isomorphism $\id_{\mathcal{C}} \to (-)^{\vee\vee}$ of monoidal functors, where $(-)^{\vee\vee}$ is the double left dual of $\mathcal{C}$.
The sphericity of a pivotal structure was first introduced in \cite{MR1686423} in relation with Turaev-Viro invariants of 3-manifolds.
Later, a modified definition of a spherical pivotal structure for a finite tensor category was introduced in \cite{MR4254952} in view of applications to topological field theories arising from non-semisimple categories. Here, we extend the definition of \cite{MR4254952} to unimodular Frobenius tensor categories as follows:

\begin{definition}
  Let $\mathcal{C}$ be a unimodular Frobenius tensor category, and let $f : \unitobj \to \modobj_{\mathcal{C}}$ be an isomorphism in $\mathcal{C}$. We say that a pivotal structure $\PivIso$ of $\mathcal{C}$ is {\em spherical} if the diagram
  \begin{equation*}
    \begin{tikzcd}[column sep = 80pt]
      X
      \arrow[d, "\id \otimes f"']
      \arrow[r, "\PivIso_{X}"]
      & X^{\vee\vee} \arrow[r, "\PivIso_{X^{\vee\vee}}"]
      & X^{\vee 4}
      \arrow[d, "f \otimes \id"]\\
      X \otimes \modobj_{\mathcal{C}}
      \arrow[rr, "{\RadIso_{X}}"]
      & & \modobj_{\mathcal{C}} \otimes X^{\vee 4}
    \end{tikzcd}
  \end{equation*}
  commutes for all objects $X \in \mathcal{C}$ (since $\Hom_{\mathcal{C}}(\unitobj, \modobj_{\mathcal{C}})$ is one-dimensional by Schur's lemma, this definition does not depend on the choice of $f$).
  A {\em spherical tensor category} is a unimodular Frobenius tensor category equipped with a spherical pivotal structure.
\end{definition}

Let $\mathcal{C}$ be a pivotal monoidal category with pivotal structure $\PivIso$. Then the {\em left pivotal trace} $\ptrace^{\L}(f)$ and the {\em right pivotal trace} $\ptrace^{\R}(f)$ of an endomorphism $f: X \to X$ in $\mathcal{C}$ are defined by
\begin{equation*}
  \ptrace^{\L}(f) = \trace(\PivIso_{X^{\vee}} \circ f^{\vee})
  \quad \text{and} \quad
  \ptrace^{\R}(f) = \trace(\PivIso_X \circ f),
\end{equation*}
respectively. According to \cite{MR4254952}, we say that $\PivIso$ is {\em trace spherical} if $\ptrace^{\L}(f) = \ptrace^{\R}(f)$ for all endomorphism $f$ in $\mathcal{C}$ (or, equivalently, $\PivIso$ is spherical in the sense of \cite{MR1686423}).

As noted in \cite[Example 3.5.5]{MR4254952}, there are no logical implications between the sphericity and the trace sphericity in the general setting. However, for the case of a fusion category ($=$ a semisimple finite tensor category), these two conditions are equivalent \cite[Proposition 3.5.4]{MR4254952}.
The proof given there relies on the fact that the equation $\trace(f^{\vee}) = \trace(f^{\vee\vee} \circ \tilde{\RadIso}_X)$ holds for all morphism $f: X^{\vee\vee} \to X$ in $\mathcal{C}$, where $\tilde{\RadIso}$ is the natural isomorphism given by \eqref{eq:Radford-isomorphism-tilde}.
Since we now know that the same equation holds without the finiteness of $\mathcal{C}$ by Theorem~\ref{thm:Radford-iso-semisimple}, we obtain the following theorem:

\begin{theorem}
  A pivotal structure of a semisimple tensor category is spherical if and only if it is trace spherical.
\end{theorem}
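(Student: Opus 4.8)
The plan is to reduce the equivalence of sphericity and trace sphericity to the trace identity furnished by Theorem~\ref{thm:Radford-iso-semisimple}. Let $\mathcal{C}$ be a semisimple tensor category; by Corollary~\ref{cor:Radford-S4-semisimple}, $\mathcal{C}$ is unimodular, so the notion of a spherical pivotal structure is available. Since $\modobj_{\mathcal{C}} \cong \unitobj$, I will fix an isomorphism $f : \unitobj \to \modobj_{\mathcal{C}}$ and use it to transport the Radford isomorphism $\RadIso_X : X \otimes \modobj_{\mathcal{C}} \to \modobj_{\mathcal{C}} \otimes X^{\vee 4}$ to the natural isomorphism $\tilde{\RadIso}_X : X \to X^{\vee 4}$ of monoidal functors characterized by \eqref{eq:Radford-isomorphism-tilde}. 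Unwinding the definition of sphericity, a pivotal structure $\PivIso$ is spherical precisely when the outer square of the defining diagram commutes; after cancelling the isomorphism $f$ on both sides (legitimate because $\mathcal{C}_{\triv} \simeq \Vect_{\fd}$ and $\tau_{X,\modobj_{\mathcal{C}}}$ intertwines the two copower structures), this is equivalent to the clean identity $\tilde{\RadIso}_X = \PivIso_{X^{\vee\vee}} \circ \PivIso_X$ for all $X \in \mathcal{C}$.

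Next I would translate trace sphericity into an analogous statement about traces. For an endomorphism $f : X \to X$, the left and right pivotal traces are $\ptrace^{\L}(f) = \trace(\PivIso_{X^{\vee}} \circ f^{\vee})$ and $\ptrace^{\R}(f) = \trace(\PivIso_X \circ f)$, where $\trace$ is the trace of a morphism $X \to X^{\vee\vee}$ as defined just before Theorem~\ref{thm:Radford-iso-semisimple}. Using the monoidal naturality of $(-)^{\vee}$ and the standard duality identity $\trace(g^{\vee}) = \trace(g)$ for $g : X \to X^{\vee\vee}$ composed with $\PivIso$-insertions, one rewrites $\ptrace^{\L}(f)$ in the form $\trace\bigl(\PivIso_{X^{\vee\vee}}^{-1,\vee\vee}\!\circ(\cdots)\bigr)$; the upshot, exactly as in \cite[Proposition 3.5.4]{MR4254952}, is that trace sphericity of $\PivIso$ is equivalent to the assertion that, for every morphism $g : X^{\vee\vee} \to X$ in $\mathcal{C}$, one has $\trace(g^{\vee}) = \trace(g^{\vee\vee} \circ \tilde{\RadIso}_X)$. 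It is at this point that Theorem~\ref{thm:Radford-iso-semisimple} enters: setting $\phi = g^{-1}$ when $g$ is an isomorphism (and reducing to the isomorphism case by semisimplicity and Schur's lemma, since on a simple $X$ every nonzero such $g$ is invertible), equation \eqref{eq:Radford-iso-semisimple-2} says precisely $\trace\bigl((\phi^{-1})^{\vee\vee}\circ\tilde{\RadIso}_X\bigr) = \trace\bigl((\phi^{-1})^{\vee}\bigr)$, which is the required identity.

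With these two reformulations in hand the proof is a short loop. If $\PivIso$ is spherical, then $\tilde{\RadIso}_X = \PivIso_{X^{\vee\vee}}\circ\PivIso_X$, so for any isomorphism $g : X^{\vee\vee}\to X$ we get $g^{\vee\vee}\circ\tilde{\RadIso}_X = g^{\vee\vee}\circ\PivIso_{X^{\vee\vee}}\circ\PivIso_X = (\PivIso_X^{-1}\circ g)^{\vee}\circ\ldots$; tracking these through the duality-of-trace identities reduces $\trace(g^{\vee\vee}\circ\tilde{\RadIso}_X)$ to $\trace(g^{\vee})$, giving trace sphericity. Conversely, if $\PivIso$ is trace spherical, then $\trace(g^{\vee\vee}\circ\tilde{\RadIso}_X) = \trace(g^{\vee})$ for all $g$; comparing with $\trace(g^{\vee\vee}\circ\PivIso_{X^{\vee\vee}}\circ\PivIso_X) = \trace(g^{\vee})$ (which holds because $\PivIso$ is a pivotal structure) shows that $\tilde{\RadIso}_X$ and $\PivIso_{X^{\vee\vee}}\circ\PivIso_X$ induce the same linear functional on the one-dimensional space $\Hom_{\mathcal{C}}(X, X^{\vee 4})$ for every simple $X$, whence they are equal on simples and, by additivity and monoidal naturality, on all of $\mathcal{C}$; this is exactly sphericity. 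I expect the main obstacle to be the bookkeeping in the second reformulation — carefully identifying which composite of $\PivIso$-insertions converts $\ptrace^{\L}$ into a trace of a morphism landing in $X^{\vee\vee}$ and recognizing the resulting identity as an instance of \eqref{eq:Radford-iso-semisimple-2} — rather than anything conceptually deep, since all the hard analytic content is already packaged in Theorem~\ref{thm:Radford-iso-semisimple}.
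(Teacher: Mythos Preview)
Your overall strategy matches the paper's: both rely on Theorem~\ref{thm:Radford-iso-semisimple} to supply the key trace identity, then invoke the argument of \cite[Proposition 3.5.4]{MR4254952}. The paper's entire proof is the one-line observation that this argument from \cite{MR4254952} goes through verbatim once the identity $\trace(f^{\vee}) = \trace(f^{\vee\vee}\circ\tilde{\RadIso}_X)$ is available for all $f : X^{\vee\vee}\to X$, which is now supplied by Theorem~\ref{thm:Radford-iso-semisimple}.

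However, your second paragraph swaps the roles of two identities, and this breaks the written logic. You assert that trace sphericity of $\PivIso$ is equivalent to $\trace(g^{\vee}) = \trace(g^{\vee\vee}\circ\tilde{\RadIso}_X)$ for all $g$, but this identity does not mention $\PivIso$ at all; it is a statement about $\tilde{\RadIso}$ alone and holds \emph{unconditionally} in a semisimple tensor category by Theorem~\ref{thm:Radford-iso-semisimple}. The correct reformulation is that trace sphericity of $\PivIso$ is equivalent to $\trace(g^{\vee}) = \trace(g^{\vee\vee}\circ\PivIso_{X^{\vee\vee}}\circ\PivIso_X)$ for all $g : X^{\vee\vee}\to X$. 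Likewise, in your converse direction you claim that $\trace(g^{\vee\vee}\circ\PivIso_{X^{\vee\vee}}\circ\PivIso_X) = \trace(g^{\vee})$ ``holds because $\PivIso$ is a pivotal structure'' --- but this is precisely the content of trace sphericity, not a general fact about pivotal structures. Once you swap these two statements back to their correct places, your loop closes cleanly: sphericity gives $\tilde{\RadIso}_X = \PivIso_{X^{\vee\vee}}\circ\PivIso_X$, so the $\PivIso$-identity follows from the unconditional $\tilde{\RadIso}$-identity, yielding trace sphericity; conversely, trace sphericity gives the $\PivIso$-identity, which compared against the unconditional $\tilde{\RadIso}$-identity forces $\tilde{\RadIso}_X = \PivIso_{X^{\vee\vee}}\circ\PivIso_X$ on each simple by non-degeneracy of the trace pairing.
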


A {\em ribbon category} is a braided rigid monoidal category $\mathcal{C}$ equipped with a {\em twist}, that is, a natural isomorphism $\theta : \id_{\mathcal{C}} \to \id_{\mathcal{C}}$ satisfying
\begin{gather}
  \label{eq:twist-def-1}
  \theta_{X \otimes Y} = \sigma_{Y,X} \circ \sigma_{X,Y} \circ (\theta_X \otimes \theta_Y), \\
  \label{eq:twist-def-2}
  \theta_{X^{\vee}} = (\theta_X)^{\vee}
\end{gather}
for all objects $X, Y \in \mathcal{C}$, where $\sigma$ is the braiding of $\mathcal{C}$. A braided spherical tensor category is naturally a ribbon category. Indeed, we have:

\begin{theorem}
  \label{thm:spherical-ribbon}
  Let $\mathcal{C}$ be a unimodular braided Frobenius tensor category.
  For a pivotal structure $\PivIso$ of $\mathcal{C}$, the following are equivalent:
  \begin{enumerate}
  \item $\PivIso$ is spherical.
  \item $\theta = \DriIso^{-1} \circ \PivIso$ is a twist, where $\DriIso$ is the Drinfeld isomorphism~\eqref{eq:Drinfeld-iso-def}.
  \end{enumerate}
\end{theorem}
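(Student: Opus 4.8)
The plan is to show the equivalence of the two conditions by reducing everything to the braided Radford formula of Theorem~\ref{thm:Radford-iso-braiding}. First I would unpack condition (1): since $\mathcal{C}$ is unimodular, we may fix an isomorphism $f : \unitobj \to \modobj_{\mathcal{C}}$, and sphericity of $\PivIso$ says precisely that $(f \otimes \id) \circ \PivIso_{X^{\vee\vee}} \circ \PivIso_X = \RadIso_X \circ (\id_X \otimes f)$ for all $X$. Using the characterization \eqref{eq:Radford-isomorphism-tilde} of the natural isomorphism $\tilde{\RadIso}$ together with $\tau_{X,\modobj_{\mathcal{C}}} = \sigma_{X,\modobj_{\mathcal{C}}}$ (which holds by \eqref{eq:NS-Lemma-7-1-braided} since $\modobj_{\mathcal{C}}$, being invertible, need not be trivial --- so one must instead invoke Corollary~\ref{cor:Radford-iso-braiding}, that $\modobj_{\mathcal{C}}$ is transparent, and argue that $\RadIso_X$ composed with $\sigma^{-1}$ vs.\ $\tau^{-1}$ agree after cancelling $f$), I would rewrite sphericity as the statement $\PivIso_{X^{\vee\vee}} \circ \PivIso_X = \tilde{\RadIso}_X$ for all $X \in \mathcal{C}$.

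Next I would bring in the braided Radford formula. By Theorem~\ref{thm:Radford-iso-braiding}, $\RadIso_X = (\id_{\modobj_{\mathcal{C}}} \otimes (\DriIso_{X^{\vee}}^{\vee})^{-1} \DriIso_X) \circ (\sigma_{\modobj_{\mathcal{C}},X})^{-1}$, and feeding this through \eqref{eq:Radford-isomorphism-tilde} (again using transparency of $\modobj_{\mathcal{C}}$ to identify the two ways of moving $\modobj_{\mathcal{C}}$ past $X$) yields the clean identity $\tilde{\RadIso}_X = (\DriIso_{X^{\vee}}^{\vee})^{-1} \circ \DriIso_X$. Thus condition (1) becomes the \emph{closed} equation $\PivIso_{X^{\vee\vee}} \circ \PivIso_X = (\DriIso_{X^{\vee}}^{\vee})^{-1} \circ \DriIso_X$, or equivalently $\DriIso_X^{-1} \circ \PivIso_X = \PivIso_{X^{\vee\vee}}^{-1} \circ \DriIso_X \circ (\PivIso_X)^{-1} \circ \PivIso_X \cdots$; more usefully, writing $\theta_X = \DriIso_X^{-1} \circ \PivIso_X$, I would massage it into the two classical twist axioms.

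The main work is then verifying that, for $\theta := \DriIso^{-1}\circ\PivIso$, the identity $\PivIso_{X^{\vee\vee}}\circ\PivIso_X = (\DriIso_{X^\vee}^\vee)^{-1}\circ\DriIso_X$ is equivalent to the conjunction of \eqref{eq:twist-def-1} and \eqref{eq:twist-def-2}. Naturality of $\theta$ is automatic since $\DriIso$ and $\PivIso$ are natural. For \eqref{eq:twist-def-1}: $\PivIso$ is monoidal, so $\PivIso_{X\otimes Y} = (\PivIso_X\otimes\PivIso_Y)$ (after identifying $(X\otimes Y)^{\vee\vee}\cong X^{\vee\vee}\otimes Y^{\vee\vee}$), while the Drinfeld isomorphism satisfies the well-known formula $\DriIso_{X\otimes Y} = (\DriIso_X\otimes\DriIso_Y)\circ(\text{double braiding})^{-1}$ (a standard computation from \eqref{eq:Drinfeld-iso-def} and the hexagon axioms); combining these gives \eqref{eq:twist-def-1} unconditionally --- that is, $\theta$ always satisfies the tensor axiom. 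Hence the only condition that genuinely constrains $\PivIso$ is the duality axiom \eqref{eq:twist-def-2}, $\theta_{X^\vee} = (\theta_X)^\vee$, and I would show by direct manipulation --- using $\PivIso_{X^\vee} = (\PivIso_X^\vee)^{-1}$ (the standard relation between a pivotal structure on $X^\vee$ and on $X$) and the analogous relation $\DriIso_{X^\vee} = ((\DriIso_X)^\vee)^{-1}\circ(\text{something involving }\DriIso_{X^{\vee\vee}})$ --- that \eqref{eq:twist-def-2} holds if and only if $\PivIso_{X^{\vee\vee}}\circ\PivIso_X = (\DriIso_{X^\vee}^\vee)^{-1}\circ\DriIso_X$, which by the previous paragraph is exactly sphericity.

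The step I expect to be the main obstacle is the careful bookkeeping of the Drinfeld-isomorphism identities: precisely how $\DriIso_{X^\vee}$, $\DriIso_{X^{\vee\vee}}$ and $(\DriIso_X)^\vee$ are related, and how $\DriIso$ interacts with the tensor product. These are known (they underlie the finite-tensor-category case, \cite[Section 8.10]{MR3242743} and the twist-from-pivotal-plus-ribbon folklore), but assembling them into the exact form $\tilde{\RadIso}_X = (\DriIso_{X^\vee}^\vee)^{-1}\circ\DriIso_X$ and then matching with \eqref{eq:twist-def-2} requires handling the (harmless but fiddly) canonical isomorphisms $(X\otimes Y)^{\vee\vee}\cong X^{\vee\vee}\otimes Y^{\vee\vee}$ and $X^{\vee\vee\vee}\cong (X^\vee)^{\vee\vee}$, ideally by strictifying the left duality functor as permitted in the conventions subsection. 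A secondary subtlety is that $\modobj_{\mathcal{C}}$ is invertible but not trivial, so every appeal to $\tau = \sigma$ must route through transparency (Corollary~\ref{cor:Radford-iso-braiding}) rather than through Lemma~\ref{lem:NS-Lemma-7-1}; keeping track of which $\sigma$ versus $\sigma^{-1}$ appears is where sign-type errors would creep in.
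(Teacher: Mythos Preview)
Your proposal is correct and follows essentially the same route as the paper. Both arguments reduce sphericity, via Theorem~\ref{thm:Radford-iso-braiding}, to the identity $\PivIso_{X^{\vee\vee}}\circ\PivIso_X = (\DriIso_{X^\vee}^\vee)^{-1}\circ\DriIso_X$, and then identify this with the twist axiom \eqref{eq:twist-def-2} for $\theta = \DriIso^{-1}\circ\PivIso$ (the tensor axiom \eqref{eq:twist-def-1} holding automatically). The only difference is that the paper does not re-derive this last equivalence but cites it as \cite[Lemma~5.2]{2017arXiv170709691S}, whereas you plan to work it out by hand; your sketch of that computation is accurate.

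One simplification: your worry about $\tau$ versus $\sigma$ and the appeal to transparency (Corollary~\ref{cor:Radford-iso-braiding}) is unnecessary. The theorem \emph{assumes} unimodularity, so $\modobj_{\mathcal{C}}\cong\unitobj$ is a trivial object, and Lemma~\ref{lem:NS-Lemma-7-1} applies directly. More simply still, you can use the isomorphism $f:\unitobj\to\modobj_{\mathcal{C}}$ to transport $\RadIso_X$ to a map $X\to X^{\vee 4}$; under this identification $\sigma_{\modobj_{\mathcal{C}},X}$ becomes $\sigma_{\unitobj,X}=\id_X$ by naturality, and the formula of Theorem~\ref{thm:Radford-iso-braiding} collapses immediately to $(\DriIso_{X^\vee}^\vee)^{-1}\circ\DriIso_X$ with no bookkeeping about $\tau$. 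The detour through $\tilde{\RadIso}$ and \eqref{eq:Radford-isomorphism-tilde} is not needed here.
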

\begin{proof}
  The assignment $\PivIso \mapsto \DriIso^{-1} \circ \PivIso$ gives a bijective correspondence between the set of pivotal structures of $\mathcal{C}$ and the set of natural transformations $\theta : \id_{\mathcal{C}} \to \id_{\mathcal{C}}$ satisfying \eqref{eq:twist-def-1}. Now let $\PivIso$ be a pivotal structure of $\mathcal{C}$. Then $\theta := \DriIso^{-1} \circ \PivIso$ is a twist ({\it i.e.}, it satisfies \eqref{eq:twist-def-2}) if and only if the equation
  \begin{equation*}
    \PivIso_{X^{\vee\vee}} \circ \PivIso_{X} = (\DriIso_{X^{\vee}}^{-1})^{\vee} \circ \DriIso_{X}
  \end{equation*}
  holds for all objects $X \in \mathcal{C}$ \cite[Lemma 5.2]{2017arXiv170709691S}. Thus, by Theorem~\ref{thm:Radford-iso-braiding}, the pivotal structure $\PivIso$ is spherical if and only if $\theta$ is a twist. The proof is done.
\end{proof}

\section{Applications to Hopf algebras}
\label{sec:hopf-algebras}

\subsection{Semiperfectness of Hopf algebras}
\label{subsec:semiperfect-Hopf}

In this section, we apply our results to Hopf algebras.
No new significant results on Hopf algebras will be given.
The main purpose of this section is, rather, to give new insights on the theory of Hopf algebras by explaining how our categorical results can be applied to Hopf algebras.

In what follows, the unadorned symbol $\otimes$ means the tensor product over $\bfk$.
Let $H$ be a Hopf algebra with comultiplication $\Delta$, counit $\varepsilon$ and antipode $S$. Then the category $\fdmod^H$ is a left rigid monoidal category. We recall that a left dual object of $X \in \fdmod^H$, denoted by $X^{\vee}$ as in the last section, is the vector space $X^*$ equipped with the right $H$-coaction characterized by
$\langle x^*_{(0)}, x \rangle x^*_{(1)} = \langle x^*, x_{(0)} \rangle S(x_{(1)})$
for $x \in X$ and $x^* \in X^*$. If $S$ is bijective, then a right dual object of $X \in \fdmod^H$ is given by the same way as $X^{\vee}$ but by using $S^{-1}$ instead of $S$. It is known that $\fdmod^H$ is rigid if and only if $S$ is bijective \cite{MR1098991}.
Now we apply our results to obtain the following theorem, which is a part of \cite[Theorem 5.3.2]{MR1786197}.

\begin{theorem}
  \label{thm:Hopf-semiperfect}
  For a Hopf algebra $H$, the following assertions are equivalent:
  \begin{enumerate}
  \item $H$ is co-Frobenius (as a coalgebra; the same applies below).
  \item $H$ is QcF.
  \item $H$ is semiperfect.
  \item $H$ is either left or right QcF.
  \item $H$ is either left or right semiperfect.
  \end{enumerate}
  If these equivalent assertions are satisfied, then the antipode $S$ of $H$ is bijective.
\end{theorem}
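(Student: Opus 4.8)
The plan is to deduce Theorem~\ref{thm:Hopf-semiperfect} from the general results on tensor categories established in the previous sections, by exploiting the fact that $\fdmod^H$ is a left rigid monoidal category with simple unit object $\unitobj = \bfk$ (the trivial $H$-comodule).

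First I would settle the equivalence of the five conditions. Conditions (1), (2) and (3) for a general coalgebra are not equivalent, so the point is that for a Hopf algebra the comultiplication has enough symmetry to collapse them. The cleanest route is to observe that $S : H \to H^{\cop}$ is a coalgebra map, and that $S$ composed with itself lands back in $H$; more usefully, for the bialgebra $H$ the convolution-invertibility of $\id_H$ gives a coalgebra anti-isomorphism relating $H$ and $H^{\cop}$ at the level of comodule categories once we know $S$ is bijective. To avoid circularity I would instead argue categorically: $\mathcal{C} := \fdmod^H$ satisfies the hypotheses of Theorem~\ref{thm:one-sided-rigidity} as soon as it has a non-zero projective or injective object, so if $H$ is left \emph{or} right semiperfect then, by Lemma~\ref{lem:semiperfect}, $\mathcal{C}$ (or $\mathcal{C}^{\op}\approx\fdmod^{H^{\cop}}$) has enough projectives, hence a non-zero projective object; Theorem~\ref{thm:one-sided-rigidity} then makes $\mathcal{C}$ rigid and forces $H$ to be QcF. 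This gives (4)$\Rightarrow$(2) and (5)$\Rightarrow$(2). The implications (1)$\Rightarrow$(2)$\Rightarrow$(3)$\Rightarrow$(5) and (2)$\Rightarrow$(4) are either immediate or already recorded in the excerpt (a co-Frobenius coalgebra is QcF, a QcF coalgebra is semiperfect, semiperfect implies one-sided semiperfect). It remains to close the loop with (2)$\Rightarrow$(1), i.e.\ to upgrade QcF to co-Frobenius for a Hopf algebra. Here I would invoke Theorem~\ref{thm:QcF-to-be-co-Frob}: it suffices to show $\dim_{\bfk}\Nakr_H(S') = \dim_{\bfk} S'$ for every simple right $H$-comodule $S'$. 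By the isomorphism $\Nakr_{\mathcal{C}}(X)\cong \modobj_{\mathcal{C}}\otimes X^{\vee\vee}$ from \eqref{eq:Frob-tensor-cat-Naka-formula} (valid since $\mathcal{C}$ is now a Frobenius tensor category by Theorem~\ref{thm:Frob-tensor-cat-def}), and since $\modobj_{\mathcal{C}}$ is an invertible object, hence one-dimensional as a comodule (being simple with an invertible dual), and since the double-dual $X^{\vee\vee}\cong X^{**}$ has the same dimension as $X$, we get $\dim_{\bfk}\Nakr_H(S') = \dim_{\bfk} S'$. Thus $H$ is co-Frobenius.

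For the final assertion that $S$ is bijective, I would argue that under the equivalent conditions $\mathcal{C} = \fdmod^H$ is rigid: by Theorem~\ref{thm:Frob-tensor-cat-def} the conditions make $\mathcal{C}$ a Frobenius tensor category, and any tensor category is rigid by definition; alternatively, Theorem~\ref{thm:one-sided-rigidity} directly yields rigidity from the existence of a non-zero projective object (which holds by Lemma~\ref{lem:semiperfect} once $H$ is semiperfect). Then I would quote the classical fact recalled in the excerpt, that $\fdmod^H$ is rigid if and only if the antipode $S$ of $H$ is bijective \cite{MR1098991}. Hence $S$ is bijective.

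The main obstacle is keeping the logical order clean: several of the tools in the excerpt (Theorem~\ref{thm:Frob-tensor-cat-def}, the formula \eqref{eq:Frob-tensor-cat-Naka-formula}, Theorem~\ref{thm:one-sided-rigidity}) presuppose that $\mathcal{C}$ is already a tensor category, i.e.\ rigid, whereas a priori $\fdmod^H$ is only left rigid. The key insight that unlocks everything is that \emph{left rigidity plus a non-zero projective (or injective) object suffices} to invoke Theorem~\ref{thm:one-sided-rigidity}, and that one-sided semiperfectness of $H$ delivers such an object via Lemma~\ref{lem:semiperfect}; after that, all the tensor-categorical machinery applies and the rest is bookkeeping. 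A secondary subtlety is verifying that the modular object $\modobj_{\mathcal{C}}$ really is one-dimensional as a comodule — this follows because it is simple and invertible, so I would just remark that an invertible object of $\fdmod^H$ is necessarily one-dimensional (its class lies in the Picard group, and $\dim$ is multiplicative on invertibles, forcing dimension $1$).
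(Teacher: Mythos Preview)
Your proposal is correct and follows essentially the same approach as the paper's proof: use Theorem~\ref{thm:one-sided-rigidity} (together with Lemma~\ref{lem:semiperfect}) to obtain rigidity and QcF from one-sided semiperfectness, then invoke Theorem~\ref{thm:Frob-tensor-cat-def} for the equivalence of (2)--(5), and finally use the formula $\Nakr_{\mathcal{C}}(X)\cong\modobj_{\mathcal{C}}\otimes X^{\vee\vee}$ with $\dim_{\bfk}\modobj_{\mathcal{C}}=1$ and Theorem~\ref{thm:QcF-to-be-co-Frob} to upgrade QcF to co-Frobenius. The bijectivity of $S$ via rigidity of $\fdmod^H$ is likewise identical to the paper's reasoning.
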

\begin{proof}
  For simplicity, we write $\mathcal{C} = \fdmod^H$.
  By Theorems~\ref{thm:one-sided-rigidity} and \ref{thm:Frob-tensor-cat-def}, (2), (3), (4) and (5) are equivalent and these equivalent conditions imply that $\mathcal{C}$ is rigid, or, equivalently, $S$ is bijective. It is trivial that (1) implies (2). Thus it remains to show that (2) implies (1). Suppose that (2) holds.
  The modular object $\modobj_{\mathcal{C}}$ is of dimension one (as a vector space) since it is an invertible object of $\mathcal{C}$.
  By Theorem~\ref{thm:Frob-tensor-cat-Nakayama}, we have
  \begin{equation*}
    \dim_{\bfk}(\Nakr_{\mathcal{C}}(X))
    = \dim_{\bfk}(\modobj_{\mathcal{C}}) \cdot \dim_{\bfk}(X^{**})
    = \dim_{\bfk}(X)
  \end{equation*}
  for all $X \in \mathcal{C}$. Thus, by Theorem~\ref{thm:QcF-to-be-co-Frob}, $H$ is co-Frobenius. The proof is done.
\end{proof}

\subsection{Frobenius pairings on co-Frobenius Hopf algebras}

In the previous subsection, we have given equivalent conditions for a Hopf algebra to be co-Frobenius.
Here we prove that a non-zero cointegral on a co-Frobenius Hopf algebra exists and it gives a Frobenius pairing on that Hopf algebra.
This fact has been known (see, {\it e.g.}, the proof of \cite[Theorem 5.3.2]{MR1786197}), however, we include a proof since the discussion below gives a helpful viewpoint for later subsections.

Let $H$ be a Hopf algebra.
Given an $H$-bicomodule algebra $A$ and an $H$-bimodule coalgebra $C$, we introduce two categories ${}^C_A \Mod^H_H$ and ${}^C_A \YD$ as follows:
An object of the former category is a $C$-$H$-bicomodule $M$ equipped with an $A$-$H$-bimodule structure such that the equations
\begin{align*}
  \delta^{\L}_M(a \cdot m \cdot h)
  & = (a_{(-1)} \actl m_{(-1)} \actr h_{(1)}) \otimes a_{(0)} m_{(0)} h_{(2)}, \\
  \delta^{\R}_M(a \cdot m \cdot h)
  & = a_{(0)} m_{(0)} h_{(1)} \otimes (a_{(1)} \actl m_{(1)} \actr h_{(2)})
\end{align*}
hold for all $a \in A$, $m \in M$ and $h \in H$, where $\actl$ and $\actr$ denote the left and the right action of $H$ on $C$, respectively.

An object of the latter one, ${}^C_A \YD$, is a left $C$-comodule $V$ equipped with a left $A$-module structure such that the Yetter-Drinfeld (YD) condition
\begin{equation}
  \label{eq:YD-condition}
  (a_{(-1)} \actl v_{(-1)}) \otimes a_{(0)} v_{(0)}
  = ((a_{(0)} v)_{(-1)} \actr a_{(1)}) \otimes (a_{(0)} v)_{(0)}
\end{equation}
is satisfied for all elements $a \in A$ and $v \in V$.
We remark that the YD condition is equivalent to that the equation
\begin{equation}
  \label{eq:YD-condition-2}
  \delta_V(a v) = (a_{(-1)} \actl v_{(-1)} \actr S(a_{(1)})) \otimes a_{(0)} v_{(0)}
\end{equation}
holds for all $a \in A$ and $v \in V$.
In both categories, a morphism is a linear map respecting all the actions and coactions.

\begin{lemma}
  \label{lem:Hopf-modules}
  The categories ${}^C_A\YD$ and ${}^C_A \Mod^H_H$ are equivalent.  
\end{lemma}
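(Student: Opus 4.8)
The plan is to establish the equivalence $\mathfrak{Z}: {}^C_A\YD \xrightarrow{\ \sim\ } {}^C_A\Mod^H_H$ by a ``free Hopf module'' construction, generalizing the classical fact that ${}^H_H\Mod^H_H \approx \Vect$ via $M \mapsto M^{\mathrm{co}H}$ and $V \mapsto V \otimes H$. On objects, I would send a Yetter--Drinfeld module $V \in {}^C_A\YD$ to the vector space $V \otimes H$ equipped with: the right $H$-module structure by multiplication on the second tensorand, $(v \otimes h) \cdot h' = v \otimes hh'$; the left $A$-module structure by the ``diagonal'' formula $a \cdot (v \otimes h) = a_{(0)} v \otimes a_{(1)} h$; the right $H$-comodule structure $\id_V \otimes \Delta$; and the left $C$-comodule structure $v \otimes h \mapsto (v_{(-1)} \actr h_{(1)}) \otimes (v_{(0)} \otimes h_{(2)})$. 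First I would verify that these four structures are compatible, i.e.\ that $V \otimes H$ really lies in ${}^C_A\Mod^H_H$; the only non-formal check is the compatibility of $\delta^{\L}$ with the $A$-action, which is exactly where the YD condition in the form \eqref{eq:YD-condition-2} is used (the $S(a_{(1)})$ appearing there cancels against the left multiplication by $a_{(1)}$ coming from the $A$-action on the $H$-factor).

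In the other direction, given $M \in {}^C_A\Mod^H_H$ I would take $M^{\mathrm{co}H} := \{ m \in M \mid \delta^{\R}_M(m) = m \otimes 1_H \}$, the space of right $H$-coinvariants. I would check that $M^{\mathrm{co}H}$ is a left $A$-submodule using the formula for $\delta^{\R}_M(a \cdot m)$ with $h = 1_H$, and that it is a left $C$-subcomodule of $M$ via $\delta^{\L}_M$ restricted to $M^{\mathrm{co}H}$, again reading off the displayed compatibility equations with $h = 1_H$; the YD condition \eqref{eq:YD-condition} for the resulting data then falls out of the two displayed equations by setting $h = 1_H$ and comparing. So $\mathfrak{Z}^{-1}: M \mapsto M^{\mathrm{co}H}$ is a well-defined functor ${}^C_A\Mod^H_H \to {}^C_A\YD$.

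Next I would produce the unit and counit of the equivalence. The counit at $M \in {}^C_A\Mod^H_H$ is the linear map $M^{\mathrm{co}H} \otimes H \to M$, $m \otimes h \mapsto m \cdot h$, and the unit at $V \in {}^C_A\YD$ is $V \to (V \otimes H)^{\mathrm{co}H}$, $v \mapsto v \otimes 1_H$. The key point is that these are isomorphisms: this is precisely the content of the fundamental theorem for Hopf modules in the generalized form referenced in the introduction (\cite{MR1629389}), whose inverse to the counit is the standard formula $m \mapsto m_{(0)} \otimes S(m_{(1)}) \mapsto$ (project and tensor), i.e.\ $m \mapsto (m_{(0)} \cdot S(m_{(1)})) \otimes m_{(2)}$ lands in $M^{\mathrm{co}H} \otimes H$. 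I would then check these isomorphisms are morphisms in the respective categories, i.e.\ $A$-linear and $C$- and $H$-colinear, which is a routine Sweedler-notation verification, and that the triangle identities hold.

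The main obstacle is bookkeeping rather than conceptual: one must carefully track the interplay of the left and right $H$-actions/coactions on $C$ (the maps $\actl$, $\actr$) through the coinvariants functor and check that no antipode-bijectivity is secretly needed --- the classical fundamental theorem only uses $S$, not $S^{-1}$, so the argument goes through for an arbitrary Hopf algebra $H$, as the statement requires. I expect the YD-compatibility of $\delta^{\L}$ on $V \otimes H$ to be the single computation that genuinely needs care; everything else is an adaptation of the textbook proof of the fundamental theorem of Hopf modules with the extra $C$-comodule and $A$-module decorations carried along passively.
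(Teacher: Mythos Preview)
Your forward functor $V \mapsto V \otimes H$ with the four structures you list coincides exactly with the paper's, and taking right $H$-coinvariants is the correct inverse on underlying vector spaces. However, there is a genuine gap in the $A$-module structure you put on $M^{\mathrm{co}H}$: it is \emph{not} an $A$-submodule of $M$ under the restricted action. For $m \in M^{\mathrm{co}H}$ the displayed compatibility gives $\delta^{\R}_M(a \cdot m) = a_{(0)} m \otimes a_{(1)}$, which equals $(a \cdot m) \otimes 1_H$ only when $a$ happens to be $H$-coinvariant in $A$; so your proposed check ``using the formula for $\delta^{\R}_M(a \cdot m)$ with $h = 1_H$'' fails. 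The action one must use on $I(M) = M^{\mathrm{co}H}$ is the adjoint one,
\[
  a \triangleright m := a_{(0)} \cdot m \cdot S(a_{(1)}),
\]
for which $\delta^{\R}_M(a_{(0)} m S(a_{(1)})) = a_{(0)} m S(a_{(3)}) \otimes a_{(1)} S(a_{(2)}) = (a \triangleright m) \otimes 1_H$. This is also forced by your own forward construction: on $F(V) = V \otimes H$ the coinvariants are $V \otimes 1_H$, and $a \cdot (v \otimes 1_H) = a_{(0)} v \otimes a_{(1)}$ does not lie there, whereas $a \triangleright (v \otimes 1_H) = a_{(0)} v \otimes a_{(1)} S(a_{(2)}) = a v \otimes 1_H$ does and recovers the given $A$-action on $V$.

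With the adjoint action in place the rest of your plan is correct and matches the paper's proof: the left $C$-coaction genuinely restricts to $I(M)$ by the bicomodule compatibility, the YD condition for $(I(M), \triangleright, \delta^{\L})$ comes from the formula for $\delta^{\L}_M(a \cdot m \cdot h)$ with $h = S(a_{(1)})$ rather than $h = 1_H$, and the unit $v \mapsto v \otimes 1_H$ and counit $m \otimes h \mapsto m \cdot h$ are mutually inverse natural isomorphisms by the classical fundamental theorem for Hopf modules, exactly as you outline.
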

\begin{proof}
  This is only a slight variant of \cite[Theorem 3.1 (i)]{MR1629389}.
  For later use, we include how to establish an equivalence between these categories.
  Given an object $V \in {}^C_A \YD$, we define $F(V) = V \otimes H$ as a vector space and make it an object of ${}^C_A \Mod^H_H$ by the following structure maps:
  \begin{gather*}
    \delta^{\L}_{F(V)}(v \otimes h) = (v_{(-1)} \actr h_{(1)}) \otimes v_{(0)} \otimes h_{(2)}, \quad 
    \delta^{\R}_{F(V)} = \id_V \otimes \Delta, \\
    a \cdot (v \otimes h) \cdot h' = a_{(0)} v \otimes a_{(1)} h h'
    \quad (v \in V, h, h' \in H, a \in A).
  \end{gather*}
  Given an object $M \in {}^C_A \Mod^H_H$, we set $I(M) = \{ m \in M \mid \delta^{\R}_M(m) = m \otimes 1_H \}$. Then $I(M)$ is a subcomodule of the left $C$-comodule $M$. Furthermore, it becomes an object of ${}^C_A \YD$ together with the left $A$-action given by
  \begin{equation}
    \label{eq:Hopf-modules-invariant-action}
    a \triangleright m = a_{(0)} m S(a_{(1)})
    \quad (a \in A, m \in I(M)).
  \end{equation}
  The assignments $V \mapsto F(V)$ and $M \mapsto I(M)$ give rise to equivalences between ${}^C_A \YD$ and ${}^C_A \Mod^H_H$. Indeed, by the same way as \cite{MR1629389}, one can verify that the following are natural isomorphisms:
  \begin{gather*}
    F I(M) \to M,
    \quad m \otimes h \mapsto m h
    \quad (m \in M \in {}^C_A \Mod^H_H, h \in H), \\
    V \to I F(V),
    \quad v \mapsto v \otimes 1_H
    \quad (v \in V \in {}^C_A\YD). \qedhere
  \end{gather*}
\end{proof}

We suppose that $H$ satisfies the equivalent conditions of Theorem \ref{thm:Hopf-semiperfect}.
Then, by that theorem, the antipode of $H$ is bijective.
We define $\tilde{H}$ to be the $H$-bimodule coalgebra whose underlying coalgebra is same as that of $H$ but whose $H$-bimodule structure is given by
\begin{equation*}
h \actl x \actr h' = S^{-2}(h) x S^2(h')
\quad (x \in \tilde{H}, h, h' \in H).
\end{equation*}
We suppose that $H$ satisfies the equivalent conditions of Theorem \ref{thm:Hopf-semiperfect}.
By the semiperfectness of $H$, the subspace $H^{*\rat} \subset H^*$ is rational both as a left and a right $H^*$-module, and thus it has an $H$-bicomodule structure determined by
\begin{equation}
  \label{eq:Hopf-module-H*rat-coactions}
  f_{(0)} \langle \xi, f_{(1)} \rangle = \xi * f
  \quad \text{and} \quad
  \langle \xi, f_{(-1)} \rangle f_{(0)} = f * \xi
\end{equation}
for $f \in H^{*\rat}$ and $\xi \in H^*$.
One can verify that the subspace $H^{*\rat}$ is closed under the actions $\rightharpoonup$ and $\leftharpoonup$ of $H$ on $H^*$ by the same way as \cite[Lemma 2.8]{MR242840}. Now we define the left action $\rightharpoondown$ and the right action $\leftharpoondown$ of $H$ on $H^{*\rat}$ by
\begin{equation}
  \label{eq:Hopf-module-H*rat-actions}
  h \rightharpoondown f \leftharpoondown h' = S(h') \rightharpoonup f \leftharpoonup S^{-1}(h)
\end{equation}
for $h, h' \in H$ and $f \in H^{*\rat}$. It is straightforward to check that the equations
\begin{align*}
  \delta^{\R}_{H^{*\rat}}(h \rightharpoondown f \leftharpoondown h')
  & = (h_{(1)} \rightharpoondown f_{(0)} \leftharpoondown h'_{(1)}) \otimes h_{(2)} f_{(1)} h'_{(2)}, \\
  \delta^{\L}_{H^{*\rat}}(h \rightharpoondown f \leftharpoondown h')
  & = S^{-2}(h_{(1)}) f_{(-1)} S^2(h'_{(1)}) \otimes (h_{(2)} \rightharpoondown f_{(0)} \leftharpoondown h'_{(2)})
\end{align*}
hold for all $f \in H^{*\rat}$ and $h, h' \in H$. This means that the vector space $H^{*\rat}$ is an object of the category ${}^{\tilde{H}}_H \Mod^H_H$ by the actions \eqref{eq:Hopf-module-H*rat-actions} and the coactions \eqref{eq:Hopf-module-H*rat-coactions}.

We recall that a left cointegral on $H$ is a linear map $\lambda : H \to \bfk$ satisfying $h_{(1)} \langle \lambda, h_{(2)} \rangle = \langle \lambda, h \rangle 1_H$ for all $h \in H$. As has been observed in \cite{MR242840}, the space $I(H^{*\rat})$ coincides with the space of left cointegrals on $H$.
Hence, by the proof of Lemma~\ref{lem:Hopf-modules}, we have an isomorphism
\begin{equation}
  \label{eq:Hopf-module-iso}
  \theta: I(H^{*\rat}) \otimes H \to H^{*\rat},
  \quad \lambda \otimes h \mapsto \lambda \leftharpoondown h
\end{equation}
in the category ${}^{\tilde{H}}_H \Mod^H_H$. Now we are ready to prove:

\begin{theorem}
  \label{thm:co-Fb-cointegral}
  The space $I(H^{*\rat})$ of left cointegrals on $H$ is one-dimensional.
  We fix a non-zero left cointegral $\lambda$ on $H$.
  Then the bilinear map
  \begin{equation}
    \label{eq:co-Fb-Hopf-pairing}
    \beta : H \times H \to \bfk,
    \quad \beta(a, b) := \langle \theta(\lambda \otimes b), a \rangle = \langle \lambda, a S(b) \rangle
    \quad (a, b \in H)
  \end{equation}
  is a Frobenius pairing on $H$.
\end{theorem}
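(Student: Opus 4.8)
The theorem has two parts: (i) $I(H^{*\rat})$ is one-dimensional, and (ii) the pairing $\beta(a,b) = \langle \lambda, aS(b)\rangle$ is a Frobenius pairing, i.e.\ it is non-degenerate and $H^*$-balanced. I would prove (i) first, then deduce (ii) from it together with the isomorphism \eqref{eq:Hopf-module-iso}.

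\textbf{Step 1: $I(H^{*\rat})$ is one-dimensional.} Since $H$ satisfies the equivalent conditions of Theorem~\ref{thm:Hopf-semiperfect}, viewing $\mathcal{C} = \fdmod^H$ as a Frobenius tensor category, the modular object $\modobj_{\mathcal{C}} = \Nakr_{\mathcal{C}}(\unitobj)$ is invertible, hence one-dimensional as a vector space. On the other hand, by Example~\ref{ex:counter-example-3} (or directly), $\Nakr_C(\bfk)^* \cong \Hom^C(\bfk, ({}_{C^*}C^*)^{\rat}) = \Hom^H(\bfk, H^{*\rat})$ for $C = H$. The space $\Hom^H(\bfk, H^{*\rat})$ is precisely the space of $H$-coinvariants of the right $H$-comodule $H^{*\rat}$, which (by the characterization of $I(-)$ in the proof of Lemma~\ref{lem:Hopf-modules} and the identification of $I(H^{*\rat})$ with the space of left cointegrals recalled just before the theorem) equals $I(H^{*\rat})$. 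Hence $\dim_\bfk I(H^{*\rat}) = \dim_\bfk \Nakr_{\mathcal{C}}(\unitobj)^* = \dim_\bfk \modobj_{\mathcal{C}} = 1$. In particular a non-zero left cointegral $\lambda$ exists and is unique up to scalar.

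\textbf{Step 2: non-degeneracy of $\beta$.} Fix $\lambda \neq 0$. The map $\theta : I(H^{*\rat}) \otimes H \to H^{*\rat}$, $\lambda \otimes h \mapsto \lambda \leftharpoondown h$, is an isomorphism in ${}^{\tilde H}_H\Mod^H_H$ by \eqref{eq:Hopf-module-iso}; since $I(H^{*\rat}) = \bfk\lambda$, this says that $\beta^{\R}_\lambda : H \to H^{*\rat}$, $h \mapsto \lambda\leftharpoondown h = \langle\lambda, -\,S(h)\rangle = \beta(-,h)$, is a linear isomorphism. Composing with the inclusion $H^{*\rat} \hookrightarrow H^*$ shows $h\mapsto\beta(-,h)$ is injective, so $\beta$ is right non-degenerate. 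For left non-degeneracy, I would use bijectivity of the antipode $S$ (available by Theorem~\ref{thm:Hopf-semiperfect}): the map $h\mapsto\beta(h,-) = \langle\lambda, h\,S(-)\rangle$ can be rewritten, after the substitution $b \mapsto S^{-1}(b)$, as $h \mapsto \langle\lambda, h\,b\rangle$, and one checks this is injective because $\lambda\neq 0$ and $S$ is bijective — concretely, if $\langle\lambda, hb\rangle = 0$ for all $b$ then $h \rightharpoonup \lambda = 0$ in $H^*$; since $\lambda$ generates $H^{*\rat}$ as a right (hence, via the twisted action, as a two-sided) $H$-module and $H^{*\rat}$ is faithful as such (density of $H^{*\rat}$ in $H^*$ from Lemma~\ref{lem:DNR-thm-323}), this forces $h = 0$. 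Alternatively, left non-degeneracy follows from right non-degeneracy applied to $H^{\cop}$ or $H^{\op}$ together with the symmetry of the co-Frobenius condition. Either way $\beta$ is non-degenerate.

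\textbf{Step 3: $H^*$-balancedness.} I must show $\beta(f\rightharpoonup a, b) = \beta(a, b\leftharpoonup f)$ for $f \in H^*$, or in the appropriate convention that $\beta$ factors through the relevant tensor product over $H^*$. This should be a direct computation using the cointegral property $h_{(1)}\langle\lambda, h_{(2)}\rangle = \langle\lambda, h\rangle 1_H$ and the antipode axioms: one expands $\langle\lambda, (f\rightharpoonup a)S(b)\rangle$ and $\langle\lambda, a S(b\leftharpoonup f)\rangle$ and matches them, the key identity being that $S$ converts the right action on $b$ into a left action on $S(b)$ which can then be absorbed using the cointegral identity. \textbf{The main obstacle} is getting Step~3's balancedness computation to come out cleanly with the $S^{\pm 2}$-twisted actions $\rightharpoondown,\leftharpoondown$ that appear in the $H$-bimodule structure of $H^{*\rat}$; the cleanest route is probably not to compute $\beta$'s balancedness from scratch but to observe that it is encoded in $\theta$ being a morphism in ${}^{\tilde H}_H\Mod^H_H$ — the right $H$-module structure on $H^{*\rat}$ transported through $\theta$ is exactly what the $H^*$-balancedness of $\beta$ records, since $\beta^{\R}_\lambda$ is right $H$-linear (equivalently right $H^*$-linear on rational modules) by construction. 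I would therefore phrase Step~3 as: the right $H$-comodule structure and the identification $\beta^{\R}_\lambda : H \xrightarrow{\sim} H^{*\rat}$ of right $H^*$-modules (which is what $\theta|_{\lambda\otimes H}$ gives) immediately yield that $\beta$ is $H^*$-balanced. This reduces everything to Steps 1–2 plus unwinding the already-established module structure of $H^{*\rat}$.
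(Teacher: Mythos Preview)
Your overall strategy matches the paper's: use the isomorphism $\theta$ of \eqref{eq:Hopf-module-iso} and its structure to read off both one-dimensionality and the Frobenius pairing. Two remarks.

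\textbf{Step 1 is a genuine (and nice) alternative.} The paper instead applies $\bfk \otimes_{H^*}(-)$ to the isomorphism $\theta$ and uses Lemma~\ref{lem:C-star-rat-M-rat} to conclude $\dim_{\bfk} I(H^{*\rat}) = 1$. Your route, identifying $I(H^{*\rat}) \cong \Hom^H(\bfk, H^{*\rat}) \cong \Nakr_H(\bfk)^* = \modobj_{\mathcal{C}}^*$ and invoking invertibility of the modular object, is correct and conceptually cleaner; it makes the link between cointegrals and $\modobj_{\mathcal{C}}$ explicit. The paper's argument is more self-contained (it does not need the Frobenius-tensor-category machinery of Section~\ref{sec:Frobenius-tensor}).

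\textbf{Step 3 has the right idea but a muddled identification of the relevant structure.} You variously write ``right $H$-module structure on $H^{*\rat}$'', ``$\beta^{\R}_\lambda$ is right $H$-linear'', and ``right $H^*$-modules''. None of these is what is actually used. The balancedness $\beta(a \leftharpoonup h^*, b) = \beta(a, h^* \rightharpoonup b)$ comes from the fact that $\theta$ is a morphism of \emph{right $H$-comodules} (equivalently, of \emph{left} $H^*$-modules), which is part of being a morphism in ${}^{\tilde{H}}_H\Mod^H_H$. Concretely, with the right $H$-coaction on $H^{*\rat}$ given by \eqref{eq:Hopf-module-H*rat-coactions}, the two-line computation in the paper reads
\[
\beta(a, h^* \rightharpoonup b)
= \langle \theta(\lambda \otimes b)_{(0)}, a\rangle \langle h^*, \theta(\lambda \otimes b)_{(1)}\rangle
= \langle h^* * \theta(\lambda \otimes b), a\rangle
= \beta(a \leftharpoonup h^*, b).
\]
Your final sentence does land on ``right $H$-comodule structure'', so you are close; just excise the incorrect ``right $H$-linear'' and ``right $H^*$-module'' phrasings, and your Step~3 becomes exactly the paper's argument. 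Your Step~2 is fine once you use the density argument (the ``faithfulness'' detour is unnecessary): since $\{\beta(-,b):b\in H\}=H^{*\rat}$ is dense in $H^*$, any $a$ annihilated by all of them is zero.
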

\begin{proof}
  There is a natural isomorphism $M \otimes_{H^*} H^{*\rat} \cong M$ ($M \in {}^C\Mod$) of vector spaces by Lemma \ref{lem:C-star-rat-M-rat}. Thus, by \eqref{eq:Hopf-module-iso}, we have isomorphisms
  \begin{equation*}
    I(H^{*\rat}) \otimes (\bfk \otimes_{H^*} H)
    \cong \bfk \otimes_{H^*} (I(H^{*\rat}) \otimes H)
    \cong \bfk \otimes_{H^*} H^{*\rat}
    \cong \bfk
  \end{equation*}
  of vector spaces. Therefore $I(H^{*\rat})$ is one-dimensional.

  The non-degeneracy of $\beta$ follows from that \eqref{eq:Hopf-module-iso} is bijective and that $H^{*\rat}$ is dense in $H^*$.
  Since \eqref{eq:Hopf-module-iso} is right $H$-colinear, we have
  \begin{gather*}
      \beta(a, h^* \rightharpoonup b)
      = \langle \theta(\lambda \otimes b_{(1)}), a \rangle
      \langle h^*, b_{(2)} \rangle
      = \langle \theta(\lambda \otimes b)_{(0)}, a \rangle
      \langle h^*, \theta(\lambda \otimes b)_{(1)} \rangle \\
      \mathop{=}^{\eqref{eq:Hopf-module-H*rat-coactions}}
      \langle h^* * \theta(\lambda \otimes b), a \rangle
      = \langle h^*, a_{(1)} \rangle
      \langle \theta(\lambda \otimes b), a_{(2)} \rangle
      = \beta(a \leftharpoonup h^*, b)
  \end{gather*}
  for all $a, b \in H$ and $h^* \in H^*$.
  Namely, $\beta$ is $H^*$-balanced.
  The proof is done.
\end{proof}

The bijectivity of the map \eqref{eq:Hopf-module-iso} have been proved by Sweedler \cite{MR242840} by using the fundamental theorem for Hopf modules, {\it i.e.}, Lemma \ref{lem:Hopf-modules} for $A = C = \bfk$.
Unlike \cite{MR242840}, we have used the bijectivity of the antipode of $H$.
As a trade-off, the map \eqref{eq:Hopf-module-iso} is not only a morphism of Hopf modules, but also preserves the left $\tilde{H}$-coaction and the left $H$-action. Below, we see that some useful formulas related to cointegrals can be derived by this observation.

\subsection{Nakayama automorphisms of co-Frobenius Hopf algebras}

Let $H$ be a Hopf algebra satisfying the equivalent conditions of Theorem~\ref{thm:Hopf-semiperfect}. By Theorem~\ref{thm:co-Fb-cointegral}, there is a non-zero cointegral $\lambda$ on $H$ and, moreover, the bilinear map $\beta : H \times H \to \bfk$ given by \eqref{eq:co-Fb-Hopf-pairing} is a Frobenius pairing on $H$. We denote by $\nu_H$ the Nakayama automorphism associated to $\beta$. Since the antipode of $H$ is bijective, the non-degeneracy of $\beta$ implies that the pairing $(a, b) \mapsto \lambda(a b)$ ($a, b \in H$) is also non-degenerate. According to \cite{MR2278058}, we define $\chi_H : H \to H$ by
\begin{equation}
  \label{eq:co-Fb-another-Nakayama-def}
  h \rightharpoonup \lambda = \lambda \leftharpoonup \chi_H(h)
\end{equation}
for $h \in H$ (the map $\chi_H$ is called the {\it generalized Nakayama automorphism} of $H$ in \cite{MR2278058}). We note that, unlike $\nu_H$, the map $\chi_H$ is not a coalgebra automorphism in general but an algebra automorphism of $H$.

By Theorem~\ref{thm:co-Fb-cointegral}, there is a grouplike element $g_H \in H$ and an algebra map $\alpha_H : H \to \bfk$ such that the left $\tilde{H}$-coaction and the left $H$-action on $I(H^{*\rat}) \in {}^{\tilde{H}}_H \YD$ are given by $\delta_{I(H^{*\rat})}(\lambda) = g_H \otimes \lambda$ and $h \triangleright \lambda = \alpha_H(h) \lambda$ for all $h \in H$, respectively.
The former equation is equivalent to that
\begin{equation}
  \label{eq:co-Fb-Hopf-g}
  \langle \lambda, h_{(1)} \rangle h_{(2)} = \lambda(h) g_H
\end{equation}
holds for all $h \in H$. Namely, $g_H$ is the {\em distinguished grouplike element} of $H$ introduced in \cite{MR1482980}. The Nakayama automorphism of $H$ is expressed as follows:

\begin{lemma}
  \label{lem:co-Fb-Hopf-Nakayama-2}
  $\nu_H(h) = g_H S^2(h)$ for $h \in H$.
\end{lemma}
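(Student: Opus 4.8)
The plan is to compute the Nakayama automorphism $\nu_H$ using its characterization from Lemma~\ref{lem:Nakayama-auto-lemma-2}, namely that $\nu_H$ is the unique linear endomorphism of $H$ satisfying $\beta(x_{(1)}, y)\, x_{(2)} = \nu_H(y_{(1)})\, \beta(x, y_{(2)})$ for all $x, y \in H$, together with the explicit form $\beta(a,b) = \langle \lambda, aS(b)\rangle$ of the Frobenius pairing from Theorem~\ref{thm:co-Fb-cointegral}. First I would substitute the formula for $\beta$ into both sides of the defining equation~\eqref{eq:Nakayama-auto-lemma-2} and rewrite everything in terms of $\lambda$ and $S$. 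So the candidate relation to verify is that $h \mapsto g_H S^2(h)$ satisfies $\langle \lambda, x_{(1)} S(y)\rangle\, x_{(2)} = g_H S^2(y_{(1)})\, \langle \lambda, x S(y_{(2)})\rangle$ for all $x, y \in H$; by uniqueness, establishing this identity proves the lemma.

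The key steps would be: (i) manipulate the left-hand side using the cointegral property of $\lambda$, namely $h_{(1)} \langle \lambda, h_{(2)}\rangle = \langle \lambda, h\rangle 1_H$, after suitably reorganizing the Sweedler indices — here one uses the standard trick of writing $x_{(2)} = x_{(2)} S(y_{(2)}) y_{(3)} \cdot (\text{counit adjustments})$ or, more cleanly, applying $\mathrm{id}\otimes S$ and the antipode axioms to pull the "missing" part of $y$ back; (ii) invoke equation~\eqref{eq:co-Fb-Hopf-g}, i.e. $\langle \lambda, h_{(1)}\rangle h_{(2)} = \lambda(h) g_H$, to extract the distinguished grouplike element $g_H$; and (iii) keep careful track of the powers of $S$ that appear — the factor $S^2$ should emerge precisely because collapsing the cointegral forces one to "move" a factor of $y$ past $\lambda$ from one side to the other, and each such move through the antipode-twisted structure contributes an $S^2$. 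One economical route is to first prove the auxiliary identity $\langle \lambda, h_{(1)}\rangle S(h_{(2)}) = \lambda(h)\, g_H^{-1}$ (equivalently $S^{-1}$-twisted version of~\eqref{eq:co-Fb-Hopf-g}), which follows from applying $S$ to~\eqref{eq:co-Fb-Hopf-g} and using that $g_H$ is grouplike, and then feed this into the computation.

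Alternatively — and this may be the cleaner write-up — I would exploit the structure already assembled: the isomorphism $\theta: I(H^{*\rat}) \otimes H \to H^{*\rat}$, $\lambda \otimes h \mapsto \lambda \leftharpoondown h$ of~\eqref{eq:Hopf-module-iso} is a morphism in ${}^{\tilde H}_H \Mod^H_H$, hence in particular left $\tilde{H}$-colinear. Unwinding the left $\tilde{H}$-coaction~\eqref{eq:Hopf-module-H*rat-coactions} on $H^{*\rat}$ (with its $S^{\pm 2}$-twisted $H$-bimodule structure) and comparing with $\delta_{I(H^{*\rat})}(\lambda) = g_H \otimes \lambda$ should directly yield a formula of the shape $\xi * \theta(\lambda\otimes b) = \theta(\lambda \otimes (\text{stuff involving }S^2))$ against which the defining relation for $\nu_H$ can be matched term by term. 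This is essentially the coalgebra-theoretic repackaging of the Hopf-algebraic computation and isolates exactly where $g_H$ and $S^2$ enter.

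The main obstacle I anticipate is purely bookkeeping: getting the Sweedler indices to line up correctly when collapsing the cointegral, and being scrupulous about which of $S$, $S^{-1}$, $S^2$ appears at each stage — a single misplaced antipode gives $g_H^{-1}$ or $S^{-2}$ instead. A useful sanity check along the way is to specialize to the cosemisimple (hence symmetric, by Theorem~\ref{thm:Nakayama-symmetric}) case, where $g_H = 1$ and $S^2 = \mathrm{id}$ on the relevant pieces, so the formula must reduce to $\nu_H = \mathrm{id}$ up to a coinner twist; and to compare with the classical finite-dimensional Radford formula, where the analogous statement for the (algebra) Nakayama automorphism is well known. Apart from this indexing care, no step requires anything beyond the cointegral axiom, equation~\eqref{eq:co-Fb-Hopf-g}, the antipode axioms, and the uniqueness clause of Lemma~\ref{lem:Nakayama-auto-lemma-2}.
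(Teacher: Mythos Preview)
Your proposal is correct, and in fact your ``alternative'' route---exploiting that $\theta$ is left $\tilde{H}$-colinear and then matching against the characterization of $\nu_H$ in Lemma~\ref{lem:Nakayama-auto-lemma-2}---is exactly the paper's proof. The paper computes, for $z = \lambda \otimes y$ and arbitrary $\xi \in H^*$, the chain
\[
\langle \xi, g_H S^2(y_{(1)})\rangle \beta(x,y_{(2)})
= \langle \xi, z_{(-1)}\rangle \langle \theta(z_{(0)}), x\rangle
= \langle \xi, \theta(z)_{(-1)}\rangle \langle \theta(z)_{(0)}, x\rangle
= \beta(x_{(1)},y)\langle \xi, x_{(2)}\rangle,
\]
using the $\tilde{H}$-coaction on $I(H^{*\rat})\otimes H$ and~\eqref{eq:Hopf-module-H*rat-coactions}, and then invokes the uniqueness clause of Lemma~\ref{lem:Nakayama-auto-lemma-2}; this is precisely what you outline. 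Your first approach---a direct Sweedler manipulation using the cointegral identity and~\eqref{eq:co-Fb-Hopf-g}---would also work and is genuinely more elementary (it avoids the ${}^{\tilde H}_H\Mod^H_H$ machinery), but as you anticipate it is heavier on bookkeeping, whereas the colinearity argument makes the appearance of $g_H$ and $S^2$ structurally transparent.
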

\begin{proof}
  We use the isomorphism $\theta: I(H^{*\rat}) \otimes H \to H^{*\rat}$ of \eqref{eq:Hopf-module-iso}.
  In view of the definition of the right action $\actr$ of $H$ on $\tilde{H}$, the left $\tilde{H}$-comodule structure of the source of $\theta$ is given by the following formula:
  \begin{equation*}
    I(H^{*\rat}) \otimes H \to \tilde{H} \otimes I(H^{*\rat}) \otimes H,
    \quad \lambda \otimes h \mapsto g_H S^2(h_{(1)}) \otimes \lambda \otimes h_{(2)}.
  \end{equation*}
  We fix elements $x, y \in H$ and $\xi \in H^*$, and write $z = \lambda \otimes y$ for simplicity.
  Since the map $\theta$ is left $\tilde{H}$-colinear, we have
  \begin{gather*}
    \langle \xi, g_H S^2(y_{(1)}) \rangle \beta(x, y_{(2)})
    = \langle \xi, g_H S^2(y_{(1)}) \rangle \langle \theta(\lambda \otimes y_{(2)}), x \rangle
    = \langle \xi, z_{(-1)} \rangle \langle \theta(z_{(0)}), x \rangle \\
    = \langle \xi, \theta(z)_{(-1)} \rangle \langle \theta(z)_{(0)}, x \rangle
    \mathop{=}^{\eqref{eq:Hopf-module-H*rat-coactions}} \langle \theta(z) * \xi, x \rangle
    = \beta(x_{(1)}, y) \langle \xi, x_{(2)} \rangle.
  \end{gather*}
  Now the formula follows from Lemma~\ref{lem:Nakayama-auto-lemma-2}.
\end{proof}

The story may be summarized that the $\tilde{H}$-colinearity of $\theta$ implies a formula of the Nakayama automorphism $\nu_H$. Another property of $\theta$ yields the following formula of the algebra automorphism $\chi_H$.

\begin{lemma}
  \label{lem:co-Fb-Hopf-Nakayama-1}
  $\chi_H(h) = S^{-2}(\alpha_H \rightharpoonup h)$ for $h \in H$.
\end{lemma}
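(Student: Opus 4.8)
The plan is to mimic the structure of the proof of Lemma~\ref{lem:co-Fb-Hopf-Nakayama-2}, but now exploiting the \emph{left $H$-linearity} of the isomorphism $\theta: I(H^{*\rat}) \otimes H \to H^{*\rat}$ of \eqref{eq:Hopf-module-iso} rather than its left $\tilde{H}$-colinearity. Recall that on the source $I(H^{*\rat}) \otimes H = F(I(H^{*\rat}))$, viewed as an object of ${}^{\tilde H}_H \Mod^H_H$, the left $H$-action is $a \cdot (\lambda \otimes h) = a_{(0)} \triangleright \lambda \otimes a_{(1)} h = \alpha_H(a_{(1)}) \lambda \otimes a_{(2)} h$, while on the target $H^{*\rat}$ the left $H$-action is the action $\rightharpoondown$ of \eqref{eq:Hopf-module-H*rat-actions}, namely $a \rightharpoondown f = f \leftharpoonup S^{-1}(a)$. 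So the first step is to write down what $H$-linearity of $\theta$ says: for all $a, h \in H$,
\begin{equation*}
  \theta(\alpha_H(a_{(1)}) \lambda \otimes a_{(2)} h) = \alpha_H(a_{(1)}) (\lambda \leftharpoondown a_{(2)} h) = a \rightharpoondown \theta(\lambda \otimes h) = (\lambda \leftharpoondown h) \leftharpoonup S^{-1}(a).
\end{equation*}

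The second step is to specialize this to $h = 1_H$, which gives $\alpha_H(a_{(1)}) (\lambda \leftharpoondown a_{(2)}) = \lambda \leftharpoonup S^{-1}(a)$; unravelling the definition $\lambda \leftharpoondown b = \lambda \leftharpoonup S(b)$ from \eqref{eq:Hopf-module-H*rat-actions}, this reads $\alpha_H(a_{(1)}) (\lambda \leftharpoonup S(a_{(2)})) = \lambda \leftharpoonup S^{-1}(a)$, i.e. $\lambda \leftharpoonup S(\alpha_H \rightharpoonup a) = \lambda \leftharpoonup S^{-1}(a)$ after rewriting $\alpha_H(a_{(1)}) a_{(2)} = \alpha_H \rightharpoonup a$. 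Applying the algebra automorphism $S^{-1}$ inside, and using that the right action $\leftharpoonup$ of $H$ on $H^*$ is injective in the relevant sense (because the pairing $(a,b) \mapsto \lambda(ab)$ is non-degenerate, as noted before \eqref{eq:co-Fb-another-Nakayama-def}), we want to conclude that $S(\alpha_H \rightharpoonup a) = S^{-1}(a)$ — but this cannot be literally correct as stated, so the third step is to be careful: I should instead directly compare with the defining equation \eqref{eq:co-Fb-another-Nakayama-def} of $\chi_H$, namely $h \rightharpoonup \lambda = \lambda \leftharpoonup \chi_H(h)$. That is, I need a formula relating $\leftharpoonup$ and $\rightharpoonup$; this is where the identity $b \rightharpoonup \lambda = \lambda \leftharpoonup \chi_H(b)$ gets combined with what $\theta$-linearity produced.

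Concretely, the cleanest route: starting from $\alpha_H(a_{(1)}) (\lambda \leftharpoonup S(a_{(2)})) = \lambda \leftharpoonup S^{-1}(a)$, substitute $a = S(b)$ to get $\alpha_H(S(b)_{(1)}) (\lambda \leftharpoonup S(S(b)_{(2)})) = \lambda \leftharpoonup b$, i.e. $\alpha_H(S(b_{(2)})) (\lambda \leftharpoonup S^2(b_{(1)})) = \lambda \leftharpoonup b$. On the other hand, pairing \eqref{eq:co-Fb-another-Nakayama-def} with an element and using that $S$ is bijective, one rewrites $h \rightharpoonup \lambda$ in terms of $\leftharpoonup$ via $h \rightharpoonup \lambda = \lambda \leftharpoonup \chi_H(h)$; I then want to produce the expression $S^{-2}(\alpha_H \rightharpoonup h)$ and check $\lambda \leftharpoonup \chi_H(h) = \lambda \leftharpoonup S^{-2}(\alpha_H \rightharpoonup h)$, whence $\chi_H(h) = S^{-2}(\alpha_H \rightharpoonup h)$ by non-degeneracy of the pairing $(a,b) \mapsto \lambda(ab)$. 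The main obstacle will be bookkeeping: getting the antipode powers and the placement of $\alpha_H$ exactly right when translating between the $\rightharpoonup$/$\leftharpoonup$ actions and the $\rightharpoondown$/$\leftharpoondown$ actions defined via $S^{\pm 1}$ in \eqref{eq:Hopf-module-H*rat-actions}, and making sure the two ``definitions'' of $\chi_H$ (from \eqref{eq:co-Fb-another-Nakayama-def} versus from the $\theta$-linearity computation) are matched through the correct Sweedler manipulation; I expect that once the identity $\lambda \leftharpoonup S^2(b_{(1)}) \, \alpha_H(S(b_{(2)})) = \lambda \leftharpoonup b$ is in hand, substituting and appealing to non-degeneracy finishes it in one line.
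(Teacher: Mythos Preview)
Your plan is exactly the paper's: exploit the left $H$-linearity of $\theta$ and compare with the defining equation of $\chi_H$. The problem is a single bookkeeping error that derails the computation. From \eqref{eq:Hopf-module-H*rat-actions}, setting $h=1$ gives $f \leftharpoondown h' = S(h') \rightharpoonup f$, \emph{not} $f \leftharpoonup S(h')$; this is consistent with $\langle \theta(\lambda\otimes b),a\rangle = \langle \lambda, aS(b)\rangle$ in Theorem~\ref{thm:co-Fb-cointegral}. With the correct formula your identity at $h=1$ reads
\[
\alpha_H(a_{(1)})\,\bigl(S(a_{(2)})\rightharpoonup\lambda\bigr)=\lambda\leftharpoonup S^{-1}(a).
\]
Now the step you were reaching for becomes immediate: apply $b\rightharpoonup\lambda=\lambda\leftharpoonup\chi_H(b)$ on the left and use non-degeneracy of $(x,y)\mapsto\lambda(xy)$ to strip $\lambda\leftharpoonup$, obtaining $\alpha_H(a_{(1)})\,\chi_H(S(a_{(2)}))=S^{-1}(a)$. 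Substituting $a=S(b)$ and using $\alpha_H\circ S=\alpha_H^{-1}$ gives $\chi_H\bigl(S^2(\alpha_H^{-1}\rightharpoonup b)\bigr)=b$, i.e.\ $\chi_H^{-1}(b)=S^2(\alpha_H^{-1}\rightharpoonup b)$, which inverts (using $\alpha_H\circ S^2=\alpha_H$) to the stated formula. Your dead end ``$S(\alpha_H\rightharpoonup a)=S^{-1}(a)$'' was a symptom of the wrong $\leftharpoondown$ formula, not a sign that a different route was needed; the paper's proof is precisely this computation, carried out by evaluating $\langle\lambda,hh'\rangle$ via $S(h)\rightharpoondown\theta(\lambda\otimes 1)$.
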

\begin{proof}
  Since $\theta$ is left $H$-linear, we have
  \begin{gather*}
    \langle \lambda, h h' \rangle
    = \langle S(h) \rightharpoondown \theta(\lambda \otimes 1), h' \rangle
    = \langle \theta(S(h)_{(1)} \triangleright \lambda \otimes S(h)_{(2)}), h' \rangle \\
    = \langle \alpha_H, S(h_{(2)}) \rangle \langle \theta(\lambda \otimes S(h_{(1)})), h' \rangle
    = \langle \lambda, h' S^2(\alpha_H^{-1} \rightharpoonup h) \rangle
  \end{gather*}
  for $h, h' \in H$, where $\alpha_H^{-1} := \alpha_H \circ S$ is the convolution inverse of $\alpha_H$. By comparing this result with \eqref{eq:co-Fb-another-Nakayama-def}, we obtain $\chi_H^{-1}(h) = S^2(\alpha_H^{-1} \rightharpoonup h)$ for $h \in H$, which implies the desired formula.
\end{proof}

Lemma~\ref{lem:co-Fb-Hopf-Nakayama-1} implies $\alpha_H = \varepsilon \circ \chi_H$. Thus, $\alpha_H$ is the distinguished grouplike element of $H^*$ in the sense of \cite[Definition 2.3]{MR2278058} and the formula of $\chi_H$ given by Lemma~\ref{lem:co-Fb-Hopf-Nakayama-1} is the same as that given in \cite[Lemma 2.5]{MR2278058}.

\subsection{Radford $S^4$-formula for co-Frobenius Hopf algebras}
\label{subsec:Hopf-Radford-S4}

We will discuss a relation between the Radford $S^4$-formula for co-Frobenius Hopf algebras \cite[Theorem 2.8]{MR2278058} and our categorical Radford $S^4$-formula (Corollary~\ref{cor:categorical-Radford-S4}) in the next subsection.
Although it is not directly related to this purpose, we include a new proof of \cite[Theorem 2.8]{MR2278058}.
Let $H$ be a co-Frobenius Hopf algebra with non-zero left cointegral $\lambda$, and define $\alpha_H$ and $g_H$ as in the previous subsection (we note that $g_H$ is the inverse of what written as $g$ in \cite[Theorem 2.8]{MR2278058}).

\begin{theorem}[{\cite[Theorem 2.8]{MR2278058}}]
  \label{thm:co-Fb-Radford-S4}
  For all $h \in H$, we have
  \begin{equation*}
    S^4(h) = g_H^{-1} (\alpha_H \rightharpoonup h \leftharpoonup \alpha_H^{-1}) g_H.
  \end{equation*}
\end{theorem}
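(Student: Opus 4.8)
The plan is to extract the $S^4$-formula from the two automorphisms already computed, namely the Nakayama automorphism $\nu_H$ of the coalgebra $H$ (Lemma~\ref{lem:co-Fb-Hopf-Nakayama-2}) and the generalized Nakayama automorphism $\chi_H$ (Lemma~\ref{lem:co-Fb-Hopf-Nakayama-1}), by comparing the two non-degenerate bilinear forms they govern. First I would recall that $\nu_H$ is characterized by $\beta(y,x)=\beta(\nu_H(x),y)$ for $x,y\in H$, where $\beta(a,b)=\langle\lambda,aS(b)\rangle$, while $\chi_H$ is characterized by $h\rightharpoonup\lambda=\lambda\leftharpoonup\chi_H(h)$, i.e.\ $\langle\lambda,h a\rangle=\langle\lambda,a\chi_H(h)\rangle$ for all $a,h\in H$. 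The aim is to turn the symmetry relation for $\beta$ into a relation purely about the pairing $(a,b)\mapsto\langle\lambda,ab\rangle$ and then read off $S^4$.

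The key computation is the following. Starting from $\beta(y,x)=\beta(\nu_H(x),y)$, unfold both sides: $\langle\lambda,yS(x)\rangle=\langle\lambda,\nu_H(x)S(y)\rangle$. Substituting $\nu_H(x)=g_H S^2(x)$ from Lemma~\ref{lem:co-Fb-Hopf-Nakayama-2} and replacing $x$ by $S^{-2}(x')$ (legitimate since $S$ is bijective on a co-Frobenius Hopf algebra by Theorem~\ref{thm:Hopf-semiperfect}) gives a relation of the form $\langle\lambda,yS^{-1}(x')\rangle=\langle\lambda,g_H x' S(y)\rangle$. On the other hand, the defining property of $\chi_H$ together with Lemma~\ref{lem:co-Fb-Hopf-Nakayama-1}, which says $\chi_H(h)=S^{-2}(\alpha_H\rightharpoonup h)$ and hence $\alpha_H=\varepsilon\circ\chi_H$, lets one move an $S^{\pm2}$ past $\lambda$ at the cost of twisting by $\alpha_H$: explicitly $\langle\lambda,hk\rangle=\langle\lambda,k\,S^{-2}(\alpha_H\rightharpoonup h)\rangle$. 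Feeding this into the identity obtained from $\nu_H$, and using that $S$ is an anti-coalgebra, anti-algebra map so that $\alpha_H\rightharpoonup S^2(h)=S^2(\alpha_H^{-1}\rightharpoonup h)$ and similar compatibilities, one collapses everything to the single equation $\langle\lambda, a\,S^4(h)\,b\rangle=\langle\lambda, a\,g_H^{-1}(\alpha_H\rightharpoonup h\leftharpoonup\alpha_H^{-1})g_H\,b\rangle$ for all $a,b,h\in H$. Since the pairing $(u,v)\mapsto\langle\lambda,uv\rangle$ is non-degenerate (the restriction of $\beta$ to $H^{*\mathrm{rat}}$-translates is dense), this forces $S^4(h)=g_H^{-1}(\alpha_H\rightharpoonup h\leftharpoonup\alpha_H^{-1})g_H$.

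The main obstacle is bookkeeping: one must keep careful track of how the antipode $S$, the left and right hit actions $\rightharpoonup,\leftharpoonup$ of $H$ on $H^*$, the grouplike $g_H$, and the character $\alpha_H$ interact, in particular the identities $\alpha_H\rightharpoonup(gx)=\alpha_H(g)\,g(\alpha_H\rightharpoonup x)$ for grouplike $g$, the relation $S(g_H)=g_H^{-1}$, and the fact that conjugation by $g_H$ is a coalgebra automorphism while $\alpha_H\rightharpoonup(-)\leftharpoonup\alpha_H^{-1}$ is an algebra automorphism, so that their composite is compatible with $S^4$ being a Hopf algebra automorphism. A clean way to organize this is to prove the formula first as an equality of maps $H\to H$ after pairing with $\lambda\leftharpoonup c$ for arbitrary $c\in H$ (using right $H$-colinearity of $\theta$ from \eqref{eq:Hopf-module-iso}, which already encodes the $g_H$-twist), and only at the end invoke density of $H^{*\mathrm{rat}}$ in $H^*$ to cancel $\lambda$. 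Everything else reduces to the two lemmas already established and the elementary properties of $\lambda$, $g_H$ and $\alpha_H$ recorded in \eqref{eq:co-Fb-Hopf-g} and \eqref{eq:co-Fb-another-Nakayama-def}.
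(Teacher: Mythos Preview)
Your approach is workable and essentially reproduces the original argument of Beattie--Bulacu--Torrecillas: derive the identity $\langle\lambda,c\rangle=\langle\lambda,g_H S(c)\rangle$ from $\nu_H$ via Lemma~\ref{lem:co-Fb-Hopf-Nakayama-2}, combine it with the cycling relation $\langle\lambda,ha\rangle=\langle\lambda,a\,\chi_H(h)\rangle$ coming from Lemma~\ref{lem:co-Fb-Hopf-Nakayama-1}, and cancel by non-degeneracy. This is correct, though the bookkeeping you flag is real and has to be carried out carefully.

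The paper, however, takes a genuinely different and much shorter route. Rather than treating the two lemmas as separate inputs to be recombined through bilinear-form gymnastics, it observes that they are precisely the action and coaction data of the one-dimensional object $I(H^{*\rat})\in{}^{\tilde{H}}_H\YD$: the left $\tilde{H}$-coaction is $\lambda\mapsto g_H\otimes\lambda$ and the left $H$-action is $h\triangleright\lambda=\alpha_H(h)\lambda$. The Yetter--Drinfeld compatibility~\eqref{eq:YD-condition} then reads
\[
S^{-2}(h_{(1)})\,g_H\,\alpha_H(h_{(2)})=\alpha_H(h_{(1)})\,g_H\,S^2(h_{(2)}),
\]
and the $S^4$-formula follows by applying $S^2$ and substituting $h\mapsto h\leftharpoonup\alpha_H^{-1}$. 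What you do in two stages---first extracting $\nu_H$ from the colinearity of $\theta$, then $\chi_H$ from its linearity, then laboriously recoupling them---the paper does in a single stroke, because the YD axiom \emph{is} exactly the compatibility between those two structures. Your method is more elementary in that it never names the YD category; the paper's method is cleaner and, more importantly for the sequel, makes transparent the link to the twisted Drinfeld center $\mathfrak{Z}_{0,4}$ exploited in Subsection~\ref{subsec:Hopf-Radford-S4}.
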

\begin{proof}
  Since $I(H^{*\rat})$ is an object of the category ${}^{\tilde{H}}_H\YD$, we have
  \begin{equation*}
    S^{-2}(h_{(1)}) g_H \otimes \alpha_H(h_{(2)}) \lambda
    = \alpha_H(h_{(1)}) g_H S^2(h_{(2)}) \otimes \lambda
    \quad (h \in H)
  \end{equation*}
  by the YD condition \eqref{eq:YD-condition}. The formula easily follows from this.
\end{proof}

We write $\mathcal{C} = \fdmod^H$ for simplicity.
In a similar way as the Drinfeld center of a monoidal category \cite{MR3242743}, the {\em center} of a $\mathcal{C}$-bimodule category $\mathcal{M}$ is defined as the category whose object is a pair $(M, \sigma)$ consisting of an object $M \in \mathcal{M}$ and a natural isomorphism $\sigma(X): X \rhd M \to M \lhd X$ ($X \in \mathcal{C}$) satisfying the `half' of the hexagon axiom for a braiding, where $\lhd$ and $\rhd$ are the left and the right action of $\mathcal{C}$ on $\mathcal{M}$, respectively.

We denote by $\mathfrak{Z}_{2a, 2b}$ ($a, b \in \mathbb{Z}$) the center of the $\mathcal{C}$-bimodule category ${}_{(2a)}\mathcal{C}_{(2b)}$ introduced to formulate Theorem~\ref{thm:Frob-tensor-cat-Nakayama}.
This category may be called the {\em twisted Drinfeld center} of $\mathcal{C}$ as $\mathfrak{Z}_{0,0}$ is precisely the Drinfeld center of $\mathcal{C}$.
Behind the proof of Theorem \ref{thm:co-Fb-Radford-S4} lies an interpretation of ${}^{\tilde{H}}_H\YD_{\fd}$ as the twisted Drinfeld center.
For full generality, we fix integers $a$ and $b$ and let $C$ be the $H$-bicomodule coalgebra whose underlying coalgebra is $H$ but whose $H$-bimodule structure is given by
\begin{equation*}
  h \actl x \actr h' = S^{2b}(h) x S^{2a}(h')
\end{equation*}
for $x \in C$ and $h, h' \in H$ (we note that $\tilde{H}$ is the case where $(a, b) = (1, -1)$). Given objects $V \in {}^{C}_H\YD_{\fd}$ and $X \in \mathcal{C}$, we define the linear map
\begin{equation*}
  \xi_V(X): X^{(S^{2a})} \otimes V \to V \otimes X^{(S^{2b})},
  \quad x \otimes v \mapsto x_{(1)} v \otimes x_{(0)}
\end{equation*}
for $v \in V$ and $x \in X$. We view $V$ as a right $H$-comodule by the coaction
\begin{equation*}
  \delta^{\R}_V: V \to V \otimes H,
  \quad v \mapsto v_{[0]} \otimes v_{[1]} := v_{(0)} \otimes S^{-1}(v_{(-1)})
  \quad (v \in V).
\end{equation*}
Then the YD condition \eqref{eq:YD-condition-2} implies the equation
\begin{equation*}
  (h_{(2)} v)_{[0]} \otimes (h_{(2)} v)_{[1]} S^{2b}(h_{(1)})
  = h_{(1)} v_{[0]} \otimes S^{2a}(h_{(2)}) v_{[1]}
  \quad (h \in H, v \in V),
\end{equation*}
which is equivalent to that $\xi_V(X)$ is right $H$-colinear for all $X \in \mathcal{C}$.
One can verify that the pair $(V, \xi_V)$ becomes an object of $\mathfrak{Z}_{2a,2b}$ if we identify the functors $(-)^{\vee\vee}$ and ${}^{\vee\vee}(-)$ with $(-)^{(S^2)}$ and $(-)^{(S^{-2})}$, respectively, through the canonical isomorphism $\phi_X: X \to X^{**}$ for $X \in \Vect_{\fd}$.
This construction gives rise to a category isomorphism between $\mathfrak{Z}_{2a,2b}$ and ${}^C_H\YD_{\fd}$.
Letting $(a, b) = (1, -1)$, we see that ${}^{\tilde{H}}_H\YD_{\fd}$ is identified with $\mathfrak{Z}_{2,-2}$.

\begin{remark}
There are some other important variations:
The well-known isomorphism ${}^H_H \YD_{\fd} \cong \mathcal{Z}(\mathcal{C})$ is the case where $(a, b) = (0, 0)$. The category of left-left {\em anti-Yetter-Drinfeld modules} \cite[Definition 2.1]{MR2056464} is the case where $(a, b) = (-1, 0)$, and thus its full subcategory of finite-dimensional objects is isomorphic to $\mathfrak{Z}_{-2, 0}$.
\end{remark}

The object of $\mathfrak{Z}_{2,-2}$ corresponding to $I(H^{*\rat}) \in {}^{\tilde{H}}_{H} \YD_{\fd}$ is the right $H$-comodule $\bfk g_H^{-1}$ together with the natural isomorphism $\sigma$ given by
\begin{equation*}
  \sigma_X: X^{\vee\vee} \otimes \bfk g_H^{-1}
  \to \bfk g_H^{-1} \otimes {}^{\vee\vee}\!X,
  \quad \phi_X(x) \otimes g_H^{-1}
  \mapsto \alpha_H(x_{(1)}) g_H^{-1} \otimes \phi_X(x_{(0)})
\end{equation*}
for $x \in X \in \mathcal{C}$.
By applying the functor $\bfk g_H \otimes (-) \otimes \bfk g_H$ to the inverse of $\sigma_{X^{\vee\vee}}$ and composing some obvious isomorphisms, we obtain the natural isomorphism
\begin{equation*}
  \RadIso'_X: X \otimes \bfk g_H \to \bfk g_H \otimes X^{\vee\vee\vee\vee},
  \quad x \otimes g_H \mapsto \langle \alpha^{-1}_H, x_{(1)} \rangle g_H \otimes \phi_{X^{**}}\phi_X(x_{(0)})
\end{equation*}
for $x \in X \in \mathcal{C}$. This gives the object
\begin{equation}
  \label{eq:Radford-iso-from-YD-2}
  (\bfk g_H, \RadIso') \in \mathfrak{Z}_{0,4},
\end{equation}
which is, in a sense, equivalent to our categorical Radford $S^4$-formula as we will see in the next subsection.

\subsection{Equivalence to the categorical version}

We keep the same notation as in the previous subsection.
Here we give an explicit formula of the Radford isomorphism $\RadIso$ of $\mathcal{C} = \fdmod^H$ and explain its relation to the object \eqref{eq:Radford-iso-from-YD-2}.
We recall that the isomorphism $\RadIso_X$ is given in terms of the `twisted' $\mathcal{C}$-bimodule structure of the right exact Nakayama functor $\Nakr = \Nakr_H$, which we denote by
\begin{equation*}
  \Psi^{\L}_{X,Y}: \Nakr(X \otimes Y) \to {}^{\vee\vee}\!X \otimes \Nakr(Y), \quad
  \Psi^{\R}_{X,Y}: \Nakr(X \otimes Y) \to \Nakr(X) \otimes Y^{\vee\vee}
\end{equation*}
for $X, Y \in \mathcal{C}$. By Definition~\ref{def:Radford-iso}, we have
\begin{equation}
  \RadIso_X = \Psi^{\R}_{\unitobj,X^{\vee\vee}} \circ (\Psi^{\L}_{X^{\vee\vee},\unitobj})^{-1}
  \circ (\phi_{X^{**}} \phi_X \otimes \id_{\modobj})
\end{equation}
for $X \in \mathcal{C}$, where $\modobj = \Nakr(\unitobj)$. We shall compute $\Psi^{\L}_{X,Y}$ and $\Psi^{\R}_{X,Y}$.

\begin{lemma}
  For $x \in X$, $h \in H$ and $y \in Y$, we have
  \begin{align}
    \label{eq:co-Fb-Hopf-Nakayama-structure-1}
    \Psi^{\L}_{X,Y}(h \otimes_{H^*} (x \otimes y))
    & = \phi_X(x_{(0)}) \otimes (S^{-1}(x_{(1)}) h \otimes_{H^*} y), \\
    \label{eq:co-Fb-Hopf-Nakayama-structure-2}
    \Psi^{\R}_{X,Y}(h \otimes_{H^*} (x \otimes y))
    & = (h S(y_{(1)}) \otimes_{H^*} x) \otimes \phi_Y(y_{(0)}).
  \end{align}
\end{lemma}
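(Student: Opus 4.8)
The plan is to make every object and arrow explicit inside $\fdmod^H$ and then match the resulting formulas with the abstract construction of the twisted bimodule structure. Recall that $\Nakr_H(M) = H \otimes_{H^*} M$, with comodule structure $h \otimes_{H^*} m \mapsto (h_{(1)} \otimes_{H^*} m) \otimes h_{(2)}$, that $X^{\vee}$ is $X^*$ equipped with the $S$-twisted coaction recalled at the start of this section, and hence that $X^{\vee\vee}$ and ${}^{\vee\vee}X$ are canonically identified, through $\phi_X$, with the comodules obtained from $X$ by twisting the coaction by $S^2$ and by $S^{-2}$, respectively. First I would check that the right-hand sides of \eqref{eq:co-Fb-Hopf-Nakayama-structure-1} and \eqref{eq:co-Fb-Hopf-Nakayama-structure-2} are well defined --- that is, that they are $H^*$-balanced in $h \otimes_{H^*}(-)$ and balanced over $H^*$ on the target --- and that they are morphisms of right $H$-comodules into ${}^{\vee\vee}X \otimes \Nakr_H(Y)$ and $\Nakr_H(X) \otimes Y^{\vee\vee}$, respectively. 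Both verifications are routine Sweedler-calculus computations relying only on the description of the coaction on a left dual, the identity $\Delta \circ S = (S \otimes S) \circ \tau \circ \Delta$, the antipode axioms, and coassociativity and counitality of the coactions; for instance, colinearity of $\Psi^{\R}_{X,Y}$ comes down to the rewriting $S(y_{(2)}) S^2(y_{(1)}) = S\bigl(S(y_{(1)}) y_{(2)}\bigr)$ followed by a collapse through the antipode axiom.

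The substantive step is to identify these two maps with the structure morphisms $\Psi^{\L}_{X,Y}$ and $\Psi^{\R}_{X,Y}$ defined abstractly via Theorem~\ref{thm:Frob-tensor-cat-Nakayama}. By Corollary~\ref{cor:Nakayama-and-adjunctions}(b), applied to $F = (-) \otimes Y^{\vee\vee}$ (whose double left adjoint is $(-) \otimes Y$) and to $F = {}^{\vee\vee}X \otimes (-)$ (whose double left adjoint is $X \otimes (-)$), these morphisms arise from the isomorphism $\phi^{\R}_{F}$ of Theorem~\ref{thm:Nakayama-and-adjunctions}(b), which by construction is the composite of three natural isomorphisms: the one expressing that $F$ preserves copowers --- governed, via Lemma~\ref{lem:NS-Lemma-7-1}, by the transposition $\tau$ --- the coend-reindexing isomorphism \eqref{eq:adjunction-and-coend}, and the coHom-adjunction isomorphism \eqref{eq:adjunction-coHom}. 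To make $\phi^{\R}_{F}$ concrete I would realize $\Nakr_H(M)$ as the coend $\int^{V \in \fdmod^H} \coHom^H(V, M) \otimes V$ of Theorem~\ref{thm:Nakayama-(co)end}, whose universal dinatural transformation $i_V(M)\colon \coHom^H(V, M) \otimes V \to H \otimes_{H^*} M$ is induced from the Tannakian coend \eqref{eq:Tannaka}, with dinatural maps \eqref{eq:Tannaka-dinatural}, by the colimit-preserving functor $\Phi_M = (-) \otimes_{H^*} M$; under $\coHom^H(V, M) \cong V^* \otimes_{H^*} M$ (Lemma~\ref{lem:coHom-finite-2}) this $i_V(M)$ is completely explicit. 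It then suffices to precompose both the candidate formula and the composite of the copower isomorphism, \eqref{eq:adjunction-and-coend}, and \eqref{eq:adjunction-coHom} with each $i_V(M)$ and verify that the results agree --- the same kind of commuting diagram in terms of the $i_V$'s that characterizes the morphism $\psi_X$ displayed after Theorem~\ref{thm:Frob-tensor-cat-Nakayama}.

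The main obstacle is exactly this identification: one must thread each of the three isomorphisms through its explicit incarnation in $\fdmod^H$. The copower isomorphism is a use of $\tau$ and hence invisible on underlying vector spaces, whereas \eqref{eq:adjunction-and-coend} and \eqref{eq:adjunction-coHom} for the functors $(-) \otimes Y^{\vee\vee}$ and ${}^{\vee\vee}X \otimes (-)$ introduce the evaluation and coevaluation of $Y$ and of $X$, and these are the source of the factors $S(y_{(1)})$ and $S^{-1}(x_{(1)})$ appearing in \eqref{eq:co-Fb-Hopf-Nakayama-structure-1} and \eqref{eq:co-Fb-Hopf-Nakayama-structure-2}. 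Keeping the iterated coactions and the powers of $S$ correctly ordered through the composite is delicate, but once the diagram is assembled each face is a finite computation of the same flavour as the colinearity check of the first step. Assembling the pieces yields the stated formulas and completes the proof.
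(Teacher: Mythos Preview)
Your proposal is correct and follows essentially the same approach as the paper: realize $\Nakr_H(M)$ as the coend of Theorem~\ref{thm:Nakayama-(co)end} with $\coHom^H(V,M)\cong V^*\otimes_{H^*}M$, unwind the three ingredients of $\phi^{\R}_F$ (the $\tau$-isomorphism, the coend-reindexing \eqref{eq:adjunction-and-coend}, and the coHom adjunction \eqref{eq:adjunction-coHom}) for the relevant $F$, and verify the candidate formula by precomposing with the universal dinatural $i_V$ and chasing an element. The paper writes out exactly this commuting diagram and the element chase for $\Psi^{\L}_{X,Y}$, declaring $\Psi^{\R}_{X,Y}$ similar; your preliminary step of checking $H^*$-balancedness and $H$-colinearity of the candidate formulas is harmless but redundant, since agreement with the abstractly defined $\Psi^{\L}_{X,Y}$ and $\Psi^{\R}_{X,Y}$ via the coend universal property already guarantees both.
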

\begin{proof}
  We only show that the map $\Psi^{\L}_{X,Y}$ is expressed as stated, since the expression for $\Psi^{\R}_{X,Y}$ can be verified in a similar way.
  In view of Lemma~\ref{lem:coHom-finite-2}, we may assume that the coHom functor for $\Ind(\mathcal{C}) = \Mod^H$ is given by $\coHom(V, W) := V^* \otimes_{H^*} W$ for $V, W \in \mathcal{C}$. The adjunction isomorphism
  \begin{equation*}
    \Theta_{V, W, X}: \coHom(V, X \otimes W)
    \xrightarrow{\ \eqref{eq:adjunction-coHom} \ }
    \coHom({}^{\vee}\!X \otimes V, W)
    \quad (V, W, X \in \mathcal{C})
  \end{equation*}
  associated to the adjunction $X \otimes (-) \dashv {}^{\vee}\!X \otimes (-)$ is given by
  \begin{equation*}
    \Theta_{V, W, X}(v^{*} \otimes_{H^*} (x \otimes w))
    = (\phi_X(x) \otimes v^*) \otimes_{H^*} w
    \quad (v^* \in V^*, x \in X, w \in W)
  \end{equation*}
  under the identification $(X^* \otimes V)^* \cong X^{**} \otimes V^*$ of vector spaces.
  Let $M \in \mathcal{C}$. By the proof of Lemma~\ref{thm:Nakayama-(co)end}, the universal dinatural transformation for $\Nakr(M)$ as a coend is given by
  \begin{gather*}
    i_V(M) : \coHom(V, M) \otimes V = (V^* \otimes_{H^*} M) \otimes V \to H \otimes_{H^*} M = \Nakr(M), \\
    (v^{*} \otimes_{H^*} m) \otimes v \mapsto \langle v^*, v_{(0)} \rangle v_{(1)} \otimes_{H^*} m
    \quad (v^* \in V^*, v \in V, m \in M)
  \end{gather*}
  for $V \in \mathcal{C}$. Since the forgetful functor $\Mod^H \to \Vect$ preserves small colimits \cite{MR414567}, the morphism $\Psi^{\L}_{X,Y}$ is a unique linear map making the diagram
  \begin{equation*}
    \begin{tikzcd}[column sep = 80pt]
      \Nakr(X \otimes Y)
      \arrow[ddd, "{\Psi^{\L}_{X,Y}}"']
      & \coHom(V, X \otimes Y) \otimes V
      \arrow[d, "(1)"', "\Theta_{V, Y, X} \otimes \id"]
      \arrow[l, "i_{V}(X \otimes Y)"'] \\
      & \coHom({}^{\vee}X \otimes V, Y) \otimes V
      \arrow[d, "(2)"', "\id \otimes \coev_{{}^{\vee\vee} \! X} \otimes \id"] \\
      & \coHom({}^{\vee}X \otimes V, Y) \otimes {}^{\vee\vee}X \otimes {}^{\vee}X \otimes V
      \arrow[d, "(3)"', "{\tau_{\coHom({}^{\vee} \! X \otimes V, Y), {}^{\vee\vee} \! X }\otimes \id \otimes \id}"] \\
      {}^{\vee\vee}\!X \otimes \Nakr(Y)
      & {}^{\vee\vee} \! X \otimes \coHom({}^{\vee}X \otimes V, Y) \otimes {}^{\vee}X \otimes V
      \arrow[l, "(4)"', "\id \otimes i_{{}^{\vee}\!X \otimes V}(Y)"]
    \end{tikzcd}
  \end{equation*}
  commute for all $V \in \mathcal{C}$, where $\tau$ is the natural isomorphism of Lemma~\ref{lem:NS-Lemma-7-1} (which just permutes tensorands in this case).
  We fix a basis $\{ x_j \}$ of $X$, and let $\{ x^j \}$ be the basis of ${}^{\vee}\!X$ ($=X^*$) dual to $\{ x_j \}$.
  An element
  \begin{gather*}
    t := (v^* \otimes_{H^*} (x \otimes y)) \otimes v
    \in \coHom(V, X \otimes Y) \otimes V \\
    (v^* \in V^*, x \in X, y \in Y, v \in V)
  \end{gather*}
  is sent along the path through the lower right corner as follows:
  \begin{align*}
    t & \xmapsto{\ (1) \ } ((\phi_X(x) \otimes v^*) \otimes_{H^*} y) \otimes v \\
    & \xmapsto{\ (2) \ } ((\phi_X(x) \otimes v^*) \otimes_{H^*} y) \otimes \phi_X(x_j) \otimes x^j \otimes v \\
    & \xmapsto{\ (3) \ } \underbrace{\phi_X(x_j)}_{X^{\vee\vee}} \mbox{}
    \otimes ((\underbrace{\phi_X(x) \otimes v^*}_{({}^{\vee\!}X \otimes V)^*}) \otimes_{H^*} y) \otimes \underbrace{x^j \otimes v_{}}_{{}^{\vee\!}X \otimes V} \\
    & \xmapsto{\ (4) \ } \phi_X(x_j) \otimes \langle \phi_X(x) \otimes v^*, (x^j \otimes v)_{(0)} \rangle (x^j \otimes v)_{(1)} \otimes_{H^*} y  \\
    & = \phi_X(x_j) \otimes \langle \phi_X(x), x^j_{(0)} \rangle \langle v^*, v_{(0)} \rangle x^j_{(1)} v_{(1)} \otimes_{H^*} y  \\
    & = \phi_X(x_j) \otimes \langle x^j, x_{(0)} \rangle \langle v^*, v_{(0)} \rangle S^{-1}(x_{(1)}) v_{(1)} \otimes_{H^*} y  \\
    & = \phi_X(x_{(0)}) \otimes \langle v^*, v_{(0)} \rangle S^{-1}(x_{(1)}) v_{(1)} \otimes_{H^*} y,
  \end{align*}
  where the summation over $j$ is suppressed and the second equality follows from the definition of the coaction of $H$ on ${}^{\vee\!}X$. We also have
  \begin{align*}
    t
    & \xmapsto{\makebox[5em]{\scriptsize $i_V(X \otimes Y)$}}
      \langle v^*, v_{(0)} \rangle v_{(1)} \otimes_{H^*} (x \otimes y) \\
    & \xmapsto{\makebox[5em]{\scriptsize \eqref{eq:co-Fb-Hopf-Nakayama-structure-1}}}
      \phi_X(x_{(0)}) \otimes \langle v^*, v_{(0)} \rangle S^{-1}(x_{(1)}) v_{(1)} \otimes_{H^*} y.
  \end{align*}
  Thus \eqref{eq:co-Fb-Hopf-Nakayama-structure-1} makes the diagram commute. The proof is done.
\end{proof}

\begin{lemma}
  The Radford isomorphism in $\mathcal{C}$ is given by
  \begin{equation}
    \label{eq:Radford-iso-H-comod}
    \RadIso_X(x \otimes (h \otimes_{H^*} 1))
    = (x_{(2)} h S^{3}(x_{(1)}) \otimes_{H^*} 1) \otimes \phi_{X^{**}}\phi_X(x_{(0)})
  \end{equation}
  for $x \in X \in \mathcal{C}$ and $h \in H$.
\end{lemma}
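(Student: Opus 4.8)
The plan is to assemble the formula for $\RadIso_X$ directly from its definition in Definition~\ref{def:Radford-iso} together with the two explicit expressions \eqref{eq:co-Fb-Hopf-Nakayama-structure-1} and \eqref{eq:co-Fb-Hopf-Nakayama-structure-2} for the `twisted' bimodule structure maps $\Psi^{\L}$ and $\Psi^{\R}$ of $\Nakr = \Nakr_H$ just established. Recall that $\RadIso_X = \Psi^{\R}_{\unitobj, X^{\vee\vee}} \circ (\Psi^{\L}_{X^{\vee\vee}, \unitobj})^{-1} \circ (\phi_{X^{**}}\phi_X \otimes \id_{\modobj})$, where $\modobj = \Nakr(\unitobj) = H \otimes_{H^*} \bfk$ and $\unitobj = \bfk$ is the trivial $H$-comodule. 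The first step is to spell out each of the three factors on an element $x \otimes (h \otimes_{H^*} 1) \in X \otimes \modobj$.

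First I would apply $\phi_{X^{**}}\phi_X \otimes \id_{\modobj}$, which sends $x \otimes (h \otimes_{H^*} 1)$ to $\phi_{X^{**}}\phi_X(x) \otimes (h \otimes_{H^*} 1) \in X^{\vee\vee\vee\vee} \otimes \modobj$ under the identification $(-)^{\vee\vee} = (-)^{(S^2)}$; here one must be careful that $X^{\vee\vee}$ as an $H$-comodule carries the coaction twisted by $S^2$, so when I next invert $\Psi^{\L}_{X^{\vee\vee}, \unitobj}$ the relevant comodule structure on the first tensorand is the $S^2$-twisted one. Setting $Y = \unitobj$ in \eqref{eq:co-Fb-Hopf-Nakayama-structure-1} gives $\Psi^{\L}_{X^{\vee\vee}, \unitobj}\big((h \otimes_{H^*}(w \otimes 1))\big) = \phi_{X^{\vee\vee}}(w_{(0)}') \otimes (S^{-1}(w_{(1)}') h \otimes_{H^*} 1)$, where $w \mapsto w_{(0)}' \otimes w_{(1)}'$ is the $S^2$-twisted coaction on $X^{\vee\vee}$; solving for the preimage of $\phi_{X^{**}}\phi_X(x) \otimes (h \otimes_{H^*}1)$ requires inverting the comodule coaction formula, which is where the antipode powers get shuffled. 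Concretely, since the twisted coaction on $X^{\vee\vee}$ corresponds under $\phi$ to $x \mapsto x_{(0)} \otimes S^2(x_{(1)})$ on $X$, one finds $(\Psi^{\L}_{X^{\vee\vee},\unitobj})^{-1}(\phi_{X^{**}}\phi_X(x) \otimes (h\otimes_{H^*}1)) = h' \otimes_{H^*}(\phi_X(x_{(0)}) \otimes 1)$ for an appropriate $h'$ obtained by absorbing the coaction tail, and tracking the Sweedler indices carefully should produce $h' = S(x_{(1)}')h = S(S^2(x_{(1)}))h = S^3(x_{(1)})h$ acting on the leftover $\phi_X(x_{(0)})$. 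Finally, applying $\Psi^{\R}_{\unitobj, X^{\vee\vee}}$ via \eqref{eq:co-Fb-Hopf-Nakayama-structure-2} with $X = \unitobj$ moves the comodule tail of the $X^{\vee\vee}$-component back to the right: $\Psi^{\R}_{\unitobj, X^{\vee\vee}}(h'' \otimes_{H^*}(1 \otimes \phi_X(x_{(0)}))) = (h'' S(\text{tail}) \otimes_{H^*} 1) \otimes \phi_{X^{\vee\vee}}(\cdots)$, and combining this with the earlier $S^3(x_{(1)})h$ factor and re-expanding $\phi_{X^{**}}\phi_X$ should collapse to the stated answer $(x_{(2)} h S^3(x_{(1)}) \otimes_{H^*} 1) \otimes \phi_{X^{**}}\phi_X(x_{(0)})$.

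The main obstacle I expect is bookkeeping of the antipode powers and Sweedler legs when inverting $\Psi^{\L}_{X^{\vee\vee},\unitobj}$: because $X^{\vee\vee}$ carries the $S^2$-twisted comodule structure while the structure maps are stated for untwisted comodules, one has to consistently translate between the coaction $x \mapsto x_{(0)} \otimes x_{(1)}$ on $X$ and $x \mapsto x_{(0)} \otimes S^2(x_{(1)})$ on $X^{\vee\vee}$, and the appearance of $S^{-1}$ in \eqref{eq:co-Fb-Hopf-Nakayama-structure-1} versus $S$ in \eqref{eq:co-Fb-Hopf-Nakayama-structure-2} means the two contributions must be balanced to land on $S^3$. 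A clean way to control this is to first verify the identity on the generating element $x \otimes (1_H \otimes_{H^*} 1)$ and exploit left $H^*$-linearity (equivalently, naturality in $h$) to reduce to that case, and to double-check the final expression is well-defined over $H^*$ and right $H$-colinear, which it must be since $\RadIso_X$ is an isomorphism in $\mathcal{C}$ by construction. Once the index chase is done, comparing with the object \eqref{eq:Radford-iso-from-YD-2} of $\mathfrak{Z}_{0,4}$ — i.e. checking that \eqref{eq:Radford-iso-H-comod} restricted to the one-dimensional comodule $\bfk g_H$ recovers $\RadIso'$ — is then a short matter of plugging in the distinguished grouplike $g_H$ and using \eqref{eq:co-Fb-Hopf-g} together with Lemma~\ref{lem:co-Fb-Hopf-Nakayama-2}, and I would record that as the concluding remark.
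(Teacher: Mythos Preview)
Your overall approach matches the paper's: unwind $\RadIso_X = \Psi^{\R}_{\unitobj,X^{\vee\vee}} \circ (\Psi^{\L}_{X^{\vee\vee},\unitobj})^{-1} \circ (\phi_{X^{**}}\phi_X \otimes \id_{\modobj})$ using the explicit formulas \eqref{eq:co-Fb-Hopf-Nakayama-structure-1}--\eqref{eq:co-Fb-Hopf-Nakayama-structure-2}. However, your computation of $(\Psi^{\L}_{X^{\vee\vee},\unitobj})^{-1}$ is wrong. You reason that since $\Psi^{\L}$ multiplies $h$ on the left by $S^{-1}(w_{(1)}')$, the inverse should multiply by $S(w_{(1)}')$; but $w_{(1)}'$ is a Sweedler leg of the \emph{input}, not a fixed scalar, so this heuristic fails. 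The correct inverse is
\[
(\Psi^{\L}_{X,Y})^{-1}\bigl(\phi_X(x)\otimes(h\otimes_{H^*}y)\bigr)
= S^{-2}(x_{(1)})\,h \;\otimes_{H^*}\; (x_{(0)}\otimes y),
\]
as one verifies directly using $S^{-1}(x_{(1)})S^{-2}(x_{(2)}) = \varepsilon(x)1$. Applied with $X^{\vee\vee}$ in place of $X$ and $w=\phi_X(x)$ (so $w_{(0)}\otimes w_{(1)}=\phi_X(x_{(0)})\otimes S^2(x_{(1)})$), this gives $h' = S^{-2}(S^2(x_{(1)}))\,h = x_{(1)}h$, \emph{not} $S^3(x_{(1)})h$. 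The factor $S^3(x_{(1)})$ in the final formula comes entirely from the subsequent application of $\Psi^{\R}_{\unitobj,X^{\vee\vee}}$, which contributes $S(w_{(1)}) = S(S^2(x_{(1)})) = S^3(x_{(1)})$ on the right of $h$. With your $h'$ you would land on $(S^3(x_{(2)})\,h\,S^3(x_{(1)})\otimes_{H^*}1)\otimes\cdots$, which is not the stated formula.

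The paper streamlines the bookkeeping by first computing $\Psi^{\R}_{\unitobj,X}\circ(\Psi^{\L}_{X,\unitobj})^{-1}$ for \emph{arbitrary} $X$, obtaining $(S^{-2}(x_{(2)})\,h\,S(x_{(1)})\otimes_{H^*}1)\otimes\phi_X(x_{(0)})$, and only then substituting $X^{\vee\vee}$ for $X$; this avoids carrying the $S^2$-twist through the whole calculation. Finally, the comparison with $(\bfk g_H,\RadIso')\in\mathfrak{Z}_{0,4}$ that you propose as a concluding remark is a separate statement (Lemma~\ref{lem:Radford-iso-H-comod-2}) and requires Lemma~\ref{lem:co-Fb-Hopf-Nakayama-1} rather than Lemma~\ref{lem:co-Fb-Hopf-Nakayama-2}; it is not part of the present lemma.
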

\begin{proof}
  We first note that the inverse of \eqref{eq:co-Fb-Hopf-Nakayama-structure-1} is given by
  \begin{equation*}
    (\Psi^{\L}_{X,Y})^{-1}(\phi_X(x) \otimes (h \otimes_{H^*} y))
    = S^{-2}(x_{(1)}) h \otimes_{H^*} (x_{(0)} \otimes y)
  \end{equation*}
  for $X, Y \in \fdmod^{H}$, $x \in X$, $y \in Y$ and $h \in H$. Thus we have
  \begin{align*}
    \Psi^{\R}_{\unitobj,X} (\Psi^{\L}_{X,\unitobj})^{-1}
    (\phi_X(x) \otimes (h \otimes_{H^*} 1))
    & = \Psi^{\R}_{\unitobj, X}(S^{-2}(x_{(1)}) h \otimes_{H^*} (x_{(0)} \otimes 1)) \\
    & = \Psi^{\R}_{\unitobj, X}(S^{-2}(x_{(1)}) h \otimes_{H^*} (1 \otimes x_{(0)})) \\
    & = (S^{-2}(x_{(2)}) h S(x_{(1)}) \otimes_{H^*} 1) \otimes \phi_X(x_{(0)})
  \end{align*}
  for all $x \in X$ and $h \in H$, where we have identified $X \otimes \unitobj$ with $\unitobj \otimes X$ at the second equality. The desired formula is obtained by replacing $X$ with $X^{\vee\vee}$.
\end{proof}

By the proof of Lemma~\ref{lem:Nakayama-co-Fro}, there is a natural isomorphism
\begin{equation*}
  \beta^{\natural}_{M} : \Nakr(M) \to M^{(\nu_H)},
  \quad h \otimes_{H^*} m \mapsto \langle \lambda, h S(m_{(1)}) \rangle m_{(0)}
  \quad (M \in \Mod^H)
\end{equation*}
associated to the Frobenius pairing \eqref{eq:co-Fb-Hopf-pairing}.
By Lemma \ref{lem:co-Fb-Hopf-Nakayama-2}, $\unitobj^{(\nu_H)}$ is isomorphic to $\bfk g_H$ as a right $H$-comodule. Thus we have an isomorphism
\begin{equation*}
  \kappa: \Nakr(\unitobj) \to \bfk g_H,
  \quad h \otimes_{H^*} 1_{\bfk} \mapsto \langle \lambda, h \rangle g_H
  \quad (h \in H)
\end{equation*}
of right $H$-comodules.

\begin{lemma}
  \label{lem:Radford-iso-H-comod-2}
  For all $X \in \mathcal{C}$, the following diagram commutes:
  \begin{equation*}
    \begin{tikzcd}[column sep = 80pt]
      X \otimes \Nakr(\unitobj)
      \arrow[d, "{\RadIso_X}"']
      \arrow[r, "{\id_X \otimes \kappa}"]
      & X \otimes \bfk g_H
      \arrow[d, "{\RadIso'_X}"] \\
      \Nakr(\unitobj) \otimes X^{\vee 4}
      \arrow[r, "{\kappa \otimes \id_{X^{\vee 4}}}"']
      & \bfk g_H \otimes X^{\vee 4}
    \end{tikzcd}
  \end{equation*}
\end{lemma}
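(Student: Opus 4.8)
The plan is to prove commutativity of the square by a direct diagram chase on a general element $x \otimes (h \otimes_{H^*} 1_{\bfk})$ of $X \otimes \Nakr(\unitobj)$, using the three explicit formulas at hand: equation~\eqref{eq:Radford-iso-H-comod} for $\RadIso_X$, the explicit formula defining $\RadIso'_X$, and the formula for $\kappa$. Along the top--right path, $x \otimes (h \otimes_{H^*} 1)$ maps under $\id_X \otimes \kappa$ to $\langle\lambda,h\rangle\,(x \otimes g_H)$ and then under $\RadIso'_X$ to $\langle\lambda,h\rangle\langle\alpha_H^{-1},x_{(1)}\rangle\, g_H \otimes \phi_{X^{**}}\phi_X(x_{(0)})$; along the left--bottom path it maps under $\RadIso_X$ to $(x_{(2)}hS^3(x_{(1)})\otimes_{H^*}1)\otimes\phi_{X^{**}}\phi_X(x_{(0)})$ and then under $\kappa \otimes \id$ to $\langle\lambda, x_{(2)}hS^3(x_{(1)})\rangle\, g_H \otimes \phi_{X^{**}}\phi_X(x_{(0)})$. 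Since $g_H$ is a fixed nonzero vector and $\phi_{X^{**}}\phi_X$ is a linear isomorphism, commutativity is equivalent to the identity $\langle\lambda, x_{(2)}hS^3(x_{(1)})\rangle\,x_{(0)} = \langle\lambda,h\rangle\langle\alpha_H^{-1},x_{(1)}\rangle\,x_{(0)}$ in $X$, and, since this only involves the $H$-component of the coaction, by linearity it follows from the scalar identity $\langle\lambda, z_{(2)}\,h\,S^3(z_{(1)})\rangle = \langle\lambda,h\rangle\langle\alpha_H^{-1},z\rangle$ for all $z,h\in H$.

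To prove this scalar identity, I would use that the generalized Nakayama automorphism $\chi_H$ is an algebra automorphism satisfying $\langle\lambda,ab\rangle = \langle\lambda,\chi_H(b)\,a\rangle$ (from~\eqref{eq:co-Fb-another-Nakayama-def}) and is given by $\chi_H(y) = S^{-2}(y_{(1)})\langle\alpha_H,y_{(2)}\rangle$ (Lemma~\ref{lem:co-Fb-Hopf-Nakayama-1}). Applying the first property summand-wise to $\Delta(z)$ rewrites $\langle\lambda, z_{(2)}hS^3(z_{(1)})\rangle$ as $\langle\lambda, \chi_H(h)\,\chi_H(S^3(z_{(1)}))\,z_{(2)}\rangle$. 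A short computation using the explicit formula for $\chi_H$, the fact that $S^3$ is an anti-coalgebra map, and $\alpha_H\circ S^3 = \alpha_H^{-1}$ (valid because $\alpha_H$ is a grouplike element of $H^*$, so $\alpha_H\circ S^2 = \alpha_H$) gives $\chi_H(S^3(y)) = \langle\alpha_H^{-1},y_{(1)}\rangle\,S(y_{(2)})$. Substituting $y = z_{(1)}$, then invoking coassociativity and the antipode axiom $S(a_{(1)})a_{(2)} = \varepsilon(a)1_H$, collapses the expression to $\langle\alpha_H^{-1},z\rangle\,\langle\lambda,\chi_H(h)\rangle$; finally $\langle\lambda,\chi_H(h)\rangle = \langle\lambda,h\rangle$ by evaluating~\eqref{eq:co-Fb-another-Nakayama-def} at $1_H$. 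Assembling these equalities yields the scalar identity and hence the lemma.

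I do not expect a genuine obstacle: the argument is essentially Sweedler-calculus bookkeeping once the reduction in the first paragraph is made. The two points requiring care are (i) applying the $\chi_H$-twist identity correctly to the entangled pair $z_{(1)},z_{(2)}$ coming from $\Delta(z)$ (it must be applied summand-wise), and (ii) the computation of $\chi_H\circ S^3$, where one must track the anti-coalgebra (rather than coalgebra) behaviour of odd powers of the antipode and the grouplike identities $\alpha_H\circ S^2=\alpha_H$, $\alpha_H\circ S=\alpha_H^{-1}$. Everything else is routine.
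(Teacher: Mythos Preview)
Your proposal is correct and follows essentially the same approach as the paper: a direct Sweedler-calculus chase reducing to the scalar identity $\langle\lambda, x_{(2)} h S^3(x_{(1)})\rangle = \langle\lambda,h\rangle\langle\alpha_H^{-1},x_{(1)}\rangle$, which is then verified using the defining property~\eqref{eq:co-Fb-another-Nakayama-def} of $\chi_H$ together with Lemma~\ref{lem:co-Fb-Hopf-Nakayama-1}. The only cosmetic difference is that the paper moves the leftmost factor $x_{(2)}$ to the right via $\chi_H^{-1}$ (so that $S^3(x_{(1)})S^2(x_{(2)})$ collapses directly), whereas you move the two right factors to the left via $\chi_H$ and then need the extra (but immediate) observation $\langle\lambda,\chi_H(h)\rangle=\langle\lambda,h\rangle$; both routes are equally valid.
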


Thus the object \eqref{eq:Radford-iso-from-YD-2} is isomorphic to $(\Nakr(\unitobj), \RadIso)$ in $\mathfrak{Z}_{0,4}$. In other words, our categorical Radford $S^4$-formula is equivalent to \cite[Theorem 2.8]{MR2278058} through the interpretation of the category ${}^{\tilde{H}}_H \YD$ as the twisted Drinfeld center.

\begin{proof}
  For $x \in X \in \mathcal{C}$ and $h \in H$, we compute
  \begin{align*}
    & (\kappa \otimes \id_{X^{\vee4}}) \RadIso_X (x \otimes (h \otimes_{H^*} 1_{\bfk})) \\
    {}^{\eqref{eq:Radford-iso-H-comod}}\!
    & = \langle \lambda, x_{(2)} h S^3(x_{(1)}) \rangle g_H \otimes \phi_{X^{**}}\phi_X(x_{(0)}) \\
    {}^{\eqref{eq:co-Fb-another-Nakayama-def}}\!
    & = \langle \lambda, h S^{3}(x_{(1)}) \chi_H^{-1}(x_{(2)}) \rangle g_H \otimes \phi_{X^{**}}\phi_{X}(x_{(0)}) \\
    {}^{\text{(Lemma~\ref{lem:co-Fb-Hopf-Nakayama-1})}}\!
    & = \langle \lambda, h S^{3}(x_{(1)}) S^2(x_{(2)}) \rangle \langle \alpha_H^{-1}, S^2(x_{(3)}) \rangle g_H \otimes \phi_{X^{**}}\phi_{X}(x_{(0)}) \\
    & = \langle \lambda, h \rangle \langle \alpha_H^{-1}, x_{(1)} \rangle g_H \otimes \phi_{X^{**}}\phi_{X}(x_{(0)}) \\
    & = \RadIso'_X (\id_X \otimes \kappa) (x \otimes (h \otimes_{H^*} 1_{\bfk})). \qedhere
  \end{align*}
\end{proof}

Thanks to this lemma, one can interpret Theorem~\ref{thm:Radford-iso-braiding} in the context of Hopf algebras. We suppose that $H$ possesses a universal R-form $r: H \times H \to \bfk$ so that the monoidal category $\mathcal{C}$ has the braiding given by
\begin{equation*}
  \sigma_{X,Y} : X \otimes Y \to Y \otimes X,
  \quad x \otimes y \mapsto r(x_{(1)}, y_{(1)}) y_{(0)} \otimes x_{(0)}
\end{equation*}
for $X, Y \in \mathcal{C}$. We define $u, v, b \in H^*$ by
\begin{equation*}
  u(h) = r(h_{(2)}, S(h_{(1)})),
  \quad v = u \circ S,
  \quad b(h) = r(g_H, h)
  \quad (h \in H).
\end{equation*}
For reader's convenience, we note that our $g_H$ and $b$ correspond to $a^{-1}$ and $\beta_a^{-1}$ of \cite{MR2554186}, respectively.
The Drinfeld isomorphism~\eqref{eq:Drinfeld-iso-def} is given by
\begin{equation*}
  \DriIso_X : X \to X^{\vee\vee},
  \quad x \mapsto \phi_X(u \rightharpoonup x)
  \quad (X \in \mathcal{C}).
\end{equation*}
Thus, by Theorem~\ref{thm:Radford-iso-braiding} and Lemma~\ref{lem:Radford-iso-H-comod-2}, we have
\begin{align*}
  (\RadIso'_X)^{-1}(g_H \otimes x)
  & = \sigma_{\bfk g_H, X} (\id_{\bfk g_H} \otimes \DriIso_X^{-1} \DriIso_{X^{\vee}}^{\vee} \phi_{X^{**}}\phi_X)(g_H \otimes x) \\
  & = \langle b * u^{-1} * v, x_{(1)} \rangle x_{(0)} \otimes g_H
\end{align*}
for all $x \in X \in \mathcal{C}$. This implies $\alpha_H = b * u^{-1} * v$ \cite[Theorem 3.3]{MR2554186}.


\def\cprime{$'$}

\end{document}